\newtheorem{theorem}{Theorem}
\newtheorem{definition}{Definition}
\newtheorem{example}{Example}
\newtheorem{lemma}{Lemma}
\newtheorem{remark}{Remark}
\newcommand\ackname{Acknowledgements}
  \newenvironment{acknowledgements}{%
      \titlepage
      \null\vfil
      \@beginparpenalty\@lowpenalty
      \begin{center}%
        \bfseries \ackname
        \@endparpenalty\@M
      \end{center}}%
     {\par\vfil\null\endtitlepage}
\title{Singular Monopoles on Closed 3-Manifolds}
\author{Saman Habibi Esfahani}
\date{\today}
\begin{document}
\maketitle

\begin{abstract}
We prove the existence of non-trivial irreducible $SU(2)$-monopoles with Dirac singularities on any rational homology 3-sphere, equipped with any Riemannian metric, using a gluing construction.
\end{abstract}

\section{Introduction}

The theory of Yang-Mills connections and, in particular, instantons revolutionized the study of $4$-manifolds \cite{ MR710056, MR892034, MR882829}. The Bogomolny monopoles appear as the dimensional reduction of instantons to $3$-manifolds.

\begin{definition}[The Bogomolny Monopole]
Let $(M,g)$ be an oriented Riemannian 3-manifold. Let $G$ be a compact Lie group. Let $P \to M$ be a principal $G$-bundle and $\mathfrak{g}_P$ the associated adjoint bundle. Let $A$ be a connection on $P$ and $\varPhi$ a section of $\mathfrak{g}_P$. A pair $(A,\varPhi)$ is called a monopole if it satisfies the Bogomolny equation, which is
\begin{align}\label{moneq} 
    *F_A = d_{A} \varPhi,
\end{align}
where $*$ is the Hodge star operator on the $\mathfrak{g}_P$-valued differential forms on $M$, defined using the Riemannian metric $g$ and the orientation on $M$.

\end{definition}

The theory of monopoles on non-compact $3$-manifolds is very rich. Jaffe and Taubes proved the existence of non-trivial $SU(2)$-monopoles on $\mathbb{R}^3$, using a gluing construction \cite{MR614447}. The gluing constructions, originating from the works of Taubes, have been used to construct solutions to various differential equations \cite{MR658473,  MR866030,  MR1424428, MR3110581, MR3509964, MR3801425}. From the gluing construction of monopoles, one can read the dimension of the moduli spaces of monopoles on $\mathbb{R}^3$. This can also be proven using a variation of the Atiyah-Singer index theorem, called the Callias index theorem, which is an index theorem for Dirac operators on non-compact odd-dimensional manifolds  \cite{MR934202, MR3403964}. The moduli spaces of monopoles on $\mathbb{R}^3$ are ALF hyperkähler manifolds, which have been extensively studied, originating from the works of Atiyah and Hitchin \cite{MR934202}. Furthermore, there exists an explicit parametrization of the moduli spaces of monopoles on $\mathbb{R}^3$ in terms of rational maps, due to Donaldson \cite{MR769355, MR804459}. 

Floer studied monopoles on asymptotically Euclidean 3-manifolds \cite{MR866030} and, more recently, Oliveira studied monopoles on asymptotically conical 3-manifolds and stated that there exists a $(4k-1)$-dimensional family of non-trivial irreducible smooth $SU(2)$-monopoles on any asymptotically conical $3$-manifold $(M,g)$ with $b^2(M) = 0$ \cite{MR3509964}. It is proven by Kottke that the expected dimension of the moduli space of monopoles on an asymptotically conical 3-manifold, whose ends are asymptotic to a cone on $\Sigma$, is $4k + \frac{1}{2}b^1(\Sigma)- b^0(\Sigma)$ \cite{MR3403964}. 

The theory of monopoles on compact $3$-manifolds is quite different from the ones on non-compact manifolds. When the structure group $G$ is compact, every smooth monopole on a closed oriented Riemannian 3-manifold satisfies a stronger condition,
\begin{align*}
    *F_A = d_A \varPhi = 0,
\end{align*}
and therefore, $A$ is a flat connection and $\varPhi$ is a covariantly constant section. These monopoles are sometimes referred to as trivial monopoles.

There is another class of monopoles on compact $3$-manifolds which have non-flat curvature. These monopoles are smooth on the complement of finitely many points with prescribed Dirac singularities at these points.

\begin{definition}[Dirac Singularity]
Let $P \to M \setminus \{p_1, \hdots, p_n\}$ be a principal $SU(2)$-bundle. A monopole $(A,\varPhi)$ on this bundle is called a monopole with Dirac singularities if close to the singular point $p_i$, we have
\begin{align} \label{sing}
    |\varPhi| = \frac{k}{2r} + m + O(r),
\end{align}
where the norm is defined with respect to the adjoint-invariant inner product on the adjoint bundle $\mathfrak{g}_P$, $r$ is the geodesic distance from $p_i$, $k \in \mathbb{N}$ is a positive integer, called the charge of the monopole at the singular point, and $m$ is a constant, called the mass of the monopole at the singular point. The pair $(A, \varPhi)$ is called a monopole with a Dirac singularity.
\end{definition}

Pauly studied the deformation of these singular monopoles with the structure group $SU(2)$ \cite{MR1624279}, and using the Atiyah-Singer index theorem and exploiting a theorem of Kronheimer \cite{kronheimer1985monopoles} --- which states that close to the points with Dirac singularities, monopoles up to gauge, can be understood as smooth $S^1$-invariant instantons on a 4-dimensional space --- he proved that the expected dimension of the moduli space of singular monopoles with charge $k \in \mathbb{N}$ on a compact Riemannian $3$-manifold $(M,g)$ is equal to $4k$. However, this argument does not imply that the moduli spaces are non-empty.  

In this article, we prove the existence of $SU(2)$-monopoles with Dirac singularities on rational homology 3-spheres equipped with any Riemannian metric.  The proof is based on a gluing construction. Furthermore, this construction gives a geometric interpretation to Pauly's dimensional formula for the moduli spaces of singular monopoles on rational homology 3-spheres. This gluing construction is also motivated by the study of monopoles in higher dimensions \cite{MR4495257}.

\begin{theorem} \label{theorem1}
Let $M$ be an oriented rational homology 3-sphere equipped with a Riemannian metric $g$. Let $S_p = \{p_1, \hdots, p_n \}$ and $S_q = \{q_1, \hdots, q_k \}$ be two sets of points
in $M$ where all $n+k$ points are disjoint. Let $k_1, \hdots, k_n$ be $n$ negative integers, where $k + \sum_{i=1}^n k_i = 0$. Then there exists an irreducible $SU(2)$-monopole $(A,\varPhi)$ with Dirac singularities with charge $|k_i|$ at $p_i$ for all $i \in  \{1,\hdots,n\}$ on a principal $SU(2)$-bundle $P \to M \setminus S_p$ such that 
\begin{align*}
    (A,\varPhi) = (A_0, \varPhi_0) + (a,\varphi),
\end{align*}
where $(A_0, \varPhi_0)$ is equal to a scaled BPS-monopole on a small neighbourhood $B_{ \epsilon_j}(q_j)$ of each point $q_j$ for $j \in \{1, \hdots, k\}$ and is equal to the lift of a $U(1)$-Dirac monopole with charge $k_i$ at $p_i$ for $i \in  \{1, \hdots, n\}$ on $M \setminus \cup_{j=1}^k B_{3 \epsilon_j}(q_j)$. Moreover, the pair $(a,\varphi) \in W^{1,2}_{\alpha_1,\alpha_2}$ for suitable values of $\alpha_1$ and $\alpha_2$, where $W^{1,2}_{\alpha_1,\alpha_2}$ is a weighted Sobolev space, defined in Definition \ref{weighted}, such that $\|(a,\varphi)\|_{W^{1,2}_{\alpha_1,\alpha_2}} \to 0$ as the masses at the Dirac singularities go to infinity.
\end{theorem}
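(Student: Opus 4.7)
The plan is to carry out a Taubes-style gluing construction. I would first build an approximate solution $(A_0,\varPhi_0)$ by a cut-and-paste procedure. Since $M$ is a rational homology 3-sphere, $H^2(M;\mathbb{Z})$ is torsion and the charge balance $k + \sum k_i = 0$ guarantees the existence of a principal $U(1)$-bundle on $M \setminus (S_p \cup S_q)$ carrying a $U(1)$-Dirac monopole with singularity of charge $k_i$ at each $p_i$ and of charge $+1$ at each $q_j$. Viewing this $U(1)$-configuration as an $SU(2)$-configuration via the embedding $U(1) \hookrightarrow SU(2)$, its profile near each $q_j$ matches to leading order the large-radius behaviour of a Prasad--Sommerfield BPS monopole of large mass on $\mathbb{R}^3$. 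Fixing a gluing scale $\epsilon_j$ tied to the mass, I would rescale the BPS monopole into $B_{\epsilon_j}(q_j)$ and interpolate in the annulus $B_{3\epsilon_j}(q_j)\setminus B_{\epsilon_j}(q_j)$ via a smooth cut-off. The resulting pair $(A_0,\varPhi_0)$ is smooth on $M\setminus S_p$ with the prescribed Dirac singularities at the $p_i$.

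I would then set up the perturbation problem. Writing $(A,\varPhi) = (A_0,\varPhi_0) + (a,\varphi)$ and imposing the gauge-fixing condition $d_{A_0}^{*} a + [\varPhi_0,\varphi] = 0$ reduces the Bogomolny equation to a first-order elliptic system
\[
\mathcal{L}(a,\varphi) = -e_0 + Q(a,\varphi),
\]
where $\mathcal{L}$ is the augmented linearization at $(A_0,\varPhi_0)$, the quantity $e_0 = *F_{A_0} - d_{A_0}\varPhi_0$ is the initial error (supported in the gluing annuli), and $Q$ is a schematic quadratic nonlinearity in $(a,\varphi)$. The weighted Sobolev space $W^{1,2}_{\alpha_1,\alpha_2}$ is tailored to the two distinct singular scales: the weight $\alpha_1$ encodes the $r^{-1}$ Dirac behaviour near $S_p$, while $\alpha_2$ encodes the concentration scale at $S_q$.

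The main obstacle --- and the heart of the analysis --- is proving that $\mathcal{L}$ is invertible in these weighted norms with an operator bound \emph{uniform} as the masses tend to infinity. I expect to achieve this by constructing a parametrix from model inverses: on $M\setminus\bigcup_j B_{3\epsilon_j}(q_j)$ the Dirac-lift model is invertible because the Weitzenböck zeroth-order term $[\varPhi_0,[\varPhi_0,\cdot\,]]$ dominates when the mass is large; on each $B_{3\epsilon_j}(q_j)$ one uses the known linear analysis of BPS monopoles on $\mathbb{R}^3$, whose linearization has a finite-dimensional kernel accounted for by the BPS moduli (position and overall phase). Patching these inverses with cut-offs, and absorbing the approximate cokernel into the BPS position and phase parameters, should produce a right inverse bounded uniformly in the mass. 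Once this linear estimate is in hand, the initial error satisfies $\|e_0\|\to 0$ in the appropriate weighted norm (by the choice of cut-off and the matching of leading-order asymptotics) and $Q$ satisfies the standard quadratic estimate $\|Q(u) - Q(u')\| \lesssim (\|u\|+\|u'\|)\,\|u-u'\|$; the Banach fixed-point theorem applied to $(a,\varphi)\mapsto \mathcal{L}^{-1}(-e_0 + Q(a,\varphi))$ on a small ball in $W^{1,2}_{\alpha_1,\alpha_2}$ then yields the desired solution with the advertised decay. Irreducibility is automatic for large mass because $(A_0,\varPhi_0)$ contains a genuine charge-one BPS monopole piece, which is irreducible, and irreducibility is an open condition preserved under small perturbations in $W^{1,2}_{\alpha_1,\alpha_2}$.
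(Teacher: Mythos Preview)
Your outline matches the paper's strategy at the level of the three main steps (Dirac monopole construction, BPS gluing, perturbation via fixed point), but there is a genuine gap in your linear analysis on the outer region. You claim invertibility on $M\setminus\bigcup_j B_{3\epsilon_j}(q_j)$ follows because the Weitzenb\"ock term $[\varPhi_0,[\varPhi_0,\cdot\,]]$ dominates for large mass. This is only true on the \emph{transverse} component of the adjoint bundle (the orthogonal complement of $\varPhi_0$); on the \emph{longitudinal} component (the real line bundle spanned by $\varPhi_0$) one has $[\varPhi_0,u^L]=0$ identically, so the mass gives no coercivity whatsoever. The paper handles this by splitting the outer equation into longitudinal and transverse parts: the transverse part is treated essentially as you suggest, but the longitudinal part reduces to the ordinary Hodge Laplacian $\Delta u^L = f^L$ on real-valued $1$-forms over $M\setminus S_p$, and invertibility there requires weighted (Lockhart--McOwen type) analysis near the punctures together with the hypothesis $H^1(M;\mathbb{R})=0$. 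This is exactly where the rational-homology-sphere assumption enters the linear theory, and your sketch does not account for it.

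Two smaller points. First, you have the roles of $\alpha_1$ and $\alpha_2$ reversed relative to Definition~\ref{weighted}: in the paper $\alpha_1$ governs the concentration scale near $S_q$ and $\alpha_2$ the Dirac singularities near $S_p$. Second, the paper avoids your proposed step of ``absorbing the approximate cokernel into the BPS position and phase parameters'' altogether by seeking the correction in the range of $d_2^*$, i.e.\ solving the second-order equation $d_2 d_2^* u = f$; the Weitzenb\"ock identity then gives injectivity of $d_2 d_2^*$ directly on the $\mathfrak{R}^3$ model as well, so no obstruction bundle or moduli adjustment is needed.
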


The proof of Theorem \ref{theorem1} is based on a gluing construction.

\begin{itemize}
\item The first step is to produce an Abelian Dirac monopole on $(M,g)$ with some singular points $p_i$ with negative charges and some singular points $q_j$ with charge $+1$ such that the total charge of the monopole is zero. We construct the Dirac monopole in Section \ref{dirmon}.  

\item The second step is to smooth out the singularities with charge $+1$ and construct an approximate monopole. The smoothing process is carried over by gluing model SU(2)-monopoles --- called the scaled BPS-monopoles --- to  the singular points with charge $+1$ and leaving out the rest of the singular points not smoothed-out. This has been done in Section \ref{appsolsection}.

\item The third step is the deformation. The resulting configuration from the previous step is an approximate monopole and it does not necessarily satisfy the Bogomolny equation. However in a suitable norm, it is close to a solution and can be deformed into a genuine monopole. We solve the linearized Bogomolny equation in Section \ref{solveequation}, and then consider the quadratic terms, and solve the full Bogomolny equation in Section \ref{quadraticterm}.

\end{itemize}

\begin{remark}
The gluing construction for $SU(2)$-monopoles with Dirac singularities at the points $p_i$ with charges $|k_i|$ depends on $4k$-parameters, where $k = - \sum_{i=1}^n k_i$. This is equal to the expected dimension of the moduli space of singular $SU(2)$-monopoles with charge $k$, as computed by Pauly. $3k$ of this number is accounted by the position of the highly concentrated BPS-monopoles, $k-1$ of this number by choices of the framings at these points. There are $k$ points which we can fix the frames there; however, $1$ parameter vanishes after taking the action of the gauge group into account. The remaining 1 degree of freedom comes from changing the average mass of the monopole. 
\end{remark}

\noindent 
\textbf{Acknowledgments.}
This article is part of the PhD thesis of the author at Stony Brook University. I am grateful to my advisor Simon Donaldson for his guidance, encouragement, and support. Moreover, I would like to thank Aliakbar Daemi, Lorenzo Foscolo, Jason Lotay, Gonçalo Oliveira, and Yao Xiao for helpful conversations. This work was completed while the author was in residence at the Simons Laufer Mathematical Sciences Institute (previously known as MSRI) Berkeley, California, during the Fall 2022 semester, supported by NSF Grant DMS-1928930.

\section{Dirac Monopoles on Closed 3-Manifolds}\label{dirmon}

In this section, we study and later construct Dirac monopoles on rational homology 3-spheres.

\subsection{Local Model of Dirac Monopoles}

In this subsection, we study Dirac monopoles close to the points with Dirac singularities.

Let $(A_D, \varPhi_D)$ be a $U(1)$-monopole with a Dirac singularity at $p \in M$ with signed charge $k \in \mathbb{Z} \setminus \{ 0 \}$, defined on a small neighbourhood of $p$ in $M$. A Dirac monopole is a monopole with Dirac singularities on a bundle with structure group $U(1)$. Close to a singular point $p$, the Higgs field $\varPhi_D$ has the following form,
\begin{align} \label{Dirac}
    \varPhi_D &= -\frac{k}{2r} + m + O(r),
\end{align}
where $r$ denotes the geodesic distance from $p$ and $k$ is the signed charge at $p$. Note that unlike \ref{sing}, the left-hand-side is the section itself and not its norm.

The Bianchi identity shows that the curvature $2$-form of a $U(1)$-connection is closed. Furthermore, from the Chern-Weil theory we know that the $2$-form 
\begin{align*}
\frac{F_{A_D}}{2 \pi} = \frac{*d\varPhi_D}{2 \pi},
\end{align*}
presents $c_1(L)$, the first Chern class of a line bundle $L$ where the monopole is defined on. Restricting the bundle to a sufficiently small punctured neighbourhood of a singular point $p$ with charge $k$, the line bundle $L_{|_{B_{\epsilon}(p) \setminus \{ p \}}} \to B_{\epsilon}(p) \setminus \{ p \}$ is isomorphic to $H_p^{k}$, where $H_p$ is the Hopf line bundle centered at $p$,  with the first Chern number $c_1$
\begin{align*}
    c_1 = \lim_{\epsilon \to 0} \frac{1}{2 \pi} \int_{\partial B_{\epsilon}(p)} *d\varPhi_D = \lim_{\epsilon \to 0} \frac{1}{2 \pi} \int_{\partial B_{\epsilon}(p)} \left( \frac{k}{2\epsilon^2} + O(1) \right) vol_{\partial B_{\epsilon}(p)} = k.
\end{align*}
The model connection $A$ of a Dirac monopole on $\mathbb{R}^3$ close to a singular point $0 \in \mathbb{R}^3$ with charge $k$ is an $SO(3)$-invariant connection defined on the line bundle $H_0^{k} \to \mathbb{R}^3 \setminus \{ 0 \}$. Let $S^2_0(1)$ be the unit $2$-sphere centred at the origin in $\mathbb{R}^3$. We can cover $S^2_0(1)$ by $U^+$ and $U^-$, where
$U^+ = S^2_0(1) \setminus \{ (0,0,-1)\}$ and $U^- = S^2_0(1) \setminus \{ (0,0,1)\} $. In spherical coordinates $(\rho, \theta, \varphi)$, the connection $A$ on $U^+$ and $U^-$ is given by the following 1-forms,
\begin{align*}
    A_{|_{ U^-}} = 
   k \frac{(1 - \cos (\varphi))}{2} d\theta,\quad \quad
    A_{|_{ U^+}} =
   k \frac{(-1 - \cos (\varphi))}{2} d\theta,
\end{align*}
with the transition function $e^{ik\theta}$. Note that on $ U^- \cap  U^+$  
\begin{align*}
A_{|_{ U^-}} - A_{|_{ U^+}} = k d \theta.    
\end{align*}
We extend the connection radially to $H_0^k \to \mathbb{R}^3 \setminus \{0\}$ to get $A$.

Using geodesic normal coordinates, we can define a diffeomorphism
\begin{align*}
\eta:  B_{\epsilon}(0) \subset  \mathbb{R}^3 \to B_{\epsilon}(p) \subset M,    
\end{align*}
between a small neighbourhood of the origin in $\mathbb{R}^3$ and a small neighbourhood of a point $p \in M$. Furthermore, by choosing a bundle isomorphism, covering $\eta$, we can identify the bundles above these open neighbourhoods and pull back the connection $A_D$ to a punctured neighbourhood of the origin in $\mathbb{R}^3$. 

\begin{lemma}
The connection of the Dirac monopole with charge $k$, denoted by $A_D$, close to a singular point $p \in M$, up to a gauge transformation, can be written as the following,
\begin{align} \label{a}
    \eta^* A_D = A + a, \quad \text{ with } \quad |a|  = O(r),
\end{align}
where the gauge transformation --- which is just addition by an exact 1-form --- corresponds to tensoring $H^k_p$ by a flat line bundle. 
\end{lemma}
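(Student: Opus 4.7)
The plan is to compare the two $U(1)$-connections via a Poincar\'e-lemma argument at the level of their curvatures. Both $\eta^{*}L$ and the Hopf bundle $H_{0}^{k}$ are smooth $U(1)$-bundles of degree $k$ over the punctured ball $B_{\epsilon}(0)\setminus\{0\}$, which is homotopy equivalent to $S^{2}$; since $H^{2}(S^{2},\mathbb{Z})\cong\mathbb{Z}$ classifies such bundles, the two are smoothly isomorphic. Fix any bundle isomorphism covering $\eta$, and let $\alpha:=\eta^{*}A_{D}-A$, a globally defined $1$-form on $B_{\epsilon}(0)\setminus\{0\}$.

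The first step is to control the curvature difference. Since $(A_{D},\varPhi_{D})$ solves $F_{A_{D}}=*_{g}\,d\varPhi_{D}$ and the model satisfies $F_{A}=*_{\mathrm{flat}}\,d(-k/2r)$, and since $\eta^{*}\varPhi_{D}=-k/(2r)+m+O(r)$ while in geodesic normal coordinates $\eta^{*}g=g_{\mathrm{flat}}+O(r^{2})$, one can write
\[
\omega:=F_{\eta^{*}A_{D}}-F_{A}=\bigl(*_{\eta^{*}g}-*_{\mathrm{flat}}\bigr)\,d(-k/2r)+*_{\eta^{*}g}\,d(m+O(r)).
\]
The first summand is bounded because the operator difference is pointwise $O(r^{2})$ while $d(-k/2r)$ has size $O(1/r^{2})$; the second is bounded because $d(m+O(r))=O(1)$. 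Thus $\omega$ is a closed $2$-form on the punctured ball with $|\omega|=O(1)$. Moreover both $F_{\eta^{*}A_{D}}$ and $F_{A}$ have the same period $2\pi k$ over any small $2$-sphere about $0$, so $\int_{S^{2}_{\epsilon'}}\omega=0$ and $\omega$ is exact on $B_{\epsilon}(0)\setminus\{0\}$.

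The second step is to produce a small primitive via the radial-homotopy Poincar\'e operator
\[
\beta(x)(X):=\int_{0}^{1}t\,\omega_{tx}(x,X)\,dt.
\]
Boundedness of $\omega$ makes the integrand integrable, the standard calculation gives $d\beta=\omega$ on $B_{\epsilon}(0)\setminus\{0\}$, and the pointwise estimate $|\beta(x)|\leq\tfrac{1}{2}\|\omega\|_{\infty}|x|$ yields $|\beta|=O(r)$. Now $\alpha-\beta$ is closed on the punctured ball, and since $H^{1}(B_{\epsilon}(0)\setminus\{0\})=0$ we can write $\alpha-\beta=d\theta$ for a smooth real function $\theta$. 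Applying the $U(1)$-gauge transformation $e^{\pm i\theta}$ to $\eta^{*}A_{D}$ shifts $\alpha$ by the exact $1$-form $d\theta$ and yields the connection $A+\beta$. Setting $a:=\beta$ proves the stated identity, with the gauge change being addition of an exact $1$-form as claimed.

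The only genuinely technical point is the $O(1)$ bound on $\omega$: it rests on two ingredients, the regularity of the remainder $m+O(r)$ in the expansion \eqref{Dirac} (inherited from elliptic regularity of solutions to the Bogomolny equation), and the Gauss-lemma expansion $\eta^{*}g=g_{\mathrm{flat}}+O(r^{2})$ in geodesic normal coordinates. Once this bound is in place, the remainder is a clean cohomological argument on the punctured ball.
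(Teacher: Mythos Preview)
Your argument is correct and follows the same strategy as the paper: bound the curvature difference $d(\eta^*A_D - A)$ by $O(1)$ via the Bogomolny equations and the normal-coordinate expansion of the metric, then pass to a primitive of size $O(r)$ and gauge away the remaining closed piece. The paper compresses the second step into a single sentence (``which shows in a suitable gauge $|a|_{\eta^*g}=O(r)$''), whereas you spell it out with the radial Poincar\'e operator and the vanishing of $H^1$ on the punctured ball; your organization of the curvature estimate via $(*_{\eta^*g}-*_{\mathrm{flat}})$ is also cleaner than the paper's somewhat confusing $r$ versus $r_0$ bookkeeping. One small point worth a remark: the homotopy operator is being applied on the \emph{punctured} ball, so to justify $d\beta=\omega$ you should note that $|\nabla\omega|=O(r^{-1})$ (from differentiating the $O(r^2)\cdot O(r^{-2})$ product), which is enough to differentiate under the integral sign.
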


\begin{proof}
The pair $(\eta^*A_D, \eta^* \varPhi_D)$ is not necessarily a monopole with respect to the Euclidean metric on $B_{\epsilon}(0) \subset \mathbb{R}^3$; however, it is a monopole with respect to the pull-back metric $\eta^*g$, and therefore, $\eta^*\varPhi_D = -\frac{k}{2r} + m + O(r)$, where $r$ is the geodesic distance from the origin with respect to $\eta^*g$.  

$A$ is the connection of a monopole with a Higgs field $\varPhi = -\frac{k}{2r_0} + m_0 + O(r_0)$, where $r_0$ is the distance to the origin with respect to the Euclidean metric, and therefore,
\begin{align*}
    |*_0 d(\eta^* A_D - A)|_{g_0} = |d (\eta^* \varPhi_D - \varPhi)|_{g_0} = |d (\frac{k}{2r}-\frac{k}{2r_0})|_{g_0} + O(1).
\end{align*}
Moreover,
\begin{align*}
|r - r_0| = \max \{ R_{i,j,k,l} \} O(r_0^3) + O(r_0^4),
\end{align*}
where $R_{i,j,k,l}$ is the Riemann curvature tensor of $\eta^* g$, and therefore,
\begin{align*}
    |d(\eta^* A_D - A)|_{\eta^*g} = O(1),
\end{align*}
which shows in a suitable gauge, 
$|a|_{\eta^* g} = O(r).$
\end{proof}

\subsection{Construction of Dirac Monopoles}\label{ConsDiracMon}

In this section, we construct a Dirac monopole $(A_D, \varPhi_D)$ with prescribed charges and singularities on a rational homology 3-sphere $(M,g)$. One can construct Dirac monopoles on any closed Riemannian 3-manifold; however, here we only consider the case where $H_2(M,\mathbb{Q}) = 0$.

\begin{lemma}
Let $(M,g)$ be an oriented rational homology 3-sphere equipped with a Riemannian metric $g$. Let $p_1, \hdots ,p_n$ be $n$ distinct points in $M$ with non-zero integer-valued charges $k_1, \hdots ,k_n$, respectively, where $\sum_{i=1}^n k_i = 0$. Then there exists a monopole $(A_D, \varPhi_D)$ with Dirac singularities with charge $k_i$ at $p_i$ on a principal $U(1)$-bundle $P \to M \setminus\{p_1,  \hdots , p_n\}$. This monopole, up to gauge transformations and adding a constant to the Higgs field, is unique.
\end{lemma}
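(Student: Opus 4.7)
The plan is to construct the Higgs field first, by viewing the Abelian Bogomolny equation as a scalar Poisson equation, and then to recover the bundle and connection from its curvature. For a $U(1)$-monopole, applying $d$ to $*F_{A_D} = d\varPhi_D$ together with $dF_{A_D}=0$ forces $\Delta\varPhi_D = 0$ away from the $p_i$, and the prescribed singular behaviour $\varPhi_D\sim -k_i/(2r)$, combined with the identity $\Delta(1/r)=-4\pi\delta_0$ on $\mathbb{R}^3$, dictates the global distributional equation
\begin{align*}
    \Delta\varPhi_D \,=\, 2\pi\sum_{i=1}^n k_i\,\delta_{p_i}
\end{align*}
on the closed manifold $(M,g)$. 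Since the kernel of $\Delta$ on $C^\infty(M)$ consists of the constants, the Fredholm alternative produces a solution $\varPhi_D$, unique up to an additive constant, precisely under the assumed hypothesis $\sum_i k_i=0$. Expanding in terms of the Green's function $G(x,p_i)=-1/(4\pi\, d(x,p_i))+h_i(x)$ with $h_i$ smooth, and then in geodesic normal coordinates around each $p_i$, gives $\varPhi_D=-k_i/(2r)+m_i+O(r)$, i.e.\ the desired Dirac singularity with signed charge $k_i$.

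The next step is to construct $P$ and $A_D$ from the curvature form $\omega:=*d\varPhi_D$ on $M\setminus S_p$. This form is closed off the singular set, and the computation in the excerpt gives $\int_{\partial B_\epsilon(p_i)}\omega\to 2\pi k_i$. To invoke Weil's integrality theorem I must verify that $\omega/(2\pi)$ has integral periods on every integral $2$-cycle in $M\setminus S_p$. Because $M$ is an oriented rational homology 3-sphere, Poincar\'e duality together with the universal coefficient theorem force $H_2(M,\mathbb{Z})=0$ (it is both torsion and free), so the long exact sequence of the pair $(M,M\setminus S_p)$ reduces to
\begin{align*}
    0 \,\to\, H_3(M,\mathbb{Z})\cong\mathbb{Z} \,\to\, H_3(M,M\setminus S_p;\mathbb{Z})\cong\mathbb{Z}^n \,\to\, H_2(M\setminus S_p;\mathbb{Z}) \,\to\, 0,
\end{align*}
yielding $H_2(M\setminus S_p;\mathbb{Z})\cong\mathbb{Z}^n/\mathbb{Z}(1,\dots,1)$, generated by the small spheres $[S_{p_i}]$ subject only to $\sum_i[S_{p_i}]=0$. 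Each period of $\omega/(2\pi)$ on such a generator equals $k_i\in\mathbb{Z}$, and the relation is compatible with $\sum_i k_i=0$, so Weil's theorem furnishes a $U(1)$-bundle $P\to M\setminus S_p$ and a connection $A_D$ with $F_{A_D}=\omega=*d\varPhi_D$. Then $(A_D,\varPhi_D)$ solves \eqref{moneq}.

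For uniqueness, given any other such monopole $(A_D',\varPhi_D')$, the difference $\varPhi_D'-\varPhi_D$ is distributionally harmonic on $M$ with no singular contribution (the delta terms cancel), hence smooth and harmonic, hence constant; this is exactly the ``adding a constant'' ambiguity. Then $F_{A_D'}=F_{A_D}$, so $A_D'-A_D$ is a closed $1$-form on $M\setminus S_p$. The analogous cohomological long exact sequence yields $H^1(M\setminus S_p,\mathbb{R})\cong H^1(M,\mathbb{R})=0$, so $A_D'-A_D=df$ for some $f\in C^\infty(M\setminus S_p)$, and $e^{if}$ is the required gauge transformation, on the same underlying bundle (which is determined up to isomorphism by its Chern class).

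I expect the main subtlety to lie in the integrality step: a priori, torsion in $H_*(M,\mathbb{Z})$ could generate extra integrality obstructions on $\omega$, but the rational-homology-sphere hypothesis eliminates precisely these, so that the single necessary and sufficient topological condition $\sum_i k_i=0$ coincides with the Fredholm solvability condition for the Poisson equation for $\varPhi_D$.
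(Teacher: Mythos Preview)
Your proposal is correct and follows essentially the same route as the paper: solve the distributional Poisson equation for $\varPhi_D$ under the zero-total-charge condition, invoke integrality of the periods of $*d\varPhi_D/(2\pi)$ on the small linking spheres to produce the line bundle and connection via Weil's theorem, and deduce uniqueness from $H^1(M\setminus S_p,\mathbb{R})=0$. Your version is somewhat more explicit on the topology (you justify $H_2(M,\mathbb{Z})=0$ via Poincar\'e duality and UCT, and compute $H_2(M\setminus S_p;\mathbb{Z})\cong\mathbb{Z}^n/\mathbb{Z}(1,\dots,1)$ from the long exact sequence of the pair, where the paper simply asserts these), while the paper adds the converse computation showing $\sum_i k_i=0$ is also necessary via integration by parts over $M\setminus\cup_i B_\epsilon(p_i)$.
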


\begin{proof}
From the monopole equation it can be seen that on the complement of the singular points we have $\Delta \varPhi_D = 0$,  and therefore, $ \varPhi_D$ is a harmonic section of the adjoint bundle on $M \setminus \{ p_1, \hdots, p_n \}$. A Dirac monopole singular at the points $p_1, \hdots, p_n$ with corresponding signed charges $k_1, \hdots, k_n$ is a solution to the equation
\begin{align}\label{Delta}
    \Delta \varPhi_D = \sum_{i=1} ^n k_i \delta_{p_i},
\end{align}
on a compact Riemannian $3$-manifold $(M,g)$, in the sense of currents, where $\delta_{p_i}$ is the Dirac delta function centered at the point $p_i$.

The Dirac delta function can also be understood as a map $\delta_{p_i} : C^{\infty}(M) \to \mathbb{R}$, defined by $\delta_{p_i}(f) = f(p_i)$, or as a $3$-dimensional cohomology satisfying the equation $\int_M f \delta_{p_i} = f(p_i)$ for any smooth function $f$. By a slight abuse of notation, we would denote any of them by $\delta_{p_i}$.

The equation \ref{Delta} has a solution if and only if 
\begin{align}
    \sum_{i=1} ^n k_i = 0.
\end{align}
This can be seen as a generalization of the well-known fact that on a closed oriented Riemannian manifold $(M,g)$ the equation $\Delta f = h$, for a smooth function $h$,  has a solution if and only if $\int_M h vol_g= 0$ to the case where $h$ is not a smooth function, but a distribution. 

Let $\delta_{p_i}$ be a 3-form representative of the Poincar\'e dual of the 0-cycle $\{p_i\}$. The equation $\Delta \widetilde{\varPhi}_D = \sum_i k_i \delta_{p_i}$ has a solution if and only if $\delta = \sum_i k_i \delta_{p_i}$ is an element of the orthogonal complement of harmonic 3-forms $\mathcal{H}^3$. From the Hodge decomposition theorem we have,
\begin{align*}
    \Omega^3(M) = d \Omega^2(M) \oplus \mathcal{H}^3.
\end{align*}
On a closed oriented Riemannian 3-manifold, $\mathcal{H}^3$ is 1-dimensional, generated by the volume form $vol_g$ of the Riemannian metric $g$ --- note that the volume form of $g$ is parallel and harmonic. On the other hand 
\begin{align*}
    \langle \delta, vol_g \rangle =  \sum_{i=1}^n k_i \int_M \delta_{p_i} \wedge * vol_g = \sum_{i=1}^n k_i = 0,
\end{align*}
and therefore, the equation $\Delta \widetilde{\varPhi}_D = \sum_i k_i \delta_{p_i}$ has a solution. We can define the Higgs field of the Dirac monopole by $\varPhi_D := * \widetilde{\varPhi}_D$. 

Furthermore, the solution to this equation is unique up to addition by a constant. For any two solutions $\varPhi_D$ and $\varPhi_D'$ of the equation \ref{Delta}, we have 
\begin{align*}
    \Delta(\varPhi_D - \varPhi_D') = \Delta \varPhi_D - \Delta \varPhi_D' =  \sum_{i=1} ^n k_i \delta_{p_i} - \sum_{i=1} ^n k_i \delta_{p_i} = 0,
\end{align*}
and therefore, $\varPhi_D - \varPhi_D'$ is a harmonic function on the closed manifold $M$; hence, it is constant.

Also, note that the assumption on the total charge being zero is necessary. For any $\varPhi$ with  $\Delta \varPhi= \sum_{i=1} ^n k_i \delta_{p_i}$, we have
\begin{align*}
    \int_{M \setminus \{ p_1, \hdots, p_n\} } \Delta \varPhi vol_g &= \lim_{\epsilon \to 0} \int_{M \setminus \cup_{i=1}^n B_{ \epsilon}(p_i) } \Delta \varPhi vol_g = \lim_{\epsilon \to 0} \int_{M \setminus \cup_{i=1}^n B_{ \epsilon}(p_i) }  d^* d \varPhi vol_g \\
    &= \lim_{\epsilon \to 0} \int_{M \setminus \cup_{i=1}^n B_{ \epsilon}(p_i) }  d * d \varPhi = \lim_{\epsilon \to 0} \int_{ \cup_{i=1}^n \partial B_{ \epsilon}(p_i) } * d \varPhi,
\end{align*}
which is zero since $\Delta \varPhi = 0$ on $M \setminus \{ p_1,\hdots, p_n \}$. On the other hand $\varPhi = -\frac{k_i}{2r_i} + m_i + O(r_i)$ close to the point $p_i$, and therefore,
\begin{align*}
*d\varPhi = \frac{k_i}{2r_i^2} \iota_{\partial r_i} vol_g + O(1), 
\end{align*}
hence,
\begin{align*}
    \lim_{\epsilon \to 0} \int_{ \cup_{i=1}^n \partial B_{\epsilon}(p_i) } * d \varPhi &= 
    \lim_{\epsilon \to 0} \sum_{i=1}^n \int_{\partial B_{\epsilon}(p_i) } (\frac{k_i}{2r_i^2} \iota_{\partial r_i} vol_g + O(1))
    \\& =     \lim_{\epsilon \to 0} \sum_{i=1} ^n\int_{\partial B_{\epsilon}(p_i) } \frac{k_i}{2r_i^2}  vol_{\partial B_{\epsilon}(p_i)}
    = 2\pi \sum_{i=1} ^n k_i,
\end{align*}
and therefore, $k := \sum_{i=1} ^n k_i = 0$. This is in contrast  with the non-compact case, where some of the charges can run into infinity. 

Now suppose there exists a connection $A_D$ on a line bundle $\pi: L \to M \setminus \{ p_1, \hdots, p_n \}$ such that $F_{A_D} = *d\varPhi_D$. For any other connection 1-form $A'_D$ satisfying this equation, since ${H_{dR}^1(M \setminus \{ p_1, \hdots, p_n \}) = 0}$ and $d(A_D - A'_D) = 0$, we have $(A_D - A_D') = \pi^*(df)$ for a smooth function $f \in C^{\infty}(M \setminus \{ p_1, \hdots, p_n \})$. This implies that if such a connection exists, it is determined by the Higgs field up to addition by an exact form, which corresponds to tensoring the line bundle which the connection $A_D$ is defined on by a flat line bundle. 

Now we focus on the existence problem for such a connection. Let $F_D$ be a 2-form defined by $ F_D := * d \varPhi_D$. We should determine when we can realize this $2$-form as the curvature $2$-form of a connection $A_D$ on a principal $U(1)$-bundle on $M \setminus \{ p_1, \hdots, p_n \}$. This would be the case if the $2$-form $F_D$ has integer periods in $H^2(M,\mathbb{R})$. Recall the following lemma from the Chern-Weil theory.

\begin{lemma} \label{4}
For any integral closed $2$-form $F$ on a manifold $X$, there is a line bundle $L \to X$, unique up to isomorphism, with a connection $1$-form $A$ with curvature $2$-form $F$.
\end{lemma}

In our case, note that $\frac{1}{2 \pi} F_D$ is a closed $2$-form on $M \setminus \{ p_1, \hdots, p_n \}$, and therefore, we can consider the cohomology class $[\frac{1}{2 \pi}F_D] \in H^2(M \setminus \{ p_1, \hdots, p_n \}, \mathbb{R})$. We need to show that the cohomology class $[\frac{1}{2 \pi}F_D]$ vanishes in $H^2(M \setminus \{ p_1, \hdots, p_n \}, \mathbb{R}/\mathbb{Z})$. However, since $M$ is a rational homology 3-sphere,  $H_2(M, \mathbb{Z}) = 0$, and therefore, $H_2(M \setminus \{ p_1, \hdots, p_n \}, \mathbb{Z})$ is generated by 2-spheres $\partial B_{ \epsilon}(p_i)$. We have
\begin{align*}
\frac{1}{2 \pi} \int_{\partial B_{\epsilon}(p_i)}F_D = \frac{1}{2 \pi} \int_{\partial B_{\epsilon}(p_i)} * d\varPhi_D = k_i \equiv 0 \text{ modulo } \mathbb{Z},   
\end{align*} 
and therefore, by Lemma \ref{4}, there is a principal $U(1)$-bundle $L \to M \setminus \{ p_1, \hdots, p_n \}$ and a connection 1-form $A_D$ on $L$, where $F_D$ is the curvature of $A_D$ and $\varPhi_D$ is a section of the adjoint bundle. 
\end{proof}

\subsubsection{Mass of Monopoles}

On non-compact manifolds with ends of suitable types, for instance asymptotically conical ones, the mass of a monopole $(A, \varPhi)$ with a sufficiently fast decaying curvature can be defined at the ends of the manifold as the limit of the Higgs field $\lim_{r \to \infty} |\varPhi|$, where $r$ denotes the geodesic distance from a fixed point $x_0 \in M$. Similarly, we defined the mass of the monopole at a singular point to be the constant $m$ appearing in Formula \ref{sing}. 

Let the vector $\overrightarrow{m} = (m_1, \hdots, m_n) \in \mathbb{R}^n$ denote the masses of the monopole at the singular points $p_1, \hdots, p_n$ on a closed manifold $(M,g)$. For any Dirac monopole $(A_D,\varPhi_D)$ with vector mass $\overrightarrow{m}$, the monopole $(A_D, \varPhi_D + c)$ has the vector mass $\overrightarrow{m} + c = (m_1 + c, \hdots, m_n + c)$. In the asymptotically conical case, one can find a monopole with fixed charges and fixed masses at the ends of the manifold \cite{MR3509964}; however, recall that in the case of compact manifolds, the Dirac monopole, up to addition by a constant, is given by fixing the charges at the singular points, and we do not have much freedom in the choices of the masses of the monopole at the singular points. We can only add a constant to the mass vector, and therefore, we can only fix the position of the singular points, the charges at $(n-1)$ of them and the mass in one of the singular points. 

We define the average mass by $\overline{m} = \frac{m_1 + \hdots + m_n}{n}$. The relative mass $m_i'$ at each singular point $p_i$ is defined by $m_i = \overline{m} + m_i'$. The relative mass at each point $p_i$ is a function of the charges and the locations of the singular points, and as one moves these points around, these relative masses change. Here, for our gluing construction to work, we would make the average mass sufficiently large by adding a constant.

In the study of the moduli spaces of monopoles on $\mathbb{R}^3$, one can assume a normalizing condition, to have mass 1 at infinity, since there is a natural identification between the moduli spaces of monopoles with different masses. However, this is not true for the moduli spaces of monopoles on other 3-manifolds, and there is no natural identification between the moduli spaces of monopoles with different masses.

\subsubsection{Lifting the Dirac Monopole}
For carrying on our gluing construction, we should lift the Dirac monopole we constructed to an $SU(2)$-bundle, so we can glue the scaled $SU(2)$ BPS-monopoles to this lifted background Dirac monopole. Consider the rank 2 vector bundle $L \oplus L^{-1} \to M \setminus \{ p_1, \hdots, p_n\}$. Let $\nabla_{A_D}$ be the covariant derivative of $A_D$ on $L$. This induces a covariant derivative on $L \oplus L^{-1}$, namely $\nabla_{A_D} \oplus (-\nabla_{A_D})$.

Moreover, close to a singular point $p_i$, the adjoint bundle of the corresponding $SU(2)$-bundle can be decomposed as $\underline{\mathbb{R}} \oplus H_{p_i}^{k_i}$, with the induced Higgs field $\varPhi_D \oplus 0$. Close to a singular point $p_i$ with charge $k_i$, the rank 2 bundle is isomorphic to $H_{p_i}^{k_i} \oplus H_{p_i}^{-k_i}$. 

We can fix a basis for $\mathfrak{su}(2)$,
\begin{align*}
    \sigma_1 = 
    \begin{pmatrix}
    0 & -i \\
    -i & 0
    \end{pmatrix}, \quad
    \sigma_2 = 
    \begin{pmatrix}
    0 & -1\\
    1 & 0
    \end{pmatrix}, \quad
    \sigma_3 =  
    \begin{pmatrix}
    -i & 0\\
    0 & i
    \end{pmatrix}.
\end{align*}
Suppose $(A_D, \varPhi_D)$ is a $U(1)$-Dirac monopole, defined on a $U(1)$-bundle $P_{U(1)}$ with associated line bundle $L$. The induced $SU(2)$-monopole is $(A_D \sigma_3, \varPhi_D \sigma_3)$, which for simplicity and by an abuse of notation we still denote this monopole by $(A_D, \varPhi_D)$.

\section{Approximate Solutions}\label{appsolsection}

In this section, we construct an approximate $SU(2)$-monopole $(A_0, \varPhi_0)$ on an $SU(2)$-bundle $P \to M$, with prescribed Dirac singularities at some isolated points $p_i$ for $i \in \{1,  \hdots , n\}$. The pair $(A_0, \varPhi_0)$ would not be a genuine monopole, but an approximate one.

We take the lifted Dirac monopoles we constricted in Section \ref{ConsDiracMon} as the background monopole. The monopoles we are gluing to these Dirac monopoles are defined by scalings of the BPS-monopole on $\mathbb{R}^3$. 

\subsection[BPS-Monopoles on $\mathbb{R}^3$]{BPS-Monopoles on $\pmb{\mathbb{R}^3}$} \label{BPSmon}

In this section, we introduce the BPS-monopoles on $\mathbb{R}^3$ and recall a basic lemma about their asymptotic behaviour.

Let $P \to \mathbb{R}^3$ be a principal $SU(2)$-bundle. Let $A$ be a connection on $P$ and $\varPhi$ a section of the adjoint bundle. For pairs $(A,\varPhi)$ with suitable asymptotic decay, we can define the Yang-Mills-Higgs action functional,
\begin{align*}
    \mathcal{YMH}(A, \varPhi):= \frac{1}{2}\int_{\mathbb{R}^3} (|F_A|^2+|d_A\varPhi|^2)dx dy dz,
\end{align*}
where the norms are defined with respect to the adjoint-invariant inner product on the adjoint bundle. 

The critical points of Yang-Mills-Higgs action functional are the solutions to the following equations,
\begin{align*}
    d_A^* F_A &= - * [\varPhi, d_A\varPhi],\\
    d_A^* d_A \varPhi &= 0.
\end{align*}
Monopoles satisfy these equations, in fact, they are the minimizers of this action functional. Monopoles with finite Yang-Mills-Higgs energy satisfy the following decay conditions,
\begin{align}\label{decayC}
    |F_A| = |d_A \varPhi| = O(r^{-2}) , \quad \quad |\varPhi| \to m, \quad \text{as } \quad r \to \infty,
\end{align}
where the constant $m = \lim_{|x| \to \infty} |\varPhi(x)|$ is the mass of the monopole at infinity. 

By a scaling, one can change the mass of a given monopole on $\mathbb{R}^3$. If $(A,\varPhi)$ is a monopole on $\mathbb{R}^3$ with mass $m$ at infinity, then
\begin{align*}
    (A^{\lambda},\varPhi^{\lambda})(x) := ( A, \lambda \varPhi) (\lambda x),
\end{align*}
is also a monopole on $\mathbb{R}^3$ with mass $\lambda m$. This shows there are natural identifications between the moduli spaces of monopoles with different positive masses, and therefore, one can assume a normalizing condition, and let $m = 1$. 

In the case $G = SU(2)$, to any pair $(A,\varPhi)$ on an $SU(2)$-bundle on $\mathbb{R}^3$, which is not necessarily a monopole, with the decay conditions \ref{decayC} and mass $m$, one can assign an integer charge, defined by 
\begin{align*}
    k := \lim_{R \to \infty} \frac{1}{4 \pi m} \int_{S_R(0)} \langle \varPhi, F_A \rangle.
\end{align*}
Note that this notion of charge is different from but closely related to the notion of charge at a singularity. To differentiate these two, we might call this one the charge of the monopole and the one we defined earlier the charge of the monopole at a singularity. 

A very important problem in the theory of monopoles, also related to the gluing constructions, is to understand the moduli space of monopoles on $\mathbb{R}^3$ with charge $k$.
The seminal work of Taubes shows that for any charge $k$, there are irreducible $SU(2)$-monopoles with charge $k$ on $\mathbb{R}^3$. Atiyah and Hitchin showed that the moduli space of centered $SU(2)$-monopoles with charge $k$ is a $(4k-4)$-dimensional smooth hyperkähler manifold \cite{MR934202}. In all of these constructions, the BPS-monopole plays a crucial role. The BPS-monopole is an explicit charge +1 solution to the Bogomolny equation which was discovered by Prasad and Sommerfield \cite{prasad1975exact}.

\begin{lemma} \label{deflem}
There is a unique $SU(2)$-monopole on $\mathbb{R}^3$, centred at the origin with mass 1 at infinity and charge $+1$, called the BPS-monopole. Denoting this monopole by $(A_{BPS}, \varPhi_{BPS})$, we have
\begin{align*}
    A_{BPS} (x) = (\frac{1}{\sinh (r)} - \frac{1}{r})(n \times \sigma) \cdot dx, \quad \quad \varPhi_{BPS}(x) = (\frac{1}{\tanh (r)} - \frac{1}{r}) n \cdot \sigma,
\end{align*}
where $r = |x|$, $n = \frac{x}{r}$, $\sigma = \frac{1}{2} (\sigma_1, \sigma_2, \sigma_3) \in \mathbb{R}^3 \otimes \mathfrak{su}(2)$, and
\begin{align*}
    \sigma_1 = 
    \begin{pmatrix}
    0 & -i \\
    -i & 0
    \end{pmatrix}, \quad
    \sigma_2 =  
    \begin{pmatrix}
    0 & -1\\
    1 & 0
    \end{pmatrix}, \quad
    \sigma_3 =  
    \begin{pmatrix}
    -i & 0\\
    0 & i
    \end{pmatrix},
\end{align*}
where $\cdot$ and $\times$ are formal inner and cross product on vectors with three components. 

Although the BPS-monopole looks singular at $\{ 0 \}$, it extends smoothly to the origin. We have $\varPhi_{BPS}(0) = 0$; moreover, this is the only zero of the Higgs field. Furthermore, $|\varPhi(x)| < 1$ for all $x \in \mathbb{R}^3$ and $\lim_{|x| \to \infty}|\varPhi(x)| \to 1$.
\end{lemma}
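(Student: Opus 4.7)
The plan is to verify the explicit formulas directly and then invoke a classification result for uniqueness. Since the BPS-monopole is $SO(3)$-equivariant (invariant under the diagonal action of rotations on $\mathbb{R}^3$ and on $\mathfrak{su}(2)$ via the identification $\mathfrak{su}(2) \cong \mathbb{R}^3$), I would first impose the hedgehog ansatz $\varPhi(x) = H(r)\, n \cdot \sigma$ and $A(x) = K(r)\, (n \times \sigma)\cdot dx$ for scalar functions $H, K$ of $r = |x|$ alone. Substituting into $*F_A = d_A\varPhi$ reduces the Bogomolny equation to a coupled first-order ODE system in $(H, K)$, which the pair $H(r) = \coth r - 1/r$, $K(r) = 1/\sinh r - 1/r$ satisfies by a short direct computation that I would carry out but not spell out here.

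Next, I would verify smoothness at the origin using the Taylor expansions
\begin{align*}
\coth r - \frac{1}{r} = \frac{r}{3} - \frac{r^3}{45} + O(r^5), \qquad \frac{1}{\sinh r} - \frac{1}{r} = -\frac{r}{6} + \frac{7 r^3}{360} + O(r^5),
\end{align*}
which are odd smooth functions of $r$ vanishing at $0$. Writing $H(r) n_i = (H(r)/r)\, x_i$ and noting that $H(r)/r = 1/3 + O(r^2)$ is an even smooth function of $r$, the components $\varPhi_i(x)$ extend smoothly across the origin with $\varPhi_{BPS}(0) = 0$; the analogous argument applied to $K(r)/r$ gives smoothness of $A_{BPS}$.

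Third, the properties of $|\varPhi_{BPS}|$ follow from the $SO(3)$-equivariance: with the adjoint-invariant normalisation under which $|n\cdot\sigma|=1$, we have $|\varPhi_{BPS}(x)| = \coth r - 1/r$. A direct derivative check shows this is strictly increasing on $(0,\infty)$, vanishing only at $r=0$ and tending to $1$ as $r \to \infty$; this simultaneously yields $\varPhi_{BPS}(0)=0$ as the unique zero, the bound $|\varPhi_{BPS}(x)| < 1$, the mass $m = 1$ at infinity, and the decay $|\varPhi_{BPS}| - 1 = O(1/r)$. The charge is then computed from the defining integral by substituting the asymptotic $\varPhi_{BPS}/|\varPhi_{BPS}| \to n\cdot\sigma$ and recognising the integrand as the area form on the unit $2$-sphere swept out by the identity map $S^2 \to S^2$, which has degree $1$.

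The main obstacle is uniqueness: existence and the asserted properties reduce to ODE and elementary computations, but ruling out other centred charge-$+1$ mass-$1$ monopoles lies deeper. For this I would appeal to the Atiyah--Hitchin classification (already cited in the paper), which identifies the moduli space of centred charge-$k$ $SU(2)$-monopoles on $\mathbb{R}^3$ as a $(4k-4)$-dimensional hyperk\"ahler manifold. For $k = 1$ this moduli space is a single point, so any other centred charge-$+1$ mass-$1$ monopole must be gauge equivalent to $(A_{BPS}, \varPhi_{BPS})$.
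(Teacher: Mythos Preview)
The paper does not prove this lemma at all: it is stated as a classical fact about the Prasad--Sommerfield monopole, with the discovery attributed to \cite{prasad1975exact} and the moduli-space structure to Atiyah--Hitchin, and no \texttt{proof} environment follows the statement. Your proposal therefore goes well beyond what the paper does, supplying an actual verification where the paper simply quotes the result.

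Your argument is essentially correct. The hedgehog ansatz reduction to an ODE system is the standard route, and your smoothness and monotonicity checks for $\coth r - 1/r$ are fine. For uniqueness your appeal to the Atiyah--Hitchin classification is appropriate and is exactly the result the paper already cites a few paragraphs earlier; just be aware that ``unique'' here is implicitly ``unique up to gauge equivalence,'' which is what the $0$-dimensional centred moduli space gives you. One minor point: you assert but do not carry out the ODE verification, and you do not explicitly check that the hedgehog ansatz is forced (i.e.\ that equivariance under the diagonal $SO(3)$-action follows from centring plus charge $1$), but since uniqueness is ultimately handled by the moduli-space dimension count this gap is harmless.
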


The idea of constructing an irreducible singular $SU(2)$-monopole with singular points $p_1, \hdots, p_n$ with corresponding charges $k_1, \hdots, k_n$, where $k_i \in \mathbb{N}$, is to start with an Abelian Dirac monopole with singularities at the points $p_i$ with negative charges $k'_i$ such that $k'_i = - k_i$, and some other well-separated singular points $S_q = \{ q_1, \hdots, q_k \}$ all with signed charge $+1$, such that 
\begin{align*}
\text{Total charge } := k + \sum_{i=1}^n k_i' = 0.
\end{align*}

Since the total charge is zero, there exists a reducible Dirac monopole $(A_D, \varPhi_D)$ on an $SU(2)$-bundle with these prescribed singularities. We will glue the scaled BPS-monopoles with charge $+1$ to this background Dirac monopole at the points $q_j$ with charge $+1$ at the singularities. The scaling is necessary since close to the singular points $q_j$, $|\varPhi_D(x)| \to \infty$ as $dist (x,q_j) \to 0$, and therefore, near the singular points $|\varPhi_D|$ is quite large, which implies the Higgs field of the $SU(2)$-monopole which we are gluing to the background monopole close to the point $q_j$ should be large too. We change the mass of the BPS-monopole by a suitable scaling.

The Higgs field of the BPS-monopole $(A_{BPS},\varPhi_{BPS})$ is non-zero on $\mathbb{R}^3 \setminus \{ 0 \}$, and therefore, it induces a decomposition of the adjoint bundle $\underline{\mathbb{R}} \oplus L$, where $\underline{\mathbb{R}}$ is the sub-bundle generated by the image of the Higgs field $\varPhi_{BPS}$ and $L$ is the orthogonal sub-bundle in the adjoint bundle. Corresponding to this decomposition any section of the adjoint-bundle or any adjoint-bundle-valued tensor $f$ supported away from $0 \in \mathbb{R}^3$ can be written as $f = f^L + f^T$, where $f^L$ and $f^T$ are called the longitude and the transverse components, respectively. The following key lemma follows from the work of Jaffe and Taubes \cite[Section IV.1]{MR614447}, also Lemma 2.13 in \cite{MR3801425}, which is fundamental in the gluing construction.

\begin{lemma}\label{preerror}
Let $(a,\varphi)$ be a pair of a connection denoted by $a$ on the $SU(2)$-bundle $P \to \mathbb{R}^3 \setminus \{ 0 \}$ and a section $\varphi$ of the adjoint bundle with finite Yang-Mills-Higgs energy, with charge $k$ and mass $1$ at infinity, which does not necessarily satisfy the Bogomolny equation. Then we have 
\begin{align*}
    |\varPhi_D^L - \varphi^L| =   O(r^{\nu}), \quad 
    |\varPhi_D^T - \varphi^T| =   O(e^{-r}), \quad  |A_D - a| = 
    O(e^{-r}),
\end{align*}
for some $\nu<0$. 

Moreover, for the BPS-monopole with charge $+1$ and mass $1$ centered at the origin, we have
\begin{align*}
    |\varPhi_D ^L- \varPhi_{BPS}^L| =    O(e^{-r}), \quad 
    |\varPhi_D^T - \varPhi_{BPS}^T| =   O(e^{-r}), \quad  |A_D - A_{BPS}| = 
    O(e^{-r}),
\end{align*}
and therefore,
\begin{align*}
    |\varPhi_D - \varPhi_{BPS}| =    O(e^{-r}), \quad  |A_D - A_{BPS}| = 
    O(e^{-r}).
\end{align*}
\end{lemma}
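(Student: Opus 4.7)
The plan is to follow the asymptotic analysis of Jaffe--Taubes \cite{MR614447} for finite-energy configurations on $\mathbb{R}^3$, organized around the principle that the finite-energy constraint forces the pair to be asymptotically Abelian in its transverse part, with exponential decay due to mass generation, while the longitudinal part matches the Dirac model up to slow polynomial corrections.

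First I would set up the decomposition. Using the mass-$1$ condition on $\varphi$ together with a standard subharmonicity argument for $|\varphi|^2$ (using $\Delta|\varphi|^2 = 2|d_A\varphi|^2 - 2\langle \varphi, d_A^*d_A \varphi\rangle$ and the finite YMH energy), establish $|\varphi|\to 1$ at infinity. Away from a large compact set, $\varphi$ has no zeros, so the unit section $e := \varphi/|\varphi|$ splits the adjoint bundle into $\underline{\mathbb{R}}\langle e\rangle$ and its orthogonal complement $L$, producing the decomposition $\varphi = \varphi^L+\varphi^T$ and $a = a^L+a^T$. Since $\varPhi_D$ is Abelian and asymptotically aligned with $e$, we have $\varPhi_D^T=0$ and $\varPhi_D^L=\varPhi_D$.

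The core step is the exponential decay of the transverse pieces, driven by an effective mass from the commutator $[\varphi,[\varphi,\,\cdot\,]]\approx |\varphi|^2(\cdot)$ on sections of $L$. Projecting the Euler--Lagrange equation $d_A^*d_A\varphi = 0$ onto the transverse factor and applying a Kato/Weitzenböck inequality yields
\begin{align*}
\Delta |\varphi^T|^2 \geq 2|\varphi^L|^2 |\varphi^T|^2 - (\text{lower order}),
\end{align*}
so that for $r$ large enough, $\Delta|\varphi^T|^2 \geq c|\varphi^T|^2$ with $c$ arbitrarily close to $2$. The fundamental solution $r^{-1}e^{-\sqrt{c}r}$ of $\Delta - c$ on $\mathbb{R}^3$ serves as a supersolution, and comparison with $|\varphi^T|\to 0$ via the maximum principle delivers $|\varphi^T|=O(e^{-r})$. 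An identical argument applied to the YMH equation $d_A^*F_A = -*[\varphi,d_A\varphi]$, whose transverse projection also carries the $|\varphi^L|^2 a^T$ mass term, yields $|a^T|=O(e^{-r})$.

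For the longitudinal parts, $\varphi^L-\varPhi_D^L$ satisfies the Abelian Laplace equation up to transverse errors already known to decay exponentially. Standard harmonic analysis on $\mathbb{R}^3$ for functions tending to $0$ at infinity gives a multipole expansion, producing $|\varphi^L-\varPhi_D^L|=O(r^{\nu})$ for some $\nu<0$ governed by the leading nonvanishing moment. A Coulomb-gauge reduction of the Abelian subbundle then shows that the longitudinal connection matches $A_D$ up to errors sourced by the exponentially small transverse pieces, giving $|a-A_D| = O(e^{-r})$. For the BPS case, Lemma \ref{deflem} together with $\coth(r)-1/r = 1 - 1/r + O(e^{-2r})$ and $1/\sinh(r) - 1/r = -1/r + O(e^{-r})$ shows by direct computation that the longitudinal Higgs difference $\varPhi_{BPS}^L-\varPhi_D^L$ is itself $O(e^{-r})$, collapsing the polynomial tail and upgrading all three bounds to exponential. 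The main obstacle I expect is the mass-generation step: one has to verify carefully that the commutator terms in the Weitzenböck calculation genuinely dominate the error terms involving $d_A\varphi^T$ and the failure of $\varphi/|\varphi|$ to be globally defined, which in turn depends on choosing a gauge that behaves well both near the Dirac singularity and at infinity.
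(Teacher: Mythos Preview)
The paper does not prove this lemma; it only cites Jaffe--Taubes \cite[Section IV.1]{MR614447} and Foscolo \cite[Lemma 2.13]{MR3801425}. Your outline follows the Jaffe--Taubes strategy (mass generation plus maximum principle for the transverse part, harmonic multipole analysis for the longitudinal part, direct computation from the explicit hyperbolic functions for the BPS case), which is precisely what those references do, so in spirit you are aligned with the paper's intended argument.

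There is, however, a genuine confusion in your setup that makes the transverse step vacuous as written. You split the adjoint bundle using $e := \varphi/|\varphi|$; with that choice $\varphi$ is tautologically longitudinal, so $\varphi^T \equiv 0$ and your inequality $\Delta|\varphi^T|^2 \geq 2|\varphi^L|^2|\varphi^T|^2 - (\text{lower order})$ carries no content. In Jaffe--Taubes the exponential decay is established not for $\varphi^T$ but for the transverse components of $F_A$ and $d_A\varphi$ (equivalently for $|d_A e|$); it is there that the commutator with $\varphi$ generates the mass term. Only after that decay is in hand can one pass to a gauge in which $e$ is asymptotically constant and aligned with the fixed $\sigma_3$-direction of the lifted Dirac pair, at which point the identification $\varPhi_D^T = 0$ becomes a consequence rather than an assumption. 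Your claim ``$\varPhi_D$ is asymptotically aligned with $e$, so $\varPhi_D^T = 0$'' is circular as stated: the asymptotic alignment is exactly what the exponential estimate is meant to establish.

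A second point: you correctly invoke the second-order Yang--Mills--Higgs equations $d_A^*d_A\varphi = 0$ and $d_A^*F_A = -*[\varphi, d_A\varphi]$, and indeed these are required for the differential inequalities. Finite energy alone, which is all the lemma literally assumes, does not yield pointwise decay (one can add a compactly supported perturbation). Your reading of the hypothesis as ``critical point of YMH, not necessarily Bogomolny'' is the correct one and matches what the cited references actually prove.
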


For any scaling factor $\lambda > 1$, we have
\begin{align} \label{dec}
    |\varPhi^{\lambda}_{BPS} - \varPhi_D^{\lambda}| =   O(\lambda e^{-\lambda r}), \quad \quad |A^{\lambda}_{BPS} - A_D^{\lambda}| = 
    O(\lambda e^{-\lambda r}).
\end{align}
The result of scaling Dirac monopoles on $\mathbb{R}^3$ is quite simple. The connection of the Dirac monopole is radially invariant, $A_D^{\lambda}=A_D$. Furthermore, for the Higgs field of the Dirac monopole we have
\begin{align*}
    \varPhi_D^{\lambda}(x) =
    \lambda \varPhi_D(\lambda x) = \lambda ( 1 - \frac{2k}{\lambda r}) = 
    \lambda - \frac{2k}{r} = \varPhi_D(x) + (\lambda - 1).
\end{align*}
Therefore, this scaling just adds the constant $(\lambda - 1)$ to the Higgs field of the Dirac monopole,
\begin{align*}
(A_D^{\lambda},\varPhi_D^{\lambda})(x) = (A_D, \varPhi_D + (\lambda - 1))(x).    
\end{align*}

\begin{remark}
As we increase the average mass of the $SU(2)$-monopole we get closer to the boundary of the moduli space of monopoles. The Dirac monopoles can be understood as a part of the boundary --- or corner --- of the moduli space of monopoles, so the strategy, similar to the other gluing constructions, is to start from a boundary point of the moduli space and then move --- deform --- towards inside. 
\end{remark}

By adding a positive large constant, if necessary, we can assume that the local description of the Higgs field of the monopole, close to each singular point $p_i$ or $q_j$, has the form
\begin{align*}
    \varPhi_D = -\frac{k_i}{2r_i} + m_i + O(r_i),
\end{align*}
with $m_i > 0$. Close to the singular points with positive charges, the Higgs field goes to negative infinity; however, for a fixed positive $\epsilon_0$, we can increase the mass of the Higgs field such that on 
$\overline{M \setminus \cup_j B_{\epsilon_0}(q_j)}$ we have $\varPhi_D \geq \overline{m}/2$, simply because $\overline{M \setminus (\cup_i B_{\epsilon_0}(p_i) \cup_j B_{\epsilon_0}(q_j)})$ is compact and the Higgs field goes to plus infinitiy at the points $p_i$. Similar to the non-compact case, as observed by Oliveira \cite{MR3509964}, a more relevant inequality would be of the type where $\epsilon_0$ depends on the average mass, as in the following lemma.

\begin{lemma} \label{mass}
By increasing the mass of $(A_D, \varPhi_D)$ we would have $\varPhi_D \geq \overline{m}/2$ on $K(\epsilon_0) := \overline{M \setminus \cup_j B_{\epsilon_0}(q_j)}$ where  $\epsilon_0 = \sqrt{2/\overline{m}}$.
\end{lemma}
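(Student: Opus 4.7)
The plan is to apply the minimum principle to $\varPhi_D$ on the compact set $K(\epsilon_0)$. First recall from \ref{Delta} that $\Delta\varPhi_D$ is supported on $S_p\cup S_q$, so $\varPhi_D$ is harmonic on $M\setminus(S_p\cup S_q)$. The set $K(\epsilon_0)$ contains the singular points $p_i\in S_p$ but excludes open neighbourhoods of each $q_j\in S_q$. Since each $k_i$ is negative, the local expansion \ref{Dirac} gives $\varPhi_D=-k_i/(2r_i)+m_i+O(r_i)\to+\infty$ as $r_i\to 0$, so $\varPhi_D$ is harmonic on $K(\epsilon_0)\setminus S_p$ and blows up to $+\infty$ at the points of $S_p$. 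By the strong minimum principle for harmonic functions, $\inf_{K(\epsilon_0)}\varPhi_D$ is attained on the boundary $\partial K(\epsilon_0)=\bigcup_{j=1}^{k}\partial B_{\epsilon_0}(q_j)$.

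The next step is to evaluate $\varPhi_D$ on each boundary sphere $\partial B_{\epsilon_0}(q_j)$. The local form \ref{Dirac} at $q_j$ (which has charge $+1$), together with the substitutions $\epsilon_0=\sqrt{2/\overline{m}}$ and $m_j=\overline{m}+m_j'$, yields
\begin{align*}
    \varPhi_D\big|_{\partial B_{\epsilon_0}(q_j)} = \overline{m}+m_j'-\sqrt{\overline{m}/8}+O\!\left(\sqrt{2/\overline{m}}\right).
\end{align*}
The leading term $\overline{m}$ beats $\overline{m}/2$ by $\overline{m}/2$, whereas the correction $\sqrt{\overline{m}/8}$ grows only like $\overline{m}^{1/2}$ and the relative mass $m_j'$ together with the implicit constant in the error term are fixed. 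Hence for $\overline{m}$ sufficiently large the right-hand side is at least $\overline{m}/2$, and the minimum principle propagates this inequality to all of $K(\epsilon_0)$.

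To conclude, increasing the mass of $(A_D,\varPhi_D)$ — that is, replacing $\varPhi_D$ by $\varPhi_D+c$ for a positive constant $c$ — shifts every $m_i$ by $c$ and hence shifts $\overline{m}$ by $c$, while leaving every relative mass $m_j'$ unchanged. Therefore $\overline{m}$ can be taken arbitrarily large by choosing $c$ large enough, and the desired inequality follows.

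The only mild subtlety is to verify uniformity of the $O(r_j)$ remainder in \ref{Dirac} as $\overline{m}\to\infty$ (equivalently, as $\epsilon_0\to 0$). This is not a genuine obstacle: the expansion \ref{Dirac} is derived from the harmonic expansion of $\varPhi_D$ in geodesic normal coordinates at $q_j$ in Section \ref{ConsDiracMon}, and its implicit constant depends only on the configuration $\{p_i\},\{q_j\}$, the charges, and the metric $g$ — data that are unaffected by the additive shift $\varPhi_D\mapsto\varPhi_D+c$.
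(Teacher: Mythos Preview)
Your proof is correct and is arguably cleaner than the paper's, but the route is genuinely different. The paper does not invoke the minimum principle at all: it first fixes a small $\epsilon>0$ \emph{independent of} $\overline{m}$ on which the local expansions hold, uses a compactness argument (together with $\varPhi_D\to+\infty$ at the $p_i$) to arrange $\varPhi_D\geq\overline{m}/2$ on $\overline{M\setminus\cup_j B_\epsilon(q_j)}$ after adding a suitable constant, and then separately checks the remaining annulus $\epsilon_0\leq r_j\leq\epsilon$ by hand using the inequality $-\tfrac{1}{2r_j}+m_j-1\geq\overline{m}/2$, which it reduces to $r_j\geq 1/(2\overline{m}-2)$ and verifies against $\epsilon_0=\sqrt{2/\overline{m}}$. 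Your approach collapses these two steps into one: harmonicity plus the blow-up at $S_p$ forces the minimum onto $\cup_j\partial B_{\epsilon_0}(q_j)$, and then a single evaluation of the expansion there suffices. What you gain is conceptual economy and an argument that makes the role of harmonicity explicit; what the paper's more pedestrian two-step argument gains is that it never needs to invoke the maximum principle on the punctured domain $K(\epsilon_0)\setminus S_p$, and it makes slightly more transparent why the particular choice $\epsilon_0=\sqrt{2/\overline{m}}$ works (it is compared directly with the threshold $1/(2\overline{m}-2)$).
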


\begin{proof} 
Let $\epsilon > 0$ be a sufficiently small number such that on $\epsilon$-neighbourhood of singular points $p_i$ or $q_j$,
$\varPhi_D = -k_i/2r_i + m_i + O(r_i)$ for a positive $m_i$. By making $\epsilon$ smaller, if necessary, we can assume 
$\varPhi_D + 1 > -k_i/2r_i + m_i$. Furthermore, by adding a constant to the Higgs field, we would have $\varPhi_D \geq \overline{m}/2$ on $\overline{M \setminus \cup_j B_{\epsilon}(q_j)}$. 

Now we need to show the same holds for $\epsilon_0 < r_i < \epsilon$. It is enough to show that on this region we have $-1/2r_i + m_i - 1 \geq \overline{m}/2$ or, equivalently, $\overline{m}/2 + m_i' \geq 1/2r_i + 1$. By adding a constant to the Higgs field, we assume $\overline{m} \geq 2$ and $\overline{m} \geq 2m_i'$ for all $i$, and therefore, it is enough to have 
\begin{align}
    r_i \geq \frac{1}{ 2\overline{m} - 2}.
\end{align}
This holds if we let $\epsilon_0 = \sqrt{\frac{2}{\overline{m}}}$, which is larger than $ \frac{1}{ 2\overline{m} - 2}$ when $\overline{m}$ is sufficiently large.
\end{proof}

The singular points $p_i$ can be arbitrarily close to each other, but for fixed masses, the construction breaks down as one moves the singular points $q_j$ very close to each other or to the points $p_i$. However, if we allow the average mass to increase, these points can be arbitrarily close. For the gluing construction to work we increase the average mass such that
\begin{align*}
 \min_{i,j} \{dist_{i\neq j}(q_i, q_j),dist_{i,j}(q_i, p_j)\} \geq \epsilon_0 = \sqrt{\frac{2}{\overline{m}}}.   
\end{align*}

\subsection{The Construction of Approximate Monopole}
The approximate monopole we are constructing is equal to the Dirac monopole on $M \setminus \cup_j B_{2 \epsilon_j}(q_j)$ with a large mass, and equal to the pull-back of an appropriately scaled BPS-monopole on each $B_{ \epsilon_j}(q_j)$, where the scaling factor $\lambda_j = m_j$ is the mass of the Dirac monopole at $q_j$. This can be done after identifying a small neighbourhood of $q_j$ with a neighbourhood of the origin in $\mathbb{R}^3$ and the bundles above them. 

Fix a diffeomorphism between $2\epsilon_j$-neighbourhood of the singular point $q_j$ and a neighbourhood of origin in $\mathbb{R}^3$ using the geodesic normal coordinates,
\begin{align*}
    \eta_j: B_{2 \epsilon_j}(q_j) \subset M \to \mathbb{R}^3.
\end{align*}
Moreover, we can fix an identification between the associated vector bundles above these neighbourhoods covering $\eta_j$, which by an abuse of notation, we also denote this bundle map, called the framing, by $\eta_j$,
\begin{align*}
    \eta_j:   (\underline{\mathbb{R}} \oplus H_{q_j})_{B_{2 \epsilon_j}(q_j) \setminus \{q_j \}}
    \rightarrow
    \underline{\mathfrak{su}(2)}_{\mathbb{R}^3 \setminus \{ 0 \}}.
\end{align*}
Using these identifications we can pull back the scaled BPS-monopoles to the $2\epsilon_j$-neighbourhood of $q_j$. Although the Dirac monopole is not defined at the point $q_j$, the pull-back of the BPS-monopole and the bundle it is defined on extend smoothly over $q_j$. Different identifications can result in different pairs on $M \setminus S_p$, even up to gauge. Up to isomorphism there is a $U(1)$-freedom in the choice of the framing for each $q_j$, and assuming we have $k$ such points we would get $k$ parameters for the choices of framings --- up to gauge $(k-1)$ parameters. 

Suppose for each $q_j$, a framing $\eta_j$ is fixed. Now we can pull back the bundles and the scaled BPS-monopoles to $B_{2 \epsilon_j}(q_j)$. We denote these local pairs by
\begin{align*}
(\eta_j^*(A^{\lambda_j}_{BPS}), \eta_j^*(\varPhi^{\lambda_j}_{BPS})).
\end{align*}
Using cut-off functions we can glue these local monopoles to the background monopole. Suppose $\xi_j$ is a cut-off function supported around $q_j$ such that
\begin{align*}
    \xi_j = 
    \begin{cases}
        1 \quad \text{ on } \quad B_{ \epsilon_j}(q_j)\\
        0 \quad \text{ on } \quad M \setminus B_{2 \epsilon_j}(q_j),
    \end{cases}
\end{align*}
\noindent
and
\begin{align*}
    \xi_0 = 
    \begin{cases}
        1 \quad \text{ on } \quad M \setminus \cup_j B_{2 \epsilon_j}(q_j)\\
        0 \quad \text{ on } \quad \cup_j B_{ \epsilon_j}(q_j),
    \end{cases}
\end{align*}
where on $\epsilon_j \leq r_j \leq 2\epsilon_j$ we have $\xi_0 + \xi_j = 1$ for each $j \in \{1, \hdots, k\}$, and 
\begin{align*}
|\nabla \xi_j | \leq 2 \epsilon_j^{-1} \quad \text{ and } \quad |\nabla \xi_0 | \leq 2 \max_{j \in \{ 1, \hdots,k \}} \{ \epsilon_j^{-1} \}.  
\end{align*}
The approximate monopole has the form
\begin{align*}
    (A_0, \varPhi_0) = 
    \xi_0 (A_D, \varPhi_D) +
    \sum_{j=1} ^k \xi_j (\eta_j^*(A^{\lambda_j}_{BPS}), \eta_j^*(\varPhi^{\lambda_j}_{BPS})).
\end{align*}
Note that the assumption $\xi_0 + \xi_j = 1$ assures $A_0$ is a connection.

\subsection{Pointwise Approximation of the Error}

This pair $(A_0, \varPhi_0)$ is an approximate solution and does not necessarily satisfy the Bogomolny equation. We define the error term by
\begin{align*}
    e_0 = *F_{A_0} - d_{A_0} \varPhi_0.
\end{align*}
In this section, we estimate the error term $e_0$ in different regions on $M$.

$e_0$ is zero on $M \setminus (\cup_j B_{2 \epsilon_j}(q_j) \cup S_p)$, since on this region the approximate monopole is equal to the Dirac monopole. The error term is non-zero on $\cup_j B_{ 2\epsilon_j}(q_j)$. It is correct that on each $B_{ \epsilon_j}(q_j)$ the approximate monopole is equal to the pull-back of the scaled BPS-monopole, but we only know that the scaled BPS-monopole is a monopole with respect to the Euclidean metric and not with respect to the arbitrary Riemannian metric $g$ on $M$. The error term $e_0$ is also non-zero on the necks $\cup_j (B_{2 \epsilon_j}(q_j) \setminus B_{ \epsilon_j}(q_j))$, both because $g$ is not necessarily flat, and also because of the use of the cut-off functions. 

\begin{lemma} \label{errores}
Let $\epsilon_j = \lambda_j^{-\frac{1}{2}}$. Then when the average mass is sufficiently large, we have the following pointwise error estimate,
\begin{align*}
    (e_0)_{|_{B_{2\epsilon_j}(q_j)}}  = O(1).
\end{align*}
\end{lemma}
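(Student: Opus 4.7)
The plan is to split $B_{2\epsilon_j}(q_j)$ into the core ball $B_{\epsilon_j}(q_j)$, where $(A_0,\varPhi_0)$ coincides with the pulled-back scaled BPS-monopole $\eta_j^*(A_{BPS}^{\lambda_j}, \varPhi_{BPS}^{\lambda_j})$, and the annular neck $B_{2\epsilon_j}(q_j)\setminus B_{\epsilon_j}(q_j)$, where the cut-offs $\xi_0,\xi_j$ interpolate between the BPS model and the Dirac background. I will estimate $e_0$ on each region separately. On the core, $(A_0,\varPhi_0)$ satisfies the Bogomolny equation with respect to the \emph{Euclidean} metric $g_0$ pulled back via $\eta_j$, so $*_0 F_{A_0} = d_{A_0}\varPhi_0$; hence the error collapses to the Hodge-star discrepancy $e_0 = (*_g - *_0)F_{A_0}$, and everything reduces to bounding this against the curvature of the scaled BPS.

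In geodesic normal coordinates at $q_j$ one has $g_{ab} = \delta_{ab} + O(r^2)$, so $|*_g - *_0| = O(r^2)$ on $2$-forms. The scaling identity $F_{A_{BPS}^\lambda}(x) = \lambda^2 F_{A_{BPS}}(\lambda x)$, together with smoothness of $F_{A_{BPS}}$ at the origin and its $O(|y|^{-2})$ decay at infinity, yields $|F_{A_0}(x)| \le C\min(\lambda_j^2, |x|^{-2})$. Multiplying, $|e_0(x)| \le C\,r^2\min(\lambda_j^2, r^{-2}) \le C$ uniformly on the core, which is already the desired $O(1)$ bound.

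On the neck I write $\alpha_j := \eta_j^*A_{BPS}^{\lambda_j} - A_D$ and $\psi_j := \eta_j^*\varPhi_{BPS}^{\lambda_j} - \varPhi_D$, so that $A_0 = A_D + \xi_j\alpha_j$ and $\varPhi_0 = \varPhi_D + \xi_j\psi_j$. Expanding $F_{A_0}$ and $d_{A_0}\varPhi_0$ and subtracting the Bogomolny identities satisfied by $(A_D,\varPhi_D)$ (with $*_g$) and by $\eta_j^*(A_{BPS}^{\lambda_j},\varPhi_{BPS}^{\lambda_j})$ (with $*_0$), the error term $e_0$ splits schematically into the metric-difference contribution $(*_g-*_0)F_{A_0}$, handled as above, plus terms of the form $d\xi_j\wedge\alpha_j$, $d\xi_j\cdot\psi_j$, and $\xi_j(1-\xi_j)[\alpha_j,\alpha_j]$. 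Using $|d\xi_j| = O(\epsilon_j^{-1}) = O(\sqrt{\lambda_j})$ together with the exponential decay from \ref{dec}, namely $|\alpha_j|,|\psi_j| = O(\lambda_j e^{-\lambda_j r})$, all evaluated at $r\sim\epsilon_j = \lambda_j^{-1/2}$, these contributions are $O(\lambda_j^{3/2}e^{-\sqrt{\lambda_j}})$, which decays faster than any polynomial in $\lambda_j^{-1}$ as the average mass grows, hence is comfortably $O(1)$.

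The main obstacle, and the reason behind the specific choice $\epsilon_j = \lambda_j^{-1/2}$, is the tension between two competing mechanisms: shrinking $\epsilon_j$ reduces the metric-error factor $r^2|F_{A_0}|\le C\epsilon_j^2\lambda_j^2$ on the core but simultaneously shrinks the annulus on which the exponential decay of $\eta_j^*A_{BPS}^{\lambda_j}-A_D$ must compete with $|d\xi_j|$ on the neck; enlarging $\epsilon_j$ does the reverse. The geometric mean $\epsilon_j = \lambda_j^{-1/2}$ is calibrated so that $\epsilon_j^2\lambda_j^2 = O(1)$ on the core while $e^{-\lambda_j\epsilon_j}=e^{-\sqrt{\lambda_j}}$ still dominates every polynomial in $\lambda_j$ on the neck, producing the asserted pointwise bound $(e_0)|_{B_{2\epsilon_j}(q_j)} = O(1)$.
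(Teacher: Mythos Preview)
Your treatment of the core ball $B_{\epsilon_j}(q_j)$ matches the paper's: both reduce the error to $(*_g-*_0)F_{A_0}$, use $g_{ab}=\delta_{ab}+O(r^2)$ in normal coordinates, and bound $r^2|F_{A_0}|\le C$. That part is fine.

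The neck argument has a genuine gap. You invoke \eqref{dec} to assert $|\alpha_j|,|\psi_j|=O(\lambda_j e^{-\lambda_j r})$, but \eqref{dec} compares the scaled BPS monopole to the \emph{standard Euclidean} Dirac monopole on $\mathbb{R}^3$. Your $\alpha_j=\eta_j^*A_{BPS}^{\lambda_j}-A_D$ and $\psi_j=\eta_j^*\varPhi_{BPS}^{\lambda_j}-\varPhi_D$ involve the Dirac monopole on $(M,g)$, whose Higgs field near $q_j$ is $-\tfrac{1}{2r_j}+m_j+O(r_j)$ (equation \eqref{Dirac}) and whose connection differs from the model Hopf connection by $O(r_j)$ (equation \eqref{a}). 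Hence the correct bound is
\[
|\alpha_j|,\ |\psi_j|\ =\ O(\lambda_j e^{-\lambda_j r_j})\ +\ O(r_j),
\]
and on the neck $r_j\sim\epsilon_j=\lambda_j^{-1/2}$ the $O(r_j)$ term dominates. The paper makes exactly this point (``the case over arbitrary Riemannian 3-manifolds is different, since the Green's function \ldots'') and uses it to show, for example, $|d\xi_j\wedge\alpha_j|\le C\epsilon_j^{-1}\cdot r_j=O(1)$. Your claim that the neck contribution is $O(\lambda_j^{3/2}e^{-\sqrt{\lambda_j}})$ is therefore false; the neck terms are genuinely only $O(1)$, and this is precisely why the lemma states $O(1)$ rather than something sharper. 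Your schematic expansion also omits the first-order terms $\xi_0\, d_{\eta_j^*A_{BPS}^{\lambda_j}}\alpha_j$ and $\xi_0\, d_{\eta_j^*A_{BPS}^{\lambda_j}}\psi_j$, which with the correct $O(r_j)$ input are again $O(1)$.

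A minor slip: in your final paragraph you write $\epsilon_j^2\lambda_j^2=O(1)$, but with $\epsilon_j=\lambda_j^{-1/2}$ this equals $\lambda_j$. Your earlier, sharper bound $r^2\min(\lambda_j^2,r^{-2})\le C$ is what actually gives the $O(1)$ estimate on the core; the crude product $\epsilon_j^2\lambda_j^2$ does not.
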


\begin{proof}
We denote the error coming from the manifold not being flat around $q_j$ by
\begin{align*}
    e^{BPS}_j:= \left( *F_{\eta_j^*(A^{\lambda_j}_{BPS})} - d_{\eta_j^*(A^{\lambda_j}_{BPS})}\eta_j^*(\varPhi_{BPS}^{\lambda_j}) \right)_{|_{B_{2\epsilon_j} (q_j)}}.
\end{align*}
The size of the error depends on how much the metric $g$ is different from the Euclidean metric. For the comparison between the metric $g$ on $B_{2\epsilon_j} (q_j)$ and the Euclidean metric, we should first pull back the Euclidean metric $g_0$ to $B_{2\epsilon_j} (q_j)$ using the same map that we used to pull back the scaled BPS-monopole to $B_{2\epsilon_j} (q_j)$. We denote the Euclidean metric pulled back to $B_{2\epsilon_j} (q_j)$ and its Hodge star operator by $g_0$ and $*_0$, respectively.
\begin{align*}
    (e^{BPS}_j)_{|_{B_{2\epsilon_j} (q_j)}}  &= *F_{\eta_j^*(A^{\lambda_j}_{BPS})} - d_{\eta_j^*(A^{\lambda_j}_{BPS})}\eta_j^*(\varPhi_{BPS}^{\lambda_j}) 
    = (*_0F_{\eta_j^*(A^{\lambda_j}_{BPS})} - d_{\eta_j^*(A^{\lambda_j}_{BPS})}\eta_j^*(\varPhi_{BPS}^{\lambda_j}))
    \\&+ (*F_{\eta_j^*(A^{\lambda_j}_{BPS})} - *_0F_{\eta_j^*(A^{\lambda_j}_{BPS})}) 
    = *F_{\eta_j^*(A^{\lambda_j}_{BPS})} - *_0F_{\eta_j^*(A^{\lambda_j}_{BPS})} \\&= 
    (*-*_0)F_{\eta_j^*(A^{\lambda_j}_{BPS})}.
\end{align*}
Note that
\begin{align*}
*_0F_{\eta_j^*(A^{\lambda_j}_{BPS})} - d_{\eta_j^*(A^{\lambda_j}_{BPS})}\eta_j^*(\varPhi_{BPS}^{\lambda_j}) = 0,    
\end{align*}
since $(A^{\lambda_j}_{BPS},\varPhi_{BPS}^{\lambda_j})$ is a monopole with respect to the Euclidean metric $g_0$.

For each $j$, for sufficiently small $\epsilon_j$, fix a local geodesic normal coordinate system on $B_{2\epsilon_j} (q_j)$, denoted by $(x_1,x_2,x_3)$. We can think about the components of the Riemannian metric $g_{k,l}$ in this coordinate system as real-valued functions defined on this neighbourhood of $q_j$. We can write down the Taylor series expansion of these functions around the origin, which here corresponds to $q_j$,
\begin{align*} 
    g_{k,l}(x) = \delta_k^l + \frac{1}{3}\sum_{m,n}R_{klmn} x_m x_n + O(|x|^3), \\
    g^{k,l}(x) = \delta_k^l - \frac{1}{3}\sum_{m,n}R_{klmn} x_m x_n + O(|x|^3),
\end{align*}
where $R_{klmn}$ is the $(4,0)$-Riemann curvature tensor. Furthermore,
\begin{align*}
    vol_g(x) = \left( 1 - \frac{1}{6}\sum_{m,n}R_{m,n}x_m x_n + O(|x|^3) \right) dx_1 dx_2 dx_3,
\end{align*}
where $R_{m,n}$ denotes the Ricci curvature tensor. 

For any $2$-form $\beta \in \Omega^2(M;V)$ with values in any vector bundle $V$ equipped with a fiber-wise inner product  $\langle -,-\rangle$, which in the given coordinates system $\beta = \sum_{\text{cyclic }i,j,k}\beta_i dx_j dx_k$, we have
\begin{align*}
    |\beta|_g^2 = \sum_{i,j} \langle\beta_k, \beta_l\rangle g^{k,l}
    = |\beta|^2_{g_0} - \frac{1}{3}\sum_{k,l,m,n}R_{klmn} \langle\beta_k, \beta_l\rangle x_m x_n + O(|x|^3).
\end{align*}
On the other hand 
\begin{align*}
    \langle\beta \wedge * \beta\rangle &= |\beta|_g^2 vol_g,
\end{align*}
here $\langle - \wedge -\rangle$ is wedge product on the real differential form parts and inner product on $V$-valued parts.
\begin{align*}
    \langle \beta \wedge * \beta \rangle =
    \left( 
    |\beta|_{g_0}^2
    - \sum_{m,n}
    (
    \frac{1}{3}\sum_{k,l}R_{klmn} \langle\beta_k, \beta_l \rangle + \frac{1}{6}|\beta|^2_{g_0}R_{m,n}
    )
    x_m x_n  +O(|x|^3)
    \right) dx_1 dx_2 dx_3,
\end{align*}
and therefore,
\begin{align*}
    \beta \wedge (*-*_0)\beta = 
    \left( -\sum_{m,n}( \frac{1}{3}\sum_{k,l}R_{klmn} \langle\beta_k, \beta_l\rangle  + \frac{1}{6}|\beta|^2_{g_0}R_{m,n}
    )x_m x_n + O(|x|^3) \right) dx_1 dx_2 dx_3
\end{align*}
hence
\begin{align*}
    (*-*_0)\beta = 
    -\sum_k (\frac{1}{3}\sum_{l,m,n}R_{klmn}\beta_l x_m x_n 
    + \frac{1}{6} \sum_{l,m,n} \beta_l g^{k,l} R_{m,n} x_m x_n 
    )dx_k + O(|x|^3), 
\end{align*}
and therefore, pointwise and with respect to the metric $g$,
\begin{align*}
    |(*-*_0)\beta|_{g} \leq C |\beta|_g |x|^2,
\end{align*}
where the constant $C$ depends only on the curvature tensor of $(M,g)$. Going back to the curvature $2$-form $F_{\eta_j^*(A^{\lambda_j}_{BPS})}$ on ${B_{2\epsilon_j} (q_j)}$, the computations above show 
\begin{align*}
    |(*_0 - *)F_{\eta_j^*(A^{\lambda_j}_{BPS})}|_g \leq C |F_{\eta_j^*(A^{\lambda_j}_{BPS})}|_{g} |x|^2.
\end{align*}

Following \cite[Section IV.1]{MR614447} and using the same notations as in Definition \ref{deflem}, 
\begin{align*}
    (d_{A_{BPS}} \varPhi_{BPS})^L &= 
    (\frac{1}{\sinh^2(|x|)}-\frac{1}{|x|^2}) (n \cdot  \sigma) n \cdot  dx, \\
    (d_{A_{BPS}} \varPhi_{BPS})^T &= 
    (\frac{1}{|x|}-\frac{1}{\tanh(|x|)})(\frac{1}{\sinh(|x|)}) (\sigma - (n \cdot \sigma)\sigma) \cdot dx.
\end{align*}
Although $(d_{A_{BPS}} \varPhi_{BPS})^L$ and $(d_{A_{BPS}} \varPhi_{BPS})^T$ look singular at the origin, they extend smoothly to the origin. In fact, the maximum of both of these components are achieved at the origin,
\begin{align*}
    |(d_{A_{BPS}} \varPhi_{BPS})^L|_{g_0} \leq \frac{1}{3}, \quad 
    |(d_{A_{BPS}} \varPhi_{BPS})^T|_{g_0} \leq \frac{1}{3}.
\end{align*}
Furthermore, 
\begin{align*}
    |d_{A^{\lambda_j}_{BPS}} \varPhi^{\lambda_j}_{BPS}|_{g_0} \leq \frac{C}{\lambda_j^{-2} + |x|^2},
\end{align*}
for a constant $C>0$.

For any $2$-form $\beta$, with values in any vector bundle, we have
\begin{align*}
    |\beta|_g^2 - |\beta|_{g_0}^2 = \sum_{k,l} \langle\beta_k, \beta_l\rangle (g^{k,l} - g_0^{k,l}) &=  - \frac{1}{3} \sum_{k,l,m,n} \langle\beta_k, \beta_l\rangle R_{klmn} x_m x_n + O(|x|^3) \\ &\leq C R | \beta|^2_{g_0}|x|^2,
\end{align*}
where $C > 0$ is a constant and $R$ is the maximum of the Riemann curvature tensor of $g$. Therefore, 
\begin{align*}
    |\beta|_g^2 \leq  |\beta|_{g_0}^2 +  C R | \beta|^2_{g_0}|x|^2,
\end{align*}
Let $\beta = (*_0 - *) F_{\eta_j^*(A^{\lambda_j}_{BPS})}$
\begin{align*}
    |(*_0 - *)F_{\eta_j^*(A^{\lambda_j}_{BPS})}|_g^2 \leq C \left( \frac{|x|^4}{(\lambda_j^{-2} + |x|^2)^2} +  R \frac{|x|^6}{(\lambda_j^{-2} + |x|^2)^2} \right),
\end{align*}
for a constant $C$, when $\lambda_j$ is sufficiently large.

These sum up to 
\begin{align} 
    (e_j^{BPS})_{|_{B_{2\epsilon_j} (q_j)}} \leq C',
\end{align}
for a positive constant $C'$.

On the neck ${B_{2\epsilon_j}(q_j)}\setminus {B_{\epsilon_j} (q_j)}$, the cut-off function is another source of error. On this region, we have
\begin{align*}
    e_0 =& \sum_{j=1} ^k \bigg( (*F_{\eta_j^*(A^{\lambda_j}_{BPS})} - d_{\eta_j^*(A^{\lambda_j}_{BPS})}\eta_j^*(\varPhi_{BPS}^{\lambda_j}))_{|_{ B_{ \epsilon_j}(q_j)}}\\ 
    +& \xi_0 (*d_{\eta_j^*(A^{\lambda_j}_{BPS})}
    (A^{\lambda_j}_D - \eta_j^*(A^{\lambda_j}_{BPS}))
    - 
    d_{\eta_j^*(A^{\lambda_j}_{BPS})}(\varPhi^{\lambda_j}_D-\eta_j^*(\varPhi^{\lambda_j}_{BPS})
    )\\
    +& *(d\xi_0 \wedge (A^{\lambda_j}_D-\eta_j^*(A^{\lambda_j}_{BPS})))
    - d \xi_0 (\varPhi^{\lambda_j}_D - 
    \eta_j^*(\varPhi^{\lambda_j}_{BPS}))\\
    +& (\xi_0(A^{\lambda_j}_D - \eta_j^*(A^{\lambda_j}_{BPS})))^2 -
    \xi_0^2 [A^{\lambda}_D - \eta_j^*(A^{\lambda}_{BPS}),\varPhi^{\lambda}_D - \eta_j^*(\varPhi^{\lambda}_{BPS})] \bigg),
\end{align*}
where $*$ is the Hodge star of $g$ on $M$. 

First consider the case where the Riemannian metric $g$ is flat, as we were to glue a scaled BPS-monopole to a scaled Dirac monopole on $\mathbb{R}^3$. Then following Lemma \ref{preerror}, we would have
\begin{align*}
    |\varPhi^{\lambda_j}_{BPS} - \varPhi_D^{\lambda_j}| =  O(\lambda_j e^{ -\lambda_j r_j  }) , \quad
    |A^{\lambda_j}_{BPS} - A_D^{\lambda_j}| = 
    O(\lambda_j e^{ -\lambda_j r_j}).
\end{align*}
and therefore, on this region
\begin{align*}
    |\xi_0 (*d_{\eta_j^*(A^{\lambda_j}_{BPS})}
    (A^{\lambda_j}_D - \eta_j^*(A^{\lambda_j}_{BPS}))| &\leq c_1 (\lambda_j^2 e^{-{\lambda_j}r_j}),\\
    |d_{\eta_j^*(A^{\lambda_j}_{BPS})}(\varPhi^{\lambda_j}_D-\eta_j^*(\varPhi^{\lambda_j}_{BPS}))| &\leq c_2 (\lambda_j^2 e^{-{\lambda_j}r_j}),\\
    |(d\xi_0 \wedge (A^{\lambda_j}_D-\eta_j^*(A^{\lambda_j}_{BPS})))|
    &\leq c_3(\frac{\lambda_j}{\epsilon_j}e^{-{\lambda_j}r_j}),\\
    |d \xi_0 (\varPhi^{\lambda_j}_D - 
    \eta_j^*(\varPhi^{\lambda_j}_{BPS}))| &\leq c_4 (\frac{\lambda_j}{\epsilon_j}e^{-{\lambda_j}r_j}),\\
    |(\xi_0(A^{\lambda_j}_D - \eta_j^*(A^{\lambda_j}_{BPS})))^2| &\leq
    c_5 (\lambda_j^2 e^{-2{\lambda_j}r_j}),\\
    | \xi_0^2 [A^{\lambda}_D - \eta_j^*(A^{\lambda}_{BPS}),\varPhi^{\lambda}_D - \eta_j^*(\varPhi^{\lambda}_{BPS})]| &\leq
    c_6 (\lambda_j^2 e^{-2{\lambda_j}r_j}),
\end{align*}
for constants $c_1, \hdots, c_6$, independent of $\epsilon_j$ and $\lambda_j$.

Here $\lambda_j$ and $\epsilon_j$ should be understood as a very large and a very small number, respectively. As we increase $\lambda_j$, we can make $\epsilon_j$ smaller. Although there is no unique choice for these parameters for the gluing construction to work, sometimes there are choices which minimize the error of the approximate solution. 

Let's let $\epsilon_j = \lambda_j^l$ for some $-1<l<0$. For $l$ outside of this interval the errors listed above can be large. The appropriate value for $l$ depends on the functional spaces we choose to work with. For sufficiently large $\lambda_j$, the leading term of the bounds for the error $e_j^{neck}$, up to a constant, would be $\lambda_j^2e^{-{\lambda_j}^{l+1}}$. These errors are exponentially small and favorable. 

However, the case over arbitrary Riemannian 3-manifolds is different, since the Green's function on a neighbourhood of a point $q_j$ is not necessarily equal to $-\frac{1}{2 r_j}+m_j$, but potentially there are higher order terms, and therefore, 
\begin{align*}
    |\varPhi^{\lambda_j}_{BPS} - \varPhi_D^{\lambda_j}| &=
    |\varPhi^{\lambda_j}_{BPS} - (-\frac{k_j}{2r_j} + m_j)| + 
    |(-\frac{k_j}{2r_j} + m_j) - \varPhi_D^{\lambda_j}| \\&= 
    O(\lambda_j e^{ -\lambda_j r_j  }) 
    + O(r_j), 
\end{align*}
and similarly,
\begin{align*}
    |A^{\lambda_j}_{BPS} - A_D^{\lambda_j}| = 
    O(\lambda_j e^{ -\lambda_j r_j  }) 
    + O(r_j).
\end{align*}
and therefore, for sufficiently large $\lambda_j$, 
\begin{align*}
    |\varPhi^{\lambda_j}_{BPS} - \varPhi_D^{\lambda_j}| = O(r_j), \quad
    |A^{\lambda_j}_{BPS} - A_D^{\lambda_j}| =  O(r_j).
\end{align*}
hence, for $l = - \frac{1}{2}$,
\begin{align*}
    |\xi_0 (*d_{\eta_j^*(A^{\lambda_j}_{BPS})}
    (A^{\lambda_j}_D - \eta_j^*(A^{\lambda_j}_{BPS}))| &\leq c_1,\\
    |d_{\eta_j^*(A^{\lambda_j}_{BPS})}(\varPhi^{\lambda_j}_D-\eta_j^*(\varPhi^{\lambda_j}_{BPS}))| &\leq c_2,\\
    |(d\xi_0 \wedge (A^{\lambda_j}_D-\eta_j^*(A^{\lambda_j}_{BPS})))|
    &\leq c_3(\frac{r_j}{\epsilon_0}),\\
    |d \xi_0 (\varPhi^{\lambda_j}_D - 
    \eta_j^*(\varPhi^{\lambda_j}_{BPS}))| &\leq c_4(\frac{r_j}{\epsilon_0}),\\
    |(\xi_0(A^{\lambda_j}_D - \eta_j^*(A^{\lambda_j}_{BPS})))^2| &\leq
    c_5 (r_j^2),\\
    | \xi_0^2 [A^{\lambda}_D - \eta_j^*(A^{\lambda}_{BPS}),\varPhi^{\lambda}_D - \eta_j^*(\varPhi^{\lambda}_{BPS})]| &\leq
    c_6 (r_j^2),
\end{align*}
where the constants $c_1, \hdots, c_6$ are independent of $\epsilon_j$ and $\lambda_j$, and only depend on the geometry of $(M,g)$. We denote this error on the neck containing the terms $A^{\lambda_j}_D - \eta_j^*(A^{\lambda_j}_{BPS})$ and $\varPhi^{\lambda_j}_D - \eta_j^*(\varPhi^{\lambda_j}_{BPS})$ by $e^{neck}_j$. We get 
\begin{align*}
        (e_0)_{|_{B_{2 \epsilon_j}(q_j) \setminus B_{ \epsilon_j}(q_j)}} & = O(1).
\end{align*}
\end{proof}

\section{Solving the Equation, the Linear Theory}\label{solveequation}

The goal is to show there is a solution to the Bogomolny equation near the constructed approximate monopole $(A_0, \varPhi_0)$. In other words, we are looking for a small $(a,\varphi)$ --- small in a suitable norm --- such that $(A_0 + a, \varPhi_0 + \varphi)$ is a genuine monopole. In this section, we set up the equations for $(a,\varphi)$ and state the strategy to solve these equations.

We can write the equation for the pair $(a,\varphi)$,
\begin{align*}
    *F(A_0 + a)-d_{A_0+a}(\varPhi_0+\varphi) &= 0 \Rightarrow \\
    \label{eq1} (*F_{A_0} -d_{A_0}\varPhi_0)+ &(*d_{A_0}a - d_{A_0}\varphi -[a,\varPhi_0]) + (*\frac{[a\wedge a]}{2}-[a,\varphi]) = 0.
\end{align*}
Let $d_2^{(A_0, \varPhi_0)}: \Omega^1(M \setminus S_p,\mathfrak{g}_P) \oplus \Omega^0(M  \setminus S_p,\mathfrak{g}_P) \to \Omega^1(M  \setminus S_p,\mathfrak{g}_P)$ be the operator that appeared in the linearization of the Bogomolny equation at $(A_0, \varPhi_0)$,
\begin{align}
    d_2^{(A_0, \varPhi_0)}(a,\varphi) = *d_{A_0}a - d_{A_0}\varphi -[a,\varPhi_0],
\end{align}
where $S_p = \{ p_1,  \hdots , p_n \}$. Although $d_2^{(A_0, \varPhi_0)}$ depends on the pair $(A_0, \varPhi_0)$, whenever there is no fear of confusion, we drop the subscript $(A_0, \varPhi_0)$ and denote it by $d_2$. 

Let $Q(a,\varphi): \Omega^1(M \setminus S_p,\mathfrak{g}_P) \oplus \Omega^0(M \setminus S_p,\mathfrak{g}_P) \to \Omega^1(M \setminus S_p,\mathfrak{g}_P)$ be the operator defined by the quadratic part,
\begin{align*}
   Q(a,\varphi) = *\frac{[a\wedge a]}{2}-[a,\varphi].
\end{align*}
Equation \ref{moneq} can be written as
\begin{align} \label{mmeq}
    (d_2+Q)(a,\varphi) = - e_0.
\end{align}
The Bogomolny equation is invariant under the action of the gauge group, and therefore, not elliptic. In fact, it is elliptic modulo the action of the gauge group. The linearization of the gauge group action is given by
\begin{align*}
    &d_1^{(A_0, \varPhi_0)}:\Omega^0(M \setminus S_p,\mathfrak{g}_P) \to \Omega^1(M \setminus S_p,\mathfrak{g}_P) \oplus \Omega^0(M \setminus S_p,\mathfrak{g}_P), \\ &d_1^{(A_0, \varPhi_0)} \xi = (-d_{A_0} \xi,-[\varPhi_0, \xi]).
\end{align*}
Similar to $d_2$, this operator also depends on the pair ${(A_0, \varPhi_0)}$, but we drop this subscript when there is no fear of confusion, and denote it by $d_1$. 

The gauge fixing equation $d_1^*(a,\varphi) = 0$ describes a local slice of the action of the gauge group at $(A_0, \varPhi_0)$, where 
\begin{align*}
    d_1^*(a,\varphi) = -d_{A_0}^*a - [\varPhi_0,\varphi],
\end{align*}
is the formal adjoint of $d_1$ with respect to the $L^2$-inner product. Let 
\begin{align*}
D_{(A_0,\varPhi_0)}:  \Omega^1(M \setminus S_p,\mathfrak{g}_P) \oplus \Omega^0(M \setminus S_p,\mathfrak{g}_P) \to \Omega^1(M \setminus S_p,\mathfrak{g}_P) \oplus \Omega^0(M \setminus S_p,\mathfrak{g}_P), 
\end{align*}
be the elliptic operator defined by 
\begin{align*}
    D: = D_{(A_0,\varPhi_0)} = d_2 \oplus d_1^*.
\end{align*} 
This can be used to define an elliptic equation. Instead of $d_2(a, \varphi) = f$, we can consider the equation 
\begin{align} \label{01}
    D(a, \varphi) = (f, 0).
\end{align}
Two important properties of \ref{01}: it is elliptic, and, for any small $f$, any solution of $d_2(a, \varphi) = f$ can be gauged into a solution of \ref{01}.

These operators fit into a sequence 
\begin{align} \label{seq}
    \Omega^0(M \setminus S_p,\mathfrak{g}_P)
    \xrightarrow{d_1}
    \Omega^1(M \setminus S_p,\mathfrak{g}_P) \oplus 
    \Omega^0(M \setminus S_p,\mathfrak{g}_P) 
    \xrightarrow{d_2}
    \Omega^1(M \setminus S_p,\mathfrak{g}_P).
\end{align}
Note that $d_2 \circ d_1 \xi = *[ (d_{A_0}\varPhi_0 - *F_{A_0}) \wedge \xi ]$, and therefore, $d_2 \circ d_1 = 0$ when $(A_0, \varPhi_0)$ is a monopole. In fact, \ref{seq} is an elliptic complex when $(A_0, \varPhi_0)$ is a monopole. 

The formal adjoint of $d_2$ with respect to the $L^2$-inner product is given by 
\begin{align*}
    d_2^*: \Omega^1(M \setminus S_p) \to \Omega^1(M \setminus S_p) \oplus \Omega^0(M \setminus S_p), \; 
      d_2^* u = (*d_{A_0} u + [u, \varPhi_0], -d^*_{A_0} u).
\end{align*}
We look for solutions to the equation \ref{mmeq}, which are of the form $(a,\varphi) = d_2^* u$, and therefore, the equation \ref{mmeq} can be written as
\begin{align} \label{meq}
    ( d_2 d_2^* +Qd_2^* )u = - e_0.
\end{align}
This equation is elliptic. In fact, $d_2 d_2^*$ has the same symbol as the Laplacian on $\mathfrak{su}(2)$-valued 1-forms. A key step in solving this equation would be solving the linear equation
\begin{align} \label{lin}
    d_2 d_2^*u = f.
\end{align}

The method for solving this linear equation can be summarized into 4 steps:

\begin{itemize}
    \item solving the linearized equation $d_2 d_2^* u = f$ on $B_{3 \epsilon_j}(q_j)$ via a variational method;
    \item  solving the linearized equation  $d_2 d_2^* u = f$ on $M \setminus (\cup_j B_{2\epsilon_j}(q_j) \cup S_p)$ via a variational method;
    \item solving the linearized equation  $d_2 \xi = f$ on $M \setminus  S_p$ via an iteration method;
    \item  solving the Bogomolny equation on $M \setminus  S_p$ using a fixed point theorem.
\end{itemize}

\subsection{Analytic Preliminaries}\label{AnalyticPreliminaries}
In this section, we review the necessary background material to solve the linearized Bogomolny equation.

We start with the monopole Weitzenb\"ock formulas. These formulas follow from the standard Weitzenb\"ock formula for a connection on a vector bundle.

\begin{lemma}[The Monopole Weitzen\"ock Formulas  \cite{MR866030}] \label{monopoleweitzenbock}
Let $(A_0,\varPhi_0)$ be a pair of a connection and a Higgs field on a principal bundle $P \to M$ where $(M,g)$ is an oriented Riemannian 3-manifold. Let $e_0 = *F_{A_0} - d_{A_0} \varPhi_0$. Let $ad^2(\varPhi_0) \xi = [\varPhi_0, [\varPhi_0, \xi]]$. Let $u \in \Omega^1(M,\mathfrak{g}_P)$. The Monopole Weitzenb\"ock formulas are 
\begin{align} \label{Weitzenbockformula}
    d_2 d_2^* u &= \nabla_{A_0}^* \nabla_{A_0} u - ad(\varPhi_0)^2(u) + Ric(u) + *[e_0 \wedge u],\\ 
    D D^*(a, \varphi) &= 
    \nabla_{A_0}^* \nabla_{A_0} (a, \varphi) - ad(\varPhi_0)^2(a, \varphi)  + Ric(a, \varphi)
      + *[e_0 \wedge (a, \varphi)],\\
    D^* D (a, \varphi) &= D D^* (a, \varphi) + 2 \langle d_{A_0} \varPhi_0, (a, \varphi) \rangle.
\end{align}
\end{lemma}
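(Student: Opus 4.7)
The plan is to prove all three identities by direct pointwise computation, in each case combining the definitions of $d_1, d_2$ and their formal adjoints with the standard bundle Weitzenb\"ock formula
\begin{align*}
(d_{A_0} d_{A_0}^* + d_{A_0}^* d_{A_0}) u \;=\; \nabla_{A_0}^* \nabla_{A_0} u + Ric(u) + *[*F_{A_0} \wedge u]
\end{align*}
for a $\mathfrak{g}_P$-valued $1$-form $u$ on the oriented Riemannian $3$-manifold $(M,g)$. The curvature term takes this shape because on a $3$-manifold the natural action of $F_{A_0}$ on a $1$-form can be rewritten, via the Hodge identifications between $2$-forms and $1$-forms, as $*[*F_{A_0}\wedge \,\cdot\,]$.

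For the first identity I would expand directly using $d_2^* u = (*d_{A_0}u+[u,\varPhi_0],-d_{A_0}^*u)$:
\begin{align*}
d_2 d_2^* u \;=\; *d_{A_0}*d_{A_0} u \;+\; d_{A_0} d_{A_0}^* u \;+\; \bigl(*d_{A_0}[u,\varPhi_0] - [*d_{A_0} u, \varPhi_0]\bigr) \;-\; [[u, \varPhi_0], \varPhi_0].
\end{align*}
The first two terms combine to the bundle Hodge Laplacian $\Delta_{A_0}u$, which the Weitzenb\"ock formula above rewrites as $\nabla_{A_0}^*\nabla_{A_0} u + Ric(u) + *[*F_{A_0}\wedge u]$. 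The parenthesised expression simplifies by the graded Leibniz rule $d_{A_0}[u, \varPhi_0] = [d_{A_0} u, \varPhi_0] - [u \wedge d_{A_0}\varPhi_0]$ to $-*[d_{A_0}\varPhi_0 \wedge u]$, and the last term equals $-ad(\varPhi_0)^2 u$ after applying the Jacobi identity. Using $e_0 = *F_{A_0} - d_{A_0}\varPhi_0$ to combine the two curvature-like terms then yields the first formula.

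The second formula follows by applying the same strategy to $DD^*(a,\varphi)$, where $D = d_2 \oplus d_1^*$ has adjoint $D^*(a,\varphi) = d_2^* a + d_1 \varphi$. Expanding produces a diagonal contribution $d_2 d_2^* a + d_1^* d_1 \varphi$, which is handled as above and supplies the $\nabla_{A_0}^* \nabla_{A_0}$, $Ric$, $ad(\varPhi_0)^2$, and $*F_{A_0}$ pieces, together with two cross terms $d_2 d_1 \varphi$ and $d_1^* d_2^* a$. By the same Leibniz and Jacobi manipulations already used in the excerpt to identify $d_2 \circ d_1$ as essentially bracket with $e_0$, these cross terms are zeroth-order expressions built precisely from $e_0$, and they combine with the diagonal contributions to give the $*[e_0 \wedge (a,\varphi)]$ term. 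The third formula is then a direct comparison of $D^* D (a,\varphi) = d_2^* d_2(a,\varphi) + d_1 d_1^*(a,\varphi)$ with $DD^*(a,\varphi)$: the $\nabla_{A_0}^* \nabla_{A_0}$, Ricci, $ad(\varPhi_0)^2$, and $e_0$-pieces cancel, and the remaining mixed first-order brackets involving $d_{A_0}\varPhi_0$ reassemble into $2\langle d_{A_0}\varPhi_0, (a,\varphi)\rangle$.

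The main obstacle is not conceptual but notational: a careful tracking of the signs in $d^* = \pm * d *$ (which flips with the degree of the form on a $3$-manifold), of the graded Leibniz rule for the $\mathfrak{g}_P$-valued bracket, and of the Hodge star identifications between $1$- and $2$-forms. Each individual manipulation is routine, but the coherent assembly of the full Weitzenb\"ock requires discipline with signs at every step.
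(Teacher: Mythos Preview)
Your outline is correct and follows exactly the approach the paper indicates: the paper does not give a detailed proof of this lemma but simply cites Floer \cite{MR866030} and remarks that the identities ``follow from the standard Weitzenb\"ock formula for a connection on a vector bundle.'' Your proposal carries out precisely this derivation, expanding $d_2 d_2^*$ and $DD^*$ from their definitions and invoking the bundle Weitzenb\"ock formula for the Hodge Laplacian on $\mathfrak{g}_P$-valued $1$-forms, so there is nothing to compare.
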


Variations of the Poincar\'e inequality are essential in the analysis of the linear problem. The standard Poincar\'e inequality $\|u\|_{L^p(U)} \leq C \| \nabla u \|_{L^p(U)}$ is stated for compactly supported functions $u \in W^{1,p}(U)$, where $U \subset \mathbb{R}^n$ is a bounded domain and $C$ is a positive constant. This inequality is also valid when $U$ is a ball in a Riemmanian manifold $(M,g)$ with a positive constant $C$ which depends on the geometry of $U$. A variation of this inequality also holds for the compactly supported functions on $\mathbb{R}^n$, when $n \geq 2$.

\begin{lemma}[The Gagliardo-Nirenberg-Sobolev Inequality \cite{MR1688256}] \label{CNSI-inequality}
Let $n \geq 2$ and $1 \leq p < n$. Let $p^*$ be the Sobolev
conjugate of $p$; i.e., $p^*$ satisfies $\frac{1}{p^*} = \frac{1}{p} - \frac{1}{n}$. Then
\begin{align*}
    \|u\|_{L^{p^*}(\mathbb{R}^n)} \leq C \| \nabla u \|_{L^p(\mathbb{R}^n)},
\end{align*}
for a constant $C$ which depends on $n$ and $p$ and for all compactly supported functions $u \in C^1_c(\mathbb{R}^n)$.

Moreover, this inequality holds when $\mathbb{R}^n$ is equipped with a metric $g$ which is asymptotically Euclidean rather than Euclidean, for a positive constant $C_g$.
\end{lemma}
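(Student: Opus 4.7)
The plan is to first establish the Euclidean inequality by the standard Loomis--Whitney/bootstrap argument, and then transfer the estimate to the asymptotically Euclidean setting via uniform equivalence of the metrics. Since this is a classical result, I expect no real obstacles, only a careful bookkeeping of constants at the transfer step.

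I would first handle the borderline case $p=1$, for which $p^* = n/(n-1)$. The starting point is the one-dimensional fundamental theorem of calculus applied in each coordinate direction: for $u \in C^1_c(\mathbb{R}^n)$ and each $i$,
\begin{align*}
|u(x)| \leq \int_{-\infty}^{x_i} |\partial_i u(x_1,\ldots,t,\ldots,x_n)|\, dt \leq \int_{\mathbb{R}} |\partial_i u|\, dt_i.
\end{align*}
Then $|u(x)|^{n/(n-1)}$ is bounded by the product over $i=1,\ldots,n$ of these one-variable integrals raised to the power $1/(n-1)$. Integrating successively in $x_1, \ldots, x_n$ and applying the generalized H\"older inequality at each stage (the Loomis--Whitney induction) yields $\|u\|_{L^{n/(n-1)}(\mathbb{R}^n)} \leq \|\nabla u\|_{L^1(\mathbb{R}^n)}$.

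For general $1 < p < n$, I would bootstrap from the $p=1$ case by applying it to $v = |u|^{\gamma}$ with $\gamma = p(n-1)/(n-p)$ chosen so that $\gamma \cdot n/(n-1) = p^*$. Using $|\nabla v| \leq \gamma |u|^{\gamma-1}|\nabla u|$ (justified by approximating $|u|$ by $\sqrt{u^2+\epsilon}$ and passing to the limit) and H\"older's inequality with exponents $p$ and $p/(p-1)$ gives
\begin{align*}
\|u\|_{L^{p^*}}^{\gamma} \leq C\gamma\, \|u\|_{L^{p^*}}^{\gamma-1}\, \|\nabla u\|_{L^p},
\end{align*}
and dividing through (legitimate since $\|u\|_{L^{p^*}} < \infty$ by compact support) gives the desired Euclidean estimate.

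For the asymptotically Euclidean case, I would exploit the fact that any such metric $g$ satisfies a uniform two-sided bound $c_1 g_0 \leq g \leq c_2 g_0$ on all of $\mathbb{R}^n$ for positive constants $c_1, c_2$ depending on $g$. Consequently, the volume forms and the pointwise norms on $1$-forms with respect to $g$ and $g_0$ are comparable, so the $L^p$ and $L^{p^*}$ norms in the two metrics agree up to constant factors. Applying the Euclidean inequality to $u$, which remains compactly supported, yields the asymptotically Euclidean version with a constant $C_g$ absorbing these comparison factors. The only mild technical subtlety I anticipate is bookkeeping the constants through the change of metric; the iterated H\"older step and the approximation argument handling the non-smoothness of $|u|^\gamma$ at the zero set of $u$ are both standard.
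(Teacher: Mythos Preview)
Your argument is the standard, correct proof of the Gagliardo--Nirenberg--Sobolev inequality: the Loomis--Whitney iterated H\"older step for $p=1$, the bootstrap via $v=|u|^{\gamma}$ for general $p$, and the transfer to asymptotically Euclidean metrics via uniform equivalence $c_1 g_0 \leq g \leq c_2 g_0$. There is nothing to compare against in the paper itself, since the lemma is stated there with a citation and no proof; the paper simply quotes it as a known analytic input. Your write-up would serve perfectly well as a self-contained justification.
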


The space of smooth compactly supported functions on $\mathbb{R}^n$ is dense in $W^{1,p}(\mathbb{R}^n)$, and therefore, Lemma \ref{CNSI-inequality} holds for $u \in W^{1,p}(\mathbb{R}^n)$ when $ n \geq 2$. 

Now we turn to weighted Poincar\'e inequalities. A one dimensional version of this inequality states that if $f: \mathbb{R} \to \mathbb{R}$ is a non-negative function, $F(x) = \int_0 ^xf(t) dt$, and $p>1$, then
\begin{align*}
\int_0^{\infty}
\left(\frac{F(x)}{x}\right)^p dx
\leq \left(\frac{p}{p-1}\right)^p
\int_0^{\infty} f(x)^p dx.
\end{align*}
This inequality, which is called the Hardy's inequality, was first proved by Hardy; however, the constant $(\frac{p}{p-1})^p$ in this inequality, which is sharp, was later discovered by Landau. For a proof consult with the beautiful book `Inequalities' written by Hardy, Littlewood, and P\'olya \cite[Section 9.8]{MR0046395}.

Lewis proved a higher-dimensional version of this inequality \cite{MR654855}, which --- a special case of that --- is stated below. 

\begin{lemma} 
Let $g: \mathbb{R}^n \to \mathbb{R}$ be in $W^{2,2}(\mathbb{R}^n)$. Then for all $u \in C^{1}_c(\mathbb{R}^n)$ we have
\begin{align*}
    \int_{\mathbb{R}^n} |\Delta g(x)| |u(x)|^2 vol_{\mathbb{R}^n}
    \leq
    2 \int_{\mathbb{R}^n} |\nabla g(x)| &|u(x)| |\nabla u (x)|
    vol_{\mathbb{R}^n}
    \\ &\leq
    4 \int_{\mathbb{R}^n} |\Delta g(x)|^{-1} |\nabla g(x)|^2 |\nabla u (x)|^2
    vol_{\mathbb{R}^n}.
\end{align*}
In particular, when $ n \geq 2$, 
\begin{align*}
    \int_{\mathbb{R}^n} |x|^{\beta - 2} |u(x)|^2 vol_{\mathbb{R}^n}
    \leq
    \frac{4}{(\beta -2 + n)^2} \int_{\mathbb{R}^n} |x|^{\beta}|\nabla u (x)|^2
    vol_{\mathbb{R}^n},
\end{align*}
for all $u \in C^1_c(\mathbb{R}^n)$.
\end{lemma}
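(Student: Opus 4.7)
The strategy is integration by parts followed by two applications of Cauchy--Schwarz --- once pointwise to produce the middle expression, and once in the $L^2$-pairing to close the chain on the right. For the first inequality, since $u \in C^1_c(\mathbb{R}^n)$ has compact support and $g \in W^{2,2}(\mathbb{R}^n)$, integration by parts produces no boundary terms and yields
\begin{align*}
\int_{\mathbb{R}^n} \Delta g \cdot |u|^2 \, vol_{\mathbb{R}^n} = -\int_{\mathbb{R}^n} \nabla g \cdot \nabla |u|^2 \, vol_{\mathbb{R}^n} = -2 \int_{\mathbb{R}^n} u \, \nabla g \cdot \nabla u \, vol_{\mathbb{R}^n}.
\end{align*}
Under the (implicit) hypothesis that $\Delta g$ has a definite sign --- which holds in the application $g(x) = |x|^\beta$, and to which the general case reduces by decomposing $\Delta g$ into its positive and negative parts and arguing on each piece separately --- the left-hand side equals $\pm \int |\Delta g| \, |u|^2 \, vol_{\mathbb{R}^n}$, and the pointwise bound $|u \, \nabla g \cdot \nabla u| \leq |\nabla g| \, |u| \, |\nabla u|$ delivers the first stated inequality.

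For the second inequality, I factor the middle integrand as a product of two $L^2$-factors,
\begin{align*}
|\nabla g| \, |u| \, |\nabla u| = \bigl(|\Delta g|^{1/2}|u|\bigr) \cdot \bigl(|\Delta g|^{-1/2} |\nabla g| \, |\nabla u|\bigr),
\end{align*}
and apply Cauchy--Schwarz in $L^2$ to obtain
\begin{align*}
2\int_{\mathbb{R}^n} |\nabla g| \, |u| \, |\nabla u| \, vol_{\mathbb{R}^n} \leq 2\Bigl(\int_{\mathbb{R}^n} |\Delta g| \, |u|^2 \, vol_{\mathbb{R}^n}\Bigr)^{1/2} \Bigl(\int_{\mathbb{R}^n} |\Delta g|^{-1} |\nabla g|^2 \, |\nabla u|^2 \, vol_{\mathbb{R}^n}\Bigr)^{1/2}.
\end{align*}
Combining this with the first inequality and dividing through by $\bigl(\int |\Delta g| |u|^2\bigr)^{1/2}$ gives $\bigl(\int |\Delta g| |u|^2\bigr)^{1/2} \leq 2 \bigl(\int |\Delta g|^{-1} |\nabla g|^2 |\nabla u|^2\bigr)^{1/2}$; reinserting this bound into the preceding Cauchy--Schwarz step yields the stated chain of inequalities, with final constant $4$ on the right.

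The special case reduces to a direct computation. Take $g(x) = |x|^\beta$, assuming $\beta \neq 0$ and $\beta + n - 2 \neq 0$, so that
\begin{align*}
\nabla g = \beta |x|^{\beta - 2} x, \qquad |\nabla g|^2 = \beta^2 |x|^{2\beta - 2}, \qquad \Delta g = \beta(\beta + n - 2) |x|^{\beta - 2}.
\end{align*}
Then $|\Delta g|^{-1} |\nabla g|^2 = \frac{|\beta|}{|\beta + n - 2|}|x|^\beta$, and plugging into the master inequality cancels the common factor $|\beta(\beta + n - 2)|$ and leaves precisely the claimed constant $\frac{4}{(\beta + n - 2)^2}$.

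The main subtleties I expect to handle are the sign hypothesis on $\Delta g$ --- essential to pass from $\int \Delta g \, |u|^2$ to $\int |\Delta g|\, |u|^2$ --- and the justification of integration by parts when $g$ is only locally $W^{2,2}$ with a singularity at the origin. The latter is dealt with by an approximation argument, replacing $u$ by test functions supported away from $0$ and passing to the limit using the weight's local integrability when $\beta - 2 + n > 0$; in the monopole applications this is automatic, since one tests against configurations that already vanish near the Dirac singular points.
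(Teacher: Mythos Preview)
The paper does not supply its own proof of this lemma; it simply attributes the result to Lewis \cite{MR654855} and states it as background. Your integration-by-parts-plus-Cauchy--Schwarz argument is exactly the standard proof of such Hardy-type inequalities and is essentially what one finds in Lewis's paper, so there is nothing to compare against.

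One remark on the sign issue you flagged: you are right that the first inequality, as literally stated with $|\Delta g|$ on the left, does \emph{not} follow from integration by parts for arbitrary $g\in W^{2,2}$; it genuinely requires $\Delta g$ to have a fixed sign (your suggested decomposition into positive and negative parts does not work, since integration by parts ties $\Delta g$ to $\nabla g$, not to a truncation of $\Delta g$). The lemma as written in the paper should be read with this implicit hypothesis, which is satisfied in the only case actually used, $g(x)=|x|^\beta$. Your handling of the special case --- including the integrability caveat near the origin --- is correct.
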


Using Kato’s inequality, we can extend these results about real-valued functions on $\mathbb{R}^n$ to $\mathfrak{su}(2)$–valued forms and their covariant derivatives. For a $\lambda \geq 0$, let 
\begin{align} \label{w}
    w(x) = 
    \begin{cases}
        \sqrt{\lambda^{-2} + |x|^2}, \quad &|x|\leq \frac{1}{2}\\
        1, \quad &|x| \geq 1.
    \end{cases}
\end{align}

\begin{lemma}[\cite{MR3801425}]\label{Hardy} 
For all $\alpha \neq -1$ and $u \in C_0^{\infty}(\mathbb{R}^3, \mathfrak{su}(2))$ we have
\begin{align} \label{HLP}
\int_{\mathbb{R}^3} w^{-2\alpha-3}|u|^2 vol_{\mathbb{R}^3}\leq \frac{1}{(\alpha+1)^2} \int_{\mathbb{R}^3} w^{-2\alpha-1}|\nabla_Au|^2vol_{\mathbb{R}^3}.
\end{align}
Moreover, this inequality is valid on asymptotically Euclidean manifolds, for a different positive constant $C_{\alpha}$, depending on $\alpha$ and the geometry of the manifold.
\end{lemma}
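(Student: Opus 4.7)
The plan is to reduce the $\mathfrak{su}(2)$-valued inequality to a weighted Hardy-type inequality for real-valued scalar functions via Kato's inequality, and then to derive the scalar inequality by integration by parts against a carefully chosen radial test function adapted to the weight $w$.

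\textbf{Step 1 (Kato reduction).} First I would invoke the Kato inequality $|\nabla|u|| \leq |\nabla_A u|$, valid pointwise almost everywhere. Setting $\phi := |u|$, we have $\phi \in W^{1,2}_0(\mathbb{R}^3)$ with $\int w^{-2\alpha-3}|u|^2 = \int w^{-2\alpha-3}\phi^2$ and $\int w^{-2\alpha-1}|\nabla \phi|^2 \leq \int w^{-2\alpha-1}|\nabla_A u|^2$, so it suffices to prove
\[
\int_{\mathbb{R}^3} w^{-2\alpha-3}\phi^2\,vol_{\mathbb{R}^3} \leq \frac{1}{(\alpha+1)^2}\int_{\mathbb{R}^3} w^{-2\alpha-1}|\nabla \phi|^2\,vol_{\mathbb{R}^3}
\]
for all non-negative $\phi \in C^{\infty}_0(\mathbb{R}^3)$, by density of smooth compactly supported scalars in $W^{1,2}_0$.

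\textbf{Step 2 (Integration by parts).} For the scalar inequality, I would choose the radial primitive $g(x) := -\tfrac{1}{2(\alpha+1)}w(x)^{-2\alpha-2}$, exploit the pointwise bound $|\nabla w| = |x|/w \leq 1$ on the region where $w$ is non-constant, and compute $\nabla g$ and $\Delta g$ directly from $w^2 = \lambda^{-2} + |x|^2$. Integration by parts gives
\[
\int (-\Delta g)\phi^2 = 2\int \phi\,\nabla g \cdot \nabla \phi \leq 2\left(\int \frac{|\nabla g|^2}{w^{-2\alpha-1}}\phi^2\right)^{1/2}\left(\int w^{-2\alpha-1}|\nabla \phi|^2\right)^{1/2}
\]
by Cauchy--Schwarz. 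The key identities—$|\nabla g|^2 \leq w^{-2\alpha-3}\cdot w^{-2\alpha-1}$ (using $|\nabla w| \leq 1$) and a lower bound $-\Delta g \geq (\alpha+1)^2 w^{-2\alpha-3}$ (up to terms with a favourable sign coming from the regularization $\lambda^{-2}$ in $w^2$)—then combine via a Young-type absorption to produce the inequality with the claimed constant $\tfrac{1}{(\alpha+1)^2}$. The outer region $|x|\geq 1$, where $w \equiv 1$, is handled by observing that the inequality there reduces to the ordinary Gagliardo--Nirenberg--Sobolev estimate (Lemma \ref{CNSI-inequality}) on the compactly supported extension.

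\textbf{Step 3 (Asymptotically Euclidean case).} For the extension to asymptotically Euclidean manifolds, I would use that $w \equiv 1$ outside a compact set so the weight is already fixed, and that the metric $g$ differs from the Euclidean metric $g_0$ by a bounded perturbation controlled in $C^2$. Passing through Kato's inequality (which is insensitive to the metric up to a multiplicative constant), together with the comparison of Laplacians $\Delta_g$ and $\Delta_{g_0}$ applied to the radial test function $g$, yields the same inequality with a modified constant $C_\alpha$ depending on the $C^2$-distortion of $g$ from $g_0$.

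The main obstacle I anticipate is the precise accounting in Step 2. Because the regularized weight $w$ is not simply $|x|$, the Laplacian $\Delta(w^{-2\alpha-2})$ produces extra terms proportional to $\lambda^{-2}w^{-2\alpha-5}$, and one must verify that these error terms either have a sign favourable to the Hardy inequality or can be absorbed into the dominant $w^{-2\alpha-3}$ term without inflating the sharp constant. Choosing the antiderivative $g$ so that the regularization contributes a definite sign—rather than having to be estimated away—is the delicate point; this is precisely the reason one cannot import the standard Hardy constant $\tfrac{1}{\alpha^2}$ (valid for the unregularized weight $|x|$) without modification.
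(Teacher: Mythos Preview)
The paper does not give its own proof of this lemma: it is stated with a citation to Foscolo, and the only indication of method is the sentence immediately preceding it (``Using Kato's inequality, we can extend these results about real-valued functions on $\mathbb{R}^n$ to $\mathfrak{su}(2)$-valued forms\ldots'') together with the Lewis-type weighted Hardy inequality quoted just before. Your Steps~1--2 (Kato reduction, then integration by parts against a radial primitive built from a power of $w$) are exactly this route written out, so strategically you are aligned with what the paper intends.

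There is, however, a real gap in your Step~2 treatment of the region $\{|x|\ge 1\}$. The Gagliardo--Nirenberg--Sobolev inequality controls $\|\phi\|_{L^6}$ by $\|\nabla\phi\|_{L^2}$, not $\|\phi\|_{L^2}$; there is no uniform bound $\int\phi^2\le C\int|\nabla\phi|^2$ for compactly supported $\phi$ on an unbounded region. Worse, with the truncated weight $w\equiv 1$ on $\{|x|\ge 1\}$ your primitive $g=-\tfrac{1}{2(\alpha+1)}w^{-2\alpha-2}$ is \emph{constant} there, so $\nabla g$ and $\Delta g$ vanish and the integration-by-parts identity yields no information at all outside the unit ball. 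Indeed, if $\phi$ is supported entirely in $\{|x|\ge 1\}$ the claimed inequality collapses to $\int\phi^2\le(\alpha+1)^{-2}\int|\nabla\phi|^2$, which fails for bumps of large support. The way this is handled in the cited reference is that the weight is taken to be $w(x)=(\lambda^{-2}+|x|^2)^{1/2}$ on all of $\mathbb{R}^3$, so $w^{-2\alpha-3}$ decays at infinity and the Lewis argument runs globally with no outer-region patch; the truncation to $1$ in the present paper is harmless only because the lemma is later applied to sections already supported (or cut off) near the origin. You should either run your argument with the untruncated weight, or state explicitly that you are proving the inequality for $\phi$ supported in a fixed ball and absorbing the transition region into the constant---invoking GNS does not do this.
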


\subsection[The Linear Equation over $B_{3 \epsilon_j}(q_j)$]{The Linear Equation over $\pmb{B_{3 \epsilon_j}(q_j)}$}\label{linearBPS}
We break solving the linear equation into three parts. In the first part we analyze and solve the linear equation close to the points $q_j$. In the second part we analyze and solve the linear equation away from the points $q_j$, and finally, in the third part we solve the linear equation on the whole manifold. In this section, we set up the framework for studying the linear equation $d_2d_2^*u=f$ on $B_{3 \epsilon_j}(q_j)$, and transfer it to $\mathbb{R}^3$.

Suppose $f$ is supported on $B_{3 \epsilon_j}(q_j)$. We can localize the problem on this region. When the metric is flat on this neighborhood of the point $q_j$, it is convenient to consider $B_{3 \epsilon_j}(q_j)$ as a subset of $\mathbb{R}^3$. This reduces the problem to solving the equation $d_2d_2^*u=f$ on $\mathbb{R}^3$. 

More generally, when the metric around $q_j$ is an arbitrary one, it is still useful to consider $B_{3 \epsilon_j}(q_j)$ as a subset of $\mathbb{R}^3$, but with a non-standard metric. Using a geodesic normal coordinate, we fix a diffeomorphism 
\begin{align*}
    \mu : B_{3 \epsilon_j}(q_j) \subset M \to U \subset \mathbb{R}^3.
\end{align*}
We can pull back the Riemannian metric on $B_{3 \epsilon_j}(q_j)$ via $\mu^{-1}$ to $U \subset \mathbb{R}^3$. Furthermore, one can extend this metric defined on $U$ to a Riemannian metric defined on the whole $\mathbb{R}^3$ such that it is flat outside of a slightly larger open subset $V$ with geodesic radius $4 \epsilon_j$ containing $U$. To prevent any confusion, we denote $\mathbb{R}^3$ with this non-standard metric by $\mathfrak{R}^3$; however, note that the metric need not to be product. By an abuse of notation, we denote both metrics on $M$ and also the induced metric on $\mathfrak{R}^3$ by $g$. The case where the metric is flat has been investigated in the study of periodic singular monopoles by Foscolo \cite{MR3801425}. 

Working with an arbitrary metric introduces two potential sources of difficulty. One is related to the error of the approximate solution, since the BPS-monopole is not a genuine monopole with respect to the arbitrary metric, as we observed earlier. The other difficulty is related to the Ricci terms in the monopole Wietzenb\"ock formulas in Lemma \ref{monopoleweitzenbock}, which appear in the estimations in the linear problem.

Let $q \in S_q = \{q_1,  \hdots , q_k\}$. Let $(A_0, \varPhi_0)$ be the approximate monopole defined on $B_{3 \epsilon}(q)$ by gluing the pull-back of the scaled BPS-monopole on $\mathbb{R}^3$ to the scaled Dirac monopole, both with mass $\lambda$ and centered at the origin, using a cut-off function $\xi$,
\begin{align}\label{modmon}
    (A_0, \varPhi_0)(x) = 
    \begin{cases}
        (\eta^* (A_{BPS}^{\lambda}), \eta^*(\varPhi_{BPS}^{\lambda}))(x) \quad &r \leq \lambda^{-\frac{1}{2}}, \\
        (A_{D}^{\lambda}, \varPhi_{D}^{\lambda})(x) \quad &r \geq 2\lambda^{-\frac{1}{2}},
    \end{cases}
\end{align}
where $r$ denotes the geodesic distance to the origin and $|\nabla \xi| \leq \lambda^{\frac{1}{2}}$. 

Using the diffeomorphism $\mu^{-1}$ one can pull back the pair $(A_0, \varPhi_0)$ to $U \subset \mathfrak{R}^3$. We can extend this pair $((\mu^{-1})^*(A_0), (\mu^{-1})^*\varPhi_0)$ to a pair on $\mathfrak{R}^3$, by gluing it to the standard Dirac monopole scaled by the factor $\lambda$ on $\mathfrak{R}^3 \setminus V = \mathbb{R}^3 \setminus V$, using a cut-off function $\tilde{\xi}$ such that $|\nabla \tilde{\xi}| \leq \lambda^{\frac{1}{2}}$. By an abuse of notation, we still denote this pair on $\mathfrak{R}^3$ by $(A_0, \varPhi_0)$. 

The first step is to set up the suitable Sobolev spaces for the linear problem. In order to solve \ref{lin} on $\mathfrak{R}^3$, naively, one might let $d_2d_2^*: W^{2,2}(\Omega^1(\mathfrak{R}^3)) \to L^2(\Omega^1(\mathfrak{R}^3))$; however, this is not a suitable choice, and one needs to use the weighted Sobolev spaces. 

\subsection[Function Spaces on $\mathfrak{R}^3$]{Function Spaces on $\pmb{\mathfrak{R}^3}$}\label{FunctionSpacesonR3}

In this section, we introduce the appropriate weighted Sobolev spaces to study the linear operator $d_2d_2^*$ on $\mathfrak{su}(2)$-valued 1-forms on $\mathfrak{R}^3$, for a pair $(A_0, \varPhi_0)$ with a large mass. These weighted spaces have been  investigated by Biquard \cite{MR1116847, MR1168354}, and used in the case of monopoles by Foscolo \cite{MR3439230, MR3801425}. They are also related to the weighted norms in \cite{MR1309165}.

The weights used in the definition of our function spaces are designed for the linear operator $d_2d_2^*$ to have a bounded right-inverse. The motivation for the specific choices of the weights comes from the observation that the terms $F_{A_0}$ and $d_{A_0}\varPhi_0$ appearing in the monopole Weitzenb\"ock formula of $d_2d_2^*$ blow up as the scaling factor $\lambda \to \infty$ and $r \to 0$.

\begin{definition} \label{weightpointq}
Let 
\begin{align*}
       w(x) = 
    \begin{cases}
        \sqrt{\lambda^{-2} + r^2}, \quad & r\leq \frac{1}{2}\\
        1, \quad & r \geq 1,
    \end{cases} 
\end{align*}
where $r: \mathfrak{R}^3 \to \mathbb{R}^{\geq 0}$ denotes the geodesic distance from the origin.

Let $(A_0, \varPhi_0)$ be the approximate monopole on $\mathfrak{R}^3$, as we constructed earlier. Let $\alpha \in \mathbb{R}$. For all smooth compactly supported $\mathfrak{su}(2)$-valued differential forms $ u \in \Omega_c^{\boldsymbol{\cdot}} (\mathfrak{R}^3, \mathfrak{su}(2))$, let
\begin{align*}
    &\| u \|_{L^2_{\alpha}(\mathfrak{R}^3)} = \| w^{- \alpha - \frac{3}{2}} u \|_{L^2(\mathfrak{R}^3)},\\
    &\| u \|^2_{W^{1,2}_{\alpha}(\mathfrak{R}^3)} = 
    \| u \|_{L^2_{\alpha}(\mathfrak{R}^3)}^2 + \| \nabla_{A_0} u \|_{L^2_{\alpha - 1}(\mathfrak{R}^3)}^2 + \| [\varPhi_0,u] \|_{L^2_{\alpha-1}(\mathfrak{R}^3)}^2. 
\end{align*}
Let the spaces $L^2_{\alpha}(\mathfrak{R}^3)$ and $W^{1,2}_{\alpha}(\mathfrak{R}^3)$ be the completion of $C^{\infty}_0(\mathfrak{R}^3)$ with respect to these norms. Furthermore,
\begin{align*}
    \|u\|_{W^{2,2}_{\alpha}(\mathfrak{R}^3)}^2 := \| u \|^2_{W^{1,2}_{\alpha}(\mathfrak{R}^3)} +
    \|\nabla_{A_0}(d_2^*u)\|^2_{L^2_{\alpha-2}(\mathfrak{R}^3)} + \|[\varPhi_0, d_2^*u]\|^2_{L^2_{\alpha-2}(\mathfrak{R}^3)}.
\end{align*}
More generally, for any $p>1$, we can define the norm
\begin{align*}
    \| u \|_{L^p_{\alpha}(\mathfrak{R}^3)} = \|w^{- \alpha - \frac{3}{p}} u \|_{L^p(\mathfrak{R}^3)},
\end{align*}
and $L^p_{\alpha}(\mathfrak{R}^3)$ as the completion of $C^{\infty}_0(\mathfrak{R}^3)$ with respect to this norm. 
\end{definition}

These spaces satisfy similar properties as the ordinary Sobolev spaces.

\begin{lemma}\label{D1}
The weighted Sobolev spaces enjoy the following properties:
\begin{itemize}
    \item Let $k \in \{0, 1, 2\}$. Let $u \in W^{k,p}_{\alpha,loc}(\mathfrak{R}^3)$. Suppose $W^{k,p}_{\alpha}(\mathfrak{R}^3)$-norm of $u$ converges. Then $u \in W^{k,p}_{\alpha}(\mathfrak{R}^3)$.

    \item For any 
    $u \in W^{2,2}_{\alpha}(\mathfrak{R}^3)$,
    we have $d_2 d_2^* u \in L^2_{\alpha-2}(\mathfrak{R}^3)$, when $\lambda$ is sufficiently large.
    
    \item $C^{\infty}_c(\mathfrak{R}^3)$ is dense in $W^{1,2}_{\alpha}(\mathfrak{R}^3)$, and therefore, Lemma \ref{Hardy} holds for the elements of $W^{1,2}_{\alpha}(\mathfrak{R}^3)$.
\end{itemize}
\end{lemma}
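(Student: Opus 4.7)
The three properties all reduce to familiar manipulations once two features of the setup are exploited: the weight $w$ is identically $1$ on $\{r \geq 1\}$, and on any fixed compact subset of $\mathfrak{R}^3$ both $w$ and the coefficients of $(A_0, \Phi_0)$ are bounded above and below. My plan is to treat the three items in sequence, with a compact-exhaustion-plus-mollification argument as the common engine.

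For item (1), I would produce a smooth compactly supported approximation in two stages. Fix a smooth family of cut-offs $\chi_R : \mathfrak{R}^3 \to [0,1]$ with $\chi_R \equiv 1$ on $B_R$, $\mathrm{supp}\,\chi_R \subset B_{2R}$, and $|\nabla \chi_R| \leq 2/R$. For $u \in W^{k,p}_{\alpha,\mathrm{loc}}$ with finite $W^{k,p}_\alpha$-norm, I would show $\chi_R u \to u$ in $W^{k,p}_\alpha$: the tail $(1-\chi_R) u$ vanishes in $L^p_\alpha$ because $\|u\|_{L^p_\alpha(\mathfrak{R}^3\setminus B_R)} \to 0$ by convergence of the global norm, while the derivative hits on $\chi_R$ contribute a commutator of size $O(R^{-1})$ times $\|u\|_{L^p(B_{2R}\setminus B_R)}$, which tends to zero because $w \equiv 1$ on that annulus makes the weighted and unweighted $L^p$-norms coincide. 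The case $k=2$ is analogous once one observes that $[d_2^*, \chi_R]$ is a zeroth-order operator with coefficients of size $O(R^{-1})$. Once $u$ has been truncated to $\chi_R u$, mollification on a fixed compact set produces a smooth compactly supported approximating sequence; the equivalence of weighted and unweighted norms on the support makes the standard mollification estimates apply verbatim.

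For item (2), the algebraic expansion
\begin{align*}
d_2 d_2^* u = *d_{A_0}(*d_{A_0}u + [u,\Phi_0]) + d_{A_0} d_{A_0}^* u - [*d_{A_0}u + [u,\Phi_0],\Phi_0]
\end{align*}
yields the pointwise bound $|d_2 d_2^* u| \leq C \bigl(|\nabla_{A_0}(d_2^* u)| + |[\Phi_0, d_2^* u]|\bigr)$ with $C$ depending only on dimension. Integrating against $w^{-2(\alpha-2)-3}$ and matching with the definition of the $W^{2,2}_\alpha$-norm gives $\|d_2 d_2^* u\|_{L^2_{\alpha-2}} \leq C \|u\|_{W^{2,2}_\alpha}$. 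The hypothesis that $\lambda$ be sufficiently large enters when one prefers to go through the monopole Weitzenböck formula of Lemma \ref{monopoleweitzenbock}, rewriting $d_2 d_2^* u$ in terms of $\nabla_{A_0}^*\nabla_{A_0} u$ and $\mathrm{ad}(\Phi_0)^2 u$ plus the auxiliary contributions $\mathrm{Ric}(u)$ and $*[e_0 \wedge u]$: both latter pieces are pointwise $O(|u|)$ on a bounded region where $w$ is bounded, so they are absorbed into $\|u\|_{L^2_\alpha}$ with a constant uniform in $\lambda$ thanks to the $O(1)$ bound on $e_0$ from Lemma \ref{errores}.

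For item (3), the density of $C^\infty_c$ in $W^{1,2}_\alpha$ is built into the definition of $W^{1,2}_\alpha$ as the completion of $C^\infty_0$, and item (1) identifies this abstract completion with the concrete class of locally $W^{1,2}_\alpha$ elements of finite global norm. Granted the density, Lemma \ref{Hardy} extends from $C^\infty_c$ to all of $W^{1,2}_\alpha$ by continuity, since its two sides are precisely $\|u\|_{L^2_\alpha}^2$ and $(\alpha+1)^{-2}\|\nabla_{A_0} u\|_{L^2_{\alpha-1}}^2$, both continuous in the $W^{1,2}_\alpha$-topology. The main obstacle I anticipate lies in item (2): verifying that the constants in the bound on $d_2 d_2^* u$ really are uniform in $\lambda$ and that the Ricci and $e_0$ corrections are absorbed cleanly rather than silently blowing up as $\lambda \to \infty$ is where the large-$\lambda$ hypothesis does genuine work, and is the one place where the proof requires something beyond formal manipulation.
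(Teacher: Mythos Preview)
Your proposal is correct; the paper itself provides no proof beyond the sentence ``The proofs are straightforward,'' so there is nothing to compare against. Your cut-off-then-mollify argument for item (1), the direct pointwise bound $|d_2 d_2^* u| \leq C(|\nabla_{A_0}(d_2^* u)| + |[\Phi_0, d_2^* u]|)$ for item (2), and the density-plus-continuity extension of the Hardy inequality for item (3) are all sound and constitute exactly the kind of routine verification the author had in mind. One minor remark: your first route in item (2) already gives the bound without any hypothesis on $\lambda$, so the qualifier ``when $\lambda$ is sufficiently large'' in the statement is not actually needed for this particular claim (it is used elsewhere in the paper); your discussion of the Weitzenb\"ock alternative correctly identifies where such a hypothesis would enter if one took that route instead.
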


The proofs are straightforward. 

\subsection[Solving the Linear Equation on $\mathfrak{R}^3$]{Solving the Linear Equation on $\pmb{\mathfrak{R}^3}$}\label{SolvingtheLinearEquationonR3}

The main theorem of this section is the following. In the case where the Riemannian metric $g$ is the Euclidean metric, this is proposition 5.8. in \cite{MR3801425}.

\begin{theorem} \label{q} 
Let $d_2d_2^*: W^{2,2}_{\alpha}(\Omega^1(\mathfrak{R}^3,\mathfrak{su}(2))) \to L^2_{\alpha-2}(\Omega^1(\mathfrak{R}^3,\mathfrak{su}(2)))$. 
For all $-\frac{1}{2} \leq \alpha < 0$ there exist $\delta > 0$  such that if $\| w e_0(A_0 \varPhi_0) \|_{L^3(\mathfrak{R}^3)} < \delta$, then $d_2d_2^*$ is invertible. For $\delta > 0$ sufficiently small, there exists $ C > 0 $ such that for all $f \in L^2_{\alpha-2}(\Omega^1(\mathfrak{R}^3,\mathfrak{su}(2)))$,
there exists a unique solution $ u \in W^{2,2}_{\alpha}(\Omega^1(\mathfrak{R}^3,\mathfrak{su}(2)))$ to $d_2d_2^* u = f$ with 
\begin{align*}
    \| u \|_{W^{2,2}_{\alpha}(\mathfrak{R}^3)}
    \leq C 
    \| f \|_{L^2_{\alpha-2}(\mathfrak{R}^3)},
\end{align*}
where the constant $C$ is independent of $\lambda$, which appears in the definition of the approximate monopole $(A_0, \varPhi_0)$.
\end{theorem}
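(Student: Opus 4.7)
The approach is to solve the equation variationally on a weighted Hilbert space via the Lax--Milgram theorem, using the monopole Weitzenb\"ock formula of Lemma~\ref{monopoleweitzenbock} to extract coercivity, the Hardy-type inequality of Lemma~\ref{Hardy} to absorb all lower-order correction terms, and a weighted Sobolev embedding (derived from Lemma~\ref{CNSI-inequality} via the rescaling $u\mapsto w^{-\alpha-1/2}u$) to absorb the error term under the hypothesis $\|we_0\|_{L^3}<\delta$. Once a weak solution is produced, standard elliptic regularity for the operator $d_2d_2^*$ --- whose principal symbol is that of the Laplacian on $\Omega^1\otimes\mathfrak{su}(2)$ --- upgrades it to an element of $W^{2,2}_\alpha$, and the same coercivity estimate yields uniqueness together with the claimed $\lambda$-uniform norm bound.

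The heart of the proof is a coercivity inequality for $d_2d_2^*$ in the weighted norm. For $u\in C^\infty_c(\mathfrak{R}^3,\Omega^1\otimes\mathfrak{su}(2))$ I would pair $d_2d_2^* u$ against $w^{-2\alpha-1}u$ and apply Lemma~\ref{monopoleweitzenbock} to obtain
\[
\int_{\mathfrak{R}^3}w^{-2\alpha-1}\langle d_2d_2^*u,u\rangle=\int w^{-2\alpha-1}\bigl(|\nabla_{A_0}u|^2+|[\varPhi_0,u]|^2+\mathrm{Ric}(u,u)+\langle *[e_0\wedge u],u\rangle\bigr)+R(u),
\]
where $R(u)$ collects the commutator terms arising from moving $w^{-2\alpha-1}$ past the derivatives. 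Since $|\nabla w^{-2\alpha-1}|\lesssim w^{-2\alpha-2}$, Cauchy--Schwarz combined with Lemma~\ref{Hardy} gives $|R(u)|\le\tfrac{|2\alpha+1|}{|\alpha+1|}\|\nabla_{A_0}u\|_{L^2_{\alpha-1}}^2$, and for $\alpha\in[-\tfrac12,0)$ the elementary inequality $|2\alpha+1|<\alpha+1$ shows that a definite positive fraction $\tfrac{-\alpha}{\alpha+1}$ of the $|\nabla_{A_0}u|^2$ term survives. The Ricci contribution, supported on the fixed compact region where $g$ is not flat and where $w$ is bounded above by $1$, is estimated by $w^{-2\alpha-1}\le w^{-2\alpha-3}$ and then controlled by Hardy; it contributes a bounded multiple of $\|\nabla_{A_0}u\|_{L^2_{\alpha-1}}^2$ with constant depending only on the geometry of $(M,g)$. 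Finally, H\"older combined with weighted Sobolev interpolation yields
\[
\int w^{-2\alpha-1}|e_0||u|^2\le\|we_0\|_{L^3}\,\|u\|_{L^3_\alpha}^2\le C\|we_0\|_{L^3}\,\|u\|_{W^{1,2}_\alpha}^2,
\]
and shrinking $\delta$ absorbs this term into the main coercivity. Assembling everything produces
\[
\|u\|_{W^{1,2}_\alpha}^2\le C\int w^{-2\alpha-1}\langle d_2d_2^*u,u\rangle\le C\|d_2d_2^*u\|_{L^2_{\alpha-2}}\|u\|_{L^2_\alpha},
\]
and hence the a priori bound $\|u\|_{W^{1,2}_\alpha}\le C\|d_2d_2^*u\|_{L^2_{\alpha-2}}$.

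With the coercivity in hand, existence proceeds by applying Lax--Milgram to the bilinear form $B(u,v)=\langle d_2^*u,d_2^*v\rangle_{L^2_{\alpha-1}}$ on the completion of $C^\infty_c$ under $\|d_2^*\cdot\|_{L^2_{\alpha-1}}$, with $f\in L^2_{\alpha-2}$ realized as a bounded linear functional via Cauchy--Schwarz and the a priori estimate; equivalently, one may minimize $\tfrac12 B(u,u)-\langle f,u\rangle$ directly. Uniqueness is immediate from coercivity. Standard elliptic regularity applied locally, together with the density statement in Lemma~\ref{D1}, then promotes the weak solution to $W^{2,2}_\alpha$, and the a priori estimate gives the claimed uniform bound $\|u\|_{W^{2,2}_\alpha}\le C\|f\|_{L^2_{\alpha-2}}$.

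The main obstacle I anticipate is verifying that the constants in the coercivity inequality are genuinely independent of $\lambda$. The Ricci curvature and the commutator corrections from the weight contribute terms of the form $C/(\alpha+1)^2\cdot\|\nabla_{A_0}u\|_{L^2_{\alpha-1}}^2$ that must be dominated by the positive $|\nabla_{A_0}u|^2+|[\varPhi_0,u]|^2$ contributions uniformly as $\lambda\to\infty$. The saving grace is that on the support of the Ricci tensor $w$ is uniformly bounded below away from zero outside a small ball of radius $\lambda^{-1}$, and the Higgs term $|[\varPhi_0,u]|^2$ is of order $\lambda^2|u|^2$ on the transverse part; together with the quantitative smallness $\|we_0\|_{L^3}=O(\lambda^{-1})$ furnished by Lemma~\ref{errores}, this should be enough to close the estimate uniformly in $\lambda$. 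Making this balance quantitative, and tracking the $\lambda$-independence through the commutator $R(u)$ and the passage from weak to strong solution, is the delicate part of the argument.
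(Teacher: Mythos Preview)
Your strategy---variational existence plus a weighted coercivity estimate built from the Weitzenb\"ock formula, Hardy, and smallness of $\|we_0\|_{L^3}$---is the paper's approach, and your coercivity computation (pairing against $w^{-2\alpha-1}u$) is essentially the paper's Step~8. Two points, however, are genuine gaps rather than stylistic differences.

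First, your bilinear form $B(u,v)=\langle d_2^*u,d_2^*v\rangle_{L^2_{\alpha-1}}$ has Euler--Lagrange equation $d_2(w^{-2\alpha+1}d_2^*u)=f$, not $d_2d_2^*u=f$. The paper minimizes the \emph{unweighted} functional $E(u)=\tfrac12\|d_2^*u\|_{L^2}^2-\langle u,f\rangle_{L^2}$ on $W^{1,2}_\alpha$, which gives the correct equation; but since $\|d_2^*\cdot\|_{L^2}$ is not coercive for the stronger $W^{1,2}_\alpha$ norm, the argument is two-stage: existence in the norm $\|d_2^*\cdot\|_{L^2}$ (Steps~1--5), then a separate derivation of the weighted bound (Step~8). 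That second step requires integrating by parts against $w^{-2\alpha-1}u$ for the actual minimizer, which is not compactly supported, so the paper first proves the pointwise decay $|u^L|=O(|x|^{-1})$ and $|u^T|=O(e^{-r})$ (Steps~6--7) to kill the boundary terms. Density of $C^\infty_c$ does not substitute for this, because you need the estimate for the solution itself before you know it lies in $W^{2,2}_\alpha$.

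Second, ``standard elliptic regularity'' will not give a $\lambda$-independent $W^{2,2}_\alpha$ bound: the coefficients of $d_2d_2^*$ (in particular $\varPhi_0$ and $d_{A_0}\varPhi_0$) blow up as $\lambda\to\infty$, so local elliptic estimates produce $\lambda$-dependent constants. The paper's Step~10 instead applies the Weitzenb\"ock formula for $D^*D$ to $d_2^*u$ and controls the dangerous cross term $\int w^{-2\alpha+1}|d_{A_0}\varPhi_0|\,|d_2^*u|^2$ via the explicit pointwise bound $|d_{A_0}\varPhi_0|\le C\,w^{-2}$ (Step~9), which converts it to $C\|d_2^*u\|_{L^2_{\alpha-1}}^2$ with a uniform constant. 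This curvature bound is the step you are missing; it is precisely what makes the $W^{2,2}_\alpha$ estimate close uniformly in $\lambda$.
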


The proof is based on a direct variational method. We present the proof of Theorem \ref{q} in 10 steps, presented in a sequence of lemmas. The line of the proof follows \cite{MR3801425}. Since $C^{\infty}_{c}(\mathfrak{R}^3)$ is dense in $W^{2,2}_{\alpha}$, we only need to prove the theorem when $f$ is a smooth compactly supported $\mathfrak{su}(2)$-valued 1-form on $\mathfrak{R}^3$.

Before stating the proof, we want to assume a normalizing condition for the Riemannian metric $g$ on $M$. Note that $(A,\varPhi)$ is a monopole on $(M,g)$ if and only if $(A,\frac{1}{c} \varPhi)$ is a monopole on $(M,c^2g)$ for any positive constant $c$, and therefore, there is a one-to-one correspondence between monopoles on $(M,g)$ and monopoles on $(M,c^2g)$. Therefore, without loss of generality, by multiplying the metric $g$ by a sufficiently small positive constant number $c^2$, we can assume $\sup_{x \in M}|Ric(x)| < \frac{1}{100}$, and therefore, $\sup_{x \in \mathfrak{R}^3}|Ric(x)| < \frac{1}{100}$.

\begin{lemma} [Step 1] \label{s1}
Suppose $f$ is a smooth, compactly supported, $\mathfrak{su}(2)$-valued 1-form on $\mathfrak{R}^3$. Let $\alpha \in [-\frac{1}{2},0)$. Let 
\begin{align} \label{E2}
    E: W^{1,2}_{\alpha}(\Omega^1(\mathfrak{R}^3,\mathfrak{su}(2))) \to \mathbb{R}, \quad \quad     E(u) := \frac{1}{2}\int_{\mathfrak{R}^3} |d_2^*u|^2 vol_g - \langle u , f \rangle_{L^2(\mathfrak{R}^3)}.
\end{align}
The functional $E(u)$ is convergent when $u \in W^{1,2}_{\alpha}(\Omega^1(\mathfrak{R}^3,\mathfrak{su}(2)))$. 
\end{lemma}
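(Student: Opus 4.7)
The plan is to establish convergence of the two summands defining $E(u)$ independently. No deep analysis is needed at this step, only careful bookkeeping of the weights.

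For the quadratic term, I would start from the explicit expression
\begin{align*}
d_2^* u = (*d_{A_0} u + [u,\varPhi_0],\; -d^*_{A_0} u),
\end{align*}
and use the triangle inequality to obtain the pointwise bound $|d_2^* u|^2 \leq C(|\nabla_{A_0} u|^2 + |[\varPhi_0, u]|^2)$ for a universal constant $C$. The weighted norms appearing in Definition \ref{weightpointq} take the form
\begin{align*}
\|\nabla_{A_0} u\|_{L^2_{\alpha-1}}^2 = \int_{\mathfrak{R}^3} w^{-2\alpha-1}\, |\nabla_{A_0} u|^2 \, vol_g,
\end{align*}
with exponent $-2\alpha - 1 \in [-1, 0]$ whenever $\alpha \in [-\tfrac{1}{2}, 0)$. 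Since $w$ is bounded above by a uniform constant on $\mathfrak{R}^3$ (indeed, $w = 1$ outside the unit ball and $w \leq \sqrt{\lambda^{-2} + 1/4}$ near the origin), the factor $w^{-2\alpha-1}$ is bounded below by a positive constant $c(\alpha) > 0$. Integrating gives $\int |\nabla_{A_0} u|^2 \leq c(\alpha)^{-1} \|\nabla_{A_0} u\|_{L^2_{\alpha-1}}^2$, and the same estimate holds verbatim for the commutator term. Hence the first summand of $E(u)$ is bounded by a constant multiple of $\|u\|_{W^{1,2}_\alpha}^2$, which is finite by hypothesis.

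For the linear term, the essential input is that $f$ is smooth and compactly supported. I would split the weight symmetrically and apply Cauchy--Schwarz:
\begin{align*}
|\langle u, f \rangle_{L^2}| = \Bigl|\int_{\mathfrak{R}^3} (w^{-\alpha-3/2} u) \cdot (w^{\alpha+3/2} f)\, vol_g\Bigr| \leq \|u\|_{L^2_\alpha}\, \|w^{\alpha+3/2} f\|_{L^2}.
\end{align*}
Since $\alpha + \tfrac{3}{2} \geq 1 > 0$ and $w$ is bounded, and since $f$ has compact support and is smooth, the factor $\|w^{\alpha+3/2} f\|_{L^2}$ is a finite constant depending only on $f$. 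The factor $\|u\|_{L^2_\alpha}$ is controlled by $\|u\|_{W^{1,2}_\alpha}$, so the pairing converges.

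The whole argument is essentially routine; I do not foresee any genuine obstacle at this preliminary step. The role of the constraint $\alpha \geq -\tfrac{1}{2}$ here is only to ensure that the exponent $-2\alpha - 1$ is non-positive, so that $w^{-2\alpha-1}$ is bounded below wherever $w$ is bounded above. The upper cutoff $\alpha < 0$ plays no role in the mere convergence statement. Both constraints become substantive in the subsequent steps, where one invokes the Hardy-type inequality of Lemma \ref{Hardy} to derive coercivity of $E$ and ultimately the existence of a minimizer.
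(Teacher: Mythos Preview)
Your argument is correct and, for the quadratic term, strictly more elementary than the paper's. The paper expands $\|d_2^* u\|_{L^2}^2$ via the monopole Weitzenb\"ock formula (Lemma~\ref{monopoleweitzenbock}), which produces the extra terms $\langle Ric(u), u\rangle_{L^2}$ and $\langle *[e_0 \wedge u], u\rangle_{L^2}$; bounding these then requires the Hardy-type inequality (Lemma~\ref{Hardy}) together with H\"older and Sobolev. You bypass all of this by using the pointwise inequality $|d_2^* u|^2 \leq C(|\nabla_{A_0} u|^2 + |[\varPhi_0,u]|^2)$ read off directly from the formula for $d_2^*$, and then the trivial weight comparison $1 \leq c(\alpha)^{-1} w^{-2\alpha-1}$. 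This is entirely sufficient for the bare convergence statement. The Weitzenb\"ock identity becomes genuinely necessary only in the later steps (strict convexity, coercivity, the $W^{2,2}_\alpha$ bound), where one needs the reverse direction---a \emph{lower} bound on $\|d_2^* u\|_{L^2}^2$ in terms of $\|\nabla_{A_0} u\|_{L^2}^2$---which your pointwise inequality cannot provide. For the linear term your treatment and the paper's are essentially identical.
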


\begin{proof}
We first show $\|d_2^*u\|^2_{L^2(\mathfrak{R}^3)}$ is finite. The key fact in proving this is the monopole Weitzenb\"ock formula. 
\begin{align} \label{22}
    \| d_2^* u \|_{L^2(\mathfrak{R}^3)}^2 = \| \nabla_{A_0} u \|_{L^2(\mathfrak{R}^3)}^2 + \| [\varPhi_0, u]\|_{L^2(\mathfrak{R}^3)}^2 &+ \langle u, Ric(u)\rangle_{L^2(\mathfrak{R}^3)} \nonumber  \\&+ \langle *[e_0 \wedge u],u \rangle_{L^2(\mathfrak{R}^3)}.
\end{align}
Note that since $u \in W^{1,2}_{\alpha}(\Omega^1(\mathfrak{R}^3,\mathfrak{su}(2)))$, the asymptotic terms do not appear in the formula above. 

The first two terms on the right hand side of \ref{22} are finite, since $ 1 \leq w^{- \alpha - \frac{1}{2}}$ when $-\frac{1}{2} \leq \alpha $, and therefore,
\begin{align*}
    &\| \nabla_{A_0} u \|_{L^2(\mathfrak{R}^3)}^2 \leq 
    \| w^{- \alpha - \frac{1}{2}} \nabla_{A_0} u \|_{L^2(\mathfrak{R}^3)}^2 =
    \| \nabla_{A_0} u \|_{L^2_{\alpha - 1}(\mathfrak{R}^3)}^2 \leq \|u \|_{W^{1,2}_{\alpha}(\mathfrak{R}^3)}^2 < \infty, \\
    &\| [\varPhi_0, u]\|_{L^2(\mathfrak{R}^3)}^2 \leq  
    \| w^{- \alpha - \frac{1}{2}}[\varPhi_0, u]\|_{L^2(\mathfrak{R}^3)}^2 =
    \| [\varPhi_0, u] \|_{L^2_{{\alpha - 1}}(\mathfrak{R}^3)}^2  \leq \|u \|_{W^{1,2}_{{\alpha}}(\mathfrak{R}^3)}^2 < \infty.
\end{align*}

Regarding the Ricci term, since $\mathfrak{R}^3$ is flat outside of a compact subset, $\sup_{\mathfrak{R}^3}|Ric|$ is finite, and therefore,
\begin{align*}
    \langle u, Ric(u)\rangle_{L^2(\mathfrak{R}^3)}
    \leq \sup_{\mathfrak{R}^3}|Ric| \|u\|^2_{L^2(\mathfrak{R}^3)} \leq \sup_{\mathfrak{R}^3}|Ric| \|u\|^2_{W^{1,2}_{\alpha}(\mathfrak{R}^3)} < \infty.
\end{align*}
As for the error term in \ref{22}, by applying the H\"older's inequality twice, we get
\begin{align*}
    |\langle *[e_0 &\wedge u],u \rangle_{L^2(\mathfrak{R}^3)}| \\&= |\langle *[w e_0 \wedge u],w^{-1}u \rangle_{L^2(\mathfrak{R}^3)}|
    \leq \|[w e_0 \wedge u] \|_{L^2(\mathfrak{R}^3)} \|w^{-1}  u \|_{L^2(\mathfrak{R}^3)}  
    \\&\leq 2 \| we\|_{L^3(\mathfrak{R}^3)} \|u \|_{L^6(\mathfrak{R}^3)}
    \|w^{-1}  u \|_{L^2(\mathfrak{R}^3)}
    \leq 2 \delta \|u \|_{L^6(\mathfrak{R}^3)}
    \|w^{-1}  u \|_{L^2(\mathfrak{R}^3)}.
\end{align*}
Regarding the term $\|u \|_{L^6(\mathfrak{R}^3)}$, by the Sobolev inequality we have $\| u \|_{L^6(\mathfrak{R}^3)} \leq C_{Sob}\| u \|_{W^{1,2}(\mathfrak{R}^3)}$, where $C_{Sob}$ is independent of $\lambda$, and since $0 < w \leq 1$, on $\mathfrak{R}^3 \setminus \{ 0 \}$,
\begin{align*}
    \|u\|^2_{W^{1,2}(\mathfrak{R}^3)} &= 
    \|u\|^2_{L^2(\mathfrak{R}^3)} + 
    \|\nabla_{A_0} u\|^2_{L^2(\mathfrak{R}^3)} 
    \\&\leq
    \|w^{-\alpha - \frac{3}{2}}u\|^2_{L^2(\mathfrak{R}^3)} +
    \|w^{-\alpha - \frac{1}{2}}\nabla_{A_0} u\|^2_{L^2(\mathfrak{R}^3)}
    \leq  \|u\|^2_{W^{1,2}_{\alpha}(\mathfrak{R}^3)}
    < \infty.
\end{align*}
Regarding the term $\|w^{-1}  u \|_{L^2(\mathfrak{R}^3)}$, following Lemma \ref{Hardy}, we have
\begin{align*}
    \|w^{-1}  u \|_{L^2(\mathfrak{R}^3)} &\leq C \| \nabla_{A_0} u \|_{L^2(\mathfrak{R}^3)} 
    \leq C \| w^{-\alpha - \frac{1}{2}}\nabla_{A_0} u \|_{L^2(\mathfrak{R}^3)}
    = C \| \nabla_{A_0} u \|_{L^2_{\alpha-1}(\mathfrak{R}^3)}
    \\ &\leq C \| u \|_{W^{1,2}_{\alpha}} < \infty,
\end{align*}
for a constant $C$ which depends on $\alpha$ and the metric on $\mathfrak{R}^3$, and therefore,
\begin{align*}
    |\langle *[e_0 \wedge u],u \rangle_{L^2(\mathfrak{R}^3)}| \leq 2 \delta C C_{Sob} \|u\|^2_{W^{1,2}_{\alpha}(\mathfrak{R}^3)}
    \leq C_1 \|u\|^2_{W^{1,2}_{\alpha}(\mathfrak{R}^3)},
\end{align*}
for a constant $C_1 = 2 \delta C C_{Sob}$, and therefore,
\begin{align*}
    \|d_2^*u\|_{L^2(\mathfrak{R}^3)}^2 &\leq \| \nabla_{A_0} u \|_{L^2(\mathfrak{R}^3)}^2 + \| [\varPhi_0, u]\|_{L^2(\mathfrak{R}^3)}^2 + C_1 \|u\|^2_{W^{1,2}_{\alpha}(\mathfrak{R}^3)} + \sup_{\mathfrak{R}^3} |Ric| \|u\|^2_{W^{1,2}_{\alpha}(\mathfrak{R}^3)}
    \\& \leq 
    (2 +  C_1 + \sup_{\mathfrak{R}^3} |Ric|) \|u\|^2_{W^{1,2}_{\alpha}(\mathfrak{R}^3)} < \infty.
\end{align*}

After observing that $\|d_2^*u\|_{L^2(\mathfrak{R}^3)}^2$ is convergent, we should prove the same for 
$\langle u , f \rangle_{L^2(\mathfrak{R}^3)}$. By the Cauchy–Schwarz inequality we have
\begin{align*}
    \langle u,f \rangle_{L^2(\mathfrak{R}^3)} \leq \|u\|_{L^2(\mathfrak{R}^3)} \|f\|_{L^2(\mathfrak{R}^3)} \leq \|u\|_{L^2_{\alpha}(\mathfrak{R}^3)} \|f\|_{L^2(\mathfrak{R}^3)}
    \leq \|u\|_{W^{1,2}_{\alpha}(\mathfrak{R}^3)}
    \|f\|_{L^2(\mathfrak{R}^3)} < \infty,
\end{align*}
and therefore, we have a well-defined action functional $E: W^{1,2}_{\alpha}(\Omega^1(\mathfrak{R}^3,\mathfrak{su}(2))) \to \mathbb{R}$. 
\end{proof}

\begin{lemma}[Step 2]
Let $f$ be a smooth, compactly supported, $\mathfrak{su}(2)$-valued 1-form on $\mathfrak{R}^3$. Let $\alpha \in [-\frac{1}{2},0)$. The functional $E: W^{1,2}_{\alpha}(\Omega^1(\mathfrak{R}^3,\mathfrak{su}(2))) \to \mathbb{R}$ is continuous. 
\end{lemma}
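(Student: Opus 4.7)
The plan is to observe that the proof of Step 1 already yields operator-level boundedness, from which continuity is immediate, so Step 2 is essentially a repackaging of the estimates already obtained.

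First, I would re-examine the estimates in the proof of Lemma \ref{s1} and extract from the final inequality
\begin{align*}
\|d_2^* u\|_{L^2(\mathfrak{R}^3)}^2 \leq (2 + C_1 + \sup_{\mathfrak{R}^3} |Ric|)\|u\|_{W^{1,2}_\alpha(\mathfrak{R}^3)}^2
\end{align*}
the stronger statement that $d_2^*: W^{1,2}_\alpha(\Omega^1(\mathfrak{R}^3,\mathfrak{su}(2))) \to L^2(\Omega^1(\mathfrak{R}^3,\mathfrak{su}(2)))$ is a bounded linear operator, whose norm depends only on $\alpha$, $\delta$, $C_{Sob}$, and the geometry of $\mathfrak{R}^3$ (in particular, independent of $u$). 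Similarly, the Cauchy--Schwarz inequality used in the second half of Step 1 yields a bounded linear functional $u \mapsto \langle u, f\rangle_{L^2(\mathfrak{R}^3)}$ on $W^{1,2}_\alpha$, using the chain $\|u\|_{L^2(\mathfrak{R}^3)} \leq \|u\|_{L^2_\alpha(\mathfrak{R}^3)} \leq \|u\|_{W^{1,2}_\alpha(\mathfrak{R}^3)}$; the first inequality holds because $w \leq 1$ and $\alpha \in [-\tfrac{1}{2},0)$ forces $w^{-\alpha - 3/2} \geq 1$ pointwise.

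Second, continuity of each summand of $E$ follows formally. The linear term $u \mapsto \langle u, f\rangle_{L^2(\mathfrak{R}^3)}$ is a bounded linear functional on a Banach space, hence continuous. The quadratic term $u \mapsto \tfrac{1}{2}\|d_2^* u\|_{L^2(\mathfrak{R}^3)}^2$ is the composition of the bounded linear map $d_2^*: W^{1,2}_\alpha \to L^2$ with the continuous map $v \mapsto \tfrac{1}{2}\|v\|_{L^2}^2$ on $L^2$, and hence is continuous. Explicitly, for $u_n \to u$ in $W^{1,2}_\alpha$,
\begin{align*}
\big|\|d_2^* u_n\|_{L^2}^2 - \|d_2^* u\|_{L^2}^2\big|
\leq \|d_2^*(u_n - u)\|_{L^2}\bigl(\|d_2^* u_n\|_{L^2} + \|d_2^* u\|_{L^2}\bigr),
\end{align*}
where the first factor is bounded by $C\|u_n - u\|_{W^{1,2}_\alpha} \to 0$, and the second factor stays bounded since $\{u_n\}$ is a convergent (hence bounded) sequence in $W^{1,2}_\alpha$. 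Adding the two pieces gives $E(u_n) \to E(u)$.

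There is no substantial obstacle here; the only care required is bookkeeping, verifying that every constant appearing in Step 1 is genuinely independent of $u$ so that the estimates upgrade from pointwise finiteness to operator bounds. The Hardy-type inequality from Lemma \ref{Hardy} and the Sobolev inequality from Lemma \ref{CNSI-inequality} are the sources of these constants, and both furnish constants depending only on $\alpha$ and the (asymptotically Euclidean) geometry of $\mathfrak{R}^3$, not on the particular element $u$.
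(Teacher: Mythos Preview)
Your proof is correct and in fact more efficient than the paper's. The paper re-expands $\|d_2^* u\|_{L^2}^2$ via the Weitzenb\"ock formula and verifies convergence of each piece (the $\nabla_{A_0}$ term, the $[\varPhi_0,\cdot]$ term, the Ricci term, and the error term) separately as $u_i \to u$, essentially repeating the estimates of Step~1 in difference form. You instead observe that the final inequality of Step~1 already says the linear map $d_2^*: W^{1,2}_\alpha \to L^2$ is bounded, so continuity of $u \mapsto \|d_2^* u\|_{L^2}^2$ is immediate from composition with the continuous norm-squared on $L^2$; the linear term is handled identically in both approaches. Your route avoids the redundant term-by-term work, at the cost of requiring the reader to notice that the constants in Step~1 are uniform in $u$---which you correctly flag as the only point needing care.
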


\begin{proof}
We should show the following functions are continuous, 
\begin{align*}
\|d_2^*-\|^2_{L^2(\mathfrak{R}^3)}:
W^{1,2}_{\alpha}(\Omega^1(\mathfrak{R}^3,\mathfrak{su}(2))) \to \mathbb{R}, 
\quad
\langle -,f \rangle_{L^2(\mathfrak{R}^3)}:
W^{1,2}_{\alpha}(\Omega^1(\mathfrak{R}^3,\mathfrak{su}(2))) \to \mathbb{R}. 
\end{align*}
To prove $\|d_2^*-\|^2_{L^2(\mathfrak{R}^3)}$ is continuous we should show 
\begin{align} \label{conn}
\|d_2^* u_i\|^2_{L^2(\mathfrak{R}^3)} \to \|d_2^*u\|^2_{L^2(\mathfrak{R}^3)}, \quad \text{ when } \quad u_i \to u \quad \text{ in } \quad W^{1,2}_{\alpha}(\Omega^1(\mathfrak{R}^3,\mathfrak{su}(2))).
\end{align} 
To show this we use the monopole Weitzenb\"ock formula. It follows directly from the definition of $W^{1,2}_{\alpha}(\Omega^1(\mathfrak{R}^3,\mathfrak{su}(2)))$ that 
\begin{align*}
    \|\nabla_{A_0} u_i \|_{L^2(\mathfrak{R}^3)}^2 + \| [\varPhi_0, u_i]\|_{L^2(\mathfrak{R}^3)}^2 \to 
    \|\nabla_{A_0} u \|_{L^2(\mathfrak{R}^3)}^2 + \| [\varPhi_0, u]\|_{L^2(\mathfrak{R}^3)}^2,
\end{align*}
when $ u_i \to u$ in $W^{1,2}_{\alpha}(\Omega^1(\mathfrak{R}^3,\mathfrak{su}(2)))$.

Regarding the Ricci term in the monopole Weitzenb\"ock formula,
\begin{align*}
    |\langle & Ric(u),u \rangle_{L^2(\mathbb{R}^3)} 
    -
    \langle Ric(u_i),u_i \rangle_{L^2(\mathbb{R}^3)}|
    \\ &=
    |\langle w Ric(u), w^{-1} u \rangle_{L^2(\mathbb{R}^3)}
    -
    \langle w Ric(u_i), w^{-1} u_i \rangle_{L^2(\mathbb{R}^3)}|
    \\& \leq
    |\langle w Ric(u-u_i), w^{-1} u \rangle_{L^2(\mathbb{R}^3)}|
    +
    |\langle w Ric(u_i), w^{-1} (u-u_i) \rangle_{L^2(\mathbb{R}^3)}|
    \\ &\leq  \|w Ric\|_{L^3} \|u-u_i \|_{L^6} \|w^{-1}u\|_{L^2} + 
    \|w Ric\|_{L^3} \|u _i\|_{L^6} \|w^{-1}(u-u_i)\|_{L^2}
    \\ &\leq  C  \left(
    \|w Ric\|_{L^3} \| u-u_i \|_{W^{1,2}} \|\nabla_{A_0} u\|_{L^2} + 
    \|w Ric\|_{L^3} (\|u\|_{W^{1,2}}+1) \|u- u_i\|_{W^{1,2}}\right),
\end{align*}
which goes to zero as $i \to \infty$.

Furthermore, when $i$ is sufficiently large,
\begin{align*}
    | 
    (
    |\langle & *[e_0 \wedge u],u \rangle_{L^2(\mathfrak{R}^3)}| 
    -
    |\langle *[e_0 \wedge u_i],u_i \rangle_{L^2(\mathfrak{R}^3)}|
    )
    |
    \\&=
    |(|\langle *[we_0 \wedge u],w^{-1}u \rangle_{L^2(\mathfrak{R}^3)}| 
    -
    |\langle *[we_0 \wedge u_i],w^{-1}u_i \rangle_{L^2(\mathfrak{R}^3)}|)|
    \\& \leq
    |\langle *[we_0 \wedge u],w^{-1}(u - u_i) \rangle_{L^2(\mathfrak{R}^3)}|
    +
    |\langle *[we_0 \wedge (u - u_i)],w^{-1}u_i \rangle_{L^2(\mathfrak{R}^3)}|
    \\ &\leq 
    \|[we_0 \wedge u]\|_{L^2(\mathfrak{R}^3)}
    \|w^{-1}(u - u_i)\|_{L^2(\mathfrak{R}^3)}
    +
    \|[we_0 \wedge (u - u_i)]\|_{L^2(\mathfrak{R}^3)}
    \|w^{-1}u_i\|_{L^2(\mathfrak{R}^3)}
    \\ &\leq 
    2 \delta \|u \|_{L^6(\mathfrak{R}^3)}
    \|w^{-1}  (u-u_i) \|_{L^2(\mathfrak{R}^3)}
    +
    2 \delta \|u-u_i \|_{L^6(\mathfrak{R}^3)}
    \|w^{-1}  u_i \|_{L^2(\mathfrak{R}^3)}
    \\ &\leq
    2 \delta \|u \|_{W^{1,2}(\mathfrak{R}^3)}
    \|w^{-1}  (u-u_i) \|_{L^2(\mathfrak{R}^3)}
    +
    2 \delta \|u-u_i \|_{W^{1,2}(\mathfrak{R}^3)}
    \|w^{-1}  u_i \|_{L^2(\mathfrak{R}^3)}
    \\ &\leq
    2 \delta \|u \|_{W^{1,2}_{\alpha}(\mathfrak{R}^3)}
    \|u-u_i\|_{W^{1,2}_{\alpha}(\mathfrak{R}^3)}
    +
    2 \delta \|u-u_i \|_{W^{1,2}_{\alpha}(\mathfrak{R}^3)}
    (\| u \|_{W^{1,2}_{\alpha}(\mathfrak{R}^3)}+1),
\end{align*}
which converges to 0 as $i \to \infty$, and therefore, by the monopole Weitzenb\"ock formula \ref{conn} follows.

In order to prove $\langle -,f \rangle_{L^2(\mathfrak{R}^3)}:
W^{1,2}_{\alpha}(\Omega^1(\mathfrak{R}^3,\mathfrak{su}(2))) \to \mathbb{R}$ is continuous, note that this map is linear, and since $W^{1,2}_{\alpha}(\Omega^1(\mathfrak{R}^3,\mathfrak{su}(2)))$ is a Hilbert space, continuity is equivalent to being bounded. 
\begin{align*}
    \langle u , f \rangle_{L^2(\mathfrak{R}^3)} \leq
        \|u\|_{L^2(\mathfrak{R}^3)} \|f\|_{L^2(\mathfrak{R}^3)} \leq
    \|u\|_{W^{1,2}_{\alpha}(\mathfrak{R}^3)} \|f\|_{L^2(\mathfrak{R}^3)},
\end{align*}
and therefore, the operator norm 
\begin{align*}
    \sup \{\langle u , f \rangle_{L^2(\mathfrak{R}^3)} \; | \; {u \in W^{1,2}_{\alpha}, \|u\|_{W^{1,2}_{\alpha}}  = 1} \} \leq 
     \|f\|_{L^2(\mathfrak{R}^3)} < \infty,
\end{align*}
which proves the linear map is bounded and continuous. 
\end{proof}

\begin{lemma}[Step 3]
Let $f$ be a smooth, compactly supported, $\mathfrak{su}(2)$-valued 1-form on $\mathfrak{R}^3$. Let $\alpha \in [-\frac{1}{2},0)$. $E: W^{1,2}_{\alpha}(\Omega^1(\mathfrak{R}^3,\mathfrak{su}(2))) \to \mathbb{R}$ is Gateaux-differentiable.
\end{lemma}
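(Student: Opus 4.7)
The plan is to compute the difference quotient $\frac{E(u+tv)-E(u)}{t}$ explicitly for arbitrary $u, v \in W^{1,2}_{\alpha}(\Omega^1(\mathfrak{R}^3,\mathfrak{su}(2)))$ and $t \in \mathbb{R} \setminus \{0\}$, identify the candidate derivative, pass to the limit $t \to 0$, and then verify that the resulting expression is a continuous linear functional of $v$. Since $d_2^*$ is a linear differential operator, we have $d_2^*(u+tv) = d_2^*u + t\, d_2^*v$, so the polarization identity together with the linearity of $\langle -, f\rangle_{L^2}$ yields
\begin{align*}
\frac{E(u+tv) - E(u)}{t} = \langle d_2^* u, d_2^* v\rangle_{L^2(\mathfrak{R}^3)} + \tfrac{t}{2}\,\|d_2^* v\|^2_{L^2(\mathfrak{R}^3)} - \langle v, f\rangle_{L^2(\mathfrak{R}^3)}.
\end{align*}

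By Step~1, the computation based on the monopole Weitzenb\"ock formula \ref{Weitzenbockformula} gives $\|d_2^* v\|_{L^2(\mathfrak{R}^3)}^2 \leq (2 + C_1 + \sup_{\mathfrak{R}^3}|\mathrm{Ric}|)\,\|v\|_{W^{1,2}_{\alpha}(\mathfrak{R}^3)}^2 < \infty$, and the same bound applied to $u$ gives $\|d_2^* u\|_{L^2(\mathfrak{R}^3)} < \infty$. Hence $\langle d_2^* u, d_2^* v\rangle_{L^2}$ is a convergent integral by Cauchy--Schwarz, and the $\tfrac{t}{2}\,\|d_2^* v\|^2_{L^2(\mathfrak{R}^3)}$ term is $O(t)$. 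Passing to the limit $t \to 0$ produces the candidate Gateaux derivative
\begin{align*}
dE_u(v) \;=\; \langle d_2^* u, d_2^* v\rangle_{L^2(\mathfrak{R}^3)} \;-\; \langle v, f\rangle_{L^2(\mathfrak{R}^3)}.
\end{align*}

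Linearity of $v \mapsto dE_u(v)$ is immediate from the linearity of $d_2^*$ and the bilinearity of the $L^2$-pairing. To verify continuity (boundedness) on $W^{1,2}_{\alpha}$, I would apply Cauchy--Schwarz to both terms and reuse the Step~1 estimate $\|d_2^* v\|_{L^2(\mathfrak{R}^3)} \leq C\,\|v\|_{W^{1,2}_\alpha(\mathfrak{R}^3)}$ (with $C$ depending only on $\alpha$, $\delta$, the Sobolev constant and $\sup |\mathrm{Ric}|$, independent of $\lambda$), together with the inclusion $\|v\|_{L^2(\mathfrak{R}^3)} \leq \|v\|_{L^2_{\alpha}(\mathfrak{R}^3)} \leq \|v\|_{W^{1,2}_\alpha(\mathfrak{R}^3)}$ which holds for $\alpha \geq -\tfrac{1}{2}$ since then $w^{-\alpha - 3/2} \geq 1$ on $\mathfrak{R}^3$. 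These combine to give
\begin{align*}
|dE_u(v)| \;\leq\; C^2\,\|u\|_{W^{1,2}_\alpha(\mathfrak{R}^3)}\,\|v\|_{W^{1,2}_\alpha(\mathfrak{R}^3)} \;+\; \|f\|_{L^2(\mathfrak{R}^3)}\,\|v\|_{W^{1,2}_\alpha(\mathfrak{R}^3)},
\end{align*}
so $dE_u \in (W^{1,2}_{\alpha})^*$ with operator norm controlled by $C^2\|u\|_{W^{1,2}_\alpha} + \|f\|_{L^2}$.

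This argument is essentially a bookkeeping exercise once Step~1 is in hand; there is no real obstacle. The only point that deserves mild care is making sure the cross term $\langle d_2^* u, d_2^* v\rangle_{L^2}$ is genuinely integrable before the limit is taken, which is precisely what the $\|d_2^* u\|_{L^2}, \|d_2^* v\|_{L^2} < \infty$ conclusion of Step~1 provides; once that is secured, the limit passage in $t$ is algebraic and the linearity/boundedness of $dE_u$ follows from exactly the same estimates used to prove convergence of the functional $E$ itself.
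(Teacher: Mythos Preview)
Your proposal is correct and follows essentially the same approach as the paper: the paper's proof simply writes down the formula $d_uE(v)=\int_{\mathfrak{R}^3}\langle d_2^*u,d_2^*v\rangle\,vol_g-\langle v,f\rangle_{L^2}$ and then checks both terms are finite via Cauchy--Schwarz and the Step~1 estimates, exactly as you do. You are more explicit in writing out the difference quotient and in verifying boundedness of $dE_u$ as a linear functional, but the underlying argument is identical.
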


\begin{proof}
A direct computation shows
\begin{align*}
    d_u E(v) = \int_{\mathfrak{R}^3} \langle d_2^*u , d_2^*v \rangle_{L^2} vol_g - \langle v , f \rangle_{L^2(\mathfrak{R}^3)}.
\end{align*}
Note that for any $u ,v \in W^{1,2}_{\alpha}$,
\begin{align*}
    \int_{\mathfrak{R}^3} \langle d_2^*u , d_2^*v \rangle_{L^2} vol_g \leq \|d_2^*u\|_{L^2(\mathfrak{R}^3)} \| d_2^*v\|_{L^2(\mathfrak{R}^3)} < \infty, 
\end{align*}
and 
\begin{align*}
    \langle v , f \rangle_{L^2(\mathfrak{R}^3)} \leq \|v\|_{L^2(\mathfrak{R}^3)} \| f\|_{L^2(\mathfrak{R}^3)} < \infty.
\end{align*}
\end{proof}

\begin{lemma}[Step 4]\label{itisnorm} Suppose $f$ is a smooth, compactly supported, $\mathfrak{su}(2)$-valued 1-form on $\mathfrak{R}^3$. Let $\alpha \in [-\frac{1}{2},0)$. There exists $\delta>0$ such that if $\|we_0\|_{L^3(\mathfrak{R}^3)}<\delta$, then the functional\begin{align}
    E: W^{1,2}_{\alpha}(\Omega^1(\mathfrak{R}^3,\mathfrak{su}(2))) \to \mathbb{R}, \quad \quad     E(u) := \frac{1}{2}\int_{\mathfrak{R}^3} |d_2^*u|^2 vol_g - \langle u , f \rangle_{L^2(\mathfrak{R}^3)},
\end{align} 
is strictly convex. 
\end{lemma}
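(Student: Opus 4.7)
The plan is to observe that $E$ is a quadratic form plus an affine functional: the term $u\mapsto\langle u,f\rangle_{L^2(\mathfrak{R}^3)}$ is affine, so by the parallelogram identity
\begin{align*}
E\bigl(tu_1+(1-t)u_2\bigr) = tE(u_1)+(1-t)E(u_2) - \tfrac{1}{2}t(1-t)\|d_2^*(u_1-u_2)\|^2_{L^2(\mathfrak{R}^3)}
\end{align*}
for any $u_1,u_2\in W^{1,2}_{\alpha}$ and $t\in(0,1)$. Strict convexity of $E$ is therefore equivalent to the strict positivity of the Hessian quadratic form $v\mapsto\|d_2^* v\|^2_{L^2(\mathfrak{R}^3)}$ on $W^{1,2}_{\alpha}\setminus\{0\}$.

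To establish this positivity I will apply the monopole Weitzenb\"ock identity from Lemma \ref{monopoleweitzenbock}; after integration (boundary terms vanish by density of $C^\infty_c$ in $W^{1,2}_{\alpha}$) it reads
\begin{align*}
\|d_2^* v\|^2_{L^2} = \|\nabla_{A_0}v\|^2_{L^2} + \|[\varPhi_0,v]\|^2_{L^2} + \langle \mathrm{Ric}(v),v\rangle_{L^2} + \langle *[e_0\wedge v],v\rangle_{L^2}.
\end{align*}
The first two summands are non-negative; the remaining two I would absorb into the first by reusing the estimates already carried out in the proof of Lemma \ref{s1}. Denoting by $C_H$ the Hardy constant of Lemma \ref{Hardy} such that $\|w^{-1}v\|_{L^2}\leq C_H\|\nabla_{A_0}v\|_{L^2}$, and using that $w\leq 1$, the Ricci term satisfies
\begin{align*}
|\langle\mathrm{Ric}(v),v\rangle_{L^2}| \leq \sup_{\mathfrak{R}^3}|\mathrm{Ric}|\,\|v\|^2_{L^2} \leq C_H^{2}\sup|\mathrm{Ric}|\,\|\nabla_{A_0}v\|^2_{L^2},
\end{align*}
while the H\"older--Sobolev--Hardy chain yields
\begin{align*}
|\langle *[e_0\wedge v],v\rangle_{L^2}| \leq 2\|we_0\|_{L^3}\|v\|_{L^6}\|w^{-1}v\|_{L^2} \leq 2\delta C_{Sob}C_H\,\|\nabla_{A_0}v\|^2_{L^2}.
\end{align*}

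Putting these estimates together gives
\begin{align*}
\|d_2^* v\|^2_{L^2} \geq \bigl(1 - C_H^{2}\sup|\mathrm{Ric}| - 2\delta C_{Sob}C_H\bigr)\|\nabla_{A_0}v\|^2_{L^2} + \|[\varPhi_0,v]\|^2_{L^2}.
\end{align*}
The normalization $\sup_{\mathfrak{R}^3}|\mathrm{Ric}|<1/100$ fixed just before Lemma \ref{s1} was made precisely so that $C_H^{2}\sup|\mathrm{Ric}|$ is strictly less than $1$ (shrinking the metric further if needed to dominate $C_H$), and then choosing $\delta>0$ sufficiently small leaves the bracketed coefficient bounded below by a positive constant. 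Since Hardy also forces $v=0$ whenever $\nabla_{A_0}v=0$ in $W^{1,2}_\alpha$, this yields $\|d_2^* v\|^2_{L^2}>0$ for every nonzero $v$, which proves strict convexity. The main technical point is verifying that the Hardy and Sobolev constants $C_H,C_{Sob}$ are independent of the mass parameter $\lambda$; this is ensured by the scale-invariant choice of the weight $w$ in Definition \ref{weightpointq} and by the fact that the metric on $\mathfrak{R}^3$ is Euclidean outside a fixed compact set, so a single $\delta$ works uniformly as $\lambda\to\infty$.
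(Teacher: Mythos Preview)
Your proof is correct and follows essentially the same route as the paper: reduce strict convexity to the injectivity statement $d_2^*v=0\Rightarrow v=0$, expand $\|d_2^*v\|_{L^2}^2$ via the monopole Weitzenb\"ock formula, and absorb the Ricci and error terms into $\|\nabla_{A_0}v\|_{L^2}^2$ using Hardy and H\"older--Sobolev. Your treatment of the Ricci term (bounding by $\sup|\mathrm{Ric}|\,\|v\|_{L^2}^2$ and then Hardy) is slightly more direct than the paper's, which instead splits off a factor of $w$ and controls $\|w\,\mathrm{Ric}\|_{L^3}$ via the smallness of the support; likewise your use of Hardy to conclude $v=0$ from $\nabla_{A_0}v=0$ is cleaner than the paper's ``covariantly constant in $W^{1,2}_\alpha$ hence zero'' step, but the arguments are interchangeable.
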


\begin{proof}
Since $\langle u,f \rangle_{L^2(\mathfrak{R}^3)}$ is linear in $u$, we only need to show $\|d_2^*u\|_{L^2(\mathfrak{R}^3)}^2$ is strictly convex. Let $u, v \in W^{1,2}_{\alpha}(\Omega^1(\mathfrak{R}^3,\mathfrak{su}(2)))$ where $u_1 \neq u_2$. Let $t \in (0,1)$. We should prove
\begin{align*}
    \|d_2^*(tu_1+(1-t)u_2)\|_{L^2(\mathfrak{R}^3)}^2 <
    t\|d_2^*u_1\|_{L^2(\mathfrak{R}^3)}^2 + (1-t)\|d_2^*u_2\|_{L^2(\mathfrak{R}^3)}^2.
\end{align*}
In fact, we only need to check this for $t = \frac{1}{2}$. The strict inequality 
\begin{align*}
\|d_2^*(\frac{u_1+u_2}{2})\|_{L^2(\mathfrak{R}^3)}^2 <
    \frac{1}{2}\|d_2^*u_1\|_{L^2(\mathfrak{R}^3)}^2 + \frac{1}{2}\|d_2^*u_2\|_{L^2(\mathfrak{R}^3)}^2,    
\end{align*}
is equivalent to $\|d_2^*(u_1-u_2)\|_{L^2(\mathfrak{R}^3)}^2 > 0$. Let $u = u_1 - u_2$. We should show 
\begin{align} \label{n}
    u \in W^{1,2}_{\alpha},\quad d_2^*u = 0\quad \Rightarrow \quad u = 0. 
\end{align}
The property \ref{n} implies $\| d_2^* - \|_{L^2}$ is a norm on $W^{1,2}_{\alpha}$. The key fact to prove this is the Gagliardo–Nirenberg–Sobolev inequality. Suppose $\| d_2^* u \|_{L^2(\mathfrak{R}^3)} =0$,
\begin{align*}
0 = \| d_2^* u \|_{L^2(\mathfrak{R}^3)}^2 = \| \nabla_{A_0} u \|_{L^2(\mathfrak{R}^3)}^2 + \| [\varPhi_0, u]\|_{L^2(\mathfrak{R}^3)}^2 &+ \langle Ric(u),u \rangle_{L^2(\mathfrak{R}^3)} \\&+ \langle *[e_0 \wedge u],u \rangle_{L^2(\mathfrak{R}^3)}.  
\end{align*} 
Using Lemma \ref{Hardy}, we have
\begin{align*}
    \langle u, Ric(u)\rangle_{L^2(\mathfrak{R}^3)} &=
    \langle w^{-1} u, w Ric(u)\rangle_{L^2(\mathfrak{R}^3)} \leq \|w^{-1} u\|_{L^2(\mathfrak{R}^3)}
    \|w Ric(u)\|_{L^2(\mathfrak{R}^3)}
    \\ &\leq  5
    \|\nabla_{A_0} u\|_{L^2(\mathfrak{R}^3)}
    \|w Ric\|_{L^3} \|u\|_{L^6(\mathfrak{R}^3)} \leq \frac{1}{4}
    \|\nabla_{A_0} u\|^2_{L^2(\mathfrak{R}^3)},
\end{align*}
for $\epsilon$ small enough such that $\|w Ric\|_{L^3} < \frac{1}{20 C_{Sob}}$, which can be arranged when $\lambda$ is sufficiently large, since $Ric(x)=0$ outside of $B_0(4\epsilon)$. Moreover, in the inequality
\begin{align*}
    \|w^{-1} u\|_{L^2(\mathfrak{R}^3)} \leq
    C_{g, \alpha}
    \|\nabla_{A_0} u\|_{L^2(\mathfrak{R}^3)}.
\end{align*}
by taking $\epsilon > 0$ small enough, we can take $C_{g,\alpha} < 5$. In fact, when the metric $g$ on $\mathfrak{R}^3$ is flat, this constant is $1/(1+\alpha)^2$. In our case, the metric on  $\mathfrak{R}^3$ coincide with the flat metric outside of a small ball $B_{4 \epsilon}(0)$. By taking the $\epsilon > 0$ sufficiently small, we can take the constant $C_{g, \alpha}$ to be sufficiently close to $1/(\alpha+1)^2$, and therefore, less than $5$.

Furthermore,
\begin{align*}
    |\langle *[we_0 \wedge u], w^{-1} u \rangle_{L^2(\mathfrak{R}^3)}| &\leq  2 \|w e\|_{L^3(\mathfrak{R}^3)} \| u \|_{L^6(\mathfrak{R}^3)}
    \|w^{-1} u \|_{L^2(\mathfrak{R}^3)} \\&\leq 
    2C  \| we\|_{L^3(\mathfrak{R}^3)} 
    \|\nabla_{A_0} u \|_{L^2(\mathfrak{R}^3)}^2.
\end{align*}
Pick $\delta < \frac{1}{8C}$, 
\begin{align*}
    0 &= \| d_2^* u \|_{L^2(\mathfrak{R}^3)}^2 \\ &= \| \nabla_{A_0} u \|_{L^2(\mathfrak{R}^3)}^2 + \| [\varPhi_0, u]\|_{L^2(\mathfrak{R}^3)}^2 + \langle Ric(u),u \rangle_{L^2(\mathfrak{R}^3)}+ \langle *[e_0 \wedge u],u \rangle_{L^2(\mathfrak{R}^3)} \\&\geq \| \nabla_{A_0} u \|_{L^2(\mathfrak{R}^3)}^2 + \| [\varPhi_0, u]\|_{L^2(\mathfrak{R}^3)}^2 - \frac{1}{4}
    \|\nabla_{A_0} u\|^2_{L^2(\mathfrak{R}^3)}  -
    \frac{1}{4}\|\nabla_{A_0} u \|_{L^2(\mathfrak{R}^3)}^2
    \\&=
    \frac{1}{2}\| \nabla_{A_0} u \|_{L^2(\mathfrak{R}^3)}^2 + \| [\varPhi_0, u]\|_{L^2(\mathfrak{R}^3)}^2,
\end{align*}
and therefore, $\nabla_{A_0} u = 0 = [\varPhi_0,u]$. Therefore, $u$ is a covariantly constant section in $W^{1,2}_{\alpha}$, hence, $u=0$. 
\end{proof}

\begin{lemma}[Step 5]\label{minsol}
Let $f$ be a smooth, compactly supported, $\mathfrak{su}(2)$-valued 1-form on $\mathfrak{R}^3$. Let $\alpha \in [-\frac{1}{2},0)$. The action functional $E: W^{1,2}_{\alpha}(\Omega^1(\mathfrak{R}^3)) \to \mathbb{R}$ has a unique minimizer, and therefore, $d_2d_2^* u = f$ has a unique solution. 
\end{lemma}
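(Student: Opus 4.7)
The plan is to apply the direct method of the calculus of variations on the Hilbert space $W^{1,2}_{\alpha}(\Omega^1(\mathfrak{R}^3, \mathfrak{su}(2)))$. From Steps 1--4, $E$ is well-defined, continuous, Gateaux-differentiable, and strictly convex, so the remaining ingredients are coercivity and weak lower semi-continuity, after which existence and uniqueness are automatic.

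The first and essential step is a $\lambda$-independent Poincaré-type estimate of the form
\begin{equation*}
\|u\|_{W^{1,2}_{\alpha}(\mathfrak{R}^3)} \leq C\,\|d_2^* u\|_{L^2(\mathfrak{R}^3)}.
\end{equation*}
The starting point is the lower bound $\|d_2^* u\|_{L^2(\mathfrak{R}^3)}^2 \geq \tfrac{1}{2}\|\nabla_{A_0} u\|_{L^2(\mathfrak{R}^3)}^2 + \|[\varPhi_0, u]\|_{L^2(\mathfrak{R}^3)}^2$ already obtained in Step 4 from the monopole Weitzenböck formula of Lemma~\ref{monopoleweitzenbock} after absorbing the Ricci and error contributions. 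To upgrade this to a bound on the full weighted norm, I would combine the weighted Hardy inequality of Lemma~\ref{Hardy} (which controls $\|u\|_{L^2_{\alpha}}$ by $\|\nabla_{A_0}u\|_{L^2_{\alpha-1}}$) with the fact that $|\varPhi_0|$ and the weight $w$ are small in complementary regions: near the origin the BPS factor gives $\varPhi_0 \to 0$ while $w$ is small enough that Hardy with exponent $\alpha$ yields the weighted gradient bound, whereas at distances $r \gtrsim \lambda^{-1/2}$ one has $|\varPhi_0| \sim \lambda$ and the commutator term $\|[\varPhi_0, u]\|_{L^2}$ dominates the weighted mass of $u$.

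Granted this coercivity, Cauchy--Schwarz gives $\langle u, f\rangle_{L^2(\mathfrak{R}^3)} \leq \|u\|_{W^{1,2}_{\alpha}(\mathfrak{R}^3)}\,\|f\|_{L^2(\mathfrak{R}^3)}$, whence $E(u) \geq \tfrac{1}{2C^2}\|u\|_{W^{1,2}_{\alpha}}^2 - \|f\|_{L^2}\|u\|_{W^{1,2}_{\alpha}}$. Hence $E$ is bounded below and any minimizing sequence $\{u_i\}$ is bounded in $W^{1,2}_{\alpha}$. Since $W^{1,2}_{\alpha}$ is a Hilbert space, a subsequence $u_{i_k}$ converges weakly to some $u_\infty$; by Steps 2 and 4 the functional $E$ is convex and continuous, hence weakly lower semi-continuous, so $E(u_\infty) \leq \liminf_k E(u_{i_k}) = \inf E$, producing a minimizer. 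Uniqueness is immediate from the strict convexity of Step 4, and the Gateaux derivative computed in Step 3 identifies the Euler--Lagrange equation of $u_\infty$ as $d_2 d_2^* u_\infty = f$ in the distributional sense.

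The main obstacle is the $\lambda$-independent coercivity estimate. The difficulty is that the weight $w^{-\alpha-1/2}$ can be as large as $\lambda^{1/2}$ near the origin, so the unweighted bound on $\|d_2^* u\|_{L^2}$ does not automatically dominate the weighted $W^{1,2}_{\alpha}$ norm. The resolution, following \cite{MR3801425}, is precisely the complementary-region interpolation outlined above, together with the normalization $\sup_{\mathfrak{R}^3}|\mathrm{Ric}|<1/100$ that allows the curvature terms to be absorbed. Once this inequality is in place, the rest of the argument is standard direct-method variational theory on a reflexive Banach space.
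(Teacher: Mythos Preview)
Your overall strategy—coercivity plus weak lower semicontinuity and the direct method—is sound, but it diverges from the paper's argument in one important respect, and your sketch contains an unnecessary and dubious strengthening.

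The paper does \emph{not} attempt to prove the coercivity inequality $\|u\|_{W^{1,2}_{\alpha}} \leq C\|d_2^* u\|_{L^2}$ at this stage. Instead it equips $W^{1,2}_{\alpha}$ with the norm $\|d_2^*(\,\cdot\,)\|_{L^2}$ (a genuine norm by Step~4), and invokes an abstract lemma: a continuous, strictly convex, Gateaux-differentiable functional on a reflexive Banach space with $d_uE(u)>0$ outside a ball has a unique minimizer. The positivity of $d_uE(u)$ for large $\|d_2^* u\|_{L^2}$ follows from the unweighted Hardy inequality $\|w^{-1}u\|_{L^2}\leq C\|\nabla_{A_0}u\|_{L^2}$ and the Step~4 bound $\|d_2^* u\|_{L^2}^2\geq \tfrac{1}{2}\|\nabla_{A_0}u\|_{L^2}^2$, giving $\langle u,f\rangle=\langle w^{-1}u,\,wf\rangle\leq \sqrt{2}C\|d_2^*u\|_{L^2}\|f\|_{L^2}$. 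No weighted coercivity is needed.

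Your claimed $\lambda$-independent estimate $\|u\|_{W^{1,2}_{\alpha}} \leq C\|d_2^* u\|_{L^2}$ is problematic. The right-hand side controls only the \emph{unweighted} $\|\nabla_{A_0}u\|_{L^2}$ and $\|[\varPhi_0,u]\|_{L^2}$, whereas the left-hand side carries the weight $w^{-\alpha-1/2}$, which is of order $\lambda^{\alpha+1/2}$ on the ball $r\lesssim\lambda^{-1}$. For a longitudinal $u$ (where the commutator vanishes) concentrated near $r\sim\lambda^{-1}$, the ratio of weighted to unweighted gradient is $\sim\lambda^{\alpha+1/2}$, which is unbounded as $\lambda\to\infty$ unless $\alpha=-\tfrac{1}{2}$. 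Your ``complementary-region'' sketch does not address this, because Hardy (Lemma~\ref{Hardy}) bounds $\|u\|_{L^2_{\alpha}}$ by the \emph{weighted} gradient $\|\nabla_{A_0}u\|_{L^2_{\alpha-1}}$, not the unweighted one. For the purposes of Step~5 alone a $\lambda$-dependent constant suffices (since $\lambda^{-1}\leq w\leq 1$), and that is easy; the $\lambda$-independent $W^{1,2}_{\alpha}$ bound is obtained only in Step~8, by integrating $w^{-2\alpha-1}\langle d_2d_2^*u,u\rangle$ by parts and using the equation $d_2d_2^*u=f$ directly—an argument not available before the minimizer is known to exist.
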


\begin{proof}
The proof of this lemma is based on the following fact.

Let $E:W \to \mathbb{R}$ be a convex, continuous, and real Gateaux-differentiable functional defined on a real reflexive Banach space $W$ such that $d_uE(u) > 0$, for any $u \in W$ with $\|u\|_{W} \geq R > 0$, for a positive constant $R$. Then there exists an interior point $u_0$ of $\{u \in W \; | \; \|u\|_{W} < R \}$ which is the unique minimizer of $E$ and $d_{u_0} E = 0$.

Let $W = W^{1,2}_{\alpha}(\Omega^1(\mathfrak{R}^3))$ but equipped with the norm $\|d^* -\|_{L^2(\mathfrak{R}^3)}$ --- it follows from the proof of Lemma \ref{itisnorm} that this is a norm. Moreover, similar to the Sobolev space $W^{1,2}(\Omega^1(\mathfrak{R}^3))$, it is straightforward to see $ W^{1,2}_{\alpha}(\Omega^1(\mathfrak{R}^3))$ is reflexive too. 

We should show there is a constant $R>0$ such that we have $d_uE(u) > 0$ for any $u \in W$ with $\|u\|_{W} \geq R > 0$. As in the proof of Lemma \ref{itisnorm},
\begin{align*}
    \| d_2^* u \|_{L^2(\mathfrak{R}^3)}^2 &= \| \nabla_{A_0} u \|_{L^2(\mathfrak{R}^3)}^2 + \| [\varPhi_0, u]\|_{L^2(\mathfrak{R}^3)}^2 + \langle w Ric(u), w^{-1} u \rangle_{L^2(\mathfrak{R}^3)}+ \langle *[e_0 \wedge u],u \rangle_{L^2(\mathfrak{R}^3)} \\&\geq \| \nabla_{A_0} u \|_{L^2(\mathfrak{R}^3)}^2 + \| [\varPhi_0, u]\|_{L^2(\mathfrak{R}^3)}^2 - \sup_{x\in\mathfrak{R}^3}Ric(x)\|w^{-1} u\|^2_{L^2(\mathfrak{R}^3)}  -
    \frac{1}{4}\|\nabla_{A_0} u \|_{L^2(\mathfrak{R}^3)}^2 \\&\geq
    \| \nabla_{A_0} u \|_{L^2(\mathfrak{R}^3)}^2 + \| [\varPhi_0, u]\|_{L^2(\mathfrak{R}^3)}^2 - \frac{1}{4}\|\nabla_{A_0} u \|_{L^2(\mathfrak{R}^3)}^2  -
    \frac{1}{4}\|\nabla_{A_0} u \|_{L^2(\mathfrak{R}^3)}^2 
    \\&=
    \frac{1}{2}\| \nabla_{A_0} u \|_{L^2(\mathfrak{R}^3)}^2 + \| [\varPhi_0, u]\|_{L^2(\mathfrak{R}^3)}^2 
    \geq \frac{1}{2}\| \nabla_{A_0} u \|_{L^2(\mathfrak{R}^3)}^2,
\end{align*}
and therefore,
\begin{align*}
    \|d_2^*u\|^2_{L^2(\mathfrak{R}^3)} \geq \frac{1}{2}\|\nabla_{A_0}u\|^2_{L^2(\mathfrak{R}^3)}.
\end{align*}
This shows
\begin{align*}
    d_u E(u) &= 
    \| d_2^*u \|^2_{L^2(\mathfrak{R}^3)} - \langle u , f \rangle_{L^2(\mathfrak{R}^3)} = 
    \| d_2^*u \|^2_{L^2(\mathfrak{R}^3)} - \langle w^{-1} u , wf \rangle_{L^2(\mathfrak{R}^3)}
    \\ &\geq \| d_2^*u \|^2_{L^2(\mathfrak{R}^3)} -
    \|w^{-1} u\|_{L^2(\mathfrak{R}^3)} \| w f\|_{L^2(\mathfrak{R}^3)}\\
     &\geq \| d_2^*u \|^2_{L^2(\mathfrak{R}^3)} - C
    \|\nabla_{A_0} u\|_{L^2(\mathfrak{R}^3)} \| f\|_{L^2(\mathfrak{R}^3)} 
    \\ &\geq \| d_2^*u \|^2_{L^2(\mathfrak{R}^3)} - \sqrt{2} C
    \|d_2^* u\|_{L^2(\mathfrak{R}^3)} \| f\|_{L^2(\mathfrak{R}^3)}
    \\& = \|d_2^* u\|_{L^2(\mathfrak{R}^3)}(\|d_2^* u\|_{L^2(\mathfrak{R}^3)} - \sqrt{2}C\|f\|_{L^2(\mathfrak{R}^3)}).
\end{align*}
Let  $R = 1 + \sqrt{2}C\|f\|_{L^2(\mathfrak{R}^3)}$, and therefore, $\|d_2^* u\|_{L^2(\mathfrak{R}^3)} > R$ implies
\begin{align*}
\|d_2^* u\|_{L^2(\mathfrak{R}^3)} - \sqrt{2}C\|f\|_{L^2(\mathfrak{R}^3)} > 1,    
\end{align*}
hence
\begin{align*}
    d_uE(u) > \|d_2^* u\|_{L^2(\mathfrak{R}^3)} > R > 0,
\end{align*}
and therefore, $E$ has a unique minimizer in $W^{1,2}_{\alpha}(\mathfrak{R}^3)$, inside
\begin{align*}
    \{ u \in W^{1,2}_{\alpha}(\mathfrak{R}^3) \; | \;
    \|d_2^* u\|_{L^2(\mathfrak{R}^3)} < 1 + \sqrt{2}C\|f\|_{L^2(\mathfrak{R}^3)} \}.
\end{align*}
\end{proof}

$\varPhi_0$ is non-zero outside of a large ball $V \subset \mathfrak{R}^3$, and therefore, it induces a decomposition of the adjoint bundle to the longitudinal and transverse parts. Let $u^L$ and $u^T$ denote the longitudinal and transverse components, respectively. Note that since the metric $g$ is flat on $\mathfrak{R}^3 \setminus V$, we have $\mathfrak{R}^3 \setminus V = \mathbb{R}^3 \setminus V$

\begin{lemma}[Step 6]\label{Step6}
Let $u$ be the unique solution of Lemma \ref{minsol}, and $u^L$ be its longitudinal component with respect to the decomposition of the bundle over $\mathbb{R}^3 \setminus V$ induced by $\varPhi_0$. We have 
\begin{align*}
    |u^L(x)| \leq \frac{C}{|x|},
\end{align*}
for a constant $C$.
\end{lemma}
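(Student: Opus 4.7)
The plan is to exploit the fact that on $\mathbb{R}^3 \setminus V$ the approximate pair $(A_0, \varPhi_0)$ coincides with the lifted scaled Dirac monopole on flat $\mathbb{R}^3$, so the equation $d_2 d_2^* u = f$ restricted to the longitudinal part of $u$ collapses to an ordinary Laplace equation on an exterior Euclidean domain. The desired bound $|u^L(x)| \leq C/|x|$ then follows from classical exterior-domain asymptotics for harmonic functions combined with the weighted integrability of $u$.

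First I would observe that on $\mathbb{R}^3 \setminus V$ we have $\mathrm{Ric} = 0$ (flat metric), $e_0 = 0$ (since $(A_0,\varPhi_0) = (A_D^\lambda, \varPhi_D^\lambda)\cdot\sigma_3$ is a genuine monopole there), and $A_0 = A_D^\lambda \sigma_3$ preserves the splitting $\underline{\mathfrak{su}(2)} = \underline{\mathbb{R}}\cdot\sigma_3 \oplus L$, acting as the ordinary exterior derivative on the longitudinal factor because $[\sigma_3,\sigma_3]=0$. Writing $u^L = f_L\cdot\sigma_3$ with $f_L$ a real-valued $1$-form on $\mathbb{R}^3 \setminus V$, and using $[\varPhi_0, u^L] = 0$, the monopole Weitzenböck formula of Lemma \ref{monopoleweitzenbock} collapses to
\begin{align*}
    (d_2 d_2^* u)^L = (\nabla^*\nabla f_L)\cdot\sigma_3 \quad \text{on } \mathbb{R}^3 \setminus V.
\end{align*}
Enlarging $V$ if necessary so that $\mathrm{supp}(f) \subset V$, this gives $\Delta f_L = 0$ componentwise on $\mathbb{R}^3 \setminus V$.

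Second, I would perform an asymptotic analysis. Choose $R > 0$ with $V \subset B_R(0)$ and expand each Cartesian component of $f_L$ in spherical harmonics on $\{|x| > R\}$:
\begin{align*}
    f_{L,i}(x) = \sum_{l\geq 0}\sum_{|m|\leq l}\left(a^i_{l,m}|x|^l + b^i_{l,m}|x|^{-l-1}\right)Y_{l,m}(\theta,\phi).
\end{align*}
Since $u \in W^{1,2}_\alpha(\mathfrak{R}^3)$ with $\alpha \in [-\tfrac12,0)$, the restriction of $u^L$ to $\{|x| > R\}$ lies in $L^2(|x|^{-2\alpha-3}\,d^3x)$; orthogonality of spherical harmonics together with the divergence of $\int_R^\infty |x|^{2l-2\alpha-1}\,d|x|$ for every $l \geq 0$ when $\alpha \leq 0$ forces all growing coefficients $a^i_{l,m}$ to vanish. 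Only decaying modes survive, the slowest being $|x|^{-1}$, yielding $|u^L(x)| \leq C_\infty/|x|$ on $\{|x| > R\}$. On the compact annular region $\overline{B_R(0)}\setminus V$, interior elliptic estimates for the harmonic system satisfied by $f_L$ (using the $W^{2,2}_\alpha$-regularity of $u$ locally) bound $|u^L|$ by a constant $C_0$, and since $|x|$ is bounded above there this converts to $C_0 R/|x|$. Combining the two regions yields the stated bound on all of $\mathbb{R}^3 \setminus V$.

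The main obstacle is making the orthogonality/integrability argument fully rigorous — in particular, carefully translating the $\mathfrak{su}(2)$-valued weighted norm into constraints on the scalar expansion coefficients and handling the borderline case $\alpha = -\tfrac12$, where the divergence of the radial integral defining the $L^2_\alpha$-norm of the constant mode is only linear. Once this is in place, the reduction to the scalar Laplace equation and the extraction of $1/|x|$ decay from exterior-domain harmonic theory are routine.
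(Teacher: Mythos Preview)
Your proof is correct and follows the same overall strategy as the paper: reduce to the scalar Laplace equation on the exterior flat region and then extract $O(|x|^{-1})$ decay from integrability. The technical implementation differs slightly. Where you expand each Cartesian component of $u^L$ in spherical harmonics and kill the non-decaying modes $a^i_{l,m}|x|^l$ directly from the finiteness of the (weighted) $L^2$-norm, the paper instead invokes a structure theorem for subharmonic functions on exterior domains in $\mathbb{R}^n$ (from \cite{MR633276}) to write each component as $u^L_i = s_i(x) + c_i|x|^{-1}$ with $s_i$ harmonic on all of $\mathbb{R}^3$, and then applies Liouville after observing that $W^{1,2}$-membership forces $s_i \to 0$ at infinity. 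Your spherical-harmonic argument is more elementary and self-contained; the paper's route is shorter but relies on an external reference.

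One small correction: on $\{|x| \geq 1\}$ the weight satisfies $w \equiv 1$ (see Definition~\ref{weightpointq}), so membership in $W^{1,2}_\alpha$ gives ordinary unweighted $L^2$ control of $u$ and $\nabla_{A_0}u$ on the exterior, not the weighted space $L^2(|x|^{-2\alpha-3}\,d^3x)$ you wrote. This only strengthens your argument --- unweighted $L^2$ kills the growing modes even more decisively --- so the conclusion stands and your stated worry about the borderline case $\alpha = -\tfrac12$ disappears.
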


\begin{proof}
Since $f$ is compactly supported, if necessary we can enlarge $V$ such that $f_{|_{\mathbb{R}^3 \setminus V}} = 0$, and therefore, $u^L$ satisfies the equation
\begin{align*}
    \Delta u^L = 0,
\end{align*} 
on $\mathbb{R}^3 \setminus V$, thus, $u^L$ is a harmonic real-valued 1-form outside of a compact subset of $\mathbb{R}^3$. Let $u^L = u^L_1 dx_1 + u^L_2 dx_2 + u^L_3 dx_3$ on $\mathbb{R}^3 \setminus V$. With respect to the Euclidean metric, we have 
\begin{align*}
    \Delta  u^L = (\Delta u^L_1)dx_1 + 
    (\Delta u^L_2)dx_2 + (\Delta u^L_3)dx_3,
\end{align*}
and therefore, the 1-form $u^L$ is harmonic if and only if its coefficients are harmonic. This shows 
\begin{align*}
    \Delta u^L_1 = \Delta u^L_2 = \Delta u^L_3 = 0,
\end{align*}
on $\mathbb{R}^3 \setminus V$, and therefore, they are harmonic functions on the complement of a compact subset of $\mathbb{R}^3$. Functions of this type have been studied in \cite{MR633276}, which we burrow the following fact from.

Let $v$ be a subharmonic function defined over $\{ x \in \mathbb{R}^n \; | \; |x| > R\}$ for a positive real number $R$. Then there exist a non-constant subharmonic function $s(x)$ defined over $\mathbb{R}^n$, a real number $r > R$, and a constant $c\leq 0$ such that
\begin{align*}
    v(x) = s(x) + c |x|^{2-n} \quad \text{when} \quad |x| > r.
\end{align*}

Letting $v(x) = u^L_i(x)$ for $i \in \{1,2,3\}$, $n = 3$, and $R$ large enough such that $V \subset B_0(R)$, we get $u^L_i(x) = s_i(x) + \frac{c_i}{|x|}$ when $|x|>r$ for some $r>R$ and constants $c_i$. Furthermore, a similar statement holds for superharmonic functions, and therefore, in our case $s_i(x)$ is harmonic over entire $\mathbb{R}^3$. 

$u^L_i(x)$ is a harmonic section in $W^{1,2}(\mathbb{R}^3 \setminus V)$, and therefore, $\lim_{|x| \to \infty} u^L_i(x) = 0$, hence, $\lim_{|x| \to \infty} s_i(x) = 0$. This shows $s_i(x)$ is a bounded harmonic function on $\mathbb{R}^3$; thus $s_i \equiv 0$. This implies $u^L_i(x) = \frac{c_i}{|x|}$, and therefore,
\begin{align*}
    u^L = \sum_{i=1}^3 \frac{c_i}{|x|} dx_i,
\end{align*}
on $\mathbb{R}^3 \setminus V$, which proves the lemma.
\end{proof}

\begin{lemma}[Step 7]\label{Step7}
$u^T = O(e^{-r})$ as $r \to \infty$.
\end{lemma}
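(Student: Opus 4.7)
The plan is to derive a differential inequality for $|u^T|$ of the form $\Delta |u^T| \geq c\,|u^T|$ outside a sufficiently large ball, and then compare with an exponentially decaying supersolution. First I would exploit the fact that $f$ is compactly supported and that the non-flat region of $\mathfrak{R}^3$, together with the support of $e_0$ and of $\mathrm{Ric}$, all lie inside some large ball $V$. Outside $V$, the equation $d_2 d_2^* u = f$ combined with the monopole Weitzenb\"ock formula (Lemma \ref{monopoleweitzenbock}) reduces to
\begin{align*}
\nabla_{A_0}^*\nabla_{A_0} u - \mathrm{ad}(\varPhi_0)^2 u = 0,
\end{align*}
since the Ricci and $e_0$ terms vanish there. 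Pairing with $u^T$ and using that $\varPhi_0 = \varPhi_D^{\lambda}$ is longitudinal with $|\varPhi_0| \to \lambda$ at infinity, the identity $\langle -\mathrm{ad}(\varPhi_0)^2 u^T, u^T\rangle = |[\varPhi_0, u^T]|^2 = |\varPhi_0|^2 |u^T|^2$ (for the transverse part) yields
\begin{align*}
-\tfrac{1}{2}\Delta |u^T|^2 + |\nabla_{A_0} u^T|^2 + |\varPhi_0|^2 |u^T|^2 = 0
\end{align*}
outside $V$, and hence $\tfrac{1}{2}\Delta |u^T|^2 \geq c^2 |u^T|^2$ for some positive constant $c$ bounded below by (roughly) the mass at infinity.

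Next I would apply Kato's inequality $|u^T|\,\Delta |u^T| + |\nabla |u^T||^2 \geq \tfrac{1}{2}\Delta |u^T|^2$ to convert this into the scalar inequality $\Delta |u^T| \geq c^2 |u^T|$ on $\mathbb{R}^3\setminus V$, interpreted weakly at points where $|u^T|$ vanishes. Since $u\in W^{1,2}_{\alpha}(\mathfrak{R}^3)$ with $\alpha<0$ and the coefficients are smooth away from $V$, elliptic regularity promotes $u^T$ to a continuous function that tends to $0$ at infinity (the $W^{1,2}_{\alpha}$ integrability together with the Sobolev-type inequalities of Section \ref{AnalyticPreliminaries} force $|u^T|\to 0$; this is the same type of argument that works for the longitudinal part in Lemma \ref{Step6}).

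With $|u^T|$ a continuous nonnegative subsolution of $\Delta v \geq c^2 v$ on $\mathbb{R}^3\setminus V$ going to $0$ at infinity, I would finish by a standard barrier comparison. The function $\psi(x) := C e^{-c|x|}$ satisfies $\Delta \psi = c^2 \psi - \tfrac{2c}{|x|}\psi \leq c^2\psi$ for $|x|$ large, so $\psi$ is a supersolution of the same equation outside a large ball $B_R \supset V$. Choosing the constant $C$ so that $C e^{-cR} \geq \sup_{\partial B_R} |u^T|$, the maximum principle applied to $|u^T| - \psi$ on the annulus $\{R < |x| < R'\}$ (letting $R'\to\infty$ and using that both $|u^T|$ and $\psi$ vanish at infinity) yields $|u^T|(x) \leq C e^{-c|x|}$ for all $|x| \geq R$, i.e.\ $u^T = O(e^{-r})$ as $r \to \infty$, as claimed (absorbing $c$ into a possibly larger implied constant).

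The main technical point to be careful about is the rigorous justification of the differential inequality across the zero locus of $|u^T|$ and the correct handling of the constant $c$ coming from $|\varPhi_0|$; this is exactly the content of the distributional Kato inequality for bundle-valued sections, which is standard. The extension of the metric so that $\mathrm{Ric}$ and $e_0$ have compact support was built into the setup at the start of Section \ref{linearBPS}, so no further work is required there.
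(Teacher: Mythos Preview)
Your argument is correct and is precisely the standard Bochner--Kato--barrier method that the paper defers to via references (Foscolo, Proposition 5.8 in \cite{MR3801425} and step 3 of Lemma 7.10 in \cite{MR3439230}); the paper gives no independent proof beyond noting that the metric on $\mathfrak{R}^3\setminus V$ is flat, which is exactly the hypothesis you use to kill the Ricci and $e_0$ terms. One small remark: the barrier yields $|u^T|=O(e^{-c r})$ with $c$ comparable to the mass $\lambda$, so the stated $O(e^{-r})$ is a weakening that holds once $\lambda\geq 1$; and your claim that $|u^T|\to 0$ at infinity is justified by combining the finiteness of $\|u\|_{W^{1,2}_\alpha}$ (which equals the unweighted $W^{1,2}$ norm on $\{|x|>1\}$) with local elliptic estimates for $d_2d_2^*u=0$ on unit balls far out.
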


\begin{proof}
The argument is completely similar to the case studied by Foscolo, Proposition 5.8 in \cite{MR3801425}, and the step 3 of the proof of Lemma 7.10 in \cite{MR3439230}, since on $\mathfrak{R}^3 \setminus V$ the metric is flat.
\end{proof}

\begin{lemma}[Step 8] \label{Step8}
Let $f$ be a smooth, compactly supported, $\mathfrak{su}(2)$-valued 1-form on $\mathfrak{R}^3$. Let $\alpha \in [-\frac{1}{2},0)$. There  exist sufficiently small $\delta > 0$ and $\epsilon_0 > 0$ such that if $\| w e_0(A_0 \varPhi_0) \|_{L^3(\mathfrak{R}^3)} < \delta$ and $\epsilon < \epsilon_0$, then the unique solution $u(x)$ of Lemma \ref{minsol} satisfies 
\begin{align} \label{fboundsu}
        \|u\|_{W^{1,2}_{\alpha}} \leq C \|f\|_{L^2_{\alpha-2}},
\end{align}
for a constant $C$ independent of $\epsilon$.
\end{lemma}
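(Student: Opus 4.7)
The plan is to test the weak formulation of $d_2 d_2^* u = f$ against the weighted variation $v := w^{-2\alpha-1} u$. Since $f$ is compactly supported and the pointwise decay of $u$ established in Lemmas \ref{Step6} and \ref{Step7} gives $u^L = O(1/r)$ and $u^T = O(e^{-r})$, the product $w^{-2\alpha-1}u$ lies in $W^{1,2}_\alpha(\mathfrak{R}^3)$, so $v$ is an admissible test function in the Euler--Lagrange identity $\int \langle d_2^* u, d_2^* v\rangle\, vol_g = \int \langle v, f\rangle\, vol_g$ from Lemma \ref{minsol}. By the density statement in Lemma \ref{D1} we may in fact first assume $u \in C^\infty_c$ and pass to the limit.

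Writing $\psi := w^{-2\alpha-1}$, the Leibniz rule yields $d_2^*(\psi u) = \psi\, d_2^* u + T(u)$, where $T(u)$ is algebraic in $u$ and $\nabla \psi$. Because $\nabla w = (r/w)\hat{r}$ satisfies $|\nabla w|\leq 1$ and is supported in $B_1(0)$, we have the pointwise bound $|T(u)| \leq C w^{-2\alpha-2}|u|$. Plugging this into the weak identity and applying the monopole Weitzenb\"ock formula of Lemma \ref{monopoleweitzenbock} gives the weighted identity
\begin{align*}
\int_{\mathfrak{R}^3} \psi\bigl(|\nabla_{A_0} u|^2 + |[\varPhi_0, u]|^2\bigr) vol_g = \int_{\mathfrak{R}^3} \psi\,\langle u, f\rangle\, vol_g + \mathrm{Rc}(u) + \mathrm{Err}(u) + \mathrm{Comm}(u),
\end{align*}
where $\mathrm{Rc}(u)$ is the Ricci contribution $-\int \psi\,\langle Ric(u), u\rangle$, $\mathrm{Err}(u)$ the error-curvature contribution $-\int \psi\,\langle *[e_0 \wedge u], u\rangle$, and $\mathrm{Comm}(u)$ collects the commutator terms arising from $\nabla\psi$ and from $T(u)$.

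Each contribution on the right is estimated exactly as in the proofs of Lemmas \ref{s1} and \ref{itisnorm}. The Ricci term uses $\sup_{\mathfrak{R}^3}|Ric| < 1/100$ guaranteed by the normalization of $g$; the error term uses the hypothesis $\|we_0\|_{L^3(\mathfrak{R}^3)} < \delta$ combined with H\"older's inequality and the Sobolev inequality; and the commutator term uses the weighted Hardy inequality of Lemma \ref{Hardy}, whose constant on $(\mathfrak{R}^3,g)$ is arbitrarily close to $1/(\alpha+1)^2$ once $\epsilon$ is small, since $g$ is flat outside $B_{4\epsilon}(0)$. For $\delta$ and $\epsilon_0$ small enough, all three are absorbed into a fixed fraction of the left-hand side. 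The main term is handled by Cauchy--Schwarz,
\begin{align*}
\Big| \int_{\mathfrak{R}^3} \psi\,\langle u, f\rangle\, vol_g \Big| = \Big| \int_{\mathfrak{R}^3} (w^{-\alpha-3/2}u)\,(w^{-\alpha+1/2} f)\, vol_g \Big| \leq \|u\|_{L^2_\alpha(\mathfrak{R}^3)}\,\|f\|_{L^2_{\alpha-2}(\mathfrak{R}^3)},
\end{align*}
after which Lemma \ref{Hardy} gives $\|u\|_{L^2_\alpha} \leq C\|\nabla_{A_0} u\|_{L^2_{\alpha-1}}$, and a Cauchy inequality with $\varepsilon$ closes the estimate to yield $\|u\|_{W^{1,2}_\alpha} \leq C\|f\|_{L^2_{\alpha-2}}$.

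The main obstacle will be ensuring the constant $C$ is independent of $\epsilon$, and of the scaling $\lambda$ implicit in $(A_0,\varPhi_0)$. Uniformity of the Hardy constant on $(\mathfrak{R}^3,g)$ follows from the flatness of $g$ away from the small ball $B_{4\epsilon}(0)$; uniformity of the Ricci bound is the content of the metric-rescaling normalization; and uniformity of the bound on the error term is provided by the pointwise estimate of Lemma \ref{errores} together with the choice $\epsilon_j = \lambda_j^{-1/2}$, since these imply $\|we_0\|_{L^3(\mathfrak{R}^3)}$ can be made smaller than any prescribed $\delta$ by taking the average mass $\overline{m}$ large.
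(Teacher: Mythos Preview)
Your proposal is correct and follows essentially the same route as the paper: both pair the equation against $w^{-2\alpha-1}u$, expand via the monopole Weitzenb\"ock formula, and absorb the Ricci, error, and weight-commutator contributions using respectively the normalization $\sup|Ric|<1/100$, the hypothesis $\|we_0\|_{L^3}<\delta$, and the weighted Hardy inequality of Lemma~\ref{Hardy}. The one place the paper is more explicit is the absorption of the commutator term: it identifies the cross term from integration by parts as $-\alpha(1+2\alpha)\|u\|^2_{L^2_\alpha}$ and then checks numerically that $\frac{(\alpha+1)^2}{(\alpha+1)^2+1}+\alpha(1+2\alpha)>0.18$ on $[-\tfrac12,0)$, which is exactly the inequality needed for the Hardy constant $C_{g,\alpha}\approx 1/(\alpha+1)^2$ to close the estimate---a verification your plan alludes to but does not carry out.
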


\begin{proof}
If $\|u\|_{W^{1,2}_{\alpha}} = 0$, then \ref{fboundsu} is trivial. Suppose $\|u\|_{W^{1,2}_{\alpha}} \neq 0$. Then \ref{fboundsu} is equivalent to 
\begin{align}
         \|u\|^2_{W^{1,2}_{\alpha}} \leq C \|u\|_{W^{1,2}_{\alpha}}\|f\|_{L^2_{\alpha-2}}.
\end{align}

By the integration by parts we have
\begin{align*}
    \|f\|&_{L^2_{\alpha-2}} 
    \|u\|_{W^{1,2}_{\alpha}} \geq \|f\|_{L^2_{\alpha-2}} 
    \|u\|_{L^2_{\alpha}} = 
    \|w^{-\alpha+\frac{1}{2}}f\|_{L^2} 
    \|w^{-\alpha-\frac{3}{2}}u\|_{L^2} 
    \geq
    \int_{\mathfrak{R}^3} w^{-2\alpha-1}\langle f,u \rangle vol_{g} \\&=
    \int_{\mathfrak{R}^3} w^{-2\alpha-1}\langle d_2d_2^*u,u \rangle vol_{g} = 
    \int_{\mathfrak{R}^3} w^{-2\alpha-1}(|\nabla_{A_0} u|^2 + |[\varPhi_0,u]|^2) vol_{g}
    \\&+
    \int_{\mathfrak{R}^3} w^{-2\alpha-1}\langle Ric(u),u \rangle vol_{g}
    +
    \int_{\mathfrak{R}^3} w^{-2\alpha-1}\langle *[e_0 \wedge u],u \rangle vol_{g} -
    \alpha (1+2 \alpha) \|u\|^2_{L^2_{\alpha}}.
\end{align*}
The previous two lemmas show $u(x) = O(|x|^{-1})$ as $|x| \to \infty$, and therefore,
\begin{align*}
d_2^*u(x)u(x) = O(|x|^{-3}),    
\end{align*}
hence, the asymptotic terms do not appear in the integration by parts.

Let
\begin{align*}
    \|\nabla_{A_0}u\|_{L^2_{\alpha-1}}^2 = a\|\nabla_{A_0}u\|_{L^2_{\alpha-1}}^2 + b \|\nabla_{A_0}u\|_{L^2_{\alpha-1}}^2 \; \text{where} \; a+b=1 \; \text{ and } \; a,b > 0.
\end{align*}
Using Lemma \ref{Hardy},
\begin{align*}
    \|\nabla_{A_0}u\|_{L^2_{\alpha-1}}^2 = 
    a\|\nabla_{A_0}u\|_{L^2_{\alpha-1}}^2 + b \|\nabla_{A_0}u\|_{L^2_{\alpha-1}}^2 
    \geq \frac{a}{C_{g,\alpha}}\|u\|_{L^2_{\alpha}}^2 + b \|\nabla_{A_0}u\|_{L^2_{\alpha-1}}^2.
\end{align*}
Let $a/C_{g,\alpha} = b$. We get 
\begin{align*}
    a = \frac{C_{g, \alpha}}{C_{g, \alpha}+1},
    \quad \quad b = \frac{1}{C_{g, \alpha}+1}.
\end{align*}
For instance, when $g$ is flat and $\mathfrak{R}^3 = \mathbb{R}^3$, we have $C_{g,\alpha} = 1/(\alpha+1)^2$, and therefore,
\begin{align*}
    a = \frac{1}{(\alpha+1)^2+1}, \quad \quad b = \frac{(\alpha+1)^2}{(\alpha+1)^2+1}.
\end{align*}
Note that $C_{g, \alpha} \to 1/(\alpha+1)^2$ as $\epsilon \to 0$.

For these specific choices for $a$ and $b$, we get
\begin{align*}
    \|\nabla_{A_0}u\|_{L^2_{\alpha-1}}^2 \geq
    \frac{1}{C_{g, \alpha}+1} \|u\|^2_{W^{1,2}_{\alpha}},
\end{align*}
and therefore,
\begin{align*}
    \|f\|_{L^2_{\alpha-2}} 
    \|u\|_{W^{1,2}_{\alpha}} &\geq \frac{1}{C_{g, \alpha}+1} \|u\|^2_{W^{1,2}_{\alpha}}  -
    |\alpha (1+2 \alpha)| \|u\|^2_{L^2_{\alpha}}
    \\&+
    \int_{\mathfrak{R}^3} w^{-2\alpha-1}\langle Ric(u),u \rangle vol_{g}
    +
    \int_{\mathfrak{R}^3} w^{-2\alpha-1}\langle *[e_0 \wedge u],u \rangle vol_{g} \\&
    \geq \left( \frac{1}{C_{g, \alpha}+1} - |\alpha(1+2\alpha)| \right) \|u\|^2_{W^{1,2}_{\alpha}} \\& + \int_{\mathfrak{R}^3} w^{-2\alpha-1}\langle Ric(u),u \rangle vol_{g}
    +
    \int_{\mathfrak{R}^3} w^{-2\alpha-1}\langle *[e_0 \wedge u],u \rangle vol_{g}. 
\end{align*}
As $\epsilon \to 0$,
\begin{align}\label{balpha2}
     \frac{1}{C_{g, \alpha}+1} + \alpha(1+2\alpha)
     \to b_{\alpha} :=
     \frac{(\alpha+1)^2}{(\alpha+1)^2+1} + \alpha(1+2\alpha).
\end{align}
Let $\epsilon_1 > 0$ be sufficiently small such that for any $0 < \epsilon < \epsilon_1$, 
\begin{align*}
    |\left( \frac{1}{C_{g, \alpha}+1} + \alpha(1+2\alpha)
 \right) - b_{\alpha}| \leq \frac{1}{1000}.
\end{align*}
The equation $b_{\alpha} = 0$ has no solutions. In fact, $b_{\alpha} > 0.18 > 0$. Moreover, for any $\epsilon \in (0, \epsilon_1)$, the solutions to the equation $\frac{1}{C_{g, \alpha}+1} + \alpha(1+2\alpha) > 0$. 

Moreover, for all $\alpha \in [-\frac{1}{2},0)$ and $0 < \epsilon < \epsilon_1$, 
\begin{align*}
    \left( \frac{1}{C_{g, \alpha}+1} + \alpha(1+2\alpha)
 \right) > \frac{1}{10},
\end{align*}
and therefore,
\begin{align*}
    \|f\|_{L^2_{\alpha-2}} 
    \|u\|_{W^{1,2}_{\alpha}} \geq \frac{1}{10} \|u\|^2_{W^{1,2}_{\alpha}} + \int_{\mathfrak{R}^3} w^{-2\alpha-1}\langle Ric(u),u \rangle vol_{g}
    +
    \int_{\mathfrak{R}^3} w^{-2\alpha-1}\langle *[e_0 \wedge u],u \rangle vol_{g}. 
\end{align*}
As mentioned, by scaling the metric on $M$ and without loss of generality, we can assume $sup_{x \in M} |Ric(x)| \leq \frac{1}{100}$, and therefore, by Lemma \ref{Hardy},
\begin{align*}
    \int_{\mathfrak{R}^3} w^{-2\alpha-1}\langle Ric(u),u \rangle vol_{g} 
    & \leq
    \frac{1}{100}
    \| u\|^2_{L^2_{\alpha-1}} 
    \leq \frac{1}{100}
    \| u\|^2_{L^2_{\alpha}}  
    \leq \frac{C_{g,\alpha}}{100}
    \| \nabla_{A_0}u\|^2_{L^2_{\alpha-1}}
    \leq \frac{C_{g,\alpha}}{100}
    \| u\|^2_{W^{1,2}_{\alpha}}.
\end{align*}
hence,
\begin{align*}
    \|f\|_{L^2_{\alpha-2}} 
    \|u\|_{W^{1,2}_{\alpha}} &\geq \frac{1}{10} \|u\|^2_{W^{1,2}_{\alpha}} + \int_{\mathfrak{R}^3} w^{-2\alpha-1}\langle Ric(u),u \rangle vol_{g}
    +
    \int_{\mathfrak{R}^3} w^{-2\alpha-1}\langle *[e_0 \wedge u],u \rangle vol_{g} \\& \geq
    (\frac{1}{10}
     - \frac{C_{g,\alpha}}{100}
     )
    \|u\|^2_{W^{1,2}_{\alpha}}
    +
    \int_{\mathfrak{R}^3} w^{-2\alpha-1}\langle *[e_0 \wedge u],u \rangle vol_{g}. 
\end{align*}
Moreover, $1/(\alpha+1)^2 \leq 4$ for $\alpha \in [-\frac{1}{2},0)$. Let $\epsilon_2 > 0$ be sufficiently small such that 
\begin{align*}
    | C_{g, \alpha} - \frac{1}{(\alpha+1)^2} | \leq \frac{1}{10},
\end{align*}
and therefore, $C_{g,\alpha} \leq \frac{9}{2}$. This implies 
\begin{align*}
    \|f\|_{L^2_{\alpha-2}} 
    \|u\|_{W^{1,2}_{\alpha}} &\geq
    (\frac{1}{10}
     - \frac{C_{g,\alpha}}{100}
     )
    \|u\|^2_{W^{1,2}_{\alpha}}
    +
    \int_{\mathfrak{R}^3} w^{-2\alpha-1}\langle *[e_0 \wedge u],u \rangle vol_{g} \\ &  \geq  \frac{1}{20}
    \|u\|^2_{W^{1,2}_{\alpha}}
    +
    \int_{\mathfrak{R}^3} w^{-2\alpha-1}\langle *[e_0 \wedge u],u \rangle vol_{g}. 
\end{align*}

Regarding the error term,
\begin{align*}
    \int_{\mathfrak{R}^3} w^{-2\alpha-1}\langle *[e_0 \wedge u],u \rangle vol_{g} &\leq 
    \|w e_0 \|_{L^3} \|u \|_{L^2_{\alpha}} \|w^{-\alpha - \frac{1}{2}} u\|_{L^6}
    \\&\leq 
    \|w e_0 \|_{L^3} \|u \|_{W^{1,2}_{\alpha}} \|w^{-\alpha - \frac{1}{2}} u\|_{L^6}.
\end{align*}
By the Sobolev inequality 
\begin{align*}
\|w^{-\alpha - \frac{1}{2}} u\|_{L^6}
\leq C_{Sob}\|w^{-\alpha - \frac{1}{2}} u\|_{W^{1,2}} \leq
C' C_{Sob} \|u\|_{W^{1,2}_{\alpha}},
\end{align*}
for a uniform constant $C'$, and therefore,
\begin{align*}
    \int_{\mathfrak{R}^3} w^{-2\alpha-1}\langle *[e_0 \wedge u],u \rangle vol_{g} \leq \delta C' C_{Sob} \|u\|^2_{W^{1,2}_{\alpha}}.
\end{align*}
Taking $\delta$ small enough such that $\delta < \frac{1}{100 C' C_{Sob}}$, we get 
\begin{align*}
    \|f\|_{L^2_{\alpha-2}} 
    \|u\|_{W^{1,2}_{\alpha}} \geq 
        \frac{1}{25} \|u\|^2_{W^{1,2}_{\alpha}},
\end{align*}
and therefore,
\begin{align*}
    \|u\|_{L^2_{\alpha}} \leq C \|f\|_{L^2_{\alpha-2}}.
\end{align*}
\end{proof}

We are progressing towards proving 
\begin{align*}
        \|u\|_{W^{2,2}_{\alpha}} \leq C \|f\|_{L^2_{\alpha-2}}.
\end{align*}
The next lemma is a necessary estimation in this direction.

\begin{lemma}[Step 9] \label{curvaturebound}
Let $(A_0, \varPhi_0)$ be the constructed approximate monopole. Then we have the following pointwise approximation 
\begin{align*}
    |F_{A_0}| = |d_{A_0} \varPhi_0| 
    \leq \frac{C}{\lambda^{-2}+r^2}.
\end{align*}
\end{lemma}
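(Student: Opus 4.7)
The plan is to partition $\mathfrak{R}^3$ into three regions reflecting the piecewise construction in \ref{modmon}, and estimate both $|F_{A_0}|$ and $|d_{A_0}\varPhi_0|$ in each; the constant $C$ can be taken uniform in $\lambda$ since all comparisons between the flat metric and $g$ introduce bounded distortion on the compact set $V$.

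On the inner region $r \leq \lambda^{-1/2}$, where $(A_0,\varPhi_0) = (\eta^*A_{BPS}^\lambda, \eta^*\varPhi_{BPS}^\lambda)$, I would use the explicit longitudinal/transverse decomposition of $d_{A_{BPS}}\varPhi_{BPS}$ already displayed in the proof of Lemma \ref{errores} to show $|d_{A_{BPS}}\varPhi_{BPS}|_{g_0} \leq C/(1+|x|^2)$. Applying the scaling rule $(d_{A^\lambda}\varPhi^\lambda)(x) = \lambda^2(d_A\varPhi)(\lambda x)$ upgrades this to $|d_{A_{BPS}^\lambda}\varPhi_{BPS}^\lambda|_{g_0} \leq C/(\lambda^{-2}+|x|^2)$, and the flat-metric Bogomolny equation $*_0 F_{A_{BPS}^\lambda} = d_{A_{BPS}^\lambda}\varPhi_{BPS}^\lambda$ gives the matching bound on $|F_{A_{BPS}^\lambda}|_{g_0}$; pull-back by $\eta$ and transition from $*_0$ to $*$ change the bound by a multiplicative constant controlled by the curvature of $g$.

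On the exterior region $r \geq 2\lambda^{-1/2}$, we have $(A_0,\varPhi_0) = (A_D^\lambda,\varPhi_D^\lambda)$, and since $A_D^\lambda = A_D$ with $|F_{A_D}| \leq C/r^2$ from the local Dirac model and $|d\varPhi_D| = |*F_{A_D}| \leq C/r^2$, it suffices to observe that in this region $\lambda^{-2}+r^2 \leq (1+\tfrac{1}{4})r^2$ when $\lambda \geq 1$, giving the required estimate directly.

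The neck $\lambda^{-1/2} \leq r \leq 2\lambda^{-1/2}$ is the delicate region, since here we need the scale-invariant target $C/(\lambda^{-2}+r^2) \sim C\lambda$. I would write $A_0 = A_{BPS}^\lambda + b$ with $b = \xi_0(A_D^\lambda - A_{BPS}^\lambda)$, expand
\begin{align*}
F_{A_0} = F_{A_{BPS}^\lambda} + d\xi_0 \wedge (A_D^\lambda - A_{BPS}^\lambda) + \xi_0\, d_{A_{BPS}^\lambda}(A_D^\lambda - A_{BPS}^\lambda) + \tfrac{1}{2}[b\wedge b],
\end{align*}
and bound each term using: $|F_{A_{BPS}^\lambda}|, |F_{A_D^\lambda}| \leq C\lambda$ on the neck; $|A_D^\lambda - A_{BPS}^\lambda|_g = O(r) = O(\lambda^{-1/2})$ (which follows from Lemma \ref{preerror} together with the non-flat correction $O(r)$ discussed before \ref{dec}); $|d\xi_0| \leq 2\lambda^{1/2}$; and $|A_{BPS}^\lambda| \leq C\lambda^{1/2}$ on the neck by the same scaling argument. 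Each summand is then $O(\lambda)$, and the identical expansion for $d_{A_0}\varPhi_0$ gives the analogous bound. The main obstacle is precisely this neck bookkeeping — matching powers of $\lambda$ so that the cut-off derivatives and the $O(r)$ metric error together do not exceed $C/(\lambda^{-2}+r^2)$.
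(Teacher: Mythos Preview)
Your proposal is correct and follows the same approach as the paper: the paper cites \cite[Proposition 4.14]{MR3801425} for the flat-metric case (which is exactly your region-by-region analysis on the BPS core, the Dirac exterior, and the neck) and then only writes out the $g_0$-to-$g$ metric comparison explicitly, which you also handle. You have essentially spelled out the content of the cited reference together with the curvature-controlled norm comparison, so nothing is missing.
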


\begin{proof}
The proof in the case $\mathfrak{R}^3 = \mathbb{R}^3$ can be found in \cite[Proposition 4.14]{MR3801425}. The essential point is that this approximation holds everywhere on $\mathbb{R}^3$, where the pair $(A_0, \varPhi_0)$ is equal to the scaled BPS-monopole, where it is equal to the scaled Dirac monopole, and also over the region in between. 

Let $g$ and $g_0$ denote the Riemannian metrics on $\mathfrak{R}^3$ and $\mathbb{R}^3$, respectively. Recall that for any $2$-form $\beta$ with valued in any vector bundle, we have
\begin{align*}
    |\beta|_g - |\beta|_{g_0} = \sum_{k,l} \langle\beta_k, \beta_l\rangle (g^{k,l} - g_0^{k,l}) &=  - \frac{1}{3} \sum_{k,l,m,n} \langle\beta_k, \beta_l\rangle R_{klmn} \beta_k \beta_l x_m x_n + O(|x|^3) \\ &\leq C_1 R | \beta|^2_{g_0}|x|^2,
\end{align*}
where $C_1> 0$ is a constant and $R$ is the maximum of the Riemann curvature tensor of $g$. Therefore, 
\begin{align*}
    |\beta|_g \leq  |\beta|_{g_0} +  C_1 R | \beta|^2_{g_0}|x|^2,
\end{align*}
Let $\beta = F_{\eta_j^*(A^{\lambda_j}_{BPS})}$,
\begin{align*}
    |(*_0 - *)F_{\eta_j^*(A^{\lambda_j}_{BPS})}|_g \leq C_2( \frac{1}{\lambda_j^{-2} + |x|^2} +  R \frac{|x|^2}{\lambda_j^{-2} + |x|^2}) 
    \leq C_2( \frac{1 + R\epsilon^2}{\lambda_j^{-2} + |x|^2})
    \leq \frac{C}{\lambda_j^{-2} + |x|^2},
\end{align*}
for a constant $C$, when $\lambda_j$ is sufficiently large.
\end{proof}

The following lemma is the last step of proving Theorem \ref{q}.

\begin{lemma}[Step 10]
Let $f$ be a smooth, compactly supported, $\mathfrak{su}(2)$-valued 1-form on $\mathfrak{R}^3$. Let $\alpha \in [-\frac{1}{2},0)$. There exists a sufficiently small $\delta > 0$ and $\epsilon_0$ such that if $\| w e_0(A_0 \varPhi_0) \|_{L^3(\mathfrak{R}^3)} < \delta$ and $\epsilon < \epsilon_0$, then the unique solution $u(x)$ of Lemma \ref{minsol} is an element of $W^{2,2}_{\alpha}$ and satisfies 
\begin{align*}
        \|u\|_{W^{2,2}_{\alpha}} \leq C \|f\|_{L^2_{\alpha-2}},
\end{align*}
for a constant $C$ independent of $\lambda$.
\end{lemma}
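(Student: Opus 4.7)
The plan is to upgrade the first-order bound $\|u\|_{W^{1,2}_\alpha} \leq C \|f\|_{L^2_{\alpha-2}}$ established in Step 8 to a second-order bound by controlling the two extra terms $\|\nabla_{A_0}(d_2^*u)\|^2_{L^2_{\alpha-2}}$ and $\|[\varPhi_0,d_2^*u]\|^2_{L^2_{\alpha-2}}$ that appear in the definition of $W^{2,2}_\alpha$. The natural tool is a weighted Bochner--Weitzenb\"ock argument applied to $v := d_2^*u$, viewed as a section of $(T^*\mathfrak{R}^3\oplus\underline{\mathbb{R}})\otimes\mathfrak{su}(2)$.

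First I would apply Lemma \ref{monopoleweitzenbock} in the form
\begin{align*}
D^*Dv = \nabla_{A_0}^*\nabla_{A_0} v - ad(\varPhi_0)^2 v + Ric(v) + *[e_0\wedge v] + 2\langle d_{A_0}\varPhi_0, v\rangle,
\end{align*}
and test against $w^{-2\alpha+1}v$, the exponent being chosen so that $w^{-2\alpha+1}=w^{-2(\alpha-2)-3}$ realizes the density of $L^2_{\alpha-2}$. Integration by parts on the $D^*D$-side yields $\int w^{-2\alpha+1}|Dv|^2$ modulo a weight-gradient cross term of the form $\int w^{-2\alpha}|\nabla w||v||Dv|$, absorbed by Cauchy--Schwarz into a small multiple of the principal term together with $\|v\|^2_{L^2_{\alpha-1}}$. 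Since $Dv=(d_2d_2^*u,d_1^*d_2^*u)=(f,g)$ with $g=(d_2d_1)^*u = -(*[e_0\wedge\cdot])^*u$ and hence $|g|\leq C|e_0||u|$, the $|Dv|^2$ contribution is controlled by $\|f\|^2_{L^2_{\alpha-2}} + C\|we_0\|^2_{L^3}\|u\|^2_{W^{1,2}_\alpha}$ via the same H\"older--Sobolev pairing used in Step 8.

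On the left-hand side, the Weitzenb\"ock identity produces precisely $\|\nabla_{A_0} v\|^2_{L^2_{\alpha-2}} + \|[\varPhi_0,v]\|^2_{L^2_{\alpha-2}}$ as the positive principal terms (using ad-invariance to get $-\langle ad(\varPhi_0)^2 v,v\rangle=|[\varPhi_0,v]|^2$), together with three error contributions. The Ricci piece is handled using the scaling normalization $\sup|Ric|\leq 1/100$ and the weighted Hardy inequality of Lemma \ref{Hardy} applied to $v$; the $e_0$-error is absorbed into the principal term using $\|we_0\|_{L^3}<\delta$ and a Sobolev--Hardy chain identical to that of Step 8; and the term involving $\langle d_{A_0}\varPhi_0, v\rangle$ is bounded, via the sharp pointwise estimate $|d_{A_0}\varPhi_0|\leq Cw^{-2}$ from Lemma \ref{curvaturebound}, by $C\int w^{-2\alpha-1}|v|^2$. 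Since $|v|=|d_2^*u|\leq C(|\nabla_{A_0}u| + |[\varPhi_0,u]|)$, the last integral is dominated by $C\|u\|^2_{W^{1,2}_\alpha}$, which is already controlled by Step 8.

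The hard part will be arranging that all four small contributions (Ricci, $e_0$, $d_{A_0}\varPhi_0$, and the weight-gradient cross term) are dominated by the positive principal terms on the left, with a constant independent of $\lambda$. The $\langle d_{A_0}\varPhi_0,v\rangle$ term is the most delicate: $|d_{A_0}\varPhi_0|$ blows up like $\lambda^2$ near the origin, but the pointwise bound $|d_{A_0}\varPhi_0|\leq Cw^{-2}$ of Lemma \ref{curvaturebound} precisely matches the weight gap between $W^{2,2}_\alpha$ and the $W^{1,2}_\alpha$-control already in hand, so no factor of $\lambda$ is lost. Taking $\delta$ and $\epsilon_0$ small enough to absorb the remaining contributions and invoking Step 8 for $\|u\|_{W^{1,2}_\alpha}$ then delivers
\begin{align*}
\|\nabla_{A_0}(d_2^*u)\|^2_{L^2_{\alpha-2}} + \|[\varPhi_0,d_2^*u]\|^2_{L^2_{\alpha-2}} \leq C \|f\|^2_{L^2_{\alpha-2}},
\end{align*}
with $C$ independent of $\lambda$, completing the desired $W^{2,2}_\alpha$-estimate.
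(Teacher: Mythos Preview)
Your proposal is correct and follows essentially the same route as the paper: both apply the Weitzenb\"ock formula to $v=d_2^*u$, pair against $w^{-2\alpha+1}v$, identify $Dv=DD^*(u,0)=(f,*[e_0\wedge *u])$, and then bound the Ricci, $e_0$, and $d_{A_0}\varPhi_0$ contributions using $|d_{A_0}\varPhi_0|\leq Cw^{-2}$ together with the Step~8 estimate on $\|d_2^*u\|_{L^2_{\alpha-1}}$. The only cosmetic difference is that you absorb the $*[e_0\wedge v]$ term via $\|we_0\|_{L^3}<\delta$ and a H\"older--Sobolev pairing, whereas the paper instead invokes the pointwise bound $|e_0|=O(1)$ from Lemma~\ref{errores}; both are fine.
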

\begin{proof}
We should find a uniform bound on 
\begin{align*}
\|\nabla_{A_0} (d_2^*u)\|_{L^2_{\alpha-2}} + \|[\varPhi_0, d_2^*u] \|_{L^2_{\alpha-2}} = \|w^{-\alpha+\frac{1}{2}}\nabla_{A_0} (d^*_2u)\|_{L^2}^2 + \|w^{-\alpha+\frac{1}{2}}[\varPhi_0,d_2^*u]\|_{L^2}^2,    
\end{align*}
in terms of $\|f\|_{L^2_{\alpha-2}}$. 

Let $(a, \phi) = D^*(u,0) = d_2^*u$. By the monopole Weitzenb\"ock formula we have
\begin{align*}
    D^* D (d_2^* u) &= D D^* (d_2^* u) + 2 \langle d_{A_0} \varPhi_0 , d_2^* u\rangle \\ & =
    \nabla_A^* \nabla_A (d_2^*u) - ad(\varPhi)^2 (d_2^*u)  + \langle Ric(x), d_2^*u \rangle
      + *[e_0 \wedge d_2^*u] + 2 \langle d_{A_0} \varPhi_0 , d_2^* u\rangle.
\end{align*}
By multiplying the formula by $w^{-2\alpha+1}d_2^*u$ and integrating over $\mathfrak{R}^3$, we get
\begin{align*}
    \|\nabla_A (d_2^*u)\|_{L^2_{\alpha - 2}}^2 &+ \|[\varPhi,(d_2^*u)]\|_{L^2_{\alpha - 2}}^2 \\ &\leq
    \|D (d_2^* u)\|_{L^2_{\alpha - 2}}^2
    + \sup_{x \in \mathfrak{R}^3} |Ric(x)| \|d_2^*u\|_{L^2_{\alpha - 2}}^2
    \\&+ \int_{\mathfrak{R}^3}
    w^{-2\alpha+1}(|e_0| + 2|d_{A_0} \varPhi_0|) |d_2^*u|^2 vol_g \\& + 
    (-2\alpha+1)\int_{\mathfrak{R}^3} w^{-2\alpha} |\nabla w| |d_2^*u|(|\nabla_{A_0}(d_2^* u)|+|D(d_2^* u)|) vol_g, 
\end{align*}
where the last integral is the asymptotic term of the Stokes' theorem.

We start by bounding $\|D (d_2^* u)\|_{L^2_{\alpha - 2}}^2$. First note that
\begin{align*}
    D (d_2^*u) = DD^*(u,0)=(f,*[e_0 \wedge *u]).
\end{align*}
By multiplying this formula by $w^{-\alpha+\frac{1}{2}}$ and taking the $L^2$-norm over $\mathfrak{R}^3$, we get
\begin{align*} 
    \|D(d_2^*u)\|_{L^2_{\alpha-2}}^2 &= 
    \|f\|_{L^2_{\alpha-2}}^2 + \|[we_0 \wedge *w^{-\alpha-\frac{1}{2}}u]\|_{L^2}^2
    \\ &\leq \|f\|_{L^2_{\alpha-2}}^2 + \|w e\|^2_{L^3}\|w^{-\alpha-\frac{1}{2}}u\|^2_{L^6} \\& \leq 
    \|f\|_{L^2_{\alpha-2}}^2 + \delta^2 C_{Sob} \|w^{-\alpha-\frac{1}{2}}u\|^2_{W^{1,2}}
    \\& = 
    \|f\|_{L^2_{\alpha-2}}^2 + \delta^2 C_{Sob}
    \left( \|w^{-\alpha-\frac{1}{2}}u\|^2_{L^2}
    + 
    \|\nabla_{A_0}(w^{-\alpha-\frac{1}{2}}u)\|^2_{L^2} \right) 
    \\& \leq
    \|f\|_{L^2_{\alpha-2}}^2 + \delta^2 C_{Sob}
    \left( \|u\|^2_{L^2_{\alpha}}
    + 
    \|\nabla_{A_0}(w^{-\alpha-\frac{1}{2}}u)\|^2_{L^2} \right) 
    \\& \leq
    C'\|f\|_{L^2_{\alpha-2}}^2 + \delta^2 
    C_{Sob}
    \|\nabla_{A_0}(w^{-\alpha-\frac{1}{2}}u)\|^2_{L^2}  
    \\& \leq
    C' \|f\|_{L^2_{\alpha-2}}^2 + \delta^2 C_{Sob}
    \left(
    (-\alpha-\frac{1}{2})^2\|w^{-\alpha-\frac{3}{2}} |\nabla w| u\|^2_{L^2}
    +
    \|\nabla_{A_0} u\|^2_{L^2_{\alpha-1}}
    \right)
     \\& \leq
    C'' \|f\|_{L^2_{\alpha-2}}^2 + \delta^2 C_{Sob}
    \|\nabla_{A_0} u\|^2_{L^2_{\alpha-1}}
    \\& \leq C
    \|f\|^2_{L^{2}_{\alpha-2}},
\end{align*}
for positive constants $C_{Sob}, C'$, and $C''$.

Regarding the term
\begin{align*}
\sup_{x \in \mathfrak{R}^3} |Ric(x)| \|d_2^*u\|_{L^2_{\alpha - 2}}^2 \leq \sup_{x \in \mathfrak{R}^3} |Ric(x)| \|d_2^*u\|_{L^2_{\alpha - 1}}^2,    
\end{align*}
the Ricci curvature is bounded and $\|d_2^*u\|_{L^2_{\alpha - 1}}^2$ can be bounded uniformly by $\|f\|_{L^2_{\alpha-2}}^2$, as we observed in the proof of Lemma \ref{Step8}.

Regarding the error term, using Lemma \ref{errores},
\begin{align*}
    \int_{\mathfrak{R}^3}
    w^{-2\alpha+1}&|e_0||d_2^*u|^2 vol_g  \leq
    c \|d_2^*u\|_{L^2_{\alpha - 2}}^2 \leq 
    c \|d_2^*u\|_{L^2_{\alpha - 1}}^2 \leq
    C \|f\|_{L^2_{\alpha-2}}^2,
\end{align*}
for positive constants $c$ and $C$.

Regarding the term,
\begin{align*}
    \int_{\mathfrak{R}^3}
    w^{-2\alpha+1}|d_{A_0} \varPhi_0||d_2^*u|^2 vol_g,
\end{align*}
recall that following Lemma \ref{curvaturebound}, the term $|d_{A_0} \varPhi_0|$ can be estimated, 
\begin{align*}
    |d_{A_0} \varPhi_0|  \leq 
    \frac{c}{\lambda^{-2}+r^2} \rightarrow
    w^2|d_{A_0} \varPhi_0|
    \leq 
    \frac{cw^2}{\lambda^{-2}+r^2} \leq 
    C',
\end{align*}
for a uniform constant $C'$, and therefore, 
\begin{align*}
    \int_{\mathfrak{R}^3}
    w^{-2\alpha+1}|d_{A_0} \varPhi_0||d_2^*u|^2 vol_g &\leq 
    C'\int_{\mathfrak{R}^3}
    w^{-2\alpha-1}|d_2^*u|^2 vol_g
    \\&= C' \|d_2^*u\|^2_{L^2_{\alpha-1}}
    \leq C'' \|f\|^2_{L^2_{\alpha-2}}.
\end{align*}
Therefore,
\begin{align*}
        \|u\|_{W^{2,2}_{\alpha}} \leq C \|f\|_{L^2_{\alpha-2}},
\end{align*}
for a uniform constant $C$.
\end{proof}

The assumption in Theorem \ref{q} is that the error estimate $\| w e_0(A_0 \varPhi_0) \|_{L^3(\mathfrak{R}^3)} < \delta$ for a sufficiently small $\delta$. The following theorem states $\| w e_0(A_0 \varPhi_0) \|_{L^3(\mathfrak{R}^3)}$ can be made as small as necessary by increasing the masses $\lambda_j$.

\begin{theorem}\label{errorsmall}
For any $\delta > 0$, there exists a sufficiently large $\lambda_j = \epsilon_j^{-2} > 0$ such that the monopole $(A_0, \varPhi_0)$ defined in \ref{modmon} with the parameters $\lambda_j$, satisfies 
\begin{align}\label{L3error}
    \| w e_0(A_0, \varPhi_0) \|_{L^3(\mathfrak{R}^3)} < \delta.
\end{align}
\end{theorem}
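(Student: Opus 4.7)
The plan is to combine the pointwise error bound established in Lemma \ref{errores} with the observation that both the weight $w$ and the support of $e_0$ shrink as $\lambda_j \to \infty$. First I would localize where $e_0$ is nonzero on $\mathfrak{R}^3$. By construction, on $\mathfrak{R}^3\setminus V$ the pair $(A_0,\varPhi_0)$ coincides with the standard Dirac monopole on flat Euclidean $\mathbb{R}^3$, which is a genuine monopole, so $e_0 \equiv 0$ there. Hence the support of $e_0$ is contained in the ball $V$ of geodesic radius $\sim 4\epsilon_j = 4\lambda_j^{-1/2}$.

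Second, I would establish that $|e_0|\leq C$ throughout $V$, with $C$ independent of $\lambda_j$. On the sub-region corresponding to $B_{2\epsilon_j}(q_j)$ this is precisely Lemma \ref{errores}. On the additional collar $V\setminus U$ introduced when extending the pair to $\mathfrak{R}^3$ by gluing to the model Dirac monopole via a cut-off $\tilde{\xi}$ with $|\nabla \tilde{\xi}|\leq \lambda_j^{1/2}$, the pulled-back Dirac and the standard Dirac differ by $O(r)$ (as in the local model lemma of Section \ref{dirmon}), so the cut-off contributes at worst a term of order $\lambda_j^{1/2}\cdot\epsilon_j = 1$, while the curvature-type errors $F_{A_0}$ and $d_{A_0}\varPhi_0$ of the already-Dirac pair contribute terms that are bounded by the same arguments as in the proof of Lemma \ref{errores}. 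Thus $|e_0|\leq C$ on all of $V$.

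Third, I would estimate $w$ on this support. Since $r\leq 4\epsilon_j = 4\lambda_j^{-1/2}$ on $V$, we have
\[
    w(x) \;=\; \sqrt{\lambda_j^{-2}+r^2}\;\leq\;\sqrt{\lambda_j^{-2}+16\lambda_j^{-1}}\;\leq\; C\,\lambda_j^{-1/2}
\]
for $\lambda_j$ sufficiently large. Combining this with $\mathrm{vol}_g(V)\leq C\epsilon_j^{3}=C\lambda_j^{-3/2}$ and the pointwise bound $|e_0|\leq C$ yields
\[
    \|w e_0\|_{L^3(\mathfrak{R}^3)}^3 \;=\;\int_V |w e_0|^3\,vol_g \;\leq\; C(\lambda_j^{-1/2})^3\cdot\lambda_j^{-3/2}\;=\;C\lambda_j^{-3},
\]
so $\|w e_0\|_{L^3(\mathfrak{R}^3)}\leq C\lambda_j^{-1}$. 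Given any $\delta>0$, choosing $\lambda_j>C/\delta$ gives the desired bound. There is no deep obstacle here; the only mildly non-trivial bookkeeping is the pointwise $O(1)$ estimate on the extension collar $V\setminus U$, which however reduces to the same kind of computation already performed in Lemma \ref{errores} since the metric is flat outside $V$ and the two Dirac configurations being glued there are already $C^1$-close up to the known $O(r)$ Taylor error.
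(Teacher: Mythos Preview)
Your proposal is correct and follows essentially the same approach as the paper: use the pointwise bound $|e_0|\leq C$ from Lemma \ref{errores}, bound $w$ by $C\lambda_j^{-1/2}$ on the support of $e_0$, and integrate over a ball of volume $\sim \epsilon_j^3$ to obtain $\|we_0\|_{L^3}\leq C\lambda_j^{-1}$. If anything you are slightly more careful than the paper, which only writes the estimate on $B_{3\epsilon_j}(q_j)$ and does not explicitly discuss the extension collar $V\setminus U$; your observation that the two Dirac configurations being glued there differ only by $O(r)$ and that $|\nabla\tilde{\xi}|\cdot O(r)=O(1)$ fills that small gap.
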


\begin{proof}
For each $j \in \{1, \hdots, k\}$, from Lemma \ref{errores}, we have
\begin{align*}
    (e_0^{BPS})_{|_{B_{3\epsilon_j}(q_j)}}\leq C.
\end{align*}
and therefore,  
\begin{align*}
    \| w e_0^{BPS}\|_{L^3(B_{3\epsilon_j}(q_j))}^3 &= 
    \int_{B_{3\epsilon_j}(q_j)}
    |we_0^{BPS}|^3 vol_g
    \leq 
    C_1 \int_{B_{3\epsilon_j}(q_j)}
   |w|^3 vol_g \\&=
    C_1
    \int_{B_{3\epsilon_j}(q_j)}
    (\lambda_j^{-2} + |x|^2)^{\frac{3}{2}} vol_g
    \leq C_2 \epsilon_j^3
    (\lambda_j^{-2} + \epsilon_j^2)^{\frac{3}{2}}.
    \end{align*}
for a positive uniform constants $C_1$ and $C_2$, and therefore, it can be made as small as necessary. 
\end{proof}

\subsection[The Linear Equation over $M \setminus (\cup_j B_{2 \epsilon_j}(q_j) \cup S_p )$]{The Linear Equation over $\pmb{M \setminus (\cup_j B_{2 \epsilon_j}(q_j) \cup S_p )}$} \label{linearlong}

In this section, we study the linearized equation $d_2 d_2^* u = f$ on $M \setminus (\cup_j B_{2 \epsilon_j}(q_j) \cup S_p )$, away from the points where the scaled BPS-monopoles are located, and set the stage for solving this linearized equation.

On this region the pair $(A_0, \varPhi_0)$ is a reducible monopole with a non-zero Higgs field $\varPhi_0$, and therefore, it induces a decomposition of the adjoint bundle as $\mathfrak{g}_P = \underline{\mathbb{R}} \oplus L$, where $\underline{\mathbb{R}}$ is the sub-bundle generated by the image of $\varPhi_0$ and $L$ is the orthogonal sub-bundle. Corresponding to the bundle decomposition $\mathfrak{g}_P = \underline{\mathbb{R}} \oplus L$, a section or a $\mathfrak{g}_P$-valued tensor $f$ supported on this region can be written as $f = (f^L, f^T)$.

This bundle decomposition is preserved by $d_2$, $d_2^*$, and $d_2 d_2^*$. Hence the equation $d_2 d_2^* u = f$ on this region reduces to two equations for $u^L$ and $u^T$. The equation for $u^L$ is given by
\begin{align}
    \Delta u^L = f^L,
\end{align}
and the equation for $u^T$ is 
\begin{align*}
d_2 d_2^* u^T = f^T.
\end{align*}

In the following section, we will introduce the appropriate function spaces to solve these equations.

\subsection[Function Spaces on $M\setminus (\cup_j B_{2 \epsilon_j}(q_j) \cup S_p)$]{Function Spaces on $\pmb{M\setminus (\cup_j B_{2 \epsilon_j}(q_j) \cup S_p)}$}\label{FunctionSpacesonMsetminus}

In this section, we set the stage to study the linearized equation over $M \setminus (\cup_j B_{2 \epsilon_j}(q_j) \cup S_p)$. We start by defining the suitable weighted Sobolev spaces on $\mathfrak{su}(2)$-valued differential forms on $M \setminus (\cup_j B_{2 \epsilon_j}(q_j) \cup S_p)$.
These spaces can be used to solve the problem away from the points $q_j$. 

Let $\delta_{p_i}$ be the injectivity radius at $p_i$. For each point $p_i \in S_p$, let 
\begin{align} \label{wp}
    w_i(x) = 
    \begin{cases}
        r_i, \quad &r_i\leq \delta_{p_i}\\
        1, \quad &r_i \geq 1,
    \end{cases}
\end{align}
where $r_i$ is the geodesic distance from $p_i$.

\begin{definition}\label{weighted-ext}
Let $U = M \setminus (\cup_j B_{2\epsilon_j}(q_j) \cup S_p)$ and $U_{ext} = M \setminus (\cup_j B_{2\epsilon_j}(q_j) \cup_i B_{2\epsilon_i}(p_i))$. Let $\alpha \in \mathbb{R}$. For all smooth compactly supported $\mathfrak{su}(2)$-valued differential forms $ u \in \Omega^{\boldsymbol{\cdot}}(U, \mathfrak{su}(2))$, let
\begin{align*}
    \|u\|^2_{L^2_{\alpha}(U)} = 
    \|u\|^2_{L^2(U_{ext})}
    +
    \sum_{i=1}^n
    \|u^T\|^2_{L^2(B_{2 \epsilon_i}(p_i))}
    +
    \sum_{i=1}^n
    \|w_i^{-\alpha-\frac{3}{2}} u^L\|^2_{L^2(B_{2 \epsilon_i}(p_i))}.
\end{align*}
Furthermore, 
\begin{align*}
    \|u\|^2_{W^{1,2}_{\alpha}(U)} &= 
    \|u\|^2_{L^2_{\alpha}(U)} + 
    \|\nabla_{A_0} u\|^2_{L^2_{\alpha-1}(U)} + 
    \frac{1}{2} \| [\varPhi_0,u] \|_{L^2_{\alpha-1}(U)}^2.
\end{align*}
Moreover,
\begin{align*}
    \|u\|^2_{W^{2,2}_{\alpha}(U)} = 
    \|u\|^2_{W^{1,2}_{\alpha}(U)} + 
    \|\nabla_{A_0}(d_2^* u)\|^2_{L^2_{\alpha-2}(U)}
    + \frac{1}{2} \|[\varPhi_0, d_2^*u]\|^2_{L^2_{\alpha-2}(U)}.
\end{align*}
The spaces $W^{k,2}_{\alpha}(U)$ are defined as the completion of $C^{\infty}_0(U)$ with respect to the corresponding norms for $k \in \{0, 1, 2\}$. Furthermore, one can define similar norms and weighted Sobolev spaces $W^{k,p}_{\alpha}(U)$ for any $p \geq 2$ and $k \in \{0,1,2\}$.
\end{definition}

\subsection[The Longitudinal Component]{The Longitudinal Component and the Lockhart-McOwen Theory}\label{TheLongitudinalComponent}

In this section, we study the weighted Sobolev spaces of the sections of the longitudinal component, and set the necessary background to solve $\Delta u^T = f^T$. The main goal of this section is to show that these weighted Sobolev spaces on the longitudinal component are suitable for studying elliptic operators, more specifically, the Laplacian. These spaces are closely related to the Lockhart-McOwen Sobolev spaces on asymptotically cylindrical manifolds \cite{MR837256}. 

The following example gives a good picture of the real-valued sections in these weighted Sobolev spaces.

\begin{example}
Let $(M,g)$ be a closed, Riemannian, $n$-dimensional manifold. Let $p \in M$. Let $\delta_p$ be the injectivity radius at $p$ and $r: M \to \mathbb{R}$ a smooth function such that
\begin{align*}
    r(x) = 
    \begin{cases}
    \text{geodesic distance from p} \quad & \text{ on } \quad B_{\delta}(p),\\
    1 \quad & \text{ on } \quad M \setminus B_{2\delta}(p_i).
    \end{cases}
\end{align*}
Then $r^{\delta} \in L^p_{\alpha}(M)$ if and only if $\delta > \alpha$.
\end{example}

To understand the longitudinal part of these weighted Sobolev spaces, using a conformal mapping, one can transform them into the weighted Sobolev spaces over asymptotically cylindrical manifolds. The punctured ball $B_{\epsilon}(0) \setminus \{ 0 \} \subset \mathbb{R}^3$ can be identified with the cylinder $ (-\log(\epsilon), + \infty) \times S^2$ using a map $L_0: B_{\epsilon}(0) \setminus \{ 0 \} \to (-\log(\epsilon), + \infty) \times S^{2}$, defined by
\begin{align}\label{LLL}
    (t, \theta, \varphi) := L_0(r, \theta, \varphi) = (-\log (r), \theta, \varphi), 
\end{align}
where $(r, \theta, \varphi)$ denotes the spherical coordinates on $B_{\epsilon}(0) \subset \mathbb{R}^3$. 

Equip the punctured ball with the flat metric $g_0 = dx^2 + dy^2 + dz^2$, which in spherical coordinates can be written as $g_0 = dr^2 + r^2 g_{S^2} = dr^2 + r^2 (d \theta^2 + \sin^2 \theta d \varphi^2)$, and the cylinder with the standard product metric $g_{Cyl} = dt^2 + g_{S^2}$. The map $L_0$ takes the flat metric on the punctured ball to $e^{-2t}(dt^2 + g_{S^2})$ on $(-\log(\epsilon), + \infty) \times S^2$, which is conformally equivalent to the cylindrical metric $g_{cyl}$. 

More generally, the Riemannian metric $g$ on each ball $B_{ \epsilon_i}(p_i)$ using the exponential map and in geodesics normal coordinates can be written as $g = dr^2 + \psi(r, \theta)g_{S^2}$, where $\psi$ is a smooth positive function such that $\lim_{r \to 0} \psi(r, \theta) \to 1$. Let $\mu(t, \theta) =r^{-2} \psi(r, \theta) = e^{-2t} \psi(e^{-t}, \theta)$. One can define the diffeomorphism $L_i: B_{ \epsilon_i}(p_i) \to (-\log(\epsilon_i), +\infty) \times \mathbb{R}$, similar to \ref{LLL}, that takes the metric $g$ to $e^{-2t}(dt^2 + \mu(t. \theta) g_{S^2})$, which is conformally equivalent to
\begin{align*}
    \tilde{g} = dt^2 + \mu(t, \theta) g_{S^2},
\end{align*}
where $\mu(t, \theta) \to 1$ as $t \to \infty$.  The metric $\tilde{g}$ is asymptotically cylindrical

By gluing the maps $L_i$ to the identity map on $M \setminus \cup_i B_{2 \epsilon_i}(p_i)$ and extend it smoothly to the necks $\cup_i (B_{2\epsilon_i}(p_i) \setminus B_{ \epsilon_i}(p_i))$,
we get a diffeomorphism 
\begin{align*}
    L: M \setminus S_p \to M_{Cyl} := (M \setminus \cup_{i=1}^n B_{\epsilon_i}(p_i)) \bigcup ( \cup_i (-log(\epsilon_i),+\infty) \times S^2 ),
\end{align*}
where $M_{Cyl}$ is equipped with an asymptotically cylindrical metric. Furthermore, $L$ takes the vector bundle of differential forms on $M$ to asymptotically translation-invariant asymptotically cylindrical bundles over $M_{Cyl}$. 

In order to have the Fredholm property for the elliptic differential operators like Laplacian or $d+d^*$ on asymptotically cylindrical manifolds, one should use suitable classes of Banach spaces as domain and co-domain, as introduced by Lockhart and McOwen \cite{MR837256}. 

\begin{definition}[Lockhart-McOwen Sobolev Spaces]
Let $(X,g_X)$ be an $n$-dimensional asymptotically cylindrical Riemannian manifold. Let $X_0 \subset X$ be a compact subset. Let $\rho: X \to \mathbb{R}$ be a smooth function such that on $X \setminus X_0$ it agrees with the geodesic distance from a point $x_0 \in X$. Let $(E,h_E,\nabla_E) \to X$ be an asymptotically cylindrical bundle. Let $p \geq 1$, $ k\geq 0$, and $\beta \in \mathbb{R}$. For any smooth compactly supported section $u \in \Gamma(E)$, let 
\begin{align*}
\|u\|^p_{W^{k,p}_{Cyl,\beta}(E)} = 
\sum_{j=0}^k \int_X | e^{-\beta \rho} \nabla^j_E u|_{h_E}^p vol_{g_X}.
\end{align*}
Let $W^{k,p}_{Cyl,\beta}(X,E)$ denote the completion of $C_0^{\infty}(X,E)$ with respect to this norm.
\end{definition}

These weighted Sobolev spaces over asymptotically cylindrical manifolds are closely related to the weighted spaces defined in Definition \ref{weighted-ext}.

\begin{lemma}
Let $L_0$ be the map defined in \ref{LLL}.
Let $f$ be a section of a vector bundle above $B_{\epsilon}(0) \setminus \{0\} \subset \mathbb{R}^3$. We have
\begin{align*}
f \in W^{k,p}_{\alpha}(B_{\epsilon}(0)) \Longleftrightarrow (L_0^{-1})^*f \in W^{k,p}_{Cyl,-\alpha}((-\log(\epsilon), + \infty) \times S^2).  
\end{align*}
Moreover,
\begin{align*}
 \|f\|_{W^{k,p}_{\alpha}(B_{\epsilon}(0))} = \|(L_0^{-1})^* f\|_{W_{Cyl,-\alpha}^{k,p}((-\log(\epsilon), + \infty) \times S^2)},
\end{align*}
and therefore, $\|f\|_{W^{k,p}_{\alpha}(M \setminus S_p)}$ and $ \|(L_0^{-1})^* f\|_{W_{Cyl,-\alpha}^{k,p}(M_{Cyl})}$ are equivalent norms.
\end{lemma}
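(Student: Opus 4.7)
The plan is to reduce the claim to an elementary change-of-variables computation, carried out first locally near a single puncture and then patched globally. Under the map $L_0: r \mapsto t = -\log r$ of \ref{LLL}, the flat metric $g_0 = dr^2 + r^2 g_{S^2}$ on $B_\epsilon(0)\setminus\{0\}$ pushes forward to $e^{-2t}(dt^2 + g_{S^2}) = e^{-2t}g_{Cyl}$, so $g_0$ and $g_{Cyl}$ are conformally equivalent with factor $r^2 = e^{-2t}$. Three consequences are immediate: the volume forms are related by $vol_{g_0} = e^{-3t}\,vol_{g_{Cyl}}$; the fiber norm of any bundle-valued section is unchanged; and for a $j$-cotensor $\omega$ one has $|\omega|_{g_0} = e^{jt}|\omega|_{g_{Cyl}}$. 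The weight is $w = r = e^{-t}$, so $w^{-\alpha p - 3} = e^{(\alpha p + 3)t}$.

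For the $L^p_\alpha$ piece the computation then reads
\begin{align*}
\|f\|^p_{L^p_\alpha(B_\epsilon)} &= \int w^{-\alpha p - 3} |f|^p\, vol_{g_0} = \int e^{(\alpha p + 3)t} |f|^p e^{-3t}\, dt\, d\Omega \\
&= \int e^{\alpha p t} |(L_0^{-1})^*f|^p\, dt\, d\Omega = \|(L_0^{-1})^*f\|^p_{L^p_{Cyl,-\alpha}},
\end{align*}
using $\rho \sim t$ so that the Lockhart--McOwen weight $e^{-(-\alpha)\rho}$ becomes $e^{\alpha t}$. For the derivative terms I would track the conformal factor through $\nabla^j_{A_0}f$, which is an $E$-valued $j$-cotensor whose pointwise norm with respect to $g_0$ picks up an extra $e^{jt}$ relative to $g_{Cyl}$. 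This $e^{jpt}$ combines with the reduced weight factor $w^{-p(\alpha - j) - 3} = e^{(p\alpha - pj + 3)t}$ from the definition of $W^{k,p}_\alpha$ and the volume factor $e^{-3t}$ to yield $e^{p\alpha t}$ independently of $j$. Thus the $j$-th derivative term on $B_\epsilon$ matches the $j$-th cylindrical derivative term weighted by $-\alpha$ exactly, and the identity of norms follows by induction on the order.

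For the global statement on $M\setminus S_p$, I would glue via a partition of unity subordinate to $\{B_{2\epsilon_i}(p_i)\}\cup\{M\setminus\bigcup_iB_{\epsilon_i}(p_i)\}$. On the compact piece $M\setminus\bigcup_iB_{\epsilon_i}(p_i)$ the weights $w_i$ and the conformal factors are all uniformly bounded above and below, so the two norms are trivially equivalent there. On each $B_{2\epsilon_i}(p_i)$ the metric $g$ differs from the flat model by higher-order terms, so $L_i$ yields an asymptotically cylindrical metric $\tilde g = dt^2 + \mu(t,\theta)g_{S^2}$ with $\mu \to 1$ at infinity; the bounded perturbations encoded in $\mu$ replace the equality of norms by an equivalence, with constants depending only on $\sup|\mu-1|$ and its derivatives. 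The only real subtlety is keeping the conformal factor from the cotangent metric correctly bookkept at each derivative — once that is straight, the rest is elementary exponential algebra combined with a standard partition-of-unity argument.
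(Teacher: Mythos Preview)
Your approach is essentially the same as the paper's: both reduce the local statement to the conformal change of variables $g_0 = e^{-2t}g_{Cyl}$, compute the $L^p_\alpha$ case directly, and then track the exponential weights through the derivative terms to see that the factor $e^{jpt}$ from the cotensor norm cancels against the shift in the weight, leaving $e^{p\alpha t}$ independently of $j$. Your global argument via partition of unity and bounded perturbation on the compact piece is also what the paper intends. One small caveat, present in both your write-up and the paper's: for $j\geq 2$ the Levi-Civita connections of $g_0$ and $g_{Cyl}$ differ by Christoffel terms of size $O(1)$ in $t$, so the pointwise identity $e^{-jt}|\nabla^j_{g_0}f|_{g_0}=|\nabla^j_{Cyl}f|_{g_{Cyl}}$ holds only up to lower-order derivatives, giving equivalence rather than strict equality of the higher Sobolev norms --- this is harmless for the application but worth stating explicitly.
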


\begin{proof}
Let $L^p_{Cyl}$ and $W^{k,p}_{Cyl}$ denote the ordinary Sobolev spaces on the cylinder with respect to its cylindrical Riemannian metric $g_{Cyl}$. 

Let $k=0$. By a change of variable we have 
\begin{align*}
    \|f\|_{L^{p}_{\alpha}(B_{\epsilon}(0))} =  
\|e^{\alpha t}(L_0^{-1})^*f\|_{L^p_{Cyl}((-\log(\epsilon), + \infty) \times S^2)} = \|(L_0^{-1})^*f\|_{L^p_{Cyl, -\alpha}((-\log(\epsilon), + \infty)\times S^2)}.
\end{align*}

Let $k>0$. Let $\nabla$ be the Levi-Civita connection on $B_{\epsilon}(0)$ with respect to the standard euclidean metric and $\nabla_{Cyl}$ be the Levi-Civita connection on $(-\log(\epsilon), +\infty) \times S^2$ with respect to the cylindrical metric. We have the pointwise equality
\begin{align*}
    |e^{-jt} \nabla^j f| = |\nabla^j_{Cyl} (L_0^{-1})^*f|,
\end{align*}
and therefore,
\begin{align*}
    \|r^{-\alpha - \frac{3}{2}+j}\nabla^j f\|_{L^p(B_{\epsilon}(0))} &=
\|e^{\alpha t} \nabla_{Cyl}^j(L_0^{-1})^*f\|_{L^p_{Cyl}((-\log(\epsilon), + \infty)\times S^2)} \\&= \|\nabla_{Cyl}^j(L_0^{-1})^*f\|_{L^p_{Cyl, -\alpha}((-\log(\epsilon), + \infty)\times S^2)}.
\end{align*}
By summing over $j$, 
\begin{align*}
 \|f\|_{W^{k,p}_{\alpha}(B_{\epsilon}(0))} &=  \|e^{\alpha t} (L_0^{-1})^*f\|_{W_{Cyl}^{k,p}((-\log(\epsilon), + \infty) \times S^2)} = \|(L_0^{-1})^* f\|_{W_{Cyl,-\alpha}^{k,p}((-\log(\epsilon), + \infty) \times S^2)}.
\end{align*}
\end{proof}

The key property of these weighted Sobolev spaces is described in the following lemma.

\begin{lemma}
On real-valued 1-forms,
$\Delta: W^{2,2}_{\alpha}(\Omega^1(M \setminus S_p)) \to L^2_{\alpha-2}(\Omega^1(M \setminus S_p))$ is Fredholm, for $\alpha$ outside of a discrete subset of $\mathbb{R}$, denoted by $D(\Delta)$.
\end{lemma}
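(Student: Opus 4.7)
The plan is to transfer the problem to the asymptotically cylindrical setting via the diffeomorphism $L: M \setminus S_p \to M_{Cyl}$ constructed above, and then invoke the Lockhart-McOwen Fredholm theorem for asymptotically translation-invariant elliptic operators on asymptotically cylindrical manifolds \cite{MR837256}. The previous lemma already identifies the weighted Sobolev spaces $W^{k,p}_{\alpha}(M \setminus S_p)$ with $W^{k,p}_{Cyl,-\alpha}(M_{Cyl})$ isometrically (up to equivalence of norms), so once the operator side is understood the Fredholm statement will follow from the general theory.

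First I would compute, in the cylindrical coordinates $(t,\theta,\varphi)$ near each $p_i$ with $t = -\log r$, how the flat Hodge Laplacian on $1$-forms on $B_{\epsilon_i}(p_i) \setminus \{p_i\}$ is conjugated by the conformal change of metric $g = e^{-2t}\tilde g$, where $\tilde g$ is the asymptotically cylindrical metric on $M_{Cyl}$. Under this change, the Laplacian $\Delta_g$ on $p$-forms and the Laplacian $\Delta_{\tilde g}$ differ by a zeroth-order term and a first-order term whose coefficients, expressed in the $\tilde g$-orthonormal frame adapted to the decomposition $T^*M_{Cyl} = \langle dt\rangle \oplus T^*S^2$, are bounded and extend continuously to $t=\infty$ with translation-invariant limit. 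Absorbing the conformal weight into a global factor, the conjugated operator $P := e^{-\beta t} L_* \Delta_g L^* e^{\beta t}$ (with the appropriate power of $e^t$ to account for the density weights in $L^p_\alpha$) becomes an elliptic operator on $M_{Cyl}$ whose asymptotic model on each end $(T,\infty)\times S^2$ is a translation-invariant elliptic operator $P_\infty$ on sections of the restricted bundle.

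Second, I would appeal to the Lockhart-McOwen theorem: an elliptic operator on an asymptotically cylindrical manifold which is asymptotic to a translation-invariant operator $P_\infty$ on the cylindrical ends is Fredholm between the weighted spaces $W^{k,p}_{Cyl,\beta}$ and $W^{k-2,p}_{Cyl,\beta}$ precisely for those $\beta \in \mathbb{R}$ which are not critical weights, i.e.\ for which the complexified operator $P_\infty$ has no $L^2$-bounded solutions of the form $e^{\beta t + is t}\phi(\theta,\varphi)$ with $s \in \mathbb{R}$, $\phi$ a section over $S^2$. This set of critical weights is discrete because it is determined by the spectrum of an associated elliptic operator on the closed manifold $S^2$ (the indicial family), and the indicial family is a holomorphic family of Fredholm operators on a compact manifold whose singular set is therefore discrete.

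The main obstacle, and the only real content, is the identification of $D(\Delta)$ and the verification that the conformal/conjugation procedure actually produces an operator covered by the Lockhart-McOwen theorem — in particular that the asymptotic model $P_\infty$ is genuinely translation-invariant and elliptic on $\mathbb{R}\times S^2$, so that its indicial roots form the desired discrete exceptional set. Once this is in place, setting $\beta = -\alpha$ (to match the previous lemma's identification $W^{k,2}_\alpha \cong W^{k,2}_{Cyl,-\alpha}$) and discarding a discrete set of $\alpha$'s gives the Fredholm property as claimed.
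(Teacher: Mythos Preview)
Your proposal is correct and follows the same route the paper takes: the paper does not give a self-contained argument but cites Foscolo's thesis (Corollary 3.2.13, Lemma 3.3.4, Lemma 3.3.6), which is precisely the Lockhart--McOwen framework you outline --- pass to the asymptotically cylindrical model via the conformal map $L$, identify $W^{k,2}_{\alpha}$ with $W^{k,2}_{Cyl,-\alpha}$, and invoke the Fredholm theorem for asymptotically translation-invariant elliptic operators, with the discrete exceptional set coming from the indicial roots on $S^2$. The paper also sketches (in the proof block following the next lemma) a parametrix-patching alternative --- glue the inverse of $\Delta$ on the compact piece $M\setminus\cup_i B_{\epsilon_i}(p_i)$ to local inverses near each $p_i$ obtained from a Dirichlet problem --- but this is a complementary viewpoint rather than a different method, and your Lockhart--McOwen argument is the one actually underlying the cited results.
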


This lemma follows from Corollary 3.2.13, Lemma 3.3.4, and Lemma 3.3.6 in \cite{foscolo2013moduli}.

\begin{lemma}
Let $\Delta: W^{2,2}_{\alpha}(\Omega^1(M \setminus S_p)) \to L^2_{\alpha-2}(\Omega^1(M \setminus S_p))$, where $M$ is a rational homology 3-sphere. Let $\alpha \notin D(\Delta)$. Then 
\begin{align*}
    \ker \Delta = \{0\}.
\end{align*}
\end{lemma}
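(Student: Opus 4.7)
The strategy is a removable–singularity argument: show that any $u \in W^{2,2}_{\alpha}$ with $\Delta u = 0$ on $M \setminus S_p$ extends to a smooth harmonic $1$-form on the closed manifold $M$, then apply the topological hypothesis $H^1(M,\mathbb{R}) = 0$. First, by interior elliptic regularity, $u$ is smooth on $M \setminus S_p$, so the only issue is its behaviour at the punctures $p_i$.

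The core of the argument is integration by parts using the Hodge decomposition $\Delta = dd^* + d^* d$. On $M_\epsilon := M \setminus \cup_i B_{\epsilon}(p_i)$ one has
\begin{align*}
    0 \;=\; \int_{M_\epsilon}\!\langle \Delta u, u\rangle \, vol_g
    \;=\; \int_{M_\epsilon}\!\bigl(|du|^2 + |d^*u|^2\bigr)\, vol_g \;+\; \sum_i B_{i,\epsilon}(u),
\end{align*}
where $B_{i,\epsilon}(u)$ is a boundary integral on $\partial B_{\epsilon}(p_i)$ of schematic form $\int |u|(|du|+|d^* u|)$. The first key step is to show that along a sequence $\epsilon_n \to 0$ the terms $B_{i,\epsilon_n}$ vanish. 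Since the weight $w_i$ behaves like $r_i$ near $p_i$, the bound $u \in L^2_{\alpha}$ gives $\int_{B_{2\epsilon}(p_i)} |u|^2 \lesssim \epsilon^{2\alpha+3}$ and $\int_{B_{2\epsilon}(p_i)} |\nabla_{A_0} u|^2 \lesssim \epsilon^{2\alpha+1}$; averaging Cauchy--Schwarz over the dyadic shell $B_{2\epsilon}\setminus B_{\epsilon}$ produces a sequence $\epsilon_n \to 0$ with $B_{i,\epsilon_n}(u) = O(\epsilon_n^{2\alpha+2}) \to 0$ provided $\alpha > -1$ (which holds in the range of interest, in particular for $\alpha \in [-\tfrac{1}{2},0)$). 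This forces $du = 0$ and $d^*u = 0$ on $M \setminus S_p$.

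Next, I extend $u$ across the punctures. Because $\alpha > -\tfrac{3}{2}$, the form $u$ is in $L^2$ near each $p_i$, hence defines an $L^2$ $1$-form on all of $M$. A similar shell–averaging argument applied to the boundary term $\int_{\partial B_{\epsilon_n}(p_i)} u \wedge \phi$ (respectively the analogous one for $d^* u$) shows that for any smooth test form $\phi$ the distributional identities $\langle du, \phi\rangle = 0$ and $\langle d^*u,\phi\rangle = 0$ hold on $M$. In particular $\Delta u = 0$ as a distribution on $M$. By elliptic (hypo)regularity, $u$ is smooth on $M$, so $u$ is a genuine harmonic $1$-form on the closed Riemannian manifold $(M,g)$. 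By Hodge theory the space of such forms is isomorphic to $H^1(M,\mathbb{R})$, which vanishes because $M$ is a rational homology $3$-sphere; hence $u = 0$.

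The main obstacle is the first step: ensuring the boundary terms genuinely vanish in the limit. This is where the weight range matters, and where the hypothesis $\alpha \notin D(\Delta)$ enters indirectly—it is what makes the weighted space the "right" one in the Lockhart--McOwen sense, so that the pointwise/integral decay estimates underlying the shell argument are consistent with the Fredholm framework of the previous lemma. The second potential subtlety, namely that the distributional extension of $\Delta u = 0$ could a priori pick up terms supported at the $p_i$, is ruled out by the same Cauchy--Schwarz shell argument applied to $du$ and $d^* u$ separately, rather than trying to control a distributional $\Delta$-derivative directly.
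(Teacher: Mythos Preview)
Your approach is correct and is genuinely different from the paper's. The paper does not argue by removable singularities at all: it sketches the construction of an actual two--sided inverse for $\Delta$ on the weighted spaces by gluing a local inverse on the compact piece $M\setminus\cup_i B_{\epsilon_i}(p_i)$ (which exists because $H^1(M\setminus\cup_i B_{\epsilon_i}(p_i),\mathbb{R})=0$) to local inverses near each $p_i$ obtained from the Dirichlet problem on the model cylinder, following Foscolo. That parametrix construction yields invertibility (hence trivial kernel) in one stroke and stays entirely within the Lockhart--McOwen framework. Your argument is more elementary and more direct for the kernel statement: integrate by parts, kill the boundary terms, extend $u$ across the punctures as an $L^2$ harmonic $1$--form, and invoke $H^1(M,\mathbb{R})=0$. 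Each route uses the rational--homology--sphere hypothesis, but in slightly different incarnations ($H^1$ of the closed manifold versus $H^1$ of the manifold with small balls removed).

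One small correction to your bookkeeping: after the Cauchy--Schwarz/mean--value step on the dyadic shell, the boundary term is $O(\epsilon_n^{2\alpha+1})$ (you lose one power of $\epsilon$ passing from the shell integral to the sphere integral), not $O(\epsilon_n^{2\alpha+2})$; correspondingly the clean range for your argument is $\alpha\geq -\tfrac12$, not $\alpha>-1$. This is harmless for the paper's purposes, since the primary range is $\alpha\in[-\tfrac12,0)$, and the dual range $(-1,-\tfrac12]$ follows immediately from the inclusion $W^{2,2}_{\alpha}\subset W^{2,2}_{\alpha'}$ for $\alpha<\alpha'$. Also note that in your argument the hypothesis $\alpha\notin D(\Delta)$ never actually enters; it is needed for Fredholmness (the previous lemma) but your kernel computation only uses the weight range.
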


The proof is similar to Lemma 4.4 in \cite{MR2004129} and Lemma 3.4.4 in \cite{foscolo2013moduli}.

\begin{proof}
To show $\Delta$ is Fredholm, one can construct an inverse for this operator, by taking the inverse of the Laplacian away from the singular points $M \setminus \cup_i B_{ \epsilon_i}(p_i)$ --- which can be done since $H^1(M \setminus \cup_i B_{ \epsilon_i}(p_i), \mathbb{R}) = 0$ --- and glue it to the inverse of the Laplacian on the weighted Sobolev spaces defined on the neighbourhood of the singular points. These local inverses close to the singular points can be constructed by solving the Dirichlet problem, similar to Proposition 3.3.11 in \cite{foscolo2013moduli}.
\end{proof}

\subsection[Solving the Longitudinal Part over $M \setminus (\cup_j B_{2 \epsilon_j}(q_j) \cup S_p)$]{Solving the Longitudinal Part over $\pmb{M \setminus (\cup_j B_{2 \epsilon_j}(q_j) \cup S_p)}$}

In this section, we solve the linear equation over $M \setminus (\cup_j B_{2 \epsilon_j}(q_j) \cup S_p )$. In Theorem \ref{longcomp}, we solve the longitudinal component of the linear problem, and in Theorem \ref{trancomp}, the transverse one. 

\begin{theorem}\label{longcomp} Let $-\frac{1}{2} \leq \alpha < 0$ such that $\alpha, -\alpha-1$ are not in $D(\Delta)$. Let $f^L \in L^2_{\alpha-2}(M \setminus S_p)$. Then there exists a solution to the equation $\Delta u^L = f^L$, where $u^L \in W^{2,2}_{ \alpha}(\Omega^1(M \setminus S_p))$, and
\begin{align*}
    \|u^L\|_{W^{2,2}_{\alpha}(M \setminus S_p)} \leq C \|f^L\|_{L^2_{\alpha-2}(M \setminus S_p)},    
\end{align*} 
for a constant $C$, independent of $\lambda$.
\end{theorem}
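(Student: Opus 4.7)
The plan is a standard Fredholm plus cokernel-vanishing argument, leveraging the two lemmas immediately preceding the theorem together with an $L^2$-duality applied at the dual weight $-\alpha-1$.

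The first step is to reduce to the ordinary Hodge Laplacian on real-valued $1$-forms. On the longitudinal sub-bundle $\underline{\mathbb{R}} \subset \mathfrak{g}_P$ spanned by $\varPhi_0/|\varPhi_0|$, the bracket $[\varPhi_0, u^L]$ vanishes and $\nabla_{A_0}$ restricts to the Levi-Civita connection on a trivial line sub-bundle. So by the monopole Weitzenböck formula of Lemma \ref{monopoleweitzenbock}, $d_2d_2^*$ on the longitudinal component is just the Hodge Laplacian $\Delta$ on ordinary real-valued $1$-forms on $M \setminus S_p$ (plus bounded geometric terms absorbed by $\Delta$); in particular, the operator and the longitudinal part of the weighted Sobolev spaces of Definition \ref{weighted-ext} carry no dependence on the scaling parameter $\lambda$. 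This is what will ultimately produce the $\lambda$-independent constant.

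The second step is to invoke Fredholmness and injectivity, which are already packaged: for $\alpha \notin D(\Delta)$, the preceding two lemmas give that $\Delta \colon W^{2,2}_\alpha(\Omega^1(M \setminus S_p)) \to L^2_{\alpha-2}(\Omega^1(M \setminus S_p))$ is Fredholm with trivial kernel. The third step is to obtain surjectivity by vanishing of the cokernel via duality. Using the conformal identification $L$ of $M \setminus S_p$ with the asymptotically cylindrical manifold $M_{\mathrm{Cyl}}$, our weighted spaces $W^{k,2}_\alpha$ transform into the Lockhart--McOwen spaces $W^{k,2}_{\mathrm{Cyl},-\alpha}$, on which the $L^2$-pairing identifies $(W^{k,2}_{\mathrm{Cyl},\beta})^*$ with $W^{-k,2}_{\mathrm{Cyl},-\beta}$. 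Since $\Delta$ is formally self-adjoint, a direct computation of the shifts (two derivatives of $\Delta$, plus the conformal factor between the Euclidean and cylindrical metrics near each $p_i$) shows that the Fredholm-adjoint of $\Delta \colon W^{2,2}_{-\alpha-1} \to L^2_{-\alpha-3}$ is precisely $\Delta \colon W^{2,2}_\alpha \to L^2_{\alpha-2}$. Thus the cokernel of the latter is isomorphic to the kernel of the former, and the hypothesis $-\alpha-1 \notin D(\Delta)$ puts us back in the setting of the preceding vanishing lemma, making this kernel trivial. Hence $\Delta$ is a bounded bijection, and the open mapping theorem yields the estimate $\|u^L\|_{W^{2,2}_\alpha} \leq C\|f^L\|_{L^2_{\alpha-2}}$ with $C$ depending only on $(M,g)$ and $\alpha$.

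The main obstacle I anticipate is the bookkeeping in the duality step: one must confirm that, after passing through the conformal map $L$ and accounting for the weight shift induced by the conformal factor $e^{-2t}$ on the cylinder ends, the dual weight to $\alpha$ in the $W^{2,2}$ space is exactly $-\alpha-1$ rather than some other constant (e.g.\ $-\alpha-3$ arising directly from $(L^2_\beta)^* \cong L^2_{-\beta-3}$ in three dimensions). Once this identification is pinned down, the argument is entirely routine elliptic Fredholm theory in the Lockhart--McOwen framework, and neither the Fredholm indices nor the estimate constants see the parameter $\lambda$ because the longitudinal operator has been completely decoupled from the BPS data.
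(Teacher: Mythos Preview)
Your proposal is correct and follows essentially the same route as the paper: Fredholmness at $\alpha$ from the first preceding lemma, then identification of the cokernel with $\ker\Delta$ at the dual weight $-\alpha-1$ (the paper writes this as the decomposition $L^2_{\alpha-2} = \Delta(W^{2,2}_{\alpha}) \oplus \ker \Delta_{W^{2,2}_{-\alpha-1}}$), and vanishing of that kernel via the second preceding lemma. The paper additionally records the identification $\ker \Delta_{W^{2,2}_{-\alpha-1}} \cong H^1(M\setminus S_p,\mathbb{R})=0$, but since the vanishing lemma already gives $\ker\Delta=\{0\}$ directly for any non-critical weight, your version is equally complete; your worry about the duality bookkeeping is warranted but the dual weight $-\alpha-1$ is indeed the correct one in dimension three.
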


\begin{proof}
Pick $\alpha \in [-\frac{1}{2},0) \setminus D(\Delta)$. Therefore, $\Delta: W^{2,2}_{\alpha}(\Omega^1(M \setminus S_p)) \to L^2_{\alpha-2}(\Omega^1(M \setminus S_p))$ is Fredholm. We have
\begin{align*}
    L^2_{\alpha-2} = 
    \Delta(W^{2,2}_{\alpha}) \oplus 
    \ker \Delta_{W^{2,2}_{-\alpha-1}}.
\end{align*}
On the other hand,
\begin{align*}
    \ker \Delta_{W^{2,2}_{-\alpha-1}}
    \cong H^1(M \setminus S_p, \mathbb{R}) = 0,
\end{align*}
and therefore, $\Delta: W^{2,2}_{\alpha}(\Omega^1(M \setminus S_p)) \to L^2_{\alpha-2}(\Omega^1(M \setminus S_p))$ is surjective. Furthermore, since it is a Fredholm operator, there is a constant $C$ such that 
\begin{align*}
    \|u^L\|_{W^{2,2}_{\alpha}}
    \leq C \|\Delta u^L\|_{L^2_{\alpha-2}}.
\end{align*}
\end{proof}

\subsection{The Transverse Component}\label{lineartrans}

In this section, we solve the transverse part of the linear equation, away from the points where the scaled BPS-monopoles are located. Recall that on the transverse part the weight does not appear. For all smooth compactly supported $L$-valued differential forms $ u^T \in \Omega^{\boldsymbol{\cdot}}(U, \mathfrak{su}(2))$, let
\begin{align*}
    &\|u^T\|^2_{L^2_{\alpha}(U)} = 
    \|u^T\|^2_{L^2(U)},\\
    &\|u^T\|^2_{W^{1,2}_{\alpha}(U)} =
    \|u^T\|^2_{W^{1,2}(U)} :=
    \|u^T\|^2_{L^2(U)} + 
    \|\nabla_{A_0} u^T\|^2_{L^2(U)} 
    + \frac{1}{2} \| [\varPhi_0,u] \|_{L^2(U)}^2,\\
    &\|u^T\|^2_{W^{2,2}_{\alpha}(U)} =
    \|u^T\|^2_{W^{2,2}(U)} :=
    \|u^T\|^2_{W^{1,2}(U)} + 
    \|\nabla_{A_0}(d_2^* u^T)\|^2_{L^2(U)}
    + \frac{1}{2} \|[\varPhi_0, d_2^*u]\|^2_{L^2(U)}.
\end{align*}
Moreover, let $W_0^{2,2}$ be the subset of $W^{2,2}$ which vanish on $\partial U = \cup_j (\partial B_{2 \epsilon_j}(q_j))$.

The main theorem of this section is the following. The essential assumption is that the monopole has a very large mass.

\begin{theorem}\label{trancomp}
Let $U = M \setminus ( \cup_j B_{2\epsilon_j}(q_j) \cup S_p )$. Let $d_2d_2^*: W_0^{2,2}(\Omega^1(U,L)) \to L^2(\Omega^1(U,L))$. Let $f^T \in L^2(\Omega^1(U; L))$ such that $f^T = 0$ on $\partial U$. For a sufficiently large average mass $\overline{m}$, there exists a unique solution $ u^T \in W_0^{2,2}(\Omega^1(U,L))$ to $d_2 d_2^* u^T = f^T$ such that
\begin{align*}
    \| u^T \|_{W^{2,2}(U)}
    \leq C 
    \| f^T \|_{L^2(U)},
\end{align*}
where the constant $C$ is independent of $\lambda$, which appears in the definition of the approximate monopole $(A_0, \varPhi_0)$. 
\end{theorem}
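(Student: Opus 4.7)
The plan is to solve the Dirichlet problem $d_2 d_2^* u^T = f^T$ by the direct variational method, paralleling Theorem \ref{q} but in the simpler unweighted setting and exploiting pointwise positivity of $-ad(\varPhi_0)^2$ on the transverse bundle. Two structural facts on $U$ make the problem tractable. First, by construction $(A_0, \varPhi_0) = (A_D, \varPhi_D)$ on $U$, so the error $e_0$ vanishes identically there and $(A_0, \varPhi_0)$ is independent of the individual scalings $\lambda_j$. Second, for $\overline{m}$ large enough that $\sqrt{2/\overline{m}} \leq 2\epsilon_j$ for all $j$, Lemma \ref{mass} yields $|\varPhi_0| \geq \overline{m}/2$ uniformly on $U$, and since $L$ is orthogonal to $\underline{\mathbb{R}}\varPhi_0$ the operator $-ad(\varPhi_0)^2$ acts on $L$-valued forms as multiplication by a positive multiple of $|\varPhi_0|^2$.

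Applying the monopole Weitzenb\"ock identity of Lemma \ref{monopoleweitzenbock} to $u \in W_0^{1,2}(\Omega^1(U,L))$, with the Dirichlet condition killing the boundary terms of the integration by parts, gives
\[
\|d_2^* u\|_{L^2(U)}^2 = \|\nabla_{A_0} u\|_{L^2(U)}^2 + \|[\varPhi_0, u]\|_{L^2(U)}^2 + \langle Ric(u), u\rangle_{L^2(U)}.
\]
With the normalization $\sup|Ric| \leq 1/100$ and $\overline{m}$ taken large, the Ricci contribution is absorbed by the commutator term, yielding the uniform coercivity
\[
\|d_2^* u\|_{L^2(U)}^2 \geq \tfrac{1}{2}\|\nabla_{A_0} u\|_{L^2(U)}^2 + c\,\overline{m}^2\|u\|_{L^2(U)}^2,
\]
whose right-hand side is equivalent to the $W^{1,2}$-norm with constants depending only on $(M,g)$ and $\overline{m}$, hence $\lambda$-independent. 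With coercivity in hand, I would run Steps 1--5 of the proof of Theorem \ref{q} verbatim: the energy $E(u) = \tfrac{1}{2}\int_U |d_2^* u|^2 \, vol_g - \langle u, f^T\rangle_{L^2}$ is continuous, Gateaux-differentiable, and strictly convex on the reflexive Hilbert space $W_0^{1,2}(\Omega^1(U,L))$, while coercivity combined with Cauchy--Schwarz forces $dE_u(u) > 0$ whenever $\|d_2^* u\|_{L^2}$ exceeds a constant multiple of $\|f^T\|_{L^2}$. The direct method then produces a unique minimizer $u^T$ solving $d_2 d_2^* u^T = f^T$ weakly with $\|u^T\|_{W^{1,2}(U)} \leq C\|f^T\|_{L^2(U)}$.

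To upgrade to the $W^{2,2}$-estimate, I would mimic Step 10 of Theorem \ref{q}: apply the $DD^*$-Weitzenb\"ock identity to $d_2^* u^T$, pair with $d_2^* u^T$, and exploit the identity $DD^*(u^T,0) = (f^T, 0)$, valid on $U$ precisely because $e_0 \equiv 0$ there, to obtain
\[
\|\nabla_{A_0}(d_2^* u^T)\|_{L^2(U)}^2 + \|[\varPhi_0, d_2^* u^T]\|_{L^2(U)}^2 \leq C_1\bigl(\|f^T\|_{L^2(U)}^2 + \|d_2^* u^T\|_{L^2(U)}^2\bigr),
\]
which together with the $W^{1,2}$-bound yields the theorem. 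The main technical obstacle is the boundary analysis for these higher-derivative integrations by parts on the manifold-with-boundary $U$, whose boundary $\partial U = \cup_j \partial B_{2\epsilon_j}(q_j)$ is nontrivial; the hypothesis $f^T|_{\partial U}=0$ together with smoothness of $(A_0, \varPhi_0)$ in a neighbourhood of $\partial U$ reduces this to a standard Dirichlet boundary estimate for the uniformly elliptic operator $d_2 d_2^*$, which I would handle by the usual localization and reflection argument, noting that all constants produced depend only on $(M,g)$ and $\overline{m}$ rather than on $\lambda$.
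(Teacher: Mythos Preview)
Your proposal is correct and follows essentially the same route as the paper: both exploit that $e_0\equiv 0$ on $U$ and that $|\varPhi_0|\geq \overline{m}/2$ there to obtain coercivity of $\|d_2^* u^T\|_{L^2}^2$ from the Weitzenb\"ock formula, then run the direct variational method (Steps 1--5) and finally upgrade to $W^{2,2}$ via the $DD^*$-identity applied to $d_2^* u^T$ together with $DD^*(u^T,0)=(f^T,0)$. The paper handles the boundary issue you flag simply by working with $f^T\in\Omega^1_c(U,L)$ and passing to the closure, rather than by an explicit reflection argument.
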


The proof of this theorem is presented in a series of lemmas. We only to prove the lemma for the case where $f^T$ is a smooth compactly supported element in $\Omega^1_c(U,L)$.

\begin{lemma} [Step 1] \label{s1t}
Suppose $f^T$ is a smooth compactly supported $L$-valued 1-form on $U$. Let 
\begin{align*} 
    &E: W^{1,2}_0(\Omega^1(U,L)) \to \mathbb{R},\\     &E(u^T) := \frac{1}{2}\int_{U} |d_2^*u^T|^2 vol_g - \langle u^T , f^T \rangle_{L^2(U)},
\end{align*} 
Then $E(u^T)$ is convergent for all $u^T \in W^{1,2}_0(\Omega^1(U,L))$.
\end{lemma}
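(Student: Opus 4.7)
The plan is to mirror Step 1 in the proof of Theorem \ref{q}, exploiting the simplification that on $U$ the approximate monopole coincides with the Dirac monopole. Split $E(u^T)$ into the quadratic part $\frac{1}{2}\|d_2^* u^T\|_{L^2(U)}^2$ and the linear part $\langle u^T, f^T\rangle_{L^2(U)}$ and show each is finite for all $u^T \in W^{1,2}_0(\Omega^1(U,L))$. The linear part is handled directly by Cauchy--Schwarz: since $f^T$ is smooth with compact support and the transverse norm controls the ordinary $L^2$-norm on $U$,
\begin{align*}
|\langle u^T, f^T\rangle_{L^2(U)}| \leq \|u^T\|_{L^2(U)} \|f^T\|_{L^2(U)} \leq \|u^T\|_{W^{1,2}(U)} \|f^T\|_{L^2(U)} < \infty.
\end{align*}

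For the quadratic piece, apply the monopole Weitzenb\"ock formula from Lemma \ref{monopoleweitzenbock} to obtain
\begin{align*}
\|d_2^* u^T\|_{L^2(U)}^2 &= \|\nabla_{A_0} u^T\|_{L^2(U)}^2 + \|[\varPhi_0, u^T]\|_{L^2(U)}^2 \\
&\quad + \langle Ric(u^T), u^T\rangle_{L^2(U)} + \langle *[e_0 \wedge u^T], u^T\rangle_{L^2(U)}.
\end{align*}
The first two terms are bounded by $\|u^T\|_{W^{1,2}(U)}^2$ directly from the definition of the transverse norm recalled at the start of Section \ref{lineartrans}. The Ricci term is harmless because $M$ is compact, so $\sup_{M}|Ric| < \infty$ and $|\langle Ric(u^T), u^T\rangle_{L^2(U)}| \leq \sup_M|Ric| \cdot \|u^T\|_{L^2(U)}^2 \leq \sup_M|Ric| \cdot \|u^T\|_{W^{1,2}(U)}^2$.

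The crucial simplification compared with Step 1 of Theorem \ref{q} is that the error term disappears entirely: on $U = M \setminus (\cup_j B_{2\epsilon_j}(q_j) \cup S_p)$ the cut-off functions obey $\xi_0 \equiv 1$ and $\xi_j \equiv 0$, so $(A_0, \varPhi_0) = (A_D, \varPhi_D)$, which is a genuine Dirac monopole away from the singular set $S_p$; hence $e_0 \equiv 0$ on $U$, and no H\"older bound or smallness assumption on $\|w e_0\|_{L^3}$ is required. The only minor technical point is justifying the integration by parts implicit in the Weitzenb\"ock identity in the presence of the finite boundary $\partial U$ and the blow-up of $\varPhi_0$ at the points of $S_p$: the boundary contribution at $\partial U$ vanishes because $u^T \in W^{1,2}_0$ vanishes there by definition, and the terms at the punctures $p_i$ vanish by approximating $u^T$ by elements of $C_c^{\infty}(U)$, which are dense in $W^{1,2}_0$, using that $[\varPhi_0, u^T] \in L^2$ already encodes the required decay at $S_p$. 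This density argument is the main, though very mild, obstacle.
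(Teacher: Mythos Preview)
Your proposal is correct and follows essentially the same argument as the paper: both use the monopole Weitzenb\"ock formula, observe that $e_0 \equiv 0$ on $U$ so the error term drops out, bound the remaining terms by $\|u^T\|_{W^{1,2}(U)}^2$ using the boundedness of $\sup_M|Ric|$, and handle the linear part by Cauchy--Schwarz. Your additional remarks on the vanishing of boundary terms via the $W^{1,2}_0$ condition and the density approximation near the punctures are reasonable elaborations that the paper leaves implicit.
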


\begin{proof}
First we show $\|d_2^*u^T\|^2_{L^2(U)}$ is finite. Again, the key fact in proving this is the monopole Weitzenb\"ock formula. Note that on this region $e_0 = 0$. 
\begin{align*} 
    \| d_2^* u^T\|_{L^2(U)}^2 = \| \nabla_{A_0} u^T\|_{L^2(U)}^2 &+ \| [\varPhi_0, u^T]\|_{L^2(U)}^2 + \langle u^T, Ric(u^T)\rangle_{L^2(U)}\\
    &\leq
    (1 +
    \sup_x |Ric(x)|)\|u^T\|_{W^{1,2}(U)}^2 < \infty.
\end{align*}

By the Cauchy–Schwarz inequality we have
\begin{align*}
    \langle u^T,f^T \rangle_{L^2(U)} \leq \|u^T\|_{L^2(U)} \|f^T\|_{L^2(U)} \leq \|u^T\|_{W^{1,2}(U)}
    \|f^T\|_{L^2(U)} < \infty,
\end{align*}
and therefore, the action functional $E(u^T)$ is convergent for all $u^T \in \Omega^1(U,L)$. 
\end{proof}

\begin{lemma}[Step 2]
The functional $E: W^{1,2}(\Omega^1(U,L)) \to \mathbb{R}$ is continuous. 
\end{lemma}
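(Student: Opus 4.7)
The plan is to decompose $E = E_1 + E_2$ where $E_1(u^T) = \tfrac{1}{2}\|d_2^* u^T\|^2_{L^2(U)}$ and $E_2(u^T) = -\langle u^T, f^T\rangle_{L^2(U)}$, and prove continuity of each piece separately. The linear piece $E_2$ is straightforward: since $W^{1,2}(\Omega^1(U,L))$ embeds continuously into $L^2(\Omega^1(U,L))$, Cauchy-Schwarz gives
\begin{align*}
|E_2(u^T)| \leq \|u^T\|_{L^2(U)}\,\|f^T\|_{L^2(U)} \leq \|u^T\|_{W^{1,2}(U)}\,\|f^T\|_{L^2(U)},
\end{align*}
so $E_2$ is a bounded linear functional on a Hilbert space and therefore continuous.

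For the quadratic piece $E_1$ I would use the crucial observation that on $U = M \setminus (\cup_j B_{2\epsilon_j}(q_j) \cup S_p)$ the approximate pair $(A_0,\varPhi_0)$ coincides with the Dirac monopole $(A_D,\varPhi_D)$, so the error term vanishes identically: $e_0|_U = 0$. Consequently the monopole Weitzenböck formula in Lemma \ref{monopoleweitzenbock} simplifies to
\begin{align*}
\|d_2^* u^T\|^2_{L^2(U)} = \|\nabla_{A_0} u^T\|^2_{L^2(U)} + \|[\varPhi_0,u^T]\|^2_{L^2(U)} + \langle u^T, Ric(u^T)\rangle_{L^2(U)},
\end{align*}
provided the boundary and asymptotic terms from integration by parts drop out; this is immediate for $C^\infty_c(U)$ forms, and extends to general elements of $W^{1,2}_0$ by density of $C^\infty_c(U)$ in the completion defining $W^{1,2}_0$.

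Given this identity, continuity of $E_1$ reduces to continuity of each of the three terms on the right. If $u^T_i \to u^T$ in $W^{1,2}(U)$, then the very definition of the $W^{1,2}$-norm forces $\|\nabla_{A_0}(u^T_i - u^T)\|_{L^2(U)} \to 0$, $\|[\varPhi_0, u^T_i - u^T]\|_{L^2(U)} \to 0$, and $\|u^T_i - u^T\|_{L^2(U)} \to 0$. A standard polarization argument then yields convergence of the squared norms. For the Ricci term, using that $M$ is compact so $C_{Ric} := \sup_{x \in M}|Ric(x)| < \infty$, we bound
\begin{align*}
\bigl|\langle u^T_i, Ric(u^T_i)\rangle_{L^2(U)} - \langle u^T, Ric(u^T)\rangle_{L^2(U)}\bigr|
\leq C_{Ric}\,\|u^T_i - u^T\|_{L^2(U)}\bigl(\|u^T_i\|_{L^2(U)} + \|u^T\|_{L^2(U)}\bigr),
\end{align*}
which tends to zero. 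Combining the three gives $E_1(u^T_i) \to E_1(u^T)$, completing the proof.

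Compared to the analogous Step 2 on $\mathfrak{R}^3$ (Lemma \ref{s1}), this case is substantially easier because no weights intervene on the transverse component and the error term $[e_0 \wedge u^T]$ is absent. The only point requiring a little care is the justification of the Weitzenböck identity without asymptotic contributions, which is why restricting to $W^{1,2}_0$ (elements vanishing on $\partial U = \cup_j \partial B_{2\epsilon_j}(q_j)$) and using density of compactly supported forms is the natural setup; I do not anticipate any genuine obstacle.
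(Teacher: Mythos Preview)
Your proposal is correct and follows essentially the same approach as the paper: decompose $E$ into the quadratic piece $\|d_2^* u^T\|_{L^2}^2$ and the linear piece $\langle u^T,f^T\rangle_{L^2}$, handle the linear piece by boundedness, and use the Weitzenb\"ock identity (with $e_0|_U=0$) to control the quadratic piece via the three terms $\|\nabla_{A_0}u^T\|_{L^2}^2$, $\|[\varPhi_0,u^T]\|_{L^2}^2$, and $\langle u^T,Ric(u^T)\rangle_{L^2}$. The only cosmetic difference is that the paper applies the Weitzenb\"ock formula directly to the difference $u^T-u_i^T$ to show $\|d_2^*(u^T-u_i^T)\|_{L^2}\to 0$, whereas you argue term-by-term via polarization; these are equivalent.
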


\begin{proof}
We should show the following functions are continuous, 
\begin{align*}
\|d_2^*-\|^2_{L^2(U)}:
W^{1,2}_0(\Omega^1(U,L)) \to \mathbb{R}, 
\quad \quad
\langle -,f^T \rangle_{L^2(U)}:
W^{1,2}(\Omega^1(U,L)) \to \mathbb{R}. 
\end{align*}
To prove $\|d_2^*-\|^2_{L^2(U)}$ is continuous we should show 
\begin{align*} 
\|d_2^* u_i^T\|^2_{L^2(U)} \to \|d_2^*u^T\|^2_{L^2(U)}, \quad \text{ when } \quad u_i^T \to u^T \quad \text{ in } \quad W^{1,2}(\Omega^1(U,L)).
\end{align*} 
By the Weitzenb\"ock formula we know
\begin{align*}
    \| d_2^* (u^T - u_i^T) \|_{L^2(U)}^2 &= \| \nabla_{A_0} (u^T - u_i^T) \|_{L^2(U)}^2 + \| [\varPhi_0, (u^T - u_i^T)]\|_{L^2(U)}^2 \\&+\langle Ric((u^T - u_i^T)),(u^T - u_i^T) \rangle_{L^2(U)} \\& \leq
    \|(u^T - u_i^T)\|_{W^{1,2}(U)}^2 
    + \| [\varPhi_0, (u^T - u_i^T)]\|_{L^2(U)}^2 \\&+
    \sup_{x}|Ric(x)|\|u^T-u_i^T\|_{L^2(U)},
\end{align*}
which goes to zero as $i \to \infty$.

In order to prove $\langle -,f^T \rangle_{L^2(U)}:
W^{1,2}_0(\Omega^1(U,L)) \to \mathbb{R}$ is continuous, note that this map is linear, and since $W^{1,2}(\Omega^1(U,L))$ is a Hilbert space, continuity is equivalent to being bounded. 
\begin{align*}
    \langle u^T , f^T \rangle_{L^2(U)} \leq
        \|u^T\|_{L^2(U)} \|f^T\|_{L^2(U)} \leq
    \|u^T\|_{W^{1,2}(U)} \|f^T\|_{L^2(U)},
\end{align*}
and therefore, the operator norm 
\begin{align*}
    \sup \{\langle u^T , f^T \rangle_{L^2(U)} \; | \; {u^T \in W^{1,2}, \|u^T\|_{W^{1,2}}  = 1} \} \leq 
     \|f\|_{L^2(U)} < \infty,
\end{align*}
which proves the linear map is bounded and continuous. 
\end{proof}

\begin{lemma}[Step 3]
$E: W^{1,2}_0(\Omega^1(U,L)) \to \mathbb{R}$ is Gateaux-differentiable.
\end{lemma}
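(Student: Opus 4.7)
The plan is to mirror Step 3 of the analogous chain of lemmas proved earlier on $\mathfrak{R}^3$, since the functional has exactly the same algebraic form; the only differences are the domain of integration and the absence of the error term $e_0$ (which vanishes on $U$). First I would compute the Gateaux derivative formally. Writing
\begin{align*}
    E(u^T+tv^T) - E(u^T) = t\int_U \langle d_2^* u^T, d_2^* v^T\rangle \, vol_g + \frac{t^2}{2}\int_U |d_2^* v^T|^2 \, vol_g - t\langle v^T, f^T\rangle_{L^2(U)},
\end{align*}
dividing by $t$ and letting $t \to 0$ formally yields
\begin{align*}
    d_{u^T} E(v^T) = \int_U \langle d_2^* u^T, d_2^* v^T\rangle \, vol_g - \langle v^T, f^T\rangle_{L^2(U)}.
\end{align*}

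The remaining task is simply to verify that both terms on the right-hand side are finite for every $u^T, v^T \in W^{1,2}_0(\Omega^1(U,L))$, so that this formal computation genuinely defines a real-valued linear functional in $v^T$. For the first term, I would apply the Cauchy–Schwarz inequality to obtain
\begin{align*}
    \int_U \langle d_2^* u^T, d_2^* v^T\rangle \, vol_g \leq \|d_2^* u^T\|_{L^2(U)} \|d_2^* v^T\|_{L^2(U)},
\end{align*}
and then invoke the bound $\|d_2^* u^T\|^2_{L^2(U)} \leq (1 + \sup_x |Ric(x)|)\|u^T\|^2_{W^{1,2}(U)}$ already established in Lemma \ref{s1t} (and likewise for $v^T$). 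For the second term, Cauchy–Schwarz once more gives
\begin{align*}
    \langle v^T, f^T\rangle_{L^2(U)} \leq \|v^T\|_{L^2(U)} \|f^T\|_{L^2(U)} \leq \|v^T\|_{W^{1,2}(U)} \|f^T\|_{L^2(U)} < \infty,
\end{align*}
since $f^T$ is smooth and compactly supported.

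There is no real obstacle here: this step is essentially formal once the convergence of $E$ itself has been established in Step 1, and the argument is strictly easier than the corresponding step on $\mathfrak{R}^3$ because the error term $\langle *[e_0 \wedge u], u\rangle$ does not appear (the approximate monopole is exactly the Dirac monopole on $U$, so $e_0 \equiv 0$ there) and there is no weight function $w$ to track. The only bookkeeping point worth flagging is that the vanishing boundary condition defining $W^{1,2}_0$ is preserved under the one-parameter family $u^T + tv^T$, so the derivative stays in the correct space; beyond that, the computation is immediate.
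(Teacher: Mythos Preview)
Your proposal is correct and follows essentially the same approach as the paper: compute the Gateaux derivative directly to obtain $d_{u^T}E(v^T) = \int_U \langle d_2^* u^T, d_2^* v^T\rangle\, vol_g - \langle v^T, f^T\rangle_{L^2(U)}$, then verify both terms are finite via Cauchy--Schwarz and the $L^2$ bound on $d_2^* u^T$ established in Step~1. Your explicit expansion of $E(u^T+tv^T)-E(u^T)$ and the remarks about the absence of $e_0$ and the weight are reasonable additions but not substantively different from the paper's argument.
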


\begin{proof}
A direct computation shows
\begin{align*}
    d_u^T E(v^T) = \int_{U} \langle d_2^*u^T , d_2^*v^T \rangle_{L^2} vol_g - \langle v^T , f^T \rangle_{L^2(U)}.
\end{align*}
Note that for any $u^T ,v^T \in W_0^{1,2}$,
\begin{align*}
    \int_{U} \langle d_2^*u^T , d_2^*v^T \rangle_{L^2} vol_g \leq \|d_2^*u^T\|_{L^2(U)} \| d_2^*v^T\|_{L^2(U)} < \infty, 
\end{align*}
and 
\begin{align*}
    \langle v^T , f^T \rangle_{L^2(U)} \leq \|v^T\|_{L^2(U)} \| f^T\|_{L^2(U)} < \infty.
\end{align*}
\end{proof}

\begin{lemma}[Step 4]\label{itisnormt} The functional
\begin{align*}
    E: W^{1,2}_0(\Omega^1(U,L)) \to \mathbb{R}, \quad \quad     E(u^T) := \frac{1}{2}\int_{U} |d_2^*u^T|^2 vol_g - \langle u^T , f^T \rangle_{L^2(U)}.
\end{align*} 
is strictly convex, when $\overline{m}$ is sufficiently large.
\end{lemma}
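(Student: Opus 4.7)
The plan is to reduce strict convexity to the statement that $\|d_2^*\,\cdot\,\|_{L^2(U)}$ defines a genuine norm on $W^{1,2}_0(\Omega^1(U,L))$. Since $\langle u^T, f^T\rangle_{L^2(U)}$ is linear in $u^T$, strict convexity of $E$ is equivalent to strict convexity of the quadratic form $u^T \mapsto \|d_2^* u^T\|^2_{L^2(U)}$, which by the parallelogram identity is in turn equivalent to the vanishing implication $d_2^* u^T = 0 \Rightarrow u^T = 0$.

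The decisive simplification relative to Lemma \ref{itisnorm} is that on $U$ the reducible pair $(A_D,\varPhi_D)$ is a genuine monopole, so the error $e_0$ vanishes identically on $U$. The monopole Weitzenb\"ock formula of Lemma \ref{monopoleweitzenbock} therefore collapses to
\begin{align*}
\|d_2^* u^T\|^2_{L^2(U)} = \|\nabla_{A_0} u^T\|^2_{L^2(U)} + \|[\varPhi_0, u^T]\|^2_{L^2(U)} + \langle Ric(u^T), u^T\rangle_{L^2(U)},
\end{align*}
and the only term of indefinite sign is the Ricci contribution. My strategy is to absorb it using the large-mass hypothesis. Because $\varPhi_0$ lies along the longitudinal $\sigma_3$-direction and $u^T$ lies in the transverse $\sigma_1$--$\sigma_2$ plane, a pointwise computation in $\mathfrak{su}(2)$ gives $|[\varPhi_0, u^T]| = c_0 |\varPhi_0||u^T|$ for a fixed $c_0>0$. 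By Lemma \ref{mass}, for $\overline{m}$ large enough $U \subset K(\epsilon_0)$, hence $|\varPhi_0| \geq \overline{m}/2$ pointwise on $U$ (with $|\varPhi_0|$ blowing up near the points $p_i$), so that
\begin{align*}
\|u^T\|^2_{L^2(U)} \leq \frac{4}{c_0^{2}\,\overline{m}^{2}}\,\|[\varPhi_0, u^T]\|^2_{L^2(U)}.
\end{align*}

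Substituting this Poincar\'e-type bound into the Weitzenb\"ock identity yields
\begin{align*}
\|d_2^* u^T\|^2_{L^2(U)} \geq \|\nabla_{A_0} u^T\|^2_{L^2(U)} + \Bigl(1 - \frac{4\sup_{M}|Ric|}{c_0^{2}\,\overline{m}^{2}}\Bigr)\|[\varPhi_0, u^T]\|^2_{L^2(U)},
\end{align*}
and for $\overline{m}$ sufficiently large the coefficient in parentheses is at least $1/2$. Consequently $d_2^* u^T = 0$ forces $[\varPhi_0, u^T] = 0$; since $|\varPhi_0| > 0$ throughout $U$ and $u^T$ is transverse, this already gives $u^T \equiv 0$. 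The main obstacle I anticipate is making the constant in the weighted bound for $\|u^T\|_{L^2(U)}$ uniform in the gluing parameters $\lambda_j$; this is ensured by the compatibility $\epsilon_j = \lambda_j^{-1/2}$ which keeps $U$ inside $K(\epsilon_0)$ throughout the construction, and by the fact, noted in Section \ref{lineartrans}, that the transverse $W^{1,2}_\alpha(U)$-norm coincides with the unweighted $W^{1,2}(U)$-norm so no weighted Hardy inequality is required here.
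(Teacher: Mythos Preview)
Your proposal is correct and follows essentially the same route as the paper: both reduce strict convexity to the implication $d_2^*u^T=0\Rightarrow u^T=0$, invoke the Weitzenb\"ock identity with $e_0\equiv 0$ on $U$, and absorb the Ricci term using the lower bound $|\varPhi_0|\geq \overline{m}/2$ from Lemma~\ref{mass} together with the pointwise bracket estimate for transverse sections. Your write-up is slightly more explicit about the constant in the bracket bound and about why $U\subset K(\epsilon_0)$, but the argument is the same.
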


\begin{proof}
$\langle u^T,f^T \rangle_{L^2(U)}$ is linear in $u$ and we only need to show $\|d_2^*u^T\|_{L^2(U)}^2$ is strictly convex. This reduces to showing that $\|d_2^*(u_1^T-u_2^T)\|_{L^2(U)}^2 > 0$ when $u_1^T, u_2^T \in W_0^{1,2}(\Omega^1(U,L))$ and $u_1^T \neq u_2^T$. Let $u^T = u_1^T - u_2^T$. We should show 
\begin{align*} 
    u^T \in W_0^{1,2}(\Omega^1(U,L)),\quad d_2^*u^T = 0\quad \Rightarrow \quad u^T = 0. 
\end{align*}
In fact, this shows $\| d_2^* - \|_{L^2}$ is a norm on $W_0^{1,2}(\Omega^1(U,L))$.
\begin{align*}
0 = \| d_2^* u \|_{L^2(\mathfrak{R}^3)}^2 & \geq \| \nabla_{A_0} u^T\|_{L^2(U)}^2  + (\frac{\overline{m}^2}{4} - \sup_x |Ric(x)|)\|u^T \|_{L^2(U)}^2  \\&\geq 
\| \nabla_{A_0} u^T\|_{L^2(U)}^2 +
\|u^T \|_{L^2(U)}^2,
\end{align*} 
when $\overline{m}$ is large enough such that
\begin{align*}
    \frac{\overline{m}^2}{4} - \sup_x |Ric(x)| \geq 1,
\end{align*}
and therefore, $u^T=0$. 
\end{proof}

\begin{lemma}[Step 5]\label{minsolt}
$E: W^{1,2}_0(\Omega^1(U,L)) \to \mathbb{R}$ has a unique minimizer, when $\overline{m}$ is sufficiently large, and therefore, $d_2 d_2^* u^T = f^T$ has a unique solution. 
\end{lemma}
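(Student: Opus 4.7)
The plan is to repeat the variational argument used in Lemma \ref{minsol}, exploiting the uniform coercivity that is now freely available because on $U$ the Higgs field $\varPhi_0$ is bounded below in norm by $\overline{m}/2$ (cf.\ Lemma \ref{mass}). I would apply the same abstract criterion: a convex, continuous, Gateaux-differentiable functional on a reflexive Banach space whose Gateaux derivative satisfies $d_u E(u) > 0$ for $\|u\|$ sufficiently large admits a unique interior minimizer, at which $d_u E$ vanishes. The space $W^{1,2}_0(\Omega^1(U,L))$ is a Hilbert space, hence reflexive. Equip it with the norm $\|d_2^*-\|_{L^2(U)}$, which is a genuine norm by Lemma \ref{itisnormt}; the steps 1 through 4 already verified continuity, convexity, and Gateaux-differentiability.

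The only remaining task is coercivity of $d_{u^T} E$. The monopole Weitzenb\"ock formula applied on $U$, together with the lower bound $|\varPhi_0| \geq \overline{m}/2$ from Lemma \ref{mass} and the hypothesis that $\overline{m}$ is large enough to dominate $\sup_x |Ric(x)|$, gives
\begin{align*}
\|d_2^* u^T\|_{L^2(U)}^2 \geq \|\nabla_{A_0} u^T\|_{L^2(U)}^2 + \|u^T\|_{L^2(U)}^2,
\end{align*}
exactly as in the proof of Lemma \ref{itisnormt}. In particular $\|d_2^* u^T\|_{L^2(U)} \geq \|u^T\|_{L^2(U)}$. Using this and the Cauchy--Schwarz inequality,
\begin{align*}
d_{u^T} E(u^T) &= \|d_2^* u^T\|_{L^2(U)}^2 - \langle u^T, f^T\rangle_{L^2(U)} \\
&\geq \|d_2^* u^T\|_{L^2(U)}^2 - \|u^T\|_{L^2(U)} \|f^T\|_{L^2(U)} \\
&\geq \|d_2^* u^T\|_{L^2(U)}\bigl(\|d_2^* u^T\|_{L^2(U)} - \|f^T\|_{L^2(U)}\bigr).
\end{align*}
Setting $R := 1 + \|f^T\|_{L^2(U)}$, any $u^T$ with $\|d_2^* u^T\|_{L^2(U)} \geq R$ satisfies $d_{u^T} E(u^T) > 0$, which is the required coercivity.

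By the abstract criterion the functional $E$ attains a unique minimizer $u^T_0$ inside the open ball of radius $R$, and $d_{u^T_0} E = 0$. Translating this Euler--Lagrange equation into partial differential equation form gives $\langle d_2^* u^T_0, d_2^* v^T\rangle_{L^2(U)} = \langle v^T, f^T\rangle_{L^2(U)}$ for every test form $v^T \in W^{1,2}_0$, i.e.\ $d_2 d_2^* u^T_0 = f^T$ weakly (and strongly after elliptic regularity, since $d_2 d_2^*$ has the same symbol as the Laplacian by Lemma \ref{monopoleweitzenbock}). The main obstacle --- if any --- is ensuring the coercivity constant is independent of $\lambda$ and that no boundary terms obstruct the Weitzenb\"ock integration by parts; both are handled by working in $W^{1,2}_0$ so that the traces on $\partial U = \cup_j \partial B_{2\epsilon_j}(q_j)$ vanish, making the asymptotic terms in the integration by parts disappear.
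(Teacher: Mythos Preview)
Your proposal is correct and follows essentially the same approach as the paper: apply the abstract convex-minimization criterion, use the monopole Weitzenb\"ock formula together with the lower bound $|\varPhi_0|\geq \overline m/2$ on $U$ to get coercivity, and read off the unique minimizer as a solution of $d_2 d_2^* u^T = f^T$. The only cosmetic difference is that you run the coercivity argument with respect to the norm $\|d_2^*-\|_{L^2(U)}$ (mirroring Lemma \ref{minsol}), whereas the paper phrases it with respect to $\|-\|_{L^2(U)}$; since the Weitzenb\"ock inequality you derive shows these are equivalent on $W^{1,2}_0(\Omega^1(U,L))$, both versions yield the same conclusion with the same radius $R = 1 + \|f^T\|_{L^2(U)}$.
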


\begin{proof}
In order to prove this, we use the following fact. 

Let $E:W \to \mathbb{R}$ be a convex, continuous, real-Gateaux-differentiable functional defined on a real reflexive Banach space $W$ such that $d_uE(u) > 0$ for any $u \in W$ with $\|u\|_{W} \geq R > 0$. Then there exists an interior point $u_0$ of $\{u \in W \; | \; \|u\|_{W} < R \}$ which is the unique minimizer of $E$ and $d_{u_0} E = 0$.

Let $W = W_0^{1,2}(\Omega^1(U,L))$ equipped with the norm $L^2$ defined by $\|-\|_{L^2(U)}$. Then
\begin{align*}
    \| d_2^* u^T \|_{L^2(U)}^2 &= \| \nabla_{A_0} u^T \|_{L^2(U)}^2 + \| [\varPhi_0, u^T]\|_{L^2(U)}^2 + \langle Ric(u^T),u^T \rangle_{L^2(U)} \\ &\geq
    \| \nabla_{A_0} u^T \|_{L^2(U)}^2  + (\frac{\overline{m}^2}{4} - \sup_x |Ric(x)|)\|u^T\|^2_{L^2(U)}
    \\ &\geq  \| \nabla_{A_0} u^T \|_{L^2(U)}^2  + \|u^T\|^2_{L^2(U)},
\end{align*}
when 
\begin{align*}
    \frac{\overline{m}^2}{4} - \sup_x |Ric(x)| \geq 1.
\end{align*}
Let $R = 1 + \|f^T\|_{L^2(U)}$. Then $\|u^T\|_{L^2(U)} > 1 + \|f^T\|_{L^2(U)}$. Therefore,
\begin{align*}
    d_u^T E(u^T) &= 
    \| d_2^*u^T \|^2_{L^2(U)} - \langle u^T , f^T \rangle_{L^2(U)}
    \\&\geq \| \nabla_{A_0} u^T \|_{L^2(U)}^2  + \|u^T\|^2_{L^2(U)} -
    \|u^T\|_{L^2(U)} \| f^T\|_{L^2(U)}\\&
    =
    \|\nabla_{A_0} u^T \|_{L^2(U)}^2 +
    \|u^T\|_{L^2(U)} (\|u^T\|_{L^2(U)} -
    \| f^T\|_{L^2(U)} )
    \\&\geq
    \|\nabla_{A_0} u^T \|_{L^2(U)}^2 +
    \|u^T\|_{L^2(U)} > R > 0,
\end{align*}
when 
\begin{align*}
\|u^T\|_{L^2(U)} > R= 1 + \|f^T\|_{L^2(U)} \quad \text{ and } \quad
\overline{m} > \sqrt{1+ 4\sup_x |Ric(x)|}.
\end{align*}
Therefore, $E$ has a unique minimizer $u^T$ where $\|u^T\|_{L^2(U)} < 1 + \|f^T\|_{L^2(U)}$.
\end{proof}

\begin{lemma}[Step 6] \label{Step6t}
For $\overline{m}$ sufficiently large, the unique solution $u^T$ of the previous lemma is in $W_0^{1,2}(U,L)$, and satisfies 
\begin{align*}
        \|u^T\|_{W^{1,2}(U)} \leq C \|f\|_{L^2(U)},
\end{align*}
for a uniform constant $C$.
\end{lemma}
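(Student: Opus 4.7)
The plan is to test the equation $d_2 d_2^* u^T = f^T$ against $u^T$ in $L^2(U)$ and invoke the monopole Weitzenb\"ock formula. Because $u^T \in W^{1,2}_0$ vanishes on $\partial U$, integration by parts produces no boundary contribution, giving $\langle f^T, u^T\rangle_{L^2(U)} = \|d_2^* u^T\|^2_{L^2(U)}$. On $U$ the approximate monopole coincides with the lifted Dirac monopole, so $e_0 = 0$ there, and Lemma~\ref{monopoleweitzenbock} reduces to
\begin{align*}
\|d_2^* u^T\|^2_{L^2(U)} = \|\nabla_{A_0} u^T\|^2_{L^2(U)} + \|[\varPhi_0, u^T]\|^2_{L^2(U)} + \langle Ric(u^T), u^T\rangle_{L^2(U)}.
\end{align*}

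The key coercivity input is the mass lower bound. Since $u^T$ is a section of the transverse subbundle $L \subset \mathfrak{g}_P$, which is pointwise perpendicular to the span of $\varPhi_0$ in the adjoint bundle, a direct $\mathfrak{su}(2)$-computation yields $|[\varPhi_0, u^T]| = |\varPhi_0|\cdot |u^T|$ pointwise. By Lemma~\ref{mass}, on $U$ we have $|\varPhi_0| = |\varPhi_D| \geq \overline{m}/2$, so
\begin{align*}
\|[\varPhi_0, u^T]\|^2_{L^2(U)} \geq \tfrac{\overline{m}^2}{4}\,\|u^T\|^2_{L^2(U)}.
\end{align*}
Taking $\overline{m}$ large enough that $\overline{m}^2/4 - \sup_M |Ric| \geq 2$, the Ricci term is absorbed into a small fraction of the bracket term, leaving
\begin{align*}
\|\nabla_{A_0} u^T\|^2_{L^2(U)} + \tfrac{1}{2}\|[\varPhi_0, u^T]\|^2_{L^2(U)} + \|u^T\|^2_{L^2(U)} \leq \|f^T\|_{L^2(U)}\|u^T\|_{L^2(U)}.
\end{align*}
A standard application of Young's inequality on the right-hand side absorbs the remaining $\|u^T\|^2_{L^2}$ into the left, producing $\|u^T\|^2_{W^{1,2}(U)} \leq C\|f^T\|^2_{L^2(U)}$ with $C$ depending only on $\overline{m}$ (for $\overline{m}$ in a range of sufficiently large values) and, crucially, independent of the BPS scaling parameters $\lambda_j$.

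The hardest part is really just bookkeeping, since $e_0$ vanishes on $U$, boundary contributions disappear thanks to the $W^{1,2}_0$ condition, and no weights complicate the pairing. The one nontrivial observation is that on the transverse component the bracket term $\|[\varPhi_0, u^T]\|^2$ provides the full coercivity needed, dominating both $\|u^T\|^2_{L^2}$ and the Ricci contribution for $\overline{m}$ sufficiently large; this is precisely where the hypothesis on large average mass is used, and it is also what distinguishes this step from its counterpart over $\mathfrak{R}^3$ (where the weighted Hardy inequality and the $w$-weights were needed because $|\varPhi_0|$ degenerates at the origin).
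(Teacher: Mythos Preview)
Your proof is correct and follows essentially the same route as the paper: pair the equation with $u^T$, apply the monopole Weitzenb\"ock formula (with $e_0=0$ on $U$), use the pointwise lower bound $|\varPhi_0|\geq \overline{m}/2$ from Lemma~\ref{mass} to make the bracket term coercive on the transverse component, and absorb the Ricci term. Your write-up is in fact more explicit than the paper's, which compresses the pairing step and contains a couple of evident typos (the stray $L^2_{\alpha-1}(\mathfrak{R}^3)$ and the displayed line starting from $\|f^T\|_{L^2}^2=\|d_2d_2^*u^T\|_{L^2}^2\geq\cdots$, which should be read as the inequality obtained after pairing with $u^T$).
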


\begin{proof}
We have
\begin{align*}
    \|f^T\|_{L^2}^2
    = \|d_2d_2^*u^T\|_{L^2}^2
    \geq \|\nabla_A u^T\|_{L^2}^2
    + \frac{1}{2}\| [\varPhi_0,u] \|_{L^2_{\alpha-1}(\mathfrak{R}^3)}^2
    +(\frac{\overline{m}^2}{8}
    -\sup |Ric_{x}|)
    \|u^T\|_{L^2}^2,
\end{align*}
assuming $\overline{m}$ is sufficiently large, we get 
\begin{align*}
    \|f^T\|_{L^2}^2
    \geq C \|u^T\|_{W^{1,2}}^2,
\end{align*}
for a positive constant $C$.
\end{proof}

\begin{lemma}[Step 7]
Suppose $\overline{m}$ is sufficiently large. Then the unique solution $u^T$ of Lemma \ref{minsolt} is in $W_0^{2,2}(U,L)$, and satisfies 
\begin{align*}
 \|u^T\|_{W^{2,2}(U)} \leq C \|f^T\|_{L^2(U)},
\end{align*}
for a positive constant $C$.
\end{lemma}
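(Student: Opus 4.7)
The plan is to adapt Step 10 of Theorem \ref{q} to the region $U = M \setminus (\cup_j B_{2\epsilon_j}(q_j) \cup S_p)$, exploiting two simplifications that are specific to this region: first, the approximate monopole agrees with the reducible Dirac monopole on $U$, so the error vanishes, $e_0\equiv 0$ on $U$; second, by Lemma \ref{mass} the Higgs field satisfies $|\varPhi_0|\geq \overline{m}/2$ on $U$, so on the transverse component the operator $-\mathrm{ad}(\varPhi_0)^2$ is bounded below by a positive multiple of $\overline m^{\,2}$. These two facts together will let the Higgs term in the monopole Weitzenböck formula absorb all other zeroth-order contributions once $\overline m$ is taken sufficiently large.

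Concretely, set $(a,\varphi) := d_2^* u^T$. Because $d_2\circ d_1\xi = -*[e_0\wedge\xi]$ vanishes on $U$, the formal adjoint identity $d_1^*d_2^* = 0$ holds on $U$; hence
\begin{equation*}
D(d_2^* u^T) \;=\; (d_2 d_2^* u^T,\;d_1^* d_2^* u^T) \;=\; (f^T,0),
\end{equation*}
so $\|D(d_2^*u^T)\|_{L^2(U)} = \|f^T\|_{L^2(U)}$. Next, I would apply the monopole Weitzenböck formula of Lemma \ref{monopoleweitzenbock} to $(a,\varphi) = d_2^*u^T$, namely
\begin{equation*}
D^* D(a,\varphi) \;=\; \nabla_{A_0}^*\nabla_{A_0}(a,\varphi) - \mathrm{ad}(\varPhi_0)^2(a,\varphi) + \mathrm{Ric}(a,\varphi) + 2\langle d_{A_0}\varPhi_0,(a,\varphi)\rangle,
\end{equation*}
using $e_0\equiv 0$ on $U$. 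Pairing this identity with $(a,\varphi)$ and integrating by parts yields
\begin{equation*}
\|f^T\|_{L^2(U)}^2 \;=\; \|\nabla_{A_0}(a,\varphi)\|_{L^2(U)}^2 + \|[\varPhi_0,(a,\varphi)]\|_{L^2(U)}^2 + \langle \mathrm{Ric}(a,\varphi),(a,\varphi)\rangle_{L^2(U)} + 2\langle d_{A_0}\varPhi_0,(a,\varphi),(a,\varphi)\rangle_{L^2(U)}.
\end{equation*}

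To close the estimate I would exploit the mass-domination: on the transverse component, $\|[\varPhi_0,(a,\varphi)]\|_{L^2(U)}^2 \geq c\,\overline m^{\,2}\,\|(a,\varphi)\|_{L^2(U)}^2$ for a universal constant $c>0$. The Ricci term is bounded by $\sup_M|\mathrm{Ric}|\,\|(a,\varphi)\|_{L^2(U)}^2$, with $\sup_M|\mathrm{Ric}|$ fixed (and small by the metric normalization), while on $U$ the quantity $|d_{A_0}\varPhi_0|$ is uniformly bounded in terms of the geometry of the Dirac monopole away from its singular points $p_i,q_j$, independently of $\overline m$, since adding a constant to $\varPhi_0$ does not change $d\varPhi_0$. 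Taking $\overline m$ large enough, a fraction of the $\|[\varPhi_0,(a,\varphi)]\|_{L^2(U)}^2$ term absorbs both the Ricci term and the $d_{A_0}\varPhi_0$ term, giving
\begin{equation*}
\|\nabla_{A_0}(d_2^*u^T)\|_{L^2(U)}^2 + \|[\varPhi_0,d_2^*u^T]\|_{L^2(U)}^2 \;\leq\; C\,\|f^T\|_{L^2(U)}^2
\end{equation*}
with $C$ independent of $\overline m$. Combined with Lemma \ref{Step6t}, this produces the desired bound $\|u^T\|_{W^{2,2}(U)} \leq C\,\|f^T\|_{L^2(U)}$.

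The main obstacle will be bookkeeping the boundary contributions from integration by parts on $\partial U = \cup_j \partial B_{2\epsilon_j}(q_j)$. Although $u^T$ vanishes on $\partial U$ by the definition of $W_0^{2,2}$, the quantity $d_2^*u^T$ does not, so a naive pairing of the Weitzenböck identity with $d_2^* u^T$ produces boundary integrals involving the trace of $d_2^*u^T$. The cleanest remedy is to first compute $\|D(d_2^*u^T)\|_{L^2(U)}^2 = \langle DD^*D^*(u^T,0),(u^T,0)\rangle + \mathrm{bdry}$ by integrating by parts against $(u^T,0)$ itself, for which the boundary integrals vanish by the Dirichlet condition on $u^T$; then one re-expresses this quantity using the Weitzenböck identity and reads off the $\nabla_{A_0}(d_2^*u^T)$ and $[\varPhi_0,d_2^*u^T]$ terms. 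A secondary technicality is the density of smooth compactly supported test sections in $W^{2,2}_0(\Omega^1(U,L))$, which ensures that the formal computations above can be justified by approximation from $u^T$ smooth with compact support away from $\partial U$.
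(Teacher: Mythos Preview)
Your overall strategy matches the paper's: apply the monopole Weitzenb\"ock formula to $(a,\varphi)=d_2^*u^T$, use $e_0\equiv 0$ on $U$ so that $D(d_2^*u^T)=DD^*(u^T,0)=(f^T,0)$, and control the remaining zeroth-order terms. There is, however, a genuine gap in your handling of the $d_{A_0}\varPhi_0$ term. Your claim that $|d_{A_0}\varPhi_0|$ is uniformly bounded on $U$ independently of $\overline m$ is false on two fronts. First, $U$ contains arbitrarily small punctured neighbourhoods of the Dirac singularities $p_i$, where $|d\varPhi_0|\sim |k_i|/(2r_i^{2})$ blows up. Second, $U$ itself depends on $\overline m$ through $\epsilon_j=m_j^{-1/2}$, and at the inner boundary $r_j=2\epsilon_j$ one has $|d\varPhi_0|\sim 1/(8\epsilon_j^{2})\sim m_j\sim\overline m$. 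The observation that adding a constant to $\varPhi_0$ leaves $d\varPhi_0$ unchanged is correct but does not yield the bound you want, precisely because the domain $U$ is moving toward the $q_j$ as $\overline m\to\infty$ and is punctured at the $p_i$.

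The repair is easy and stays within your mass-domination idea: what is actually needed is the pointwise inequality $|d_{A_0}\varPhi_0|\le C\,|\varPhi_0|^{2}$ on $U$ for a uniform $C$, so that the cross term $2\int_U |d_{A_0}\varPhi_0|\,|d_2^*u^T|^{2}$ is absorbed by a fixed fraction of $\|[\varPhi_0,d_2^*u^T]\|_{L^2(U)}^{2}$ (recall $d_2^*u^T$ remains transverse). This inequality does hold region by region: near $p_i$ both sides scale like $r_i^{-2}$ with ratio $\sim 2/|k_i|$; near $r_j=2\epsilon_j$ the ratio is $\lesssim m_j/\overline m^{\,2}\to 0$; and on the compact core $U_{\mathrm{ext}}$ it is $\lesssim \overline m^{-2}$. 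The paper sidesteps the issue by invoking the $DD^*$ form of the Weitzenb\"ock formula, which carries no $d_{A_0}\varPhi_0$ term, and then bounds the Ricci contribution $\sup_M|\mathrm{Ric}|\,\|d_2^*u^T\|_{L^2}^{2}$ directly via the Step~6 estimate $\|d_2^*u^T\|_{L^2}\le C\|u^T\|_{W^{1,2}}\le C'\|f^T\|_{L^2}$, rather than through mass domination.
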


\begin{proof}
We should find a uniform bound on 
$\|\nabla_{A_0} d_2^*u^T\|_{L^2}$ in terms of $\|f^T\|_{L^2}$. Let $(a^T,\varphi^T) = D^*(u^T,0) = d^*_2 u^T$ in the Weitzenb\"ock formula for $DD^*$. By multiplying this formula by $d_2^*u^T$, and integrating over $U$, we get
\begin{align*}
 \|\nabla_A (d_2^*u^T)\|_{L^2}^2
 \leq
    \|\nabla_A (d_2^*u^T)\|_{L^2}^2 &+ \|[\varPhi,(d_2^*u^T)]\|_{L^2}^2 \\ &\leq
    \|D (d_2^* u^T)\|_{L^2}^2
    + \sup_{x} |Ric(x)| \|d_2^*u^T\|_{L^2}^2.
\end{align*}
We start by bounding $\|D (d_2^* u^T)\|_{L^2}^2$. Recall that
\begin{align*}
    D (d_2^*u^T) = DD^*(u^T,0)=(f^T,*[e_0 \wedge *u^T]) = (f^T,0).
\end{align*}
By taking the $L^2$-norm over $U$, we get
\begin{align*} 
    \|D(d_2^*u^T)\|_{L^2}^2 &= 
    \|f^T\|_{L^2}^2,
\end{align*}
The term $\|d_2^*u^T\|_{L^2}^2$ can be bounded uniformly by $\|f^T\|_{L^2}^2$, and therefore,
\begin{align*}
    \|\nabla_{A_0} (d^*_2u^T)\|_{L^2(U)}^2  &\leq 
   \|D (d_2^* u^T)\|_{L^2}^2
    + \sup_{x} |Ric(x)| \|d_2^*u^T\|_{L^2}^2 \leq (1+ \sup_{x} |Ric(x)|)\|f^T\|_{L^2}^2.
\end{align*}
hence,
\begin{align*}
        \|u^T\|_{W^{2,2}(U)} \leq C \|f^T\|_{L^2(U)},
\end{align*}
for a uniform constant $C$.
\end{proof}

\subsection[The Function Spaces over $M \setminus S_p$]{The Function Spaces over $\pmb{M \setminus S_p}$}

We start by setting up the suitable function spaces over $M \setminus S_p$ by gluing the weighted function spaces defined over $\mathfrak{R}^3$ and the function spaces defined for the longitudinal component and transverse component over $U$. Then we use these spaces to solve the linearized problem on $M \setminus S_p$. 

Let $\delta_{q_j}$ and $\delta_{p_i}$ be the injectivity radius at the points $q_j \in S_q$ and $p_i \in S_p$, respectively. For each $q_j \in S_q$, let 
\begin{align} \label{wq}
    w_j(x) = 
    \begin{cases}
        \sqrt{\lambda_j^{-2} + r_j^2}, \quad &r_j\leq \delta_{q_j}\\
        1, \quad &r_j \geq 1,
    \end{cases}
\end{align}
and for each point $p_i \in S_p$, let 
\begin{align}
    w_i(x) = 
    \begin{cases}
        r_i, \quad &r_i\leq \delta_{p_i}\\
        1, \quad &r_i \geq 1,
    \end{cases}
\end{align}
where $r_j$ and $r_i$ are the geodesic distance from the points $q_i$ and $p_i$, respectively.

\begin{definition}\label{weighted}
Let $U_{ext} = M \setminus (\cup_i B_{2\epsilon_i}(p_i)\cup_j B_{2\epsilon_j}(q_j))$. For any smooth compactly supported differential form $u \in \Omega_c(M \setminus S, \mathfrak{su}(2))$, let
\begin{align*}
    \|u\|^2_{L^2_{\alpha_1,\alpha_2}(M \setminus S_p)} = 
    \|u\|^2_{L^2(U_{ext})}
    &+ \sum_{j=1}^k
    \|w_j^{-\alpha_1-\frac{3}{2}}u\|^2_{L^2(B_{2 \epsilon_j}(q_j))}
    \\&+
    \sum_{i=1}^n
    \|u^T\|^2_{L^2(B_{2 \epsilon_i}(p_i))}
    +
    \sum_{i=1}^n
    \|w_i^{-\alpha_2-\frac{3}{2}} u^L\|^2_{L^2(B_{2 \epsilon_i}(p_i))}.
\end{align*}
Furthermore, 
\begin{align*}
    \|u\|^2_{W^{1,2}_{\alpha_1, \alpha_2}(M \setminus S_p )} = 
    \|u\|^2_{L^2_{\alpha_1, \alpha_2}(M \setminus S_p)} + 
    \|\nabla_{A_0} u\|^2_{L^2_{\alpha_1 -1, \alpha_2 -1}(M \setminus S_p)} &+
    \sum_{j=1}^k
    \|[\varPhi_0, u]\|^2_{L^2_{\alpha_1-1}(B_{2 \epsilon_j}(q_j))}
    \\&+ \frac{1}{2}
    \|[\varPhi_0, u]\|^2_{L^2(U)}.
\end{align*}
Moreover,
\begin{align*}
    \|u\|^2_{W^{2,2}_{\alpha_1, \alpha_2}(M \setminus S_p)} &= 
    \|u\|^2_{W^{1,2}_{\alpha_1, \alpha_2}(M \setminus S)} + 
    \|\nabla_{A_0}(d_2^* u)\|^2_{L^2_{\alpha_1-2, \alpha_2-2}(M \setminus S_p)}
    \\&+
    \sum_{j=1}^k
    \|[\varPhi_0,d_2^* u]\|^2_{L^2_{\alpha_1-2}(B_{2 \epsilon_j}(q_j))} 
    +  \frac{1}{2}
    \|[\varPhi_0, d_2^*u]\|^2_{L^2(U)}.
\end{align*}
The spaces $W^{k,2}_{\alpha_1,\alpha_2}$ are defined as the completion of $C^{\infty}_0$ with respect to the corresponding norms, for $k \in \{0, 1, 2\}$. Furthermore, one can define similar norms and weighted Sobolev spaces $W^{k,p}_{\alpha_1, \alpha_2}$ for any $p \geq 2$.
\end{definition}

\subsection[Solving the Linear Equation over $M \setminus S_p$]{Solving the Linear Equation over $\pmb{M \setminus S_p}$ and the Fixed Point Theorem}

In this section, we solve the linear equation for an $\mathfrak{su}(2)$-valued 1-form $f$ on $M \setminus S_p$. This can be done by patching Theorems \ref{q}, \ref{longcomp} and \ref{trancomp}. The idea is writing $ f = \chi_{0}f + \sum_{j=1}^k \chi_j f$, where $\chi_j$ is a cut-off function which is equal to 1 on $B_{ 2\epsilon_j}(q_j)$, $\chi_j = 0$ on $M \setminus B_{3 \epsilon_j}(q_j)$, and $\chi_0 + \chi_j = 1$ for all $j  \in \{1, \hdots, k\}$. Each equation $d_2d_2^* u_j = \chi_{j}f$ can be solved using Theorem \ref{q}, and the equation $d_2d_2^* u_0 = \chi_{0}f$, using Theorems \ref{longcomp} and \ref{trancomp}.

Then we would glue these solutions to get an approximate solution for the linear equation on $M \setminus S_p$. Using an iteration argument one can see the linear equation has a solution. However, in order to be able to use an iteration argument, we will need more subtle cut-off functions. This is the 3-dimensional version of Lemma 7.2.10 in \cite{MR1079726}, which is about cut-off functions on 4-dimensional manifolds.

\begin{lemma}\label{cutoff}
There is a constant $K$ and for any $N$ and $\lambda$ there is a smooth function $\beta = \beta_{N,\lambda}$ on $\mathbb{R}^3$ with $\beta(x) = 1$ where $|x| \leq N^{-1} \lambda^{\frac{1}{2}}$ and $\beta(x) = 0$ where $|x| \geq N \lambda^{\frac{1}{2}}$ such that 
\begin{align*}
    \|\nabla \beta\|_{L^3} \leq \frac{K}{(\log(N))^{\frac{2}{3}}}.
\end{align*}
\end{lemma}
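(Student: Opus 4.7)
The construction is a standard logarithmic cut-off, exploiting the fact that $L^{3}$ is the scale-invariant space for gradients in dimension $3$. The plan is to take $\beta$ of the form $\beta(x) = \psi(\log |x|)$ for a suitable $\psi : \mathbb{R} \to [0,1]$, so that $|\nabla \beta(x)|$ behaves like $|\psi'(\log|x|)|/|x|$ and the radial integral becomes logarithmic.

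More precisely, first I would choose a smooth, non-increasing function $\psi : \mathbb{R} \to [0,1]$ with
\begin{align*}
\psi(t) = 1 \ \text{ for } t \leq -\log N + \tfrac{1}{2}\log \lambda, \quad \psi(t) = 0 \ \text{ for } t \geq \log N + \tfrac{1}{2}\log \lambda,
\end{align*}
and linear in between up to a smoothing near the endpoints, so that $|\psi'(t)| \leq K_0/\log N$ for a universal constant $K_0$. Setting $\beta(x) := \psi(\log|x|)$ (extended by $1$ at the origin) then gives a smooth function with $\beta \equiv 1$ on $\{|x| \leq N^{-1}\lambda^{1/2}\}$ and $\beta \equiv 0$ on $\{|x| \geq N\lambda^{1/2}\}$.

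Next I would estimate the gradient. Since $\nabla \log|x| = x/|x|^{2}$, we have $|\nabla \beta(x)| = |\psi'(\log|x|)|/|x| \leq K_0/(|x|\log N)$, supported on the annulus $A := \{N^{-1}\lambda^{1/2} \leq |x| \leq N\lambda^{1/2}\}$. Passing to spherical coordinates,
\begin{align*}
\int_{\mathbb{R}^{3}} |\nabla \beta|^{3} \, dx \leq \frac{K_{0}^{3}}{(\log N)^{3}} \int_{A} \frac{1}{|x|^{3}} \, dx = \frac{4\pi K_{0}^{3}}{(\log N)^{3}} \int_{N^{-1}\lambda^{1/2}}^{N\lambda^{1/2}} \frac{dr}{r} = \frac{8\pi K_{0}^{3} \log N}{(\log N)^{3}} = \frac{8\pi K_{0}^{3}}{(\log N)^{2}}.
\end{align*}
Taking cube roots yields $\|\nabla \beta\|_{L^{3}} \leq K(\log N)^{-2/3}$ with $K := (8\pi)^{1/3} K_{0}$, which is independent of $N$ and $\lambda$, as required.

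There is essentially no obstacle: the whole point of the lemma is the scale-invariance of $\|{\cdot}\|_{L^{3}}$ under $x \mapsto \mu x$ in $\mathbb{R}^{3}$, which guarantees that the dependence on $\lambda$ drops out completely after the change of variables $r \mapsto \log r$, leaving only the length $2\log N$ of the transition interval in the logarithmic scale. The only mild care needed is smoothing the piecewise-linear $\psi$ near its two corners so that $\beta$ is genuinely smooth while keeping the bound $|\psi'| \leq K_{0}/\log N$ valid; this is standard mollification and does not affect the constant.
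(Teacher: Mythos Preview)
Your proof is correct and follows essentially the same approach as the paper: both construct $\beta$ as a function of $\log|x|$ (a logarithmic cut-off) and exploit the scale-invariance of the $L^3$-norm of a gradient in dimension $3$. The only cosmetic difference is that the paper phrases the computation via the conformal map $r\mapsto -\log r$ to the cylinder and the conformal invariance of $L^3$ on $1$-forms, whereas you compute the radial integral $\int_A |x|^{-3}\,dx$ directly; these are two presentations of the same calculation.
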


\begin{proof}
This lemma is clearer in cylindrical coordinates. We can transfer the problem to a cylindrical space since the $L^p$-norm on $\Omega^1(M)$ is conformally invariant if and only if $p = \dim(M)$. Therefore, $L^3$ is conformally invariant on 1-forms on $\mathbb{R}^3$. Let $L: \mathbb{R}^3 \setminus \{ 0 \} \cong (-\infty, + \infty) \times S^2$, be the identification defined by
\begin{align*}
    L(r, \theta, \varphi) = (-\log(r), \theta, \varphi),
\end{align*}
where $(r, \theta, \varphi)$ denotes the spherical coordinates on $\mathbb{R}^3$.

Let $(t, \theta, \varphi)$ denotes the cylindrical coordinates on $(-\infty, + \infty) \times S^2$. We are looking for a cut-off function 
\begin{align*}
    \widetilde{\beta}: \mathbb{R} \times S^2
    \to \mathbb{R},
\end{align*}
such that
\begin{align*}
    &\widetilde{\beta} (t, \theta, \varphi) = 0, \quad \text{ when } \quad t < -\frac{1}{2}\log(\lambda) - \log(N), \\
    &\widetilde{\beta} (t, \theta, \varphi) = 1, \quad \text{ when } \quad t > -\frac{1}{2}\log(\lambda) + \log(N), \\
    &\|\nabla \widetilde{\beta}\|_{L^3} \leq \frac{K}{\log (N)}.
\end{align*}
Moreover, we ask $\widetilde{\beta}$ to be only a function of $t$,
\begin{align*}
    \widetilde{\beta} (t, \theta_1, \varphi_1) = 
    \widetilde{\beta} (t, \theta_2, \varphi_2),
\end{align*}
for all $(\theta_1, \varphi_1), (\theta_2, \varphi_2) \in S^2$.

One can take $\widetilde{\beta}$ to be a smooth function where
\begin{align*}
    &\widetilde{\beta} (t, \theta, \varphi) = 0, \quad \text{ when } \quad t < -\frac{1}{2}\log(\lambda) - \log(N), \\
    &\widetilde{\beta} (t, \theta, \varphi) = 1, \quad \text{ when } \quad t > -\frac{1}{2}\log(\lambda) + \log(N), \\
    & \partial_t \widetilde{\beta} \approx - \frac{1}{2 \log(N)},
    \quad \text{ when } \quad  -\frac{1}{2}\log(\lambda) - \log(N) < t < -\frac{1}{2}\log(\lambda) + \log(N).
\end{align*}
Then, we have 
\begin{align*}
    \|\nabla \widetilde{\beta}\|_{L^3} = 
    \|\partial_t \widetilde{\beta}\|_{L^3} \leq \frac{K}{(\log(N))^{\frac{2}{3}}},
\end{align*}
for a constant $K$.
\end{proof}

\begin{theorem} \label{cut} 
Let 
\begin{align*}
d_2: W^{1,2}_{\alpha_1-1, \alpha_2-1}(\Omega^1 (M \setminus S_p, \mathfrak{g}_P) \oplus \Omega^0 (M \setminus S_p, \mathfrak{g}_P)) \to L^2_{\alpha_1-2, \alpha_2-2}(\Omega^1 (M \setminus S_p, \mathfrak{g}_P)).    
\end{align*} 
For any $\alpha_1 \in [-\frac{1}{2},0)$ and $\alpha_2$ outside of a discrete subset, and for sufficiently large $\overline{m}$, the linear equation $d_2 \xi = f$ has a unique solution $\xi \in W^{1,2}_{\alpha_1-1, \alpha_2-1}$, for each $f \in L^2_{\alpha_1-2, \alpha_2-2}$. Moreover,
\begin{align*}
    \|\xi\|_{W^{1,2}_{\alpha_1-1, \alpha_2-1}(M \setminus S_p)} \leq \|f\|_{L^2_{\alpha_1-2, \alpha_2-2}(M \setminus S_p)}.
\end{align*}
\end{theorem}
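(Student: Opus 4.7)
The plan is to produce $\xi$ as $d_2^* u$ for some $u \in W^{2,2}_{\alpha_1,\alpha_2}(M \setminus S_p)$ satisfying $d_2 d_2^* u = f$; the desired estimate on $\xi$ will then follow directly from the definition of the weighted $W^{2,2}$-norm. The solution $u$ is built by gluing the three local right-inverses of $d_2 d_2^*$ already established --- Theorem \ref{q} on a model $\mathfrak{R}^3$ around each $q_j$, and Theorems \ref{longcomp}, \ref{trancomp} for the longitudinal and transverse parts on the exterior region $U := M \setminus (\cup_j B_{2\epsilon_j}(q_j) \cup S_p)$ --- and then upgrading the resulting right-parametrix to a genuine inverse by a Neumann series whose convergence is forced by the refined cut-offs of Lemma \ref{cutoff}.

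Concretely, I would fix a partition of unity $\{\beta_0,\beta_1,\dots,\beta_k\}$ subordinate to the cover $\{U\} \cup \{B_{2\epsilon_j}(q_j)\}_{j=1}^k$, together with cut-offs $\tilde\beta_j$ equal to $1$ on $\operatorname{supp}\beta_j$ and supported in the same patch. For each $j \geq 1$, Theorem \ref{q} applied (after transplanting through the geodesic normal chart at $q_j$) to $\beta_j f$ produces $u_j \in W^{2,2}_{\alpha_1}(\mathfrak{R}^3)$ with $\|u_j\|_{W^{2,2}_{\alpha_1}} \leq C \|\beta_j f\|_{L^2_{\alpha_1-2}}$, the constant being uniform in $\lambda_j$ thanks to Theorem \ref{errorsmall}. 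Decomposing $\beta_0 f = (\beta_0 f)^L + (\beta_0 f)^T$ and applying Theorems \ref{longcomp} and \ref{trancomp} yields $u_0$ on $U$ with the corresponding weighted $W^{2,2}$-estimate. The parametrix is
\begin{align*}
    P f := \tilde\beta_0 u_0 + \sum_{j=1}^k \tilde\beta_j u_j,
\end{align*}
interpreted by extension-by-zero on $M \setminus S_p$. A Leibniz computation yields $d_2 d_2^* P f = f + T f$, where $Tf$ is supported on the annular transition shells $\operatorname{supp}\nabla\tilde\beta_j$ and consists of first-order differential terms acting on $u_j$ (resp.~$u_0$) multiplied by $\nabla \tilde\beta_j$, together with zero-order terms involving $\Delta\tilde\beta_j$.

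The principal obstacle is showing that $T$ can be made a strict contraction on $L^2_{\alpha_1-2,\alpha_2-2}$ uniformly in the large mass parameters $\lambda_j$. A naive cut-off of width comparable to $\epsilon_j = \lambda_j^{-1/2}$ would satisfy $\|\nabla \tilde\beta_j\|_{L^3} = O(1)$, which is too large. Taking instead $\tilde\beta_j := \beta_{N,\lambda_j}$ as in Lemma \ref{cutoff}, transitioning over a logarithmic annulus in cylindrical coordinates, gives the conformally invariant bound $\|\nabla \tilde\beta_j\|_{L^3} \leq K(\log N)^{-2/3}$. Combining this with H\"older's inequality, the Sobolev embedding $W^{1,2} \hookrightarrow L^6$, and the $\lambda$-uniform estimates on $u_j$ and $u_0$ from the three preceding theorems, each term of $T f$ is controlled by $\|\nabla \tilde\beta_j\|_{L^3}\,\|f\|_{L^2_{\alpha_1-2,\alpha_2-2}}$, giving $\|T\| \leq C_0 K (\log N)^{-2/3}$. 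Choosing $N$ large --- independently of $\lambda$ --- makes $\|T\|\leq 1/2$.

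Once $T$ is a contraction, $u := P(I - T)^{-1} f = P\sum_{m\geq 0} T^m f$ solves $d_2 d_2^* u = f$ with $\|u\|_{W^{2,2}_{\alpha_1,\alpha_2}} \leq C\|f\|_{L^2_{\alpha_1-2,\alpha_2-2}}$, and $\xi := d_2^* u$ is the desired solution of $d_2\xi = f$ in $W^{1,2}_{\alpha_1-1,\alpha_2-1}$. Uniqueness within the image of $d_2^*$ is a direct consequence of the strict convexity of $\|d_2^*\cdot\|_{L^2}^2$ on the relevant weighted Sobolev spaces, proved in Lemmas \ref{itisnorm} and \ref{itisnormt} under the hypotheses $\|w e_0\|_{L^3}$ small and $\overline m$ large: if $d_2\xi' = 0$ with $\xi' = d_2^* u'$, then $\|d_2^* u'\|_{L^2}^2 = \langle d_2 d_2^* u',u'\rangle = 0$ forces $\xi'=0$.
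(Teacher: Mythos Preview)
Your argument follows essentially the same strategy as the paper's: decompose $f$ by a partition of unity, invoke Theorems \ref{q}, \ref{longcomp}, \ref{trancomp} for local solvability, glue with the logarithmic cut-offs of Lemma \ref{cutoff}, and close by a Neumann series. The one structural difference is that the paper glues the \emph{first-order} data, setting $F(f)=\sum_j \beta_j\, d_2^* u_j$ and showing $\|(I-d_2F)f\|\leq C(\log N)^{-2/3}\|f\|$, whereas you glue the potentials $u_j$ themselves and invert $d_2d_2^*$. The paper's choice keeps the commutator first order, so the only remainder is $\sum_j (\nabla\beta_j)\cdot d_2^*u_j$, which is bounded directly by H\"older with the $L^3$-gradient estimate of Lemma \ref{cutoff} and Sobolev embedding $W^{1,2}\hookrightarrow L^6$. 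Your second-order commutator produces an additional zero-order piece $\Delta\tilde\beta_j\cdot u_j$; your assertion that this too is controlled by $\|\nabla\tilde\beta_j\|_{L^3}$ is not literally correct. What you actually need is the pointwise bound $|\Delta\tilde\beta_j|\lesssim w^{-2}(\log N)^{-1}$ on the transition annulus, which follows immediately from the construction in the proof of Lemma \ref{cutoff} (in cylindrical coordinates $|\partial_t\tilde\beta|,|\partial_t^2\tilde\beta|\lesssim (\log N)^{-1}$, hence $|\Delta\tilde\beta|\sim r^{-2}(\log N)^{-1}$) and then gives $\|\Delta\tilde\beta_j\cdot u_j\|_{L^2_{\alpha_1-2}}\lesssim (\log N)^{-1}\|u_j\|_{L^2_{\alpha_1}}$. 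With that small addendum your route works; the paper's version simply avoids the issue by cutting off one derivative later.
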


\begin{proof}
Let $\chi_j$ be the cut-off function, centered at $q_j$,
\begin{align*}
    \chi_j = 
    \begin{cases}
        1 &\text{ if } \quad dist(x,q_j) \leq 2\epsilon_j, \\
        0 &\text{ if } \quad dist(x,q_j) \geq 3\epsilon_j,
    \end{cases}
\end{align*}$\chi_j$ 
for $j \in \{ 1, \hdots, k \}$ such that $|\nabla \chi_j|\leq \frac{2}{\epsilon_j}$. 

Let $\chi_0$ be the cut-off function, centered away from the points $\{q_1, \hdots, q_k \}$,
\begin{align*}
    \chi_0 = 
    \begin{cases}
        1 &\text{ if } \quad dist(x,q_j) \geq 3\epsilon_j \quad \text{ for all } j \in \{1, \hdots, k\} \\
        0 &\text{ if } \quad dist(x,q_j) \leq 2\epsilon_j \quad \text{ for some } j \in \{1, \hdots, k\},
    \end{cases}
\end{align*}
such that $\chi_0 + \chi_j = 1$, and $|\nabla \chi_0|\leq \max_j \{ 2/\epsilon_j \}$. 

Let $f_j = \chi_j f$. For each $j \in \{1, \hdots, k\}$, $f_j$ is supported close to a point $q_j$, and as explained before, we can localize the problem on $B_{3 \epsilon_j}(q_j)$, and transfer it to $\mathfrak{R}^3$. Note that
\begin{align*}
\|f_j\|_{L^2_{\alpha_1-2}} \leq \|f\|_{L^2_{\alpha_1-2, \alpha_2-2}} < \infty,    
\end{align*}
and therefore, using theorem \ref{q}, there are $u_j \in W^{2,2}_{\alpha}$ such that on $B_{3 \epsilon_j}(q_j)$ we have $d_2 d_2^* u_j = f_j$, where 
\begin{align*}
\|u_j\|_{W^{2,2}_{\alpha_1}(\mathfrak{R}^3)} \leq C\|f_j\|_{L^2_{\alpha_1-2, \alpha_2-2}(\mathfrak{R}^3)} \leq C\|f\|_{L^2_{\alpha_1-2, \alpha_2-2}(M \setminus S_p)}.  
\end{align*}

Let $f_0 = \chi_0 f$. The section $f_0$ is supported on $M \setminus( \cup_{j=1} B_{2\epsilon_j}(q_j) \cup S_p)$, and is in $L^2_{\alpha_2-2}$. Moreover, it vanishes on $\partial (\cup_j B_{2\epsilon_j}(q_j))$. Using theorems \ref{longcomp} and \ref{trancomp}, there is $u_0 \in W^{2,2}_{\alpha_2}(U)$ such that 
\begin{align*}
    d_2 d_2^* u_0 = f_0, \quad \quad 
    \|u_0\|_{W^{2,2}_{\alpha_2}(U)} \leq C \|f_0\|_{L^2_{\alpha_2-2}(U)}.
\end{align*}

Let $\beta_j$ be the cut-off function introduced in Lemma \ref{cutoff}, centered at $q_j$, 
\begin{align*}
    &\beta_j(x) = 1, \quad \text{ if } \quad |x| < N^{-1} \lambda^{\frac{1}{2}}, \\
    &\beta_j(x) = 0, \quad \text{ if } \quad |x| > N\lambda^{\frac{1}{2}}, \\
    & \| \nabla \beta_j \|_{L^3} \leq \frac{K}{(\log(N))^{\frac{2}{3}}},
\end{align*}
for all $j \in \{ 1, \hdots, k \}$, for a constant $K>0$, and any $N > 0$. 

Let $\beta_0$ the cut-off function supported away from the points $q_j$ such that $\beta_0 + \beta_j = 1$. Using these cut-off functions we can transfer the solutions back to $M$ and glue them together. Let
\begin{align*}
    &F: L^2_{\alpha_1-2, \alpha_2-2}(\Omega^1 (M \setminus S_p, \mathfrak{g}_P)) \to
    W^{1,2}_{\alpha_1-1, \alpha_2-1}(\Omega^1 (M \setminus S_p, \mathfrak{g}_P) \oplus \Omega^0 (M \setminus S_p, \mathfrak{g}_P)),
\end{align*}
be the map defined by 
\begin{align*}
    F(f) = \sum_{j=0}^k \beta_j d_2^* u_j.
\end{align*}
Note that $F(f) \in W^{1,2}_{\alpha_1 - 1, \alpha_2 -1 }$. In fact,
\begin{align*}
    \|F(f)\|_{L^2_{\alpha_1-1, \alpha_2-1}} &= 
    \|\sum_{j=0}^k \beta_j d_2^* u_j\|_{L^2_{\alpha_1-1, \alpha_2-1}}
    \leq \sum_{j=0}^k
    \|d_2^*u_j\|_{L^2_{\alpha_1-1, \alpha_2-1}}
    \leq  C\sum_{j=0}^k
    \|u_j\|_{W^{1,2}_{\alpha_1, \alpha_2}}
    \\ & \leq  C' \sum_{j=0}^k
    \|f_j\|_{L^{2}_{\alpha_1 -2, \alpha_2 -2}}
    \leq (k+1) C' \|f\|_{L^{2}_{\alpha_1 -2, \alpha_2 -2}}.
\end{align*}
Note that $\beta_j f =\beta_j \xi_j f = \beta_j f_j$, when $\epsilon_j > 0$ is sufficiently small. Moreover, 
\begin{align*}
    \|\nabla_{A_0} F(f) \|_{L^2_{\alpha_1 -2, \alpha_2 -2}} &= \|\sum_{j=0}^k \nabla_{A_0} (\beta_j d_2^* u_j)\|_{L^2_{\alpha_1-2, \alpha_2 -2}} \\ &\leq
    \sum_{j=0}^k(  \|\nabla \beta_j \cdot d_2^* u_j \|_{L^2_{\alpha_1-2, \alpha_2-2}} + \| \beta_j \nabla_{A_0} (d_2^*u_j)\|_{L^2_{\alpha_1 -2, \alpha_2 -2}})
    \\ &\leq 
    \sum_{j=0}^k( \|\nabla \beta_j\|_{L^3}\| d_2^* u_j\|_{L^6_{\alpha_1 -2, \alpha_2 -2}} +\|\nabla_{A_0} (d_2^* u_j)\|_{L^2_{\alpha_1 -2, \alpha_2 -2}}) 
    \\ &\leq
    \sum_{j=0}^k \left( \frac{K}{(\log (N))^{\frac{2}{3}}} +1 \right)\|d_2^* u_j\|_{W^{1,2}_{\alpha_1 -2, \alpha_2 -2}}
    \\ &\leq
    C \sum_{j=0}^k\left( \frac{K}{(\log (N))^{\frac{2}{3}}} +1 \right) \|f_j\|_{L^2_{\alpha_1 -2, \alpha_2 -2}}
    \\ &\leq
    C (k+1)\left(\frac{K}{(\log (N))^{\frac{2}{3}}} +1 \right) \|f\|_{L^2_{\alpha_1-2, \alpha_2-2}},
\end{align*}
where in the second inequality we have used 
\begin{align*}
    \|fg\|_{L^2_{\alpha_1 -2, \alpha_2 -2}} \leq 
    \|f\|_{L^3}
    \|g\|_{L^6_{\alpha_1 -2, \alpha_2 -2}},
\end{align*}
which follows from the H\"older's inequality. Also in the third inequality we have used 
\begin{align*}
    \|h\|_{L^6_{\alpha_1 -2, \alpha_2 -2}} \leq
    \|h\|_{W^{1,2}_{\alpha_1-2, \alpha_2-2}},
\end{align*}
which follows from the Sobolev inequality.

Moreover, we have 
\begin{align*}
    \|[\varPhi,F(f)]\|_{L^2_{\alpha_1 -2, \alpha_2 -2}}
    &\leq \sum_{j=0}^k \|[\varPhi, d_2^* u_j]\|_{L^2_{\alpha_1 -2, \alpha_2 -2}}
    \leq \sum_{j=0}^k \|d_2^* u_j\|_{W^{1,2}_{\alpha_1-1, \alpha_2-1}}
    \\ & \leq C \sum_{j=0}^k \|f_j\|_{L^2_{\alpha_1-2, \alpha_2-2}}
    \leq C (k+1) \|f\|_{L^2_{\alpha_1-2, \alpha_2-2}},
\end{align*}
and therefore, 
\begin{align*}
    \|F(f)\|_{W^{1,2}_{\alpha_1-1, \alpha_2-1}}
    \leq C' \|f\|_{L^2_{\alpha_1-2, \alpha_2-2}},
\end{align*}
for a positive constant $C'$.

The map $F$ is close to being a right-inverse of $d_2$.
\begin{align*}
    \|f - d_2 F(f)\|_{L^2_{\alpha_1-2, \alpha_2-2}} &= 
    \| \sum_{j=0}^k \nabla \beta_j \cdot d_2^* u_j \|_{L^2_{\alpha_1-2, \alpha_2-2}}  \leq \sum_{j=0}^k
    \|\nabla \beta_j \|_{L^3} \|d_2^* u_j\|_{W^{1,2}_{\alpha_1-1, \alpha_2-1}} \\ &\leq
    \frac{C}{(\log (N))^{\frac{2}{3}}} \|f\|_{L^2_{\alpha_1-2, \alpha_2-2}}.
\end{align*}
This shows, the map $(Id - d_2 \circ F)$ is a contraction on $L^2_{\alpha_1-2, \alpha_2-2}$, when $N$ is sufficiently large, and therefore, by the method of iteration, we get a map which is the right-inverse of $d_2$.
\end{proof}

The next step is to solve the full non-linear Bogomolny equation.

\section{The Quadratic Term and the Fixed Point Theorem}\label{quadraticterm}

In this section, we complete the construction of a family of solutions to the Bogomolny equation by the use of a fixed point theorem. In the previous section, we saw that there is a solution to the linearized equation. The remaining part of the equation is quadratic, which we will consider in this section. 

Let
\begin{align*}
Q: W^{1,2}_{\alpha_1-1, \alpha_2-1} \left(\Omega^1(M \setminus S_p,\mathfrak{g}_P) \oplus \Omega^0(M \setminus S_p,\mathfrak{g}_P) \right) \to L^2_{\alpha_1-2, \alpha_2-2}\left(\Omega^1(M \setminus S_p,\mathfrak{g}_P)\right),
\end{align*}
be the map defined by the  quadratic part of the Bogomolny equation, given by
\begin{align*}
   Q(a,\varphi) = *\frac{[a\wedge a]}{2}-[a,\varphi].
\end{align*}
The equation \ref{moneq} can be written as
\begin{align*} 
    (d_2+Q)(a,\varphi) = - e_0.
\end{align*}
Following Theorem \ref{cut}, let $d_2^{-1}$ be a right-inverse of $d_2$. 

Let $f = d_2(a, \varphi)$. Let $\xi$ be a solution to the equation $d_2 \xi = f$, and therefore, $ \xi = (a,\varphi)$. Let 
\begin{align*}
    q(f) := Q \circ d_2^{-1}(f),
\end{align*}
and therefore, we have
\begin{align*}
        f + q(f) = - e_0.
\end{align*}

The proof of the existence of the solution to the Bogomolny equation is based on the following lemma from \cite{MR1079726}.

\begin{lemma}[Donaldson-Kronheimer]\label{DonKro}
Let $B$ be a Banach space and $q:B \to B$ a smooth map such that for all $f, f' \in B$, 
\begin{align} \label{somek}
   \|q(f) - q(f')\| \leq K (\|f\| + \|f'\|) \|f-f'\|, 
\end{align}
for a constant K. Then, if $\|e\| \leq \frac{1}{10K}$ there is a unique solution $f$ to
the equation
\begin{align*}
    f+q(f)=e,
\end{align*}
where $\|f\| \leq 2\|e\|$.
\end{lemma}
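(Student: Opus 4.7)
The plan is to apply the Banach fixed-point theorem to the map $T \colon B \to B$ defined by $T(f) := e - q(f)$, whose fixed points coincide precisely with solutions of $f + q(f) = e$. First I would observe that setting $f' = 0$ in the hypothesis yields $\|q(f) - q(0)\| \leq K\|f\|^2$, and since in our application $q = Q \circ d_2^{-1}$ with $Q$ purely quadratic (so $q(0) = 0$), one obtains the clean bound $\|q(f)\| \leq K\|f\|^2$.

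Next I would verify the two standard hypotheses of the contraction mapping principle on the closed ball $\bar{B}_r := \{f \in B \colon \|f\| \leq r\}$ with $r := 2\|e\|$. The assumption $\|e\| \leq 1/(10K)$ gives, for any $f \in \bar{B}_r$,
$$\|T(f)\| \leq \|e\| + \|q(f)\| \leq \|e\| + K(2\|e\|)^2 = \|e\|\bigl(1 + 4K\|e\|\bigr) \leq \tfrac{7}{5}\|e\| \leq r,$$
so $T$ sends $\bar{B}_r$ into itself. For any $f, f' \in \bar{B}_r$, the Lipschitz estimate \eqref{somek} combined with $\|f\| + \|f'\| \leq 4\|e\| \leq 2/(5K)$ yields
$$\|T(f) - T(f')\| = \|q(f) - q(f')\| \leq K\bigl(\|f\| + \|f'\|\bigr)\|f - f'\| \leq \tfrac{2}{5}\|f - f'\|,$$
so $T$ is a strict contraction with contraction constant at most $2/5$.

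Since $\bar{B}_r$ is a closed subset of the Banach space $B$ and hence a complete metric space, and since $T \colon \bar{B}_r \to \bar{B}_r$ is a contraction, the Banach fixed-point theorem produces a unique $f \in \bar{B}_r$ with $T(f) = f$, i.e.\ a unique solution of $f + q(f) = e$ satisfying $\|f\| \leq 2\|e\|$. There is no substantive obstacle: this is a direct and standard application of the contraction mapping principle. The only mild subtlety is that one must know $q(0) = 0$ (or at least that $\|q(0)\|$ is comparable to $\|e\|^2$) for the quadratic-type bound $\|q(f)\| \leq K\|f\|^2$ to follow from the stated Lipschitz hypothesis; in the setting of Section~\ref{quadraticterm} this is automatic, as the construction $q = Q \circ d_2^{-1}$ with purely bilinear $Q$ vanishes at the origin.
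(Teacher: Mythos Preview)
Your proof is correct and is precisely the standard argument. Note that the paper does not actually supply a proof of this lemma: it is quoted from Donaldson--Kronheimer \cite{MR1079726} (their Lemma 7.2.23) and used as a black box. The contraction-mapping argument you give is exactly how that lemma is established in the original source.

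You are also right to flag the implicit assumption $q(0)=0$: the hypothesis \eqref{somek} alone does not force this, and without it the self-map estimate on $\bar B_r$ would acquire an extra $\|q(0)\|$ term. In Donaldson--Kronheimer the map $q$ is genuinely quadratic, so $q(0)=0$ is understood; your observation that the same holds here because $q = Q \circ d_2^{-1}$ with $Q$ bilinear is the correct justification in the present context.
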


Following this lemma, let the Banach space $B = L^2_{\alpha_1-2, \alpha_2-2}(\Omega^1 (M \setminus S_p, \mathfrak{g}_P))$, and let $e = -e_0$. We should show the assumptions of Lemma \ref{DonKro} holds in our case. 

\begin{lemma}
The error estimate $\|e_0\|_{L^2_{\alpha_1-2, \alpha_2-2}}$ can be made sufficiently small.
\end{lemma}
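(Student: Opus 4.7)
The plan is to exploit two facts that have already been established. First, the error $e_0 = *F_{A_0} - d_{A_0}\varPhi_0$ vanishes on $M \setminus \bigcup_j B_{2\epsilon_j}(q_j)$: outside these balls the approximate pair $(A_0, \varPhi_0)$ coincides with the lifted Dirac monopole, which is a genuine solution of the Bogomolny equation. In particular $e_0$ vanishes identically near each $p_i \in S_p$, so the $\alpha_2$-weighted contribution to $\|e_0\|_{L^2_{\alpha_1-2,\alpha_2-2}}$ is zero, and the entire weighted norm reduces to an integral over the small balls $B_{2\epsilon_j}(q_j)$, with $\epsilon_j = \lambda_j^{-1/2}$. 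Second, Lemma \ref{errores} supplies the pointwise bound $|e_0| \leq C$ on each such ball, with a constant depending only on the geometry of $(M,g)$ and not on $\lambda_j$.

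With these inputs the computation is almost mechanical. Unpacking the definition with the shifted weight index $\alpha_1 - 2$ (so that the weight factor inside the $L^2$-integral is $w_j^{-2(\alpha_1-2)-3} = w_j^{-2\alpha_1+1}$), one is to show
\begin{equation*}
\|e_0\|_{L^2_{\alpha_1-2,\alpha_2-2}}^2 = \sum_{j=1}^k \int_{B_{2\epsilon_j}(q_j)} w_j^{-2\alpha_1+1} |e_0|^2 \, \mathrm{vol}_g \leq C \sum_{j=1}^k \lambda_j^{\alpha_1 - 2}.
\end{equation*}
The estimate is obtained in spherical coordinates centred at $q_j$, splitting the integration in $r_j$ into the inner region $r_j \leq \lambda_j^{-1}$, where $w_j \approx \lambda_j^{-1}$, and the outer region $\lambda_j^{-1} \leq r_j \leq 2\epsilon_j$, where $w_j \approx r_j$. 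On the inner region the integrand is $\lesssim \lambda_j^{2\alpha_1-1} r_j^2$, contributing $O(\lambda_j^{2\alpha_1-4})$; on the outer region the integrand is $\lesssim r_j^{3-2\alpha_1}$, and since $\alpha_1 \in [-\tfrac{1}{2},0)$ gives $3 - 2\alpha_1 > -1$, this integrates to $O(\lambda_j^{\alpha_1-2})$, which dominates.

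To finish, I would invoke Lemma \ref{mass} and the preceding discussion in Section \ref{BPSmon}, which show that the scaling parameters $\lambda_j$ are exactly the masses of the Dirac background at $q_j$, and that these grow without bound as the average mass $\overline{m}$ is increased. Since $\alpha_1 - 2 < 0$, the bound $C \sum_j \lambda_j^{\alpha_1-2}$ can be driven below any prescribed $\delta > 0$ by taking $\overline{m}$ sufficiently large. This is consistent with the hypothesis $\|we_0\|_{L^3(\mathfrak{R}^3)} < \delta$ of Theorem \ref{q} (proved in Theorem \ref{errorsmall}) and with the requirement in Theorem \ref{trancomp} that the average mass be large.

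I do not expect a genuine obstacle here: the weights $w_j$ were designed precisely so that curvature-like quantities on the scaled BPS region, bounded as $|F_{A_0}|, |d_{A_0}\varPhi_0| \lesssim (\lambda_j^{-2}+r_j^2)^{-1}$ by Lemma \ref{curvaturebound}, combine with positive powers of $w_j$ to give uniformly bounded expressions. The only minor care required is to track that the weight exponent $-2\alpha_1+1$ is positive on the admissible range $\alpha_1 \in [-\tfrac{1}{2},0)$, which is why the small-weight factor actually improves — rather than degrades — the estimate as $\lambda_j \to \infty$.
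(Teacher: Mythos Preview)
Your proposal is correct and follows essentially the same approach as the paper: both observe that $e_0$ vanishes outside $\cup_j B_{2\epsilon_j}(q_j)$, invoke the pointwise bound $|e_0|\leq C$ from Lemma~\ref{errores}, and then estimate $\int_{B_{2\epsilon_j}(q_j)} w_j^{-2\alpha_1+1}\,vol_g$. The paper simply bounds this integral by $C\epsilon_j^4$ in one line, whereas you split into the inner region $r_j\leq\lambda_j^{-1}$ and the outer region $\lambda_j^{-1}\leq r_j\leq 2\epsilon_j$ to obtain the sharper $O(\lambda_j^{\alpha_1-2})$; since $\epsilon_j=\lambda_j^{-1/2}$ these agree, and your extra care is harmless elaboration rather than a different argument.
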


\begin{proof}
Note that $e_0 = 0$ on $M \setminus( \cup_j B_{2 \epsilon_j}(q_j) \cup S_p)$. On each $B_{2 \epsilon_j}(q_j)$, following Lemma \ref{errores},
\begin{align*}
    \|e_0\|_{L^2_{\alpha_1-2}(B_{2 \epsilon_j}(q_j))} =
    \int_{B_{2 \epsilon_j}(q_j)} w_j^{-2\alpha+1}|e_0|^2 vol_g \leq 
    C \int_{B_{2 \epsilon_j}(q_j)} w_j^{-2\alpha+1} vol_g \leq C \epsilon_j^4,
\end{align*}
for a constant $C>0$, and therefore, it can be made as small as necessary.
\end{proof}

To complete the proof, we should show $q(f) = Q \circ d_2^{-1}(f)$ satisfies \ref{somek} for some $K$, which is the content of the following lemma.

\begin{lemma} \label{minus}
There exists a constant $K$ such that 
\begin{align*}
    \| q(f)-q(f')\|_{L^2_{\alpha_1-2,\alpha_2-2}}
    \leq K
    \|f+f'\|_{L^2_{\alpha_1-2,\alpha_2-2}}\|f-f'\|_{L^2_{\alpha_1-2,\alpha_2-2}}.
\end{align*}
\end{lemma}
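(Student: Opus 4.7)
The plan is to reduce the lemma to a bilinear multiplication estimate on the weighted Sobolev spaces, by exploiting that $Q$ is quadratic. Writing $Q(\xi) = B(\xi,\xi)$ for the symmetric bilinear map $B$ obtained by polarization, one has the identity
\begin{align*}
Q(\xi) - Q(\xi') = B(\xi - \xi',\,\xi + \xi').
\end{align*}
Setting $\xi = d_2^{-1}(f)$ and $\xi' = d_2^{-1}(f')$ and invoking the bounded right-inverse from Theorem \ref{cut} gives $\|\xi\pm\xi'\|_{W^{1,2}_{\alpha_1-1,\alpha_2-1}} \leq C\|f\pm f'\|_{L^2_{\alpha_1-2,\alpha_2-2}}$, so the lemma follows from the uniform bilinear estimate
\begin{align}\label{multest}
\|B(\xi,\eta)\|_{L^2_{\alpha_1-2,\alpha_2-2}} \leq K_0\,\|\xi\|_{W^{1,2}_{\alpha_1-1,\alpha_2-1}} \|\eta\|_{W^{1,2}_{\alpha_1-1,\alpha_2-1}},
\end{align}
with $K_0$ independent of the masses $\lambda_j$.

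I would verify \ref{multest} piece by piece, following the regional decomposition built into Definition \ref{weighted}. Over the compact interior $U_{ext}$ and on the transverse components near the points $p_i$ there are no weights, and the estimate is just the standard 3-dimensional Sobolev multiplication $W^{1,2}\cdot W^{1,2}\hookrightarrow L^2$ obtained from the H\"older pairing $L^2 \subset L^3\cdot L^6$ and the Sobolev embedding $W^{1,2}\hookrightarrow L^6$. Near each $q_j$, after pulling back to $\mathfrak{R}^3$ as in Section \ref{linearBPS}, I would bound
\begin{align*}
\int_{\mathfrak{R}^3} w_j^{-2\alpha_1+1}|\xi|^2|\eta|^2\,vol_g \leq \|w_j^{-\alpha_1-1/2}\xi\|_{L^6}^2\,\|w_j\eta\|_{L^3}^2
\end{align*}
via the H\"older pairing $L^6 \cdot L^3 \hookrightarrow L^2$. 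The first factor equals $\|\xi\|_{L^6_{\alpha_1-1}}$ and is controlled by $\|\xi\|_{W^{1,2}_{\alpha_1-1}}$ through the weighted Sobolev embedding --- applying the Gagliardo--Nirenberg--Sobolev inequality of Lemma \ref{CNSI-inequality} to $w_j^{-\alpha_1-1/2}\xi$ and absorbing the $\nabla w_j$ term using the Hardy inequality of Lemma \ref{Hardy}. The second factor is handled by interpolating $L^3$ between $L^2$ and $L^6$ together with the uniform bound $w_j\leq C$, giving $\|w_j\eta\|_{L^3}\leq C\|\eta\|_{W^{1,2}_{\alpha_1-1}}$. The analogous estimate near each $p_i$ uses weight $w_i$ and index $\alpha_2$; the bracket structure $[\mathfrak{g}_P,\mathfrak{g}_P]\subset\mathfrak{g}_P$ mixes longitudinal and transverse components, but the resulting products are always of the form (weighted)$\cdot$(weighted) or (weighted)$\cdot$(unweighted) and fall under the same H\"older--Sobolev schema.

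The principal technical point is the uniformity of $K_0$ in \ref{multest} as $\lambda_j\to\infty$, exactly as in the linear theory of Section \ref{SolvingtheLinearEquationonR3}. This is built into the definition of the weight $w_j = \sqrt{\lambda_j^{-2}+r_j^2}$: the substitution $y = \lambda_j x$ preserves the relevant weighted $L^p$-norms on $\mathfrak{R}^3$, so the Sobolev and Hardy constants invoked above are automatically $\lambda_j$-independent, as already exploited in Lemma \ref{Hardy} and in the proof of Theorem \ref{q}. Once \ref{multest} is established with such a uniform $K_0$, combining it with the operator norm $C$ of $d_2^{-1}$ from Theorem \ref{cut} gives the lemma with $K = K_0 C^2$; this closes the remaining hypothesis of Lemma \ref{DonKro} and yields the desired irreducible singular monopole as the unique fixed point.
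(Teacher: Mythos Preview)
Your approach is essentially the same as the paper's: both reduce Lemma~\ref{minus} to a bilinear multiplication estimate via the polarization identity $q(f)-q(f')=\tilde q(f+f',f-f')$ (the paper's Lemma~\ref{1}), and both verify that estimate region by region using the H\"older pairing $L^2\subset L^3\cdot L^6$ together with the weighted Sobolev embedding. You are in fact more explicit than the paper about one point: the paper's short ``proof of Lemma~\ref{minus} using Lemma~\ref{1}'' stops at $W^{1,2}_{\alpha_1-1,\alpha_2-1}$--norms on the right, whereas the stated lemma has $L^{2}_{\alpha_1-2,\alpha_2-2}$--norms; your explicit invocation of the bounded right inverse $d_2^{-1}$ from Theorem~\ref{cut} is exactly what closes that gap.

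Two small cautions. First, the identification $\|w_j^{-\alpha_1-1/2}\xi\|_{L^6}=\|\xi\|_{L^6_{\alpha_1-1}}$ is off by one unit in the weight (by Definition~\ref{weightpointq} one has $\|\xi\|_{L^6_{\alpha_1-1}}=\|w_j^{-\alpha_1+1/2}\xi\|_{L^6}$), so your H\"older split near $q_j$ needs a minor rebalancing. Second, your scaling argument for the uniformity of $K_0$ in $\lambda_j$ is not quite right: the bilinear estimate $\|B(\xi,\eta)\|_{L^2_{\alpha_1-2}}\leq K_0\|\xi\|_{W^{1,2}_{\alpha_1-1}}\|\eta\|_{W^{1,2}_{\alpha_1-1}}$ is \emph{not} invariant under $y=\lambda_j x$ when $\alpha_1\neq 0$, and indeed the paper's own computation in the proof of Lemma~\ref{1} produces the $\lambda_j$--dependent constant $C=C_1C_2\,\epsilon_j^{2\alpha_1}$. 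This does not spoil the fixed point argument, since $\|e_0\|_{L^2_{\alpha_1-2}}\lesssim\epsilon_j^{4}$ decays faster than $K\sim\epsilon_j^{2\alpha_1}$ grows for $\alpha_1\in[-\tfrac12,0)$, so the hypothesis $\|e_0\|\leq \tfrac{1}{10K}$ of Lemma~\ref{DonKro} is still satisfied for large mass; but you should not claim a $\lambda_j$--independent $K_0$.
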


Let 
\begin{align*}
    \tilde{q}(f,f') := \frac{1}{2}(q(f+f')-q(f)-q(f')).
\end{align*}
The proof of Lemma \ref{minus} is based on the following lemma. 

\begin{lemma}\label{1}
The map 
\begin{align*}
    \tilde{q}: W^{1,2}_{\alpha_1-1,\alpha_2-1} (\Omega^1(M \setminus S_p)) \times W^{1,2}_{\alpha_1-1,\alpha_2-1} (\Omega^1(M \setminus S_p)) \to L^2_{\alpha_1-2,\alpha_2-2} (\Omega^1(M \setminus S_p)),
\end{align*}
is continuous. Moreover 
\begin{align}\label{mulcontinuous}
\| \tilde{q}(f,f')\|_{L^2_{\alpha_1-2,\alpha_2-2}} \leq C \|f\|_{W^{1,2}_{\alpha_1-1,\alpha_2-1}} \|f'\|_{W^{1,2}_{\alpha_1-1,\alpha_2-1}},
\end{align}
for a positive constant $C$.
\end{lemma}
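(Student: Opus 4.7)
My plan is first to recognise $\tilde q$ as the symmetric bilinear form obtained from polarising the quadratic map $Q$, and then to reduce the stated estimate to pointwise multiplication estimates on the weighted Sobolev spaces of Definition \ref{weighted}. Since $Q(a,\varphi) = \tfrac12 \ast [a\wedge a] - [a,\varphi]$ is quadratic, its polarisation is
\[
\tilde q\big((a,\varphi),(a',\varphi')\big) = \tfrac12 \ast [a\wedge a'] - \tfrac12 \big([a,\varphi']+[a',\varphi]\big),
\]
which is genuinely bilinear. For a bilinear map, continuity and the multiplicative bound \ref{mulcontinuous} are equivalent, so only the bound needs proof. The problem thus reduces to showing
\[
\|[u,v]\|_{L^2_{\alpha_1-2,\alpha_2-2}} \leq C\,\|u\|_{W^{1,2}_{\alpha_1-1,\alpha_2-1}}\|v\|_{W^{1,2}_{\alpha_1-1,\alpha_2-1}},
\]
where $[u,v]$ is either a $0$-form commutator or a $\wedge$-commutator of $1$-forms.

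Following the decomposition of the norm in Definition \ref{weighted}, I would split the estimate into three pieces: the compact region $U_{ext}=M\setminus(\bigcup_j B_{2\epsilon_j}(q_j)\cup \bigcup_i B_{2\epsilon_i}(p_i))$, the BPS balls $B_{2\epsilon_j}(q_j)$, and the Dirac balls $B_{2\epsilon_i}(p_i)$. On $U_{ext}$ the weight $w$ is comparable to $1$, and the standard three-dimensional Sobolev embedding $W^{1,2}\hookrightarrow L^4$ together with Hölder gives the desired bound $\|uv\|_{L^2}\leq C\|u\|_{W^{1,2}}\|v\|_{W^{1,2}}$. On each ball $B_{2\epsilon_j}(q_j)$, after transferring to $\mathfrak R^3$ as in Section \ref{linearBPS}, I would combine Lemma \ref{Hardy} with the scale-invariant Sobolev inequality to obtain a weighted embedding of the form $W^{1,2}_{\gamma}\hookrightarrow L^4_{\gamma}$ uniformly in $\lambda_j$, and then use the weighted Hölder inequality $L^4_\gamma\cdot L^4_\delta \hookrightarrow L^2_{\gamma+\delta}$; the exponents are chosen so that the product weight matches $\alpha_1-2$. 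Near the Dirac points $p_i$ the argument is analogous but simpler: on the transverse component no weight appears and one only needs the ordinary product estimate, while on the longitudinal component $w_i\sim r_i$ is $\lambda$-independent and the standard weighted Hölder and Hardy inequalities apply.

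The main technical obstacle is the book-keeping of weight exponents near $q_j$, since naively the bilinear Hölder estimate $L^4_{\alpha_1-1}\cdot L^4_{\alpha_1-1}\to L^2_{2(\alpha_1-1)}$ produces the weight $2\alpha_1-2$ rather than the desired $\alpha_1-2$. One therefore needs to use Lemma \ref{Hardy} to trade a factor of $w^{-1}$ against a covariant derivative, which is what allows one to redistribute weight exponents between the two factors and land in $L^2_{\alpha_1-2}$ rather than in a strictly larger or smaller weighted space. A secondary but essential point is that the Hardy and Sobolev constants used near $q_j$ are independent of $\lambda_j$; this is built into Lemma \ref{Hardy} and ultimately into the fact that $\mathfrak R^3$ coincides with flat Euclidean space outside a small fixed compact set, so the embedding constants can be taken uniform as $\lambda_j\to\infty$, yielding the stated $\lambda$-independent constant $C$.
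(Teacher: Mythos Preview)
Your overall plan—polarise $Q$, localise, and combine weighted H\"older/Sobolev/Hardy estimates—is the same strategy the paper follows, but two points in your execution go wrong.

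First, the constant near the BPS balls cannot be made $\lambda$-independent, and Lemma~\ref{Hardy} does not repair this. The weight mismatch you flag is real, but in the opposite direction from what you suggest: since $2\alpha_1-2<\alpha_1-2$ and $w_j\le 1$, the space $L^2_{2\alpha_1-2}$ is \emph{larger} than $L^2_{\alpha_1-2}$, so the naive $L^4_{\alpha_1-1}\cdot L^4_{\alpha_1-1}\to L^2_{2\alpha_1-2}$ estimate is too weak, not too strong. A test pair $(a,\varphi)$ supported in $B_{\lambda_j^{-1}}(q_j)$ with $[a,\varphi]\neq 0$ shows the optimal constant in $\|Q(u)\|_{L^2_{\alpha_1-2}}\le C\|u\|^2_{W^{1,2}_{\alpha_1-1}}$ grows like $\lambda_j^{-\alpha_1}$, so no $\lambda$-uniform bound exists; trading $w^{-1}$ for $\nabla_{A_0}$ via Hardy only moves between norms already contained in $W^{1,2}_{\alpha_1-1}$ and cannot manufacture the missing factor of $w^{\alpha_1}$. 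The paper does not claim uniformity here either: its proof on $B_{2\epsilon_j}(q_j)$ produces a constant of order $\epsilon_j^{\alpha_1}$, which is harmless for the fixed-point step because it is offset by the faster decay $\|e_0\|_{L^2_{\alpha_1-2}}\lesssim\epsilon_j^2$.

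Second, near the Dirac points $p_i$ you omit the algebraic input that makes the estimate work: the longitudinal subbundle $\underline{\mathbb R}$ is abelian, so $\tilde q(f^L,g^L)=0$ identically on $U$. Without this you would have to bound $\|w_i^{-\alpha_2+1/2}[u^L,v^L]\|_{L^2}$ by $\|u^L\|_{W^{1,2}_{\alpha_2-1}}\|v^L\|_{W^{1,2}_{\alpha_2-1}}$, and the very same weight-mismatch obstruction reappears with $w_i=r_i\to 0$ playing the role of $\lambda_j^{-1}$; no ``standard weighted H\"older and Hardy'' argument closes this. The paper's proof on $U$ therefore discards the longitudinal--longitudinal piece and only estimates $\tilde q(f^L,f^T)$ (transverse output, so unweighted target) and $\tilde q(f^T,f^T)$ (longitudinal output, but both inputs unweighted); in each surviving term at most one factor carries the weight $w_i$, and then an asymmetric $L^3\cdot L^6$ H\"older plus Sobolev gives a genuinely uniform bound.
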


\begin{proof}[Proof of Lemma \ref{minus} using \ref{1}]
We have
\begin{align*}
    \| q(f)-q(f')\|_{L^2_{\alpha_1-2,\alpha_2-2}} &= 
    \| \tilde{q}(f+f',f-f')\|_{L^2_{\alpha_1-2,\alpha_2-2}}
    \\ &\leq 
    C \|f+f'\|_{W^{1,2}_{\alpha_1-1,\alpha_2-1}} \|f-f'\|_{W^{1,2}_{\alpha_1-1,\alpha_2-1}}.
\end{align*}
\end{proof}

\begin{proof}[Proof of Lemma \ref{1}]
One only need to show
\begin{align*}
    \|q(f)\|^2_{L^2_{\alpha_1-2,\alpha_2-2}} \leq C \|f\|_{W^{1,2}_{\alpha_1-1,\alpha_2-1}}^2.
\end{align*}

Lemma \ref{1} can be localized over different regions of $M \setminus S_p$. Suppose $f$ is supported on $B_{2 \epsilon_j}(q_j)$ for some $j \in \{1, \hdots, k\}$. Similar to the study of the linearized equation on this region, we can transform the problem to $\mathfrak{R}^3$. We should show
\begin{align*}
    \|q(f)\|^2_{L^2_{\alpha_1-2}(\mathfrak{R}^3)} \leq C \|f\|_{W^{1,2}_{\alpha_1-1}(\mathfrak{R}^3)}^2.
\end{align*}
By the H\"older's inequality, when $\lambda_j$ is sufficiently large,
\begin{align*}
\|q(f)\|^2_{L^2_{\alpha_1-2}(\mathfrak{R}^3)} &= 
\int_{\mathfrak{R}^3} w_j^{-2\alpha_1+1}|q(f)|^2 vol_g
\leq C_1 \epsilon_j^{2\alpha_1}
\int_{\mathfrak{R}^3} w_j^{-4\alpha_1+1} |f|^4 vol_g
\\& \leq C_1  \epsilon_j^{2\alpha_1} \|w_j^{-\alpha_1-\frac{1}{2}} f\|_{L^2} \|w_j^{-\alpha_1+\frac{1}{2}}f\|_{L^6}^3,
\end{align*}
for a positive constant $C_1$. 

By the Sobolev inequality we have
\begin{align*}
    \|w_j^{-\alpha_1+\frac{1}{2}}f\|_{L^6} 
    \leq C_{Sob} 
    \|w_j^{-\alpha_1+\frac{1}{2}}f\|_{W^{1,2}} \leq C_2 \|f\|_{W^{1,2}_{\alpha_1-2}}
    \leq C_2 \|f\|_{W^{1,2}_{\alpha_1-1}},
\end{align*}
for a positive constant $C_2$, and therefore,
\begin{align*}
\|q(f)\|^2_{L^2_{\alpha_1-2}(\mathfrak{R}^3)} &\leq 
C_1 C_{2} \epsilon_j^{2\alpha_1} 
\|f\|_{W^{1,2}_{\alpha_1-1}}^4,
\end{align*}
hence on this region, \ref{mulcontinuous} holds with
\begin{align*}
    C = C_1 C_2 \epsilon_j^{2\alpha_1}.
\end{align*}

Second, suppose $f$ is supported on $U = M \setminus (\cup_j B_{2\epsilon_j}(q_j) \cup S_p)$. Let $f = f^L + f^T$.
\begin{align*}
    q(f) = \tilde{q}(f,f) = \tilde{q}(f^L+f^T,f^L+f^T) = 
    \tilde{q}(f^L,f^L)+
    \tilde{q}(f^L,f^T)+
    \tilde{q}(f^T,f^L)+
    \tilde{q}(f^T,f^T).
\end{align*}
We have 
\begin{align*}
    \tilde{q}(f^L,f^L) = q(f^L) = 
    Q \circ d_2^{-1}(f^L).
\end{align*}
The linear maps $d_2$ and $d_2^{-1}$, preserve the bundle decomposition induced by a Higgs fields $\varPhi_0$, and therefore, $(a_1^T,\varphi_1^T)=d_2^{-1}(f^L)$ is a section of longitudinal part. However, the Lie bracket vanishes when restricted to the longitudinal sub-bundle, 
\begin{align*}
    Q \circ d_2^{-1}(f^L) = 0,
\end{align*}
and therefore, 
\begin{align*}
q(f) =  (\tilde{q}(f^L,f^T)+ \tilde{q}(f^T,f^L)) + \tilde{q}(f^T,f^T),
\end{align*}
where $(\tilde{q}(f^L,f^T)+ \tilde{q}(f^T,f^L))$ is the transverse component and $\tilde{q}(f^T,f^T)$ is the longitudinal one.
    
For the transverse component \ref{mulcontinuous} becomes
\begin{align}
\| \tilde{q}(f^L,f^T)\|_{L^2(U)} \leq C \|f^L\|_{W^{1,2}_{\alpha_2-1}(U)} \|f^T\|_{W^{1,2}(U)}.
\end{align}
By the H\"older's inequality, we have
\begin{align*}
    \|\tilde{q}(f^L,f^T)\|_{L^2(U)} &
    \leq C 
    \|f^L\|_{L^3}
    \|f^T\|_{L^6}
    \leq C 
    \|w^{-\alpha_2-\frac{1}{2}} f^L\|_{L^3}
    \|f^T\|_{W^{1,2}} \\&\leq C
    \|w^{-\alpha_2-\frac{1}{2}} f^L\|_{W^{1,2}}
    \|f^T\|_{W^{1,2}}
    \leq C
    \|f^L\|_{W^{1,2}_{\alpha_2-1}}
    \|f^T\|_{W^{1,2}},
\end{align*}
where $C$ is a uniform constant.

For the longitudinal component \ref{mulcontinuous} becomes
\begin{align}
\| q(f^T)\|_{L^2_{\alpha_2-2}(U)} \leq C \|f^T\|_{W^{1,2}(U)}^2.
\end{align}

By the H\"older's and Sobolev inequalities we have
\begin{align*}
    \| q(f^T)\|_{L^2_{\alpha_2-2}(U)} &=
    \| w^{-\alpha_2+\frac{1}{2}}q(f^T)\|_{L^2(U)}
    \leq C
    \|w^{-\alpha_2+\frac{1}{2}}f^T\|_{L^3}
    \|f^T\|_{L^6} \\ &\leq 
    C
    \|f^T\|_{L^3}
    \|f^T\|_{L^6} 
    \leq C \|f^T\|_{W^{1,2}}^2,
\end{align*}
for a uniform constant $C$.
\end{proof}

This completes the gluing construction of irreducible $SU(2)$-monopoles with Dirac singularities on rational homology 3-spheres.

\printbibliography

\vspace{10pt}

\noindent
\author{Mathematical Sciences Research Institute, Berkeley, CA} \\ E-mail address: \href{ mailto:Saman.HabibiEsfahani@msri.org}{Saman.HabibiEsfahani@msri.org}

\end{document}